\newtheorem{lemma}{Lemma}[section]
\newtheorem{proposition}{Proposition}[section]
\newtheorem{thm}{Theorem}[section]
\newtheorem{corollary}{Corollary}[section]
\newtheorem{remark}{Remark}[section]
\def\Var{\textsf{Var}} 
\def\text#1{\mbox{\rm #1}}
\newcommand{\ess}{\mathop{\rm ess}}
\newcommand{\indc}[1]{{\mathbf{1}_{\left\{{#1}\right\}}}}
\newcommand{\norm}[1]{\left\|{#1} \right\|}
\newcommand{\wh}{\widehat}
\newcommand{\wt}{\widetilde}
\newcommand{\fnorm}[1]{\|#1\|_{\rm F}}
\newcommand{\opnorm}[1]{\|#1\|_{\rm op}}
\newcommand{\sgn}{\mathop{\sf sign}}
\newcommand{\rt}{\mathop{\sf root}}
\newcommand{\Tr}{\mathop{\sf Tr}}
\newcommand{\iprod}[2]{\left \langle #1, #2 \right\rangle}
\newtheorem*{condb'}{Condition B'}
\title{Testing Equivalence of Clustering
}
\author[1]{Chao Gao}
\author[2]{Zongming Ma}
\affil[1]{
University of Chicago
}
\affil[2]{
University of Pennsylvania
}
\begin{document}
\maketitle

\begin{abstract}
 {In this paper, we test whether two datasets share a common clustering structure. 
As a leading example, we focus on comparing clustering structures in two independent random samples from two mixtures of multivariate Gaussian distributions. 
Mean parameters of these Gaussian distributions are treated as potentially unknown nuisance parameters and are allowed to differ.
Assuming knowledge of mean parameters, we first determine the phase diagram of the testing problem over the entire range of signal-to-noise ratios by providing both lower bounds and tests that achieve them. 
When nuisance parameters are unknown, we propose tests that achieve the detection boundary adaptively as long as ambient dimensions of the datasets grow at a sub-linear rate with the sample size.}
\smallskip

\textbf{Keywords.} Minimax testing error, Sparse mixture, Phase transition, High-dimensional statistics, Discrete structure inference
\end{abstract}


\section{Introduction}
 
Clustering analysis is one of the most important tasks in unsupervised learning.
In the context of Gaussian mixture model, we have independent observations $X_1,\cdots,X_n\sim N(z_i\theta,I_p)$, where $z_i\in\{-1,1\}$ for each $i\in[n]$ and $\theta\in\mathbb{R}^p$. 
In this setting,
clustering is equivalent to estimating the unknown label vector $z\in\{-1,1\}^n$. 
It is known that the minimax risk of estimating $z$ is given by
\begin{equation} 
\inf_{\wh{z}}\sup_{z\in\{-1,1\}^n}\mathbb{E}_{(z,\theta)}\ell(\wh{z},z)=\exp\left(-(1+o(1))\frac{\|\theta\|^2}{2}\right), \label{eq:yu-lu-diao}
\end{equation}
as long as $\|\theta\|^2\rightarrow\infty$.
See, for instance, \cite{lu2016statistical}.
Throughout the paper, the distance between two clustering structures is defined as
$$
\ell(\wh{z},z)=\left(\frac{1}{n}\sum_{i=1}^n\indc{\wh{z}_i\neq z_i}\right)\wedge \left(\frac{1}{n}\sum_{i=1}^n\indc{\wh{z}_i\neq -z_i}\right),
$$
the normalized Hamming distance up to a label switching.
Taking minimum over label switching is necessary since switching labels does not alter clustering structure.
Here and after, $a\wedge b = \min(a,b)$ for any real numbers $a$ and $b$.
Since the exponent is in the form of $\|\theta\|^2/2$, 
formula (\ref{eq:yu-lu-diao}) suggests that more covariates help to increase the clustering accuracy as they increase $\|\theta\|^2$. 
To be concrete, suppose we additionally have independent observations $Y_1,\cdots,Y_n\sim N(z_i\eta, I_q)$ for some $\eta\in\mathbb{R}^q$. By combining the datasets $X$ and $Y$, the error rate can be improved from (\ref{eq:yu-lu-diao}) to
\begin{equation}
\exp\left(-(1+o(1))\frac{\|\theta\|^2+\|\eta\|^2}{2}\right). \label{eq:improved-clustering-rate}
\end{equation}

In fact, integrating different sources of data to improve the performance of clustering analysis is a common practice in many areas. For example, in cancer genomics, researchers combine molecular features such as copy number variation and gene expression in integrative clustering to reveal novel tumor subtypes \citep{shen2009integrative,curtis2012genomic,mo2013pattern}. 
In collaborative filtering, combining different databases helps to better identify user types and thus makes better recommendations \citep{ma2008sorec,ma2011recommender,wang2013location}. 
In covariate-assisted network clustering \citep{binkiewicz2017covariate,deshpande2018contextual}, additional variables are collected to facilitate the clustering of nodes in a social network.

In the present paper, we investigate the hypothesis underpinning the foregoing practices: two datasets share a common clustering structure.
To be concrete, let us consider 
independent samples
$X_i\sim N(z_i\theta,I_p)$ and $Y_i\sim N(\sigma_i\eta,I_q)$ with some $z_i\in\{-1,1\}$ and $\sigma_i\in\{-1,1\}$ 
for all $i\in[n]$. 
If one uses $\{X_i\}_{1\le i\leq n}$ and $\{Y_i\}_{1\le i\leq n}$ to cluster the subjects $\{1,2,\dots,n\}$ into two disjoint subsets, it is implicitly assumed that $\ell(z,\sigma)=0$. 
From a statistical viewpoint, checking whether $\ell(z,\sigma)=0$ is equivalent to testing
\begin{equation}
H_0:\ell(z,\sigma)=0 \quad \mbox{vs.} \quad H_1:\ell(z,\sigma)> \epsilon \label{eq:problem}
\end{equation}
for some $\epsilon \geq 0$.
Let $P^{(n)}_{(\theta,\eta,z,\sigma)}$ be the joint distribution of the two datasets $\{X_i\}_{1\le i\leq n}$ and $\{Y_i\}_{1\le i\leq n}$. 
Given any testing procedure $\psi$, we define its worst-case testing error by
\begin{equation}
R_n(\psi,\theta,\eta,\epsilon)=\sup_{\substack{z\in\{-1,1\}^n\\\sigma\in\{-1,1\}^n\\\ell(z,\sigma)=0}}P^{(n)}_{(\theta,\eta,z,\sigma)}\psi + \sup_{\substack{z\in\{-1,1\}^n\\\sigma\in\{-1,1\}^n\\\ell(z,\sigma)> \epsilon}}P^{(n)}_{(\theta,\eta,z,\sigma)}(1-\psi). \label{eq:def-worst-risk}
\end{equation}
We call $\psi$ consistent if $R_n(\psi,\theta,\eta,\epsilon)\rightarrow 0$ as $n\rightarrow\infty$. The minimax testing error is defined by
$$R_n(\theta,\eta,\epsilon)=\inf_{\psi}R_n(\psi,\theta,\eta,\epsilon).$$
We will find necessary and sufficient conditions under appropriate calibrations of $\theta$, $\eta$ and $\epsilon$ such that $R_n(\theta,\eta,\epsilon)\rightarrow 0$ as $n\rightarrow\infty$.

An intuitive approach to testing \eqref{eq:problem} is to first estimate $z$ and $\sigma$ with $\wh{z}$ and $\wh{\sigma}$ by separately clustering $\{X_i\}_{1\le i\leq n}$ and $\{Y_i\}_{1\le i\leq n}$, 
and then one could reject the null hypothesis when $\ell(\wh{z},\wh{\sigma})>\epsilon/2$. 
With the known minimax optimal estimation error rate for $\wh{z}$ and $\wh{\sigma}$ in (\ref{eq:yu-lu-diao}), one can show that such a test is consistent as long as
\begin{equation}
\|\theta\|^2\wedge\|\eta\|^2>2\log\left(\frac{1}{\epsilon}\right). \label{eq:very-very-bad}
\end{equation}
However, we shall show that condition \eqref{eq:very-very-bad} is not optimal 
and that the required signal-to-noise ratio (SNR) for an optimal test to be consistent is much weaker.

Another natural test for (\ref{eq:problem}) can be based on a reduction to a well studied sparse signal detection problem. 
For convenience of discussion, let us suppose $\ell(z,\sigma)=\frac{1}{n}\sum_{i=1}^n\indc{z_i\neq \sigma_i}$ so that there is no ambiguity due to label switching. 
Note that 
$$D(X_i,Y_i)=\frac{(\|\eta\|/\|\theta\|)\theta^TX_i-(\|\theta\|/\|\eta\|)\eta^TY_i}{\sqrt{\|\theta\|^2+\|\eta\|^2}}\sim N\left(\frac{\|\theta\|\|\eta\|(z_i-\sigma_i)}{\sqrt{\|\theta\|^2+\|\eta\|^2}},1\right).$$
Then, we have $D(X_i,Y_i)\sim N(0,1)$ for all $i\in[n]$ under the null hypothesis and there are at least an $\epsilon$ fraction of coordinates distributed by $N\left(\pm\frac{2\|\theta\|\|\eta\|}{\sqrt{\|\theta\|^2+\|\eta\|^2}},1\right)$ under the alternative hypothesis. 
This setting is the sparse signal detection problem studied by \cite{ingster1997some,ingster2012nonparametric,donoho2004higher} under the asymptotic setting of $\epsilon=n^{-\beta}$ and $\frac{2\|\theta\|\|\eta\|}{\sqrt{\|\theta\|^2+\|\eta\|^2}}=\sqrt{2r\log n}$ with some constants $\beta,r>0$. 
It was shown in \cite{donoho2004higher} that the higher criticism test is consistent as long as
\begin{equation}
r > \begin{cases}
\beta-\frac{1}{2}, & \frac{1}{2}<\beta\leq \frac{3}{4}, \\
(1-\sqrt{1-\beta})^2, & \frac{3}{4} < \beta <1.
\end{cases} \label{eq:r-beta-IDJ}
\end{equation}
Moreover, the condition (\ref{eq:r-beta-IDJ}) cannot be improved if only the sequence $\{D(X_i,Y_i)\}_{1\leq i\leq n}$ is observed \citep{ingster1997some}. It can be checked that the condition (\ref{eq:r-beta-IDJ}) is always weaker than (\ref{eq:very-very-bad}) that is required by the test based on estimating $z$ and $\sigma$ first. 
However, we shall show later that one loses information by working only with the sequence $\{D(X_i,Y_i)\}_{1\leq i\leq n}$ for testing (\ref{eq:problem}), and we can further improve condition (\ref{eq:r-beta-IDJ}).

The main result of the present paper is an entire phase diagram of the testing problem (\ref{eq:problem}) under a natural asymptotic setting 
comparable to that used in \cite{donoho2004higher}. 
It turns out this seemingly simple testing problem (\ref{eq:problem}) 
has a complicated phase diagram parametrized by three parameters. 
The detection boundary of the diagram is characterized by five different functions over five disjoint regions in the space of signal-to-noise ratios. 
We also derive an asymptotically optimal test that achieves the detection boundary adaptively. 
At the heart of our construction of the optimal test is a precise likelihood ratio approximation. 
This leads to a sequence of asymptotically sufficient statistics, based on which a higher criticism type test can be proved to be optimal.

\paragraph{Related works}
In addition to the literature on integrative clustering that we have previously mentioned,
the testing problem \eqref{eq:problem} is related to feature selection in clustering analysis. 
In the literature, this has mainly been investigated in the context of sparse clustering \citep{azizyan2013minimax,jin2016influential,jin2017phase,verzelen2017detection,cai2019chime}, where it is assumed that only a small subset of covariates are useful for finding clusters, and so it is important to identify them. 
In comparison, the testing problem (\ref{eq:problem}) can be interpreted as testing whether inclusion of an additional set of covariates $\{Y_i\}_{1\leq i\leq n}$ can potentially lead to smaller clustering errors than using $\{X_i\}_{1\leq i\leq n}$ alone. 
The major difference is that the additional set of covariates may admit a completely different clustering structure in our setting, while in sparse clustering, covariates that are not useful have no clustering structure.

In addition to testing whether clustering structures in multiple datasets are equal, it is of interest to approach the problem from the opposite direction. In other word, one could also test whether the clustering structures share anything in common.
We refer the readers to \cite{gao2019clusterings} and \cite{gao2019testing} for studies along this line.

\paragraph{Paper organization} 
The rest of the paper is organized as the following. 
Section \ref{sec:balanced} studies (\ref{eq:problem}) with an additional equal SNR assumption. 
This simplified setting demonstrates essence of the problem while reducing a lot of technicalities.
The general version of the problem  {without equal SNR assumption} is studied in Section \ref{sec:general}. 
In Section \ref{sec:equality}, we consider (\ref{eq:problem}) with $\epsilon=0$, which is testing for exact equality. 
Optimal adaptive tests with unknown parameters for both $\epsilon > 0$ and $\epsilon = 0$ are discussed in Section \ref{sec:adaptive-t}. 
Finally, technical proofs are given in Section \ref{sec:all-pf}.

\paragraph{Notation}

For $d \in \mathbb{N}$, we write $[d] = \{1,\dotsc,d\}$.  Given $a,b\in\mathbb{R}$, we write $a\vee b=\max(a,b)$, $a\wedge b=\min(a,b)$ and $a_+ = \max(a,0)$.  For two positive sequences $\{a_n\}$ and $\{b_n\}$, we write $a_n\lesssim b_n$ to mean that there exists a constant $C > 0$ independent of $n$ such that $a_n\leq Cb_n$ for all $n$. 
Moreover, $a_n \asymp b_n$ means $a_n\lesssim b_n$ and $b_n\lesssim a_n$.  For a set $S$, we use $\indc{S}$ and $|S|$ to denote its indicator function and cardinality respectively. 
For any matrix $A$, $A^T$ stands for its transpose.
Any vector $v\in \mathbb{R}^d$ is by default a $d\times 1$ matrix.
For a vector $v = (v_1,\ldots,v_d)^T \in\mathbb{R}^d$, we define $\norm{v}^2=\sum_{\ell=1}^d v_\ell^2$.  The trace inner product between two matrices $A,B\in\mathbb{R}^{d_1\times d_2}$ is defined as $\iprod{A}{B} =\sum_{\ell=1}^{d_1}\sum_{\ell'=1}^{d_2}A_{\ell \ell'}B_{\ell \ell'}$, while the Frobenius and operator norms of $A$ are given by $\fnorm{A}=\sqrt{\iprod{A}{A}}$ and $\opnorm{A}=s_{\max}(A)$ respectively, where $s_{\max}(\cdot)$ denotes the largest singular value.  The notation $\mathbb{P}$ and $\mathbb{E}$ are generic probability and expectation operators whose distribution is determined by the context.

\section{ {Testing with Equal Signal-to-Noise Ratios}}
\label{sec:balanced}

Recall that we have two independent datasets $X_i\stackrel{ind}{\sim} N(z_i\theta,I_p)$ and $Y_i \stackrel{ind}{\sim} N(\sigma_i\eta,I_q)$ for $i\in[n]$. 
In this section, we {first} assume that  {SNRs} of the two datasets are  {equal}. In other words, $\|\theta\|=\|\eta\|$. 
The general case of potentially unequal SNRs
is more complicated and will be studied in Section \ref{sec:general}. 

First, we show that we can apply dimension reduction to both datasets without losing any information for testing \eqref{eq:problem}. 
Consider $\{X_i\}_{1\leq i\leq n}$.
Since the clustering structure only appears in the direction of $\theta$, 
we can project all $X_i$'s to the one-dimensional subspace spanned by 
the unit vector $\theta/\|\theta\|$. 
After projection, we obtain $\theta^TX_i/\|\theta\|\sim N(z_i\|\theta\|,1)$ for $i\in[n]$. 
Moreover, for any vector $u\in\mathbb{R}^p$ such that $u^T\theta=0$, 
we have $u^TX_i\sim N(0,1)$ for $i\in[n]$. 
Therefore, we conclude that the projected dataset 
$\{\theta^T X_i/\|\theta\|\}_{1\leq i\leq n}$ preserves all clustering information. 
The same argument also applies to $\{Y_i\}_{1\leq i\leq n}$. 
In the rest of this section, we write
\begin{equation}
	\label{eq:proj-1d}
	 \wt{X}_i=\theta^TX_i/\|\theta\|\quad 
	 \mbox{and} \quad 
	 \wt{Y}_i=\eta^TY_i/\|\eta\|
\end{equation}
for $i\in[n]$ and work with these one-dimensional random variables when constructing tests. 
On the other hand, we shall establish lower bounds of the testing problem directly in the original multi-dimensional setting.

\subsection{A Connection to Sparse Signal Detection}

\paragraph{A related sparse mixture detection problem}
For simplicity, let us suppose for now that $\frac{1}{n}\sum_{i=1}^n\indc{z_i\neq \sigma_i}\leq \frac{1}{2}$, so that $\ell(z,\sigma)=\frac{1}{n}\sum_{i=1}^n\indc{z_i\neq \sigma_i}$. 
Under  {$H_0$ in \eqref{eq:problem}}, we have both $\wt{X}_i\sim N(z_i\|\theta\|,1)$ and $\wt{Y}_i\sim N(z_i\|\theta\|,1)$ for all $i\in[n]$, which motivates us to compute  {scaled differences} $(\wt{X}_i-\wt{Y}_i)/{\sqrt{2}}$, $i\in [n]$. 

Note that the distributions of $\{(\wt{X}_i-\wt{Y}_i)/\sqrt{2}\}_{1\leq i\leq n}$  {under $H_0$ and under $H_1$ in \eqref{eq:problem}} are the same as  {those in} a sparse signal detection problem. 
Indeed, $(\wt{X}_i-\wt{Y}_i)/\sqrt{2} \stackrel{iid}{\sim} N(0,1)$ for $i\in[n]$ under the null, and at least an $\epsilon$ fraction of the statistics follow either $N(\sqrt{2}\|\theta\|,1)$ or $N(-\sqrt{2}\|\theta\|,1)$ under the alternative. 
A well studied  {Bayesian} version of the sparse signal detection problem is given by the following form:
\begin{eqnarray}
\label{eq:equal-sparse-null} H_0: && U_1,\cdots,U_n\stackrel{iid}{\sim} N(0,1), \\
\label{eq:equal-sparse-alt} H_1: && U_1,\cdots,U_n\stackrel{iid}{\sim} (1-\epsilon)N(0,1)+\frac{\epsilon}{2}N(-\sqrt{2}\|\theta\|,1)+\frac{\epsilon}{2}N(\sqrt{2}\|\theta\|,1).
\end{eqnarray}
 {In what follows, we refers to \eqref{eq:equal-sparse-null}--\eqref{eq:equal-sparse-alt} (and any such Bayesian version of the problem) as a sparse mixture detection problem.}
There are two  {noticeable} differences between (\ref{eq:equal-sparse-alt}) and the distribution of $\{(\wt{X}_i-\wt{Y}_i)/\sqrt{2}\}_{1\leq i\leq n}$ under  {$H_1$ in \eqref{eq:problem}}:
\begin{enumerate}
	\item The number of  {non-null} signals in (\ref{eq:equal-sparse-alt}) is a binomial random variable  {while it is deterministic in \eqref{eq:problem}};
	
	\item The probabilities that a  {non-null} signal is from $N(\sqrt{2}\|\theta\|,1)$ and from $N(-\sqrt{2}\|\theta\|,1)$ are equal  {in \eqref{eq:equal-sparse-alt} while there is no restriction on how many non-null signals follow either of the two distributions in \eqref{eq:problem}.}
\end{enumerate}
However, 
these differences are inconsequential  {as long as our focus is on the phase diagrams of these testing problems with the calibration we now introduce}.

For either the hypothesis testing problem (\ref{eq:equal-sparse-null})-(\ref{eq:equal-sparse-alt})  {or \eqref{eq:problem} with $\|\theta\| = \|\eta\|$},
 {introduce the calibration}
\begin{equation}
\epsilon=n^{-\beta}\quad \mbox{and} \quad  {\sqrt{2}}\,\|\theta\|=\sqrt{ {2} r\log n}. \label{eq:equal-cali}
\end{equation}
 {For \eqref{eq:equal-sparse-null}-\eqref{eq:equal-sparse-alt}}\footnote{The non-Bayesian version of the problem has also been studied in \cite{ingster1997some,ingster2012nonparametric}.},
it was proved in \cite{ingster1997some,ingster2012nonparametric} for that the likelihood ratio test is consistent 
when $\beta<\beta_{\rm IDJ}^*(r)$\footnote{Following \cite{cai2014optimal}, we call $\beta_{\rm IDJ}^*(r)$ the Ingster--Donoho--Jin threshold.} and no test is consistent when $\beta>\beta_{\rm IDJ}^*(r)$,  where the threshold function is 
\begin{equation}
\beta_{\rm IDJ}^*(r)=\begin{cases}
\frac{1}{2}+r, & 0<r\leq \frac{1}{4}, \\ 
1-(1-\sqrt{r})_+^2, & r>\frac{1}{4}.
\end{cases}\label{eq:equal-thresh}
\end{equation}
Note that $\beta<\beta^*_{\rm IDJ}(r)$ is equivalent to (\ref{eq:r-beta-IDJ}).
Moreover, Donoho and Jin \citep{donoho2004higher} proposed a higher-criticism (HC) test that rejects $H_0$ when
$$\sup_{t>0}\frac{\left|\sum_{i=1}^n\indc{|U_i|^2> t}-n\mathbb{P}(\chi_1^2>t)\right|}{\sqrt{n\mathbb{P}(\chi_1^2>t)(1-\mathbb{P}(\chi_1^2>t))}}>\sqrt{2(1+\delta)\log\log n},$$
where $\chi_m^2$ denotes a chi-square distribution with $m$ degrees of freedom and $\delta>0$ is some arbitrary fixed constant.
They proved that the HC test adaptively achieves consistency when $\beta<\beta_{\rm IDJ}^*(r)$. 
We refer interested readers to \cite{donoho2015higher,jin2016rare} for more discussions on HC tests.

\paragraph{ {Result for testing equivalence of clustering}}
 {Turn to} (\ref{eq:problem}). 
We need to slightly modify the HC test to accommodate the possibility of label switching in the clustering context. Define
\begin{eqnarray*}
T_n^- &=& \sup_{t>0}\frac{\left|\sum_{i=1}^n\indc{|\wt{X}_i-\wt{Y}_i|^2/2> t}-n\mathbb{P}(\chi_1^2>t)\right|}{\sqrt{n\mathbb{P}(\chi_1^2>t)(1-\mathbb{P}(\chi_1^2>t))}}, \\
T_n^+ &=& \sup_{t>0}\frac{\left|\sum_{i=1}^n\indc{|\wt{X}_i+\wt{Y}_i|^2/2> t}-n\mathbb{P}(\chi_1^2>t)\right|}{\sqrt{n\mathbb{P}(\chi_1^2>t)(1-\mathbb{P}(\chi_1^2>t))}}.
\end{eqnarray*}
Based on these two statistics, we define
\begin{equation}
\psi=\indc{T_n^-\wedge T_n^+>\sqrt{2(1+\delta)\log\log n}}, \label{eq:equal-test-diff}
\end{equation}
where $\delta>0$ is an arbitrary fixed constant. 
 {Taking the minimum of $T_n^+$ and $T_n^-$ makes the test invariant to label switching.}

\begin{proposition}\label{prop:equal-diff}
 {For testing \eqref{eq:problem}}
with the assumption that $\|\theta\|=\|\eta\|$ and the calibration in (\ref{eq:equal-cali}), 
the test (\ref{eq:equal-test-diff}) satisfies $\lim_{n\rightarrow\infty}R_n(\psi,\theta,\eta,\epsilon)=0$ as long as $\beta<\beta_{\rm IDJ}^*(r)$.
\end{proposition}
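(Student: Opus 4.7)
My plan is to prove the proposition by analyzing $T_n^-$ and $T_n^+$ separately, reducing each to the classical higher criticism analysis of \cite{donoho2004higher}, and combining via the minimum operation to accommodate label switching.

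For Type I error control, under $H_0$ the condition $\ell(z,\sigma)=0$ forces $z=\sigma$ or $z=-\sigma$. If $z=\sigma$, then $(\wt X_i-\wt Y_i)/\sqrt{2}\stackrel{iid}{\sim} N(0,1)$, so $|\wt X_i-\wt Y_i|^2/2 \stackrel{iid}{\sim} \chi_1^2$ and $T_n^-$ is exactly the HC statistic computed from an i.i.d.\ sample under the null of \eqref{eq:equal-sparse-null}. The null concentration inequality of \cite{donoho2004higher} yields $T_n^-\le\sqrt{2(1+\delta)\log\log n}$ with probability tending to one. Symmetric reasoning applies to $T_n^+$ when $z=-\sigma$. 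In either case $T_n^-\wedge T_n^+$ stays below the threshold with high probability, so the first term of $R_n$ vanishes.

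For Type II error control, the key structural observation is that $\ell(z,\sigma)>\epsilon$ forces \emph{both} $|\{i:z_i\ne\sigma_i\}|>n\epsilon$ and $|\{i:z_i=\sigma_i\}|>n\epsilon$. On the first index set, $(\wt X_i-\wt Y_i)/\sqrt{2}\sim N(\pm\sqrt{2}\|\theta\|,1)$; on the second, $(\wt X_i+\wt Y_i)/\sqrt{2}\sim N(\pm\sqrt{2}\|\theta\|,1)$. Under the calibration $\sqrt{2}\|\theta\|=\sqrt{2r\log n}$, each of the squared sequences $\{|\wt X_i\pm\wt Y_i|^2/2\}_{i=1}^n$ thus contains at least $n\epsilon=n^{1-\beta}$ noncentral $\chi_1^2$ entries with noncentrality $2r\log n$, among central $\chi_1^2$ entries. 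By the alternative analysis of \cite{donoho2004higher}, whenever $\beta<\beta^*_{\mathrm{IDJ}}(r)$ both $T_n^-$ and $T_n^+$ exceed $\sqrt{2(1+\delta)\log\log n}$ with probability tending to one. Hence $T_n^-\wedge T_n^+$ crosses the threshold and the second term in $R_n$ vanishes.

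The main subtlety is that the present problem differs from the Bayesian mixture model \eqref{eq:equal-sparse-null}--\eqref{eq:equal-sparse-alt} in two ways: the number of non-null coordinates is deterministic rather than $\mathrm{Binomial}(n,\epsilon)$, and the signs of the non-null means are adversarial rather than symmetric. I would resolve both points by noting that the HC empirical-process argument only requires a \emph{lower bound} on the count of non-null coordinates, and is invariant to the signs of the means since it examines $|U_i|^2$ through a supremum over thresholds $t>0$. The non-Bayesian version of the analysis developed in \cite{ingster1997some,ingster2012nonparametric} therefore applies to each of $T_n^\pm$ essentially off the shelf, which is the step I expect to require the most care in writing out uniformly in $t$.
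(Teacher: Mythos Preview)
Your proposal is correct and follows essentially the same route as the paper's proof: split the Type-I error over the two null cases $\sigma=z$ and $\sigma=-z$, controlling $T_n^-$ (resp.\ $T_n^+$) by the null HC limit theorem; for the Type-II error, use that $\ell(z,\sigma)>\epsilon$ forces both $\frac{1}{n}\sum_i\indc{z_i\neq\sigma_i}>\epsilon$ and $\frac{1}{n}\sum_i\indc{z_i\neq-\sigma_i}>\epsilon$, and then invoke the Donoho--Jin alternative analysis (at a fixed optimal threshold, via a Chebyshev/variance argument) on each of $T_n^-$ and $T_n^+$. The paper phrases the Type-II step as the union bound $P(T_n^-\wedge T_n^+\le c)\le P(T_n^-\le c)+P(T_n^+\le c)$ rather than asserting both statistics are large simultaneously, but this is exactly the same computation.
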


Proposition \ref{prop:equal-diff} shows that the test (\ref{eq:equal-test-diff}) consistently distinguishes two clustering structures under the same condition that implies  {consistency in
the sparse mixture detection problem \eqref{eq:equal-sparse-null}-\eqref{eq:equal-sparse-alt}}.
 {This being said, it is not clear at this point whether $\beta^*_{\rm IDJ}(r)$ is the detection boundary for \eqref{eq:problem} under the equal SNR assumption and the calibration \eqref{eq:equal-cali}, which, if were true, would require that no consistent test exists when $\beta > \beta^*_{\rm IDJ}(r)$.
}

\begin{remark}\label{rem:est}
Another straightforward way to testing (\ref{eq:problem}) is to first estimate $z$ and $\sigma$ and then reject $H_0$ if the two estimators are not sufficiently close. 
Let $\wh{z}$ and $\wh{\sigma}$ be minimax optimal estimators of $z$ and $\sigma$ that satisfy the error bounds (\ref{eq:yu-lu-diao}). 
A natural test is then $\psi_{\rm estimation}=\indc{\ell(\wh{z},\wh{\sigma})>\epsilon/2}$. 
It can be shown that $\lim_{n\rightarrow\infty}R_n(\psi_{\rm estimation},\theta,\eta)=0$ when $\beta<r/2$ under the calibration  {(\ref{eq:equal-cali})}. 
Compared with the condition $\beta<\beta_{\rm IDJ}^*(r)$ required by the test (\ref{eq:equal-test-diff}), $\psi_{\rm estimation}$ 
requires a stronger SNR to achieve consistency.
\end{remark}

\subsection{The Lost Information}

The natural  {follow-up} question is whether the condition $\beta<\beta_{\rm IDJ}^*(r)$ in Proposition \ref{prop:equal-diff} is 
necessary for consistently testing (\ref{eq:problem})  {with the equal SNR assumption and the calibration \eqref{eq:equal-cali}}.
In order to address this lower bound question, let us continue  {to suppose} $\frac{1}{n}\sum_{i=1}^n\indc{z_i\neq \sigma_i}\leq \frac{1}{2}$ so that we ignore label switching temporarily. 
 {A key observation is that} by reducing the data from $(\wt{X}_i,\wt{Y}_i)$ to $(\wt{X}_i-\wt{Y}_i)/ {\sqrt{2}} $, we  {have thrown} away all the information in $(\wt{X}_i+\wt{Y}_i) / {\sqrt{2}}$. 
 {Therefore, we now study the sequence $\{(\wt{X}_i+\wt{Y}_i)/\sqrt{2}\}_{1\leq i\leq n}$.}

We note that whether $z_i=\sigma_i$ not only changes the distribution of $(\wt{X}_i-\wt{Y}_i) / {\sqrt{2}}$, but also the distribution of $(\wt{X}_i+\wt{Y}_i) / {\sqrt{2}}$. 
In fact, we have
$$\frac{1}{\sqrt{2}}(\wt{X}_i+\wt{Y}_i)\sim \begin{cases}
N(\pm\sqrt{2}\|\theta\|,1), & z_i=\sigma_i, \\
N(0,1), & z_i\neq \sigma_i.
\end{cases}$$
Since there is at least  {an} $\epsilon$ fraction of clustering labels that do not match, 
 {a natural corresponding sparse mixture detection problem is the following:}
\begin{eqnarray}
\label{eq:equal-sum-null} H_0: && V_1,\cdots,V_n\stackrel{iid}{\sim} \frac{1}{2}N(-\sqrt{2}\|\theta\|,1)+\frac{1}{2}N(\sqrt{2}\|\theta\|,1), \\
\label{eq:equal-sum-alt} H_1: && V_1,\cdots,V_n\stackrel{iid}{\sim} \frac{1-\epsilon}{2}N(-\sqrt{2}\|\theta\|,1)+\frac{1-\epsilon}{2}N(\sqrt{2}\|\theta\|,1) + \epsilon N(0,1).
\end{eqnarray}
Compared with (\ref{eq:equal-sparse-null})-(\ref{eq:equal-sparse-alt}), the roles of $N(0,1)$ and $\frac{1}{2}N(-\sqrt{2}\|\theta\|,1)+\frac{1}{2}N(\sqrt{2}\|\theta\|,1)$ are switched in (\ref{eq:equal-sum-null})-(\ref{eq:equal-sum-alt}). 
 {To our limited knowledge,} the testing problem (\ref{eq:equal-sum-null})-(\ref{eq:equal-sum-alt}) has not been studied in the literature before. With the same calibration (\ref{eq:equal-cali}), its fundamental limit is given by the following theorem.

\begin{thm}\label{thm:V-equal}
Consider testing (\ref{eq:equal-sum-null})-(\ref{eq:equal-sum-alt}) with calibration (\ref{eq:equal-cali}). 
Define
\begin{equation}
\bar{\beta}^*(r)=1\wedge\frac{r+1}{2}. \label{eq:threshold-V-equal}
\end{equation}
When $\beta<\bar{\beta}^*(r)$, the likelihood ratio test is consistent. When $\beta>\bar{\beta}^*(r)$, no test is consistent.
\end{thm}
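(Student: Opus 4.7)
The plan is to prove the two halves (impossibility when $\beta > \bar{\beta}^*(r)$ and consistency of the likelihood ratio test when $\beta < \bar{\beta}^*(r)$) separately. Since $H_0$ and $H_1$ are both simple hypotheses here, for the achievability direction it suffices to exhibit any consistent test, because the Neyman--Pearson test dominates in minimax error.

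\textbf{Impossibility.} I would split on whether $r\le 1$ (so $\bar{\beta}^*(r)=(r+1)/2$) or $r > 1$ (so $\bar{\beta}^*(r)=1$). For $r\le 1$, I would run Le Cam's chi-squared method. Writing $\mu=\sqrt{2r\log n}$, the null density has the hyperbolic form $f_0(v) = (2\pi)^{-1/2}e^{-(v^2+\mu^2)/2}\cosh(v\mu)$, so the likelihood ratio $f_1/f_0 = 1-\epsilon + \epsilon\phi/f_0$ has second moment under $H_0$ equal to $1+\epsilon^2(I(\mu)-1)$, where $I(\mu) := \int \phi^2/f_0 = (e^{\mu^2/2}/\sqrt{2\pi})\int e^{-v^2/2}/\cosh(v\mu)\,dv$. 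A Laplace-type splitting of the inner integral at $|v|=1/\mu$ (central part $\cosh\approx 1$, tail part $\cosh(v\mu)^{-1}\asymp 2e^{-v\mu}$) yields $I(\mu) \asymp \mu^{-1}e^{\mu^2/2} = \mu^{-1} n^r$. Tensorizing via $\chi^2(Q_1^n,Q_0^n)\le \exp(n\chi^2(f_1,f_0)) - 1$, with $n\epsilon^2 I(\mu) \asymp n^{1+r-2\beta}/\mu$, gives $\chi^2(Q_1^n,Q_0^n)\to 0$ whenever $\beta > (r+1)/2$, hence $\mathrm{TV}(Q_0^n,Q_1^n)\to 0$ and no consistent test exists. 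For $r > 1$, impossibility is needed only for $\beta > 1$, where $n\epsilon = n^{1-\beta}\to 0$. Under $H_1$ the number of $N(0,1)$ draws is $\mathrm{Binomial}(n,\epsilon)$, which equals zero with probability $(1-\epsilon)^n \to 1$; on that event $H_1$ and $H_0$ produce i.i.d.\ samples from the same law $f_0$, so $\mathrm{TV}(Q_0^n,Q_1^n) \le 1-(1-\epsilon)^n \to 0$.

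\textbf{Achievability.} I would analyze the counting test $T_n = \sum_{i=1}^n \indc{|V_i|\le 1/\mu}$, which exploits that $N(0,1)$ draws are far likelier to land near the origin than the $f_0$ background. Since $f_0(0)= (2\pi)^{-1/2}e^{-\mu^2/2}$, expanding $f_0$ on $|v|\le 1/\mu$ gives $p_0 := \mathbb{P}_{H_0}(|V_1|\le 1/\mu) \asymp \mu^{-1} n^{-r}$, while $p_1 - p_0 = \epsilon\int_{-1/\mu}^{1/\mu}(\phi - f_0) \asymp \epsilon/\mu = n^{-\beta}/\mu$. Applying Chebyshev at the midpoint threshold $\tau_n = n(p_0+p_1)/2$, the test is consistent provided $n(p_1-p_0)/\sqrt{np_0 \vee n(p_1-p_0)} \to\infty$, which decomposes into $n^{(1+r)/2-\beta}/\sqrt{\mu}\to\infty$ (i.e., $\beta<(r+1)/2$) and $\sqrt{n^{1-\beta}/\mu}\to\infty$ (i.e., $\beta<1$). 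Both hold precisely when $\beta<\bar{\beta}^*(r)$, and the LRT inherits consistency by optimality.

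\textbf{Main obstacle.} The crux is the sharp asymptotic $I(\mu)\asymp\mu^{-1}n^r$ rather than the cruder $I(\mu)\lesssim n^r$: the extra factor $\mu^{-1}=(2r\log n)^{-1/2}$ obtained from the $\cosh$ representation is needed for the chi-squared argument to close exactly at the boundary $\beta=(r+1)/2$. Beyond that, the two-regime split in the impossibility proof is forced by the fact that the second moment bound degrades as $r$ grows, while the no-signal coupling is insensitive to $r$ and therefore better suited to $r>1$.
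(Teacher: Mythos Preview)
Your proof is correct and follows essentially the same route as the paper's. Both halves reduce to the same computations: for impossibility, the key is $\int \phi^2/f_0 \lesssim n^r$ (you do this via chi-squared tensorization; the paper via a Hellinger expansion that isolates the same integral $\epsilon^2\int q^2/p$), together with a separate trivial $\beta>1$ case (you couple on the binomial count of $N(0,1)$ draws; the paper takes the truncation set $D=\varnothing$ in its Hellinger bound). For achievability, both use a counting test for small $|V_i|$---your fixed threshold $1/\mu$ versus the paper's $\sqrt{2t\log n}$ with $t\in(0,r)$ small---and the same Chebyshev mean--variance calculation giving the two conditions $\beta<(r+1)/2$ and $\beta<1$.

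One remark: your stated ``main obstacle'' is not actually one. Since the theorem only asserts impossibility for the strict inequality $\beta>(r+1)/2$, the crude bound $I(\mu)\lesssim n^r$ already gives $n\epsilon^2 I(\mu)\lesssim n^{1+r-2\beta}\to 0$; the extra $\mu^{-1}$ factor you extract from the $\cosh$ representation is unnecessary, and indeed the paper uses only the crude bound $\epsilon^2\int q^2/p\le 4\epsilon^2 n^r$.
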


Theorem \ref{thm:V-equal} shows that the optimal threshold  {(in terms of the calibration \eqref{eq:equal-cali})} for the testing problem (\ref{eq:equal-sum-null})-(\ref{eq:equal-sum-alt}) is $\bar{\beta}^*(r)$. 
It is easy to check that
$$
\bar{\beta}^*(r)\leq\beta_{\rm IDJ}^*(r),
\qquad \mbox{for all $r>0$}.
$$
This  {indicates that} the sequence $\{(\wt{X}_i+\wt{Y}_i)/\sqrt{2}\}_{1\leq i\leq n}$ does contain information, but not as much as that in $\{(\wt{X}_i-\wt{Y}_i)/\sqrt{2}\}_{1\leq i\leq n}$. 
Similar to (\ref{eq:equal-test-diff}), we can also design an HC-type test  {as motivated by} (\ref{eq:equal-sum-null})-(\ref{eq:equal-sum-alt}). 
Define
\begin{eqnarray}
\nonumber \bar{T}_n^+ &=& \sup_{t>0}\frac{\left|\sum_{i=1}^n\indc{(\wt{X}_i+\wt{Y}_i)^2/2\leq t}-\mathbb{P}(\chi_{1,2\|\theta\|^2}^2\leq t)\right|}{\sqrt{n\mathbb{P}(\chi_{1,2\|\theta\|^2}^2\leq t)(1-\mathbb{P}(\chi_{1,2\|\theta\|^2}^2\leq t))}}, \\
\nonumber  \bar{T}_n^- &=& \sup_{t>0}\frac{\left|\sum_{i=1}^n\indc{(\wt{X}_i-\wt{Y}_i)^2/2\leq t}-\mathbb{P}(\chi_{1,2\|\theta\|^2}^2\leq t)\right|}{\sqrt{n\mathbb{P}(\chi_{1,2\|\theta\|^2}^2\leq t)(1-\mathbb{P}(\chi_{1,2\|\theta\|^2}^2\leq t) )}}.
\end{eqnarray}
In addition to $\bar{T}_n^+$, we need $\bar{T}_n^-$ to accommodate the possibility of $\frac{1}{n}\sum_{i=1}^n\indc{z_i\neq \sigma_i}> \frac{1}{2}$. The overall test for our original problem is then
\begin{equation}
\bar{\psi}=\indc{\bar{T}_n^-\wedge \bar{T}_n^+>\sqrt{2(1+\delta)\log\log n}}, \label{eq:equal-test-sum}
\end{equation}
where $\delta>0$ is an arbitrary fixed constant. 

\begin{thm}\label{thm:equal-sum}
 {For testing \eqref{eq:problem}}
with the assumption that $\|\theta\|=\|\eta\|$ and the calibration in (\ref{eq:equal-cali}), 
the test (\ref{eq:equal-test-sum}) satisfies $\lim_{n\rightarrow\infty}R_n(\bar{\psi},\theta,\eta,\epsilon)=0$ as long as $\beta<\bar{\beta}^*(r)$.
\end{thm}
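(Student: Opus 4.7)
My plan is to bound Type I and Type II errors of $\bar\psi$ separately. Both arguments exploit the distributional fact that, for each $i$,
$$\frac{(\wt X_i+\wt Y_i)^2}{2}\sim\chi^2_{1,2\|\theta\|^2}\text{ when }z_i=\sigma_i,\qquad \frac{(\wt X_i+\wt Y_i)^2}{2}\sim\chi^2_1\text{ when }z_i\neq\sigma_i,$$
with the roles of $\chi^2_{1,2\|\theta\|^2}$ and $\chi^2_1$ swapped for $(\wt X_i-\wt Y_i)^2/2$.

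For Type I error, label-switching symmetry lets me assume $z=\sigma$ (the case $z=-\sigma$ is symmetric, with $\bar T_n^-$ playing the role of $\bar T_n^+$). Then $\{(\wt X_i+\wt Y_i)^2/2\}_{i\in[n]}$ is i.i.d.\ $\chi^2_{1,2\|\theta\|^2}$, so after the probability integral transform $\bar T_n^+$ is exactly a normalized Kolmogorov--Smirnov functional of $n$ i.i.d.\ uniforms against their own CDF. A Darling--Erd\H{o}s-type bound, as in the HC analysis of \cite{donoho2004higher}, yields $\bar T_n^+=O_{\Prob}(\sqrt{\log\log n})$, hence $\Prob(\bar T_n^+>\sqrt{2(1+\delta)\log\log n})\to 0$. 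Since $\bar T_n^-\wedge\bar T_n^+\leq\bar T_n^+$, the Type I error vanishes.

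For Type II error, symmetry again allows me to assume $|S_{\neq}|:=|\{i:z_i\neq\sigma_i\}|\in(\epsilon n,n/2]$ and to focus on $\bar T_n^+$; the argument for $\bar T_n^-$ is identical after swapping the roles of $S_=$ and $S_{\neq}$, and a union bound completes the deduction for the minimum. Writing $F_1(t)=\Prob(\chi^2_{1,2\|\theta\|^2}\leq t)$ and $F_0(t)=\Prob(\chi^2_1\leq t)$, the mean of the empirical count at level $t$ equals $|S_=|F_1(t)+|S_{\neq}|F_0(t)$, so the deterministic signal inside the absolute value of $\bar T_n^+$ at $t$ is $|S_{\neq}|(F_0(t)-F_1(t))\geq\epsilon n(F_0(t)-F_1(t))$, while the stochastic deviation is a centered sum of independent Bernoullis of variance at most $|S_=|F_1(t)+|S_{\neq}|F_0(t)$. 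A Bennett-type inequality controls the deviation on a $1-o(1)$ event, so it suffices to exhibit a deterministic $t_n^\star$ for which
\begin{equation*}
\frac{\epsilon n(F_0(t_n^\star)-F_1(t_n^\star))-O\bigl(\sqrt{(|S_=|F_1(t_n^\star)+|S_{\neq}|F_0(t_n^\star))\log n}\bigr)}{\sqrt{nF_1(t_n^\star)(1-F_1(t_n^\star))}}\gg\sqrt{\log\log n}.
\end{equation*}

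The only substantive computation is to exhibit such a $t_n^\star$ throughout the region $\beta<\bar\beta^*(r)=1\wedge(r+1)/2$. I plan to take $t_n^\star$ equal to a fixed positive constant, e.g.\ $t_n^\star=1$. Then $F_0(t_n^\star)\in(0,1)$ is constant, while the identity $F_1(t)=\Phi(\sqrt t-\mu)-\Phi(-\sqrt t-\mu)$ with $\mu=\sqrt{2r\log n}$, combined with the Mills-ratio estimate $\Phi(-x)\asymp x^{-1}e^{-x^2/2}$ as $x\to\infty$, gives $F_1(t_n^\star)=n^{-r+o(1)}$. Substituting $\epsilon=n^{-\beta}$, the deterministic signal-to-denominator ratio becomes $n^{(r+1)/2-\beta+o(1)}$, polynomially large precisely when $\beta<(r+1)/2$. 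A direct variance calculation, splitting according to whether $\beta\leq r$ or $\beta>r$, shows that the stochastic noise divided by the denominator is polynomially smaller than the signal whenever $\beta<1$; intersecting the two conditions recovers exactly $\beta<\bar\beta^*(r)$. The principal delicacy lies in the Mills-ratio expansion of $F_1(t_n^\star)$ and in verifying that the Bennett-type control is uniform over all admissible configurations of $S_=$ and $S_{\neq}$, which depend only on the lower bound $|S_{\neq}|\geq \epsilon n$ and hence are handled by a single estimate.
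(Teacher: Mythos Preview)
Your plan follows the same route as the paper: control Type~I error via the Darling--Erd\H{o}s/HC asymptotics of the normalized empirical process under the null CDF, and control Type~II error by fixing a single threshold $t_n^\star$ and showing the signal-to-noise ratio of the resulting count statistic diverges polynomially (the paper uses Chebyshev with a threshold $2t\log n$ for small $t\in(0,r)$, which gives the same leading exponent $F_1=n^{-r+o(1)}$ as your choice $t_n^\star=1$).

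There is, however, a real gap in your Type~II display. You replace $|S_{\neq}|$ by its lower bound $\epsilon n$ in the signal term while retaining $|S_{\neq}|$ in the noise term $|S_{\neq}|F_0(t_n^\star)$. This is not uniform: under the alternative $\ell(z,\sigma)>\epsilon$ one only has $|S_{\neq}|,|S_{=}|\in(\epsilon n,\,n-\epsilon n)$, so $|S_{\neq}|$ can be of order $n$. In that regime your noise is $\asymp\sqrt{n}$ while your displayed signal is only $\epsilon n=n^{1-\beta}$, and the inequality fails for any $\beta\geq 1/2$. Your closing claim that uniformity ``depends only on the lower bound $|S_{\neq}|\geq\epsilon n$'' is therefore incorrect. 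The fix is to keep $|S_{\neq}|$ in the signal throughout: the mean of the numerator is $|S_{\neq}|(F_0-F_1)\asymp|S_{\neq}|$, its variance is $\lesssim n^{1-r+o(1)}+|S_{\neq}|$, and hence
\[
\frac{(\mathbb{E}\,\text{numerator})^2}{\Var(\text{numerator})}\asymp\frac{|S_{\neq}|^2}{n^{1-r+o(1)}+|S_{\neq}|}\geq \bigl(n^{1-2\beta+r+o(1)}\bigr)\wedge n^{1-\beta},
\]
which diverges uniformly over $|S_{\neq}|>\epsilon n$ precisely when $\beta<1\wedge\frac{r+1}{2}=\bar\beta^*(r)$. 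This is exactly what the paper does (borrowing the mean/variance calculation from its Theorem~\ref{thm:V-equal}). A related minor point: your restriction $|S_{\neq}|\leq n/2$ is not a valid reduction for analyzing $\bar T_n^+$ alone; you must cover all $|S_{\neq}|\in(\epsilon n,n-\epsilon n)$, which the corrected argument above handles automatically.
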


\subsection{Combining the Two Views}\label{sec:combine-equal}

Proposition \ref{prop:equal-diff} and Theorem \ref{thm:equal-sum} show that the original testing problem (\ref{eq:problem})  {is connected} to  {both} (\ref{eq:equal-sparse-null})-(\ref{eq:equal-sparse-alt})
 {and} (\ref{eq:equal-sum-null})-(\ref{eq:equal-sum-alt}). 
 {The two views are complementary.}
Both are  {non-trivial} and lead to tests  {for the original problem \eqref{eq:problem}} that achieve consistency under appropriate conditions. 
However, to achieve optimality in the original testing problem (\ref{eq:problem})  {under equal SNR assumption with the calibration \eqref{eq:equal-cali}}, we need to combine the two views.
 {In what follows, we first explain how this can be done in sparse mixture detection. 
This is followed by our main result for testing equivalence of clustering as in \eqref{eq:problem} with equal SNR assumption.}
Interestingly, \citet{tony2019covariate} discovered a similar phenomenon that one achieves additional power by using a complementary sequence in a different context, namely two sample multiple testing.

\paragraph{Sparse mixture detection}
We now study the combination of the two views (\ref{eq:equal-sparse-null})-(\ref{eq:equal-sparse-alt}) and (\ref{eq:equal-sum-null})-(\ref{eq:equal-sum-alt}), which can be formulated  {as testing}
\begin{align}
\label{eq:equal-comb-null} H_0:~& (U_i,V_i)\stackrel{iid}{\sim} \frac{1}{2}N(0,1)\otimes N(-\sqrt{2}\|\theta\|,1)+\frac{1}{2}N(0,1)\otimes N(\sqrt{2}\|\theta\|,1),\,\, i\in[n], \,\, \mbox{vs.} \\
\label{eq:equal-comb-alt} H_1:~& (U_i,V_i)\stackrel{iid}{\sim} \frac{1-\epsilon}{2}N(0,1)\otimes N(-\sqrt{2}\|\theta\|,1)+\frac{1-\epsilon}{2}N(0,1)\otimes N(\sqrt{2}\|\theta\|,1) \\
\nonumber & \quad\quad\qquad+\frac{\epsilon}{2}N(-\sqrt{2}\|\theta\|,1)\otimes N(0,1)+\frac{\epsilon}{2}N(\sqrt{2}\|\theta\|,1)\otimes N(0,1),\,\, i\in[n].
\end{align}
 {The 
critical values \eqref{eq:equal-thresh} and \eqref{eq:threshold-V-equal}
can now be viewed as detection boundaries for testing \eqref{eq:equal-comb-null}-\eqref{eq:equal-comb-alt} when only $\{U_i\}_{1\leq i\in n}$ and $\{V_i\}_{1\leq i\in n}$ are used, respectively.}
The two components $U_i$ and $V_i$ behave very differently under  {null and alternative}. 
The value of $|U_i|$  {tends to be smaller} under $H_0$ and  {larger} under $H_1$, while the value of $|V_i|$  {behaves in the opposite way}.
 {This motivates us to} combine the two pieces of information by  {working with} $|U_i|-|V_i|$, which  {tends to be smaller} under $H_0$ and  {larger} under $H_1$.
Since there is  {on average} an $\epsilon$ fraction of  {non-nulls} under $H_1$, we  {may} reject $H_0$  {if} $\sum_{i=1}^n\indc{|U_i|-|V_i|>t}$ is above some threshold. 
This intuition motivates us to consider the following HC-type test. 
Define the survival function
$$S_{\|\theta\|}(t)=\mathbb{P}_{(U^2,V^2)\sim \chi_1^2\otimes \chi_{1,2\|\theta\|^2}^2}\left(|U|-|V|>t\right).$$
We reject $H_0$ when
\begin{equation}
\sup_{t\in\mathbb{R}}\frac{\left|\sum_{i=1}^n\indc{|U_i|-|V_i|>t}-nS_{\|\theta\|}(t)\right|}{\sqrt{nS_{\|\theta\|}(t)(1-S_{\|\theta\|}(t))}}>\sqrt{2(1+\delta)\log\log n},\label{eq:HC-comb}
\end{equation}
where $\delta>0$ is an arbitrary fixed constant.

\begin{thm}\label{thm:U-V-equal}
Consider testing (\ref{eq:equal-comb-null})-(\ref{eq:equal-comb-alt}) with calibration (\ref{eq:equal-cali}). 
Define
\begin{equation}
\beta^*(r)=\begin{cases}
\frac{1}{2}({1+3r}), & 0<r\leq \frac{1}{5}, \\
\sqrt{1-(1-2r)_+^2}, & r>\frac{1}{5}.
\end{cases}\label{eq:freestyle}
\end{equation}
When $\beta<\beta^*(r)$, the likelihood ratio test and the HC-type test (\ref{eq:HC-comb}) are consistent. When $\beta>\beta^*(r)$, no test is consistent.
\end{thm}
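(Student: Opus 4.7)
The plan is to prove the statement in two parts: a lower bound (no consistent test when $\beta > \beta^*(r)$) via a truncated second moment argument, and a matching upper bound (consistency of the HC-type test \eqref{eq:HC-comb}) via sharp empirical-process analysis; consistency of the likelihood-ratio test when $\beta<\beta^*(r)$ then follows automatically from Neyman--Pearson optimality.

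For the lower bound, both $H_0$ and $H_1$ are products of $n$ iid one-coordinate mixtures, so it suffices to show $\chi^2(P_1^n\|P_0^n)=(1+\chi_1^2)^n-1\to 0$, i.e., $n\chi_1^2\to 0$. Writing $g(x):=e^{-\|\theta\|^2}\cosh(\sqrt 2\|\theta\|\,x)$, the per-coordinate likelihood ratio is $L(u,v)=1-\epsilon+\epsilon\,g(u)/g(v)$; since $\mathbb{E}_0[g(U)]=\mathbb{E}_0[1/g(V)]=1$ by direct Gaussian MGF calculations, independence gives $\chi_1^2=\epsilon^2\{\mathbb{E}_0[g(U)^2]\,\mathbb{E}_0[1/g(V)^2]-1\}$. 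Using $\mathbb{E}_0[g(U)^2]=\cosh(2\|\theta\|^2)\asymp n^{2r}$ and a Mills-ratio estimate $\mathbb{E}_0[1/g(V)^2]=\int\phi(v)/g(v)\,dv\asymp n^r/\sqrt{\log n}$, one obtains $\chi_1^2\asymp n^{3r-2\beta}/\sqrt{\log n}$, so $n\chi_1^2\to 0$ whenever $\beta>(1+3r)/2$, matching $\beta^*(r)$ for $r\leq 1/5$. For $r>1/5$ the raw second moment is dominated by unbounded tails, because $\phi(u)g(u)^2$ concentrates near $|u|\approx 2\mu$ with $\mu:=\sqrt{2r\log n}$, whereas under the signal $|U|\approx\mu$. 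A truncated alternative is therefore needed: restrict the signal to $E=\{|U|\leq a\sqrt{2\log n}\}$ and replace $P_1$ by $\bar P_1=(1-\epsilon)P_0+\epsilon Q_E$, where $Q_E$ is the truncated renormalized signal. Then $\mathrm{TV}(P_1^n,P_0^n)\leq n\epsilon\, Q(E^c)+\tfrac{1}{2}\sqrt{\chi^2(\bar P_1^n\|P_0^n)}$; the truncation loss $Q(E^c)\asymp n^{-(a-\sqrt r)^2}$ is negligible provided $a\geq\sqrt r+\sqrt{1-\beta}$, while $\int_{|u|\leq A}\phi(u)g(u)^2\,du\asymp n^{2r-(2\sqrt r-a)^2}/\sqrt{\log n}$ for $a<2\sqrt r$ gives the truncated $\chi_1^2\asymp n^{3r-(2\sqrt r-a)^2-2\beta}/\log n$. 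Plugging in $a^*=\sqrt r+\sqrt{1-\beta}$ reduces the condition $n\chi_1^2\to 0$ to $\beta^2>4r(1-r)$, i.e., $\beta>2\sqrt{r(1-r)}=\sqrt{1-(1-2r)_+^2}=\beta^*(r)$. The case $\beta>1$ (occurring when $r\geq 1/2$) is trivial, since then $n\epsilon\to 0$ and $H_0$, $H_1$ coincide with positive probability.

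For the upper bound, the HC-type test based on $W_i:=|U_i|-|V_i|$ is analyzed as in \cite{donoho2004higher}. Under $H_0$ the $W_i$ are iid with survival $S_{\|\theta\|}$, and the Darling--Erdos theorem yields that the standardized empirical-process supremum in \eqref{eq:HC-comb} equals $(1+o_P(1))\sqrt{2\log\log n}$, so Type I error vanishes at the stated threshold. Under $H_1$, an $\epsilon$-fraction of the coordinates is drawn from the signal distribution with $|U|\approx\mu$ and $|V|=O(1)$, giving $W\approx+\mu$ rather than the null's $W\approx-\mu$. Choosing $t_n^*=\tau_n\sqrt{2\log n}$ with $\tau_n$ mirroring the lower-bound truncation parameter, sharp Mills-ratio expansions of both $S_{\|\theta\|}$ and the signal tail $\mathbb{P}_{\mathrm{sig}}(W>\cdot)$ show that the mean excess $n\epsilon\,[\mathbb{P}_{\mathrm{sig}}(W>t_n^*)-S_{\|\theta\|}(t_n^*)]$ exceeds $\sqrt{nS_{\|\theta\|}(t_n^*)\log\log n}$ by a diverging factor whenever $\beta<\beta^*(r)$, so the HC statistic at $t=t_n^*$ alone exceeds the rejection threshold and HC is consistent.

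The main obstacle is the truncated second-moment analysis for $r>1/5$: one must identify the saddle-point truncation parameter $a^*=\sqrt r+\sqrt{1-\beta}$ balancing truncation-mass loss against second-moment growth, and then carry out the algebra reducing the resulting exponents to the quadratic curve $\beta^2=4r(1-r)$. A parallel difficulty appears on the upper-bound side where the HC threshold calibration must mirror $a^*$ precisely, which requires uniform Mills-ratio expansions of $S_{\|\theta\|}(t)$ for $t$ on the $\sqrt{\log n}$ scale—both tails being genuinely non-Gaussian because $|U|-|V|$ is a difference of a half-normal and a shifted Gaussian whose tail rates differ between the null and signal directions.
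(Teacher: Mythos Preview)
Your proposal is correct and follows essentially the same route as the paper. Both arguments truncate the likelihood ratio to the event $\{|u|\le (\sqrt r+\sqrt{1-\beta})\sqrt{2\log n}\}$ (the paper writes $\sqrt{1-\beta^*(r)}$, which is equivalent once $\beta>\beta^*(r)$), and both choose the HC threshold $\bar t=2\sqrt r\wedge(\sqrt r+\sqrt{1-\beta^*(r)})$ for the upper bound; your identification of $a^*$ as the point balancing truncation mass against second-moment growth is exactly the mechanism behind the paper's choice of $D$.

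The only stylistic difference is the divergence used in the lower bound. You work with $\chi^2$ and the total-variation triangle inequality $\mathrm{TV}(P_1^n,P_0^n)\le n\epsilon\,Q(E^c)+\tfrac12\sqrt{\chi^2(\bar P_1^n\|P_0^n)}$, computing $\int_E q^2/p$ via the factorization $\int_{|u|\le A}\phi(u)g(u)^2\,du\cdot\int\phi(v)/g(v)\,dv$. The paper instead bounds the squared Hellinger distance by $\tfrac12\epsilon\int_{D^c}q+2\epsilon^2\int_D q^2/p+2\epsilon^2$ (via the inequality $\sqrt{1+t}\ge 1+t/2-t^2$) and evaluates $\int_D q^2/p$ by direct Gaussian manipulation. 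The two computations are interchangeable here and produce the same exponent conditions $\beta>\tfrac12(1+3r)$ and $\beta^2>4r(1-r)$; your $\chi^2$ route is slightly more transparent because the factorization $g(u)/g(v)$ separates cleanly, whereas the Hellinger route avoids having to control $(1+\chi_1^2)^n$ and needs only $o(n^{-1})$ on a single-coordinate quantity. One small wording fix: for $\beta>1$ the correct statement is that $\mathrm{TV}(P_1^n,P_0^n)\le n\epsilon\to 0$, not that the hypotheses ``coincide with positive probability.''
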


We plot the three threshold functions (a.k.a.~detection boundaries) {$\bar{\beta}^*(r)$  {(red)}, $\beta_{\rm IDJ}^*(r)$  {(orange)} and $\beta^*(r)$  {(blue)}} in Figure \ref{fig:phase-equal}.
\begin{figure}[!tb]
\begin{center}
\includegraphics[width=0.6\textwidth]{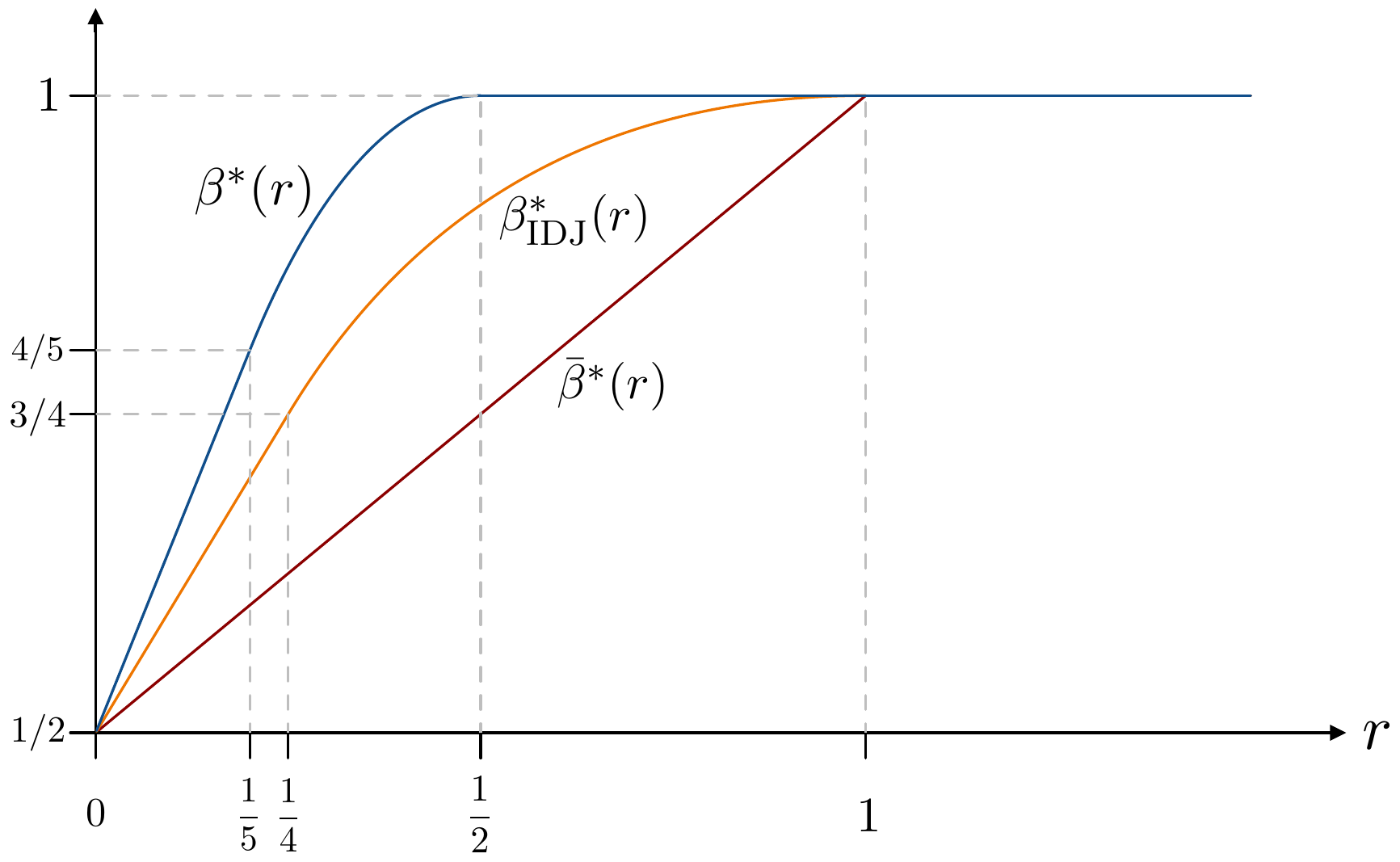}
\caption{ {Comparison of three detection boundaries.}
}
\label{fig:phase-equal}
\end{center}
\end{figure}
Since $\bar{\beta}^*(r)\leq\beta_{\rm IDJ}^*(r)\leq\beta^*(r)$ for all $r>0$, 
 {in view of the discussion following (\ref{eq:equal-comb-null})-(\ref{eq:equal-comb-alt}) we can conclude that pooling information in $\{U_i\}_{1\leq i\in n}$ and $\{V_i\}_{1\leq i\in n}$ leads to a more powerful test than using either single sequence.}

\paragraph{Testing equivalence of clustering}
 {We are now in a position to show that $\beta^*(r)$ in \eqref{eq:freestyle} is also the detection boundary for testing \eqref{eq:problem} under the equal SNR assumption and the calibration \eqref{eq:equal-cali}.}
 {Motivated by \eqref{eq:HC-comb} and taking into account possible label switching, we define}
\begin{eqnarray*}
\check{T}_n^- &=& \sup_{t\in\mathbb{R}}\frac{\left|\sum_{i=1}^n\indc{|\wt{X}_i-\wt{Y}_i|-|\wt{X}_i+\wt{Y}_i|>\sqrt{2}t}-nS_{\|\theta\|}(t)\right|}{\sqrt{nS_{\|\theta\|}(t)(1-S_{\|\theta\|}(t))}}, \\
\check{T}_n^+ &=& \sup_{t\in\mathbb{R}}\frac{\left|\sum_{i=1}^n\indc{|\wt{X}_i+\wt{Y}_i|-|\wt{X}_i-\wt{Y}_i|>\sqrt{2}t}-nS_{\|\theta\|}(t)\right|}{\sqrt{nS_{\|\theta\|}(t)(1-S_{\|\theta\|}(t))}},
\end{eqnarray*}
and
\begin{equation}
\check{\psi}=\indc{\check{T}_n^-\wedge \check{T}_n^+>\sqrt{2(1+\delta)\log\log n}}, \label{eq:equal-test-comb}
\end{equation}
where $\delta>0$ is an arbitrary fixed constant.
\begin{thm}\label{thm:main-equal}
 {For testing \eqref{eq:problem}}
with the assumption that $\|\theta\|=\|\eta\|$ and the calibration in (\ref{eq:equal-cali}), 
the test (\ref{eq:equal-test-comb}) satisfies $\lim_{n\rightarrow\infty}R_n(\check{\psi},\theta,\eta,\epsilon)=0$ as long as $\beta<{\beta}^*(r)$. Moreover, we have $\liminf_{n\rightarrow\infty}R_n(\theta,\eta,\epsilon)>0$,  {that is no test is consistent}, when $\beta>{\beta}^*(r)$.
\end{thm}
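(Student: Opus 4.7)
The plan is to handle both halves of the theorem by reducing, via the bijection $(U_i,V_i)=((\wt X_i-\wt Y_i)/\sqrt 2,\,(\wt X_i+\wt Y_i)/\sqrt 2)$, to the sparse mixture detection problem of Theorem \ref{thm:U-V-equal}. Since $(\wt X_i,\wt Y_i)$ preserves all clustering information under known $(\theta,\eta)$, this change of variables is a sufficient reduction in both directions.

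For the upper bound (consistency of $\check\psi$ when $\beta<\beta^*(r)$), first suppose $\ell(z,\sigma)=\frac1n\sum_i\indc{z_i\ne\sigma_i}$, so that $\check T_n^-$ is the relevant statistic; the case $\ell(z,\sigma)=\frac1n\sum_i\indc{z_i\ne-\sigma_i}$ is handled symmetrically by $\check T_n^+$. Partitioning $[n]$ into the concordant set $C=\{i:z_i=\sigma_i\}$ and discordant set $D=\{i:z_i\ne\sigma_i\}$, one checks directly that
\[
(U_i,V_i)\sim\begin{cases}N(0,1)\otimes N(\pm\sqrt2\|\theta\|,1), & i\in C,\\ N(\pm\sqrt2\|\theta\|,1)\otimes N(0,1), & i\in D,\end{cases}
\]
with the signs inside each factor determined by $z_i$. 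Under $H_0$ we have $D=\emptyset$, so $|U_i|-|V_i|$ is i.i.d.\ with survival function $S_{\|\theta\|}$, and the standard Darling--Erd\H{o}s argument (as used for the classical HC statistic) controls $\check T_n^-$ below the threshold $\sqrt{2(1+\delta)\log\log n}$. Under $H_1$, $|D|\ge\epsilon n$, and along the family of thresholds $t=t_n(r,\beta)$ chosen in the proof of Theorem \ref{thm:U-V-equal} (different forms on the two regimes of $\beta^*(r)$ in \eqref{eq:freestyle}), the $|D|$ discordant coordinates contribute a mean shift that matches the $\epsilon n$-shift produced by the Bayesian mixture \eqref{eq:equal-comb-alt}. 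Because the HC statistic depends only on the empirical distribution of $|U_i|-|V_i|$, the adversarial placement of $D$ is immaterial and the analysis of Theorem \ref{thm:U-V-equal} transfers verbatim, with binomial concentration of the number of signals in the i.i.d.\ case replaced by the deterministic lower bound $|D|\ge\epsilon n$ (which is only more favourable). This yields $\check T_n^-\to\infty$ in probability, and hence consistency of $\check\psi$.

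For the lower bound when $\beta>\beta^*(r)$, introduce the following Bayesian reduction. On $H_0$, draw $z$ uniformly from $\{-1,1\}^n$ and set $\sigma=z$. On $H_1$, draw $z$ uniformly, draw $\xi_i\stackrel{iid}{\sim}(1-2\epsilon)\delta_1+2\epsilon\delta_{-1}$ independent of $z$, set $\sigma_i=\xi_iz_i$, and condition on the event $\mathcal E=\{\#\{i:\xi_i=-1\}>\epsilon n\}$, which has probability $1-o(1)$ by a Chernoff bound. Under these priors, the orthogonal components of $X_i,Y_i$ are ancillary, so the induced law of $\{(U_i,V_i)\}_{i\in[n]}$ is the law in \eqref{eq:equal-comb-null}--\eqref{eq:equal-comb-alt} with $\epsilon$ replaced by $2\epsilon$, modified only by conditioning on $\mathcal E$. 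Replacing $\epsilon$ by $2\epsilon$ leaves the calibration $\beta$ unchanged, and conditioning on a high-probability event changes the total variation distance between the Bayes marginals by $o(1)$. The lower half of Theorem \ref{thm:U-V-equal} therefore forces the Bayes risk of any test to tend to $1$, whence $\liminf_n R_n(\theta,\eta,\epsilon)>0$.

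The main technical obstacle is the upper-bound transfer: the proof of Theorem \ref{thm:U-V-equal} is built on i.i.d.\ Bayesian sparsity with random signs, while here the positions and signs in $D$ are adversarial and deterministic. The resolution hinges on the fact that $\check T_n^-$ is a functional of the empirical measure of $\{|U_i|-|V_i|\}$, so only the total mass of $D$ matters for the bias of the count process, and the variance bound for the HC denominator still applies uniformly in $(z,\sigma)$. The secondary bookkeeping issue is the label-switching min between $\check T_n^\pm$, handled by noting that exactly one of the two statistics targets the correct orientation while the other behaves no worse than under the null.
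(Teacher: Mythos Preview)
Your overall strategy matches the paper's: the upper bound transfers the HC analysis of Theorem~\ref{thm:U-V-equal} to the deterministic-signal setting (the paper simply points to the arguments of Proposition~\ref{prop:equal-diff} and Theorem~\ref{thm:U-V-equal}), and the lower bound places a Bayesian prior on $(z,\sigma)$ that reduces to \eqref{eq:equal-comb-null}--\eqref{eq:equal-comb-alt} after projecting out the ancillary directions. On the lower bound, the paper uses flip probability $\bar\epsilon=n^{-(\beta-\delta)}$ with $\beta-\delta>\beta^*(r)$ rather than your $2\epsilon$; your choice also works, but your conditioning event $\mathcal E$ should additionally enforce $\sum_i\indc{z_i\neq\sigma_i}<(1-\epsilon)n$ so that the prior actually sits in $\{\ell(z,\sigma)>\epsilon\}$ (trivially true since $2\epsilon\to 0$).

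Your description of the label-switching, however, is wrong in both directions. Under $H_0$ with $z=\sigma$, the statistic $\check T_n^+$ is not ``no worse than under the null'': every coordinate looks like signal from its point of view, so $\check T_n^+$ is large; what saves the Type-I error is only that $\check T_n^-$ is null-calibrated and the test takes the minimum. Conversely, under $H_1$ you must show that \emph{both} $\check T_n^-$ and $\check T_n^+$ exceed the threshold, not just one. This works because $\ell(z,\sigma)>\epsilon$ means both $\sum_i\indc{z_i\neq\sigma_i}>\epsilon n$ and $\sum_i\indc{z_i\neq-\sigma_i}>\epsilon n$, so each of $\check T_n^\pm$ sees at least $\epsilon n$ signal coordinates (coming from complementary subsets of $[n]$). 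The paper handles this by the union bound \eqref{eq:type-2-3.1}, analyzing $\check T_n^-$ on $\{\sum_i\indc{z_i\neq\sigma_i}>\epsilon n\}$ and $\check T_n^+$ on $\{\sum_i\indc{z_i\neq-\sigma_i}>\epsilon n\}$ separately.
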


 {We conclude this section with} three remarks on Theorem \ref{thm:main-equal}. 
First, the theorem shows that the two-dimensional sparse mixture testing problem (\ref{eq:equal-comb-null})-(\ref{eq:equal-comb-alt}) contains the mathematical essence of the original  {testing equivalence of clustering} problem (\ref{eq:problem}), because  {they share the same detection boundary}. 
 {In addition}, it shows that either the view of (\ref{eq:equal-sparse-null})-(\ref{eq:equal-sparse-alt}) or (\ref{eq:equal-sum-null})-(\ref{eq:equal-sum-alt}) results in a suboptimal solution (see Figure \ref{fig:phase-equal}). 
The testing problem (\ref{eq:problem}) is fundamentally different from the sparse mixture detection problem  {(\ref{eq:equal-sparse-null})-(\ref{eq:equal-sparse-alt})} that  {has been} well studied in the literature. 
 {Furthermore}, it  {suffices} to work with the one-dimensional  {projected datasets} $\{(\theta^TX_i/\|\theta\|,\eta^TY_i/\|\eta\|)\}_{1\leq i\leq n}$  {when constructing tests}, as  {the upper and the lower bounds match} in Theorem \ref{thm:main-equal}.

\section{The General Phase Diagram}
\label{sec:general}

In this section, we study the general case  {testing \eqref{eq:problem}} where $\|\theta\|$ and $\|\eta\|$ are not necessarily equal. 
This is a more complicated problem than the  {equal SNR} case studied in Section \ref{sec:balanced}. 
 {For the general case,} we adopt the following calibration:
\begin{equation}
\epsilon=n^{-\beta},\quad \frac{2\|\theta\|\|\eta\|}{\sqrt{\|\theta\|^2+\|\eta\|^2}} = \sqrt{2r\log n}, \quad \frac{|\|\theta\|^2-\|\eta\|^2|}{\sqrt{\|\theta\|^2+\|\eta\|^2}} = \sqrt{2s\log n}. \label{eq:general-cali}
\end{equation}
 {With this calibration, $(r,s)$ can take any value in $(0,\infty)\times [0,\infty)$.}
 {Although} there are other ways to parametrize $\|\theta\|$ and $\|\eta\|$,  {we find (\ref{eq:general-cali}) convenient and interpretable}. 
In \eqref{eq:general-cali}, $r$  {characterizes} overall signal strength and $s$ {quantifies the level of difference in SNRs  {of the two samples}}.
When $s=0$, (\ref{eq:general-cali})  {reduces to} (\ref{eq:equal-cali}). 
 {With this natural reduction, all results in Section \ref{sec:balanced} can be obtained by setting $s = 0$ in results for the general case which we shall derive in this section.}
Furthermore, the following expressions can be derived from (\ref{eq:general-cali}):
\begin{eqnarray}
 {\|\theta\|^2+\|\eta\|^2} &=&  {2(r+s)\log n}, \\
\|\theta\|^2\vee\|\eta\|^2 &=& \left(r+s+\sqrt{s}\sqrt{r+s}\right)\log n, \\
\label{eq:min-sig} \|\theta\|^2\wedge\|\eta\|^2 &=& \left(r+s-\sqrt{s}\sqrt{r+s}\right)\log n.
\end{eqnarray}

\subsection{A Related Sparse Mixture Detection Problem}

With $\wt{X}_i\sim N(z_i\|\theta\|,1)$ and $\wt{Y}_i\sim N(\sigma_i\|\eta\|,1)$  {as defined in \eqref{eq:proj-1d}}, it is natural to consider
\begin{equation}
\frac{\|\eta\|\wt{X}_i-\|\theta\|\wt{Y}_i}{\sqrt{\|\theta\|^2+\|\eta\|^2}}\sim N\left(\frac{\|\theta\|\|\eta\|(z_i-\sigma_i)}{\sqrt{\|\theta\|^2+\|\eta\|^2}},1\right). \label{eq:seq1}
\end{equation}
Moreover, to  {avoid} information loss, we also consider the following complementary sequence to (\ref{eq:seq1}),
\begin{equation}
\frac{\|\theta\|\wt{X}_i+\|\eta\|\wt{Y}_i}{\sqrt{\|\theta\|^2+\|\eta\|^2}}\sim N\left(\frac{\|\theta\|^2z_i+\|\eta\|^2\sigma_i}{\sqrt{\|\theta\|^2+\|\eta\|^2}},1\right). \label{eq:seq2}
\end{equation}
The sequences (\ref{eq:seq1}) and (\ref{eq:seq2}) are  {mutually} independent. 
Since $(\wt{X}_i,\wt{Y}_i)$  {and} $\left(\frac{\|\eta\|\wt{X}_i-\|\theta\|\wt{Y}_i}{\sqrt{\|\theta\|^2+\|\eta\|^2}},\frac{\|\theta\|\wt{X}_i+\|\eta\|\wt{Y}_i}{\sqrt{\|\theta\|^2+\|\eta\|^2}}\right)$  {have one-to-one correspondence}, there is no information loss.

Without loss of generality\footnote{We only use $\|\theta\|\geq\|\eta\|$ to motivate the testing problem (\ref{eq:general-comb-null})-(\ref{eq:general-comb-alt}). All the theorems in the paper hold with general $\theta$ and $\eta$ that admit the calibration (\ref{eq:general-cali}).},  {let us further} {assume} $\|\theta\|\geq\|\eta\|$.
We note that when $z_i=\sigma_i$, the two sequences have means $0$ and $\pm\sqrt{\|\theta\|^2+\|\eta\|^2}$, respectively. 
When $z_i\neq \sigma_i$,  {they} have means $\pm \frac{2\|\theta\|\|\eta\|}{\sqrt{\|\theta\|^2+\|\eta\|^2}}$ and $\pm \frac{|\|\theta\|^2-\|\eta\|^2|}{\sqrt{\|\theta\|^2+\|\eta\|^2}}$, respectively. 
Therefore, a  {natural corresponding} sparse mixture detection problem to (\ref{eq:problem}) is
\begin{eqnarray}
\label{eq:general-comb-null} H_0: \quad (U_i,V_i) &\stackrel{iid}{\sim}& \frac{1}{2}N(0,1)\otimes N(-\sqrt{2(r+s)\log n},1) \\
\nonumber && +\frac{1}{2}N(0,1)\otimes N(\sqrt{2(r+s)\log n},1),\quad i\in[n], \\
\label{eq:general-comb-alt} H_1: \quad (U_i,V_i) &\stackrel{iid}{\sim}& \frac{1-\epsilon}{2}N(0,1)\otimes N(-\sqrt{2(r+s)\log n},1) \\
\nonumber && +\frac{1-\epsilon}{2}N(0,1)\otimes N(\sqrt{2(r+s)\log n},1) \\
\nonumber && +\frac{\epsilon}{2}N(\sqrt{2r\log n},1)\otimes N(\sqrt{2s\log n},1) \\
\nonumber && +\frac{\epsilon}{2}N(-\sqrt{2r\log n},1)\otimes N(-\sqrt{2s\log n},1), \quad i\in[n].
\end{eqnarray}
When $s=0$, the testing problem (\ref{eq:general-comb-null})-(\ref{eq:general-comb-alt}) 
 {reduces to}
(\ref{eq:equal-comb-null})-(\ref{eq:equal-comb-alt}).

 {Similar to Section \ref{sec:balanced}, as a first step,}
we derive the detection boundaries of tests that only use $\{U_i\}_{1\leq i\leq n}$ or $\{V_i\}_{1\leq i\leq n}$.

\begin{thm}\label{thm:general-separate}
Consider testing (\ref{eq:general-comb-null})-(\ref{eq:general-comb-alt}) with $\epsilon=n^{-\beta}$. Define
$$\bar{\beta}^*(r,s)=\begin{cases}
\frac{1}{2}+r-2\sqrt{s}(\sqrt{r+s}-\sqrt{s}), & 3s>r {~and~} (\sqrt{r+s}-\sqrt{s})^2\leq\frac{1}{4}, \\
\frac{1+r-s}{2}, &  3s\leq r {~and~} r+s\leq 1, \\
r-2(\sqrt{r+s}-\sqrt{s})(\sqrt{r+s}-1), & r+s > 1 {~and~} \frac{1}{4}<(\sqrt{r+s}-\sqrt{s})^2\leq 1, \\
1, & (\sqrt{r+s}-\sqrt{s})^2 > 1.
\end{cases}$$
 {For any fixed constant $\delta>0$,}
we have the following two conclusions:
\begin{enumerate}
\item When $\beta<\beta^*_{\rm IDJ}(r)$, the test with rejection region
$$\sup_{t>0}\frac{\left|\sum_{i=1}^n\indc{|U_i|^2>t}-n\mathbb{P}(\chi_1^2>t)\right|}{\sqrt{n\mathbb{P}(\chi_1^2>t) (1-\mathbb{P}(\chi_1^2>t)  )}}>\sqrt{2(1+\delta)\log\log n}$$
is consistent. When $\beta>\beta^*_{\rm IDJ}(r)$, no test that only uses $\{U_i\}_{1\leq i\leq n}$ is consistent.
\item When $\beta<\bar{\beta}^*(r,s)$, the test with rejection region
$$\sup_{t>0}\frac{\left|\sum_{i=1}^n\indc{|V_i|^2\leq t}-n\mathbb{P}(\chi_{1,2(r+s)\log n}^2\leq t )\right|}{\sqrt{n\mathbb{P}(\chi_{1,2(r+s)\log n}^2\leq t ) (1-\mathbb{P}(\chi_{1,2(r+s)\log n}^2\leq t ) )}}>\sqrt{2(1+\delta)\log\log n}$$
is consistent. When $\beta>\bar{\beta}^*(r,s)$, no test that only uses $\{V_i\}_{1\leq i\leq n}$ is consistent.
\end{enumerate}
\end{thm}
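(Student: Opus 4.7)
My plan is to handle the two parts of the theorem separately, since each concerns tests built from one of the two independent sequences. For Part 1, the marginal distribution of $U_i$ is $N(0,1)$ under $H_0$ and $(1-\epsilon) N(0,1) + (\epsilon/2)[N(\sqrt{2r\log n},1) + N(-\sqrt{2r\log n}, 1)]$ under $H_1$, which matches exactly the sparse mixture detection problem \eqref{eq:equal-sparse-null}--\eqref{eq:equal-sparse-alt} with $\sqrt{2}\|\theta\|$ replaced by $\sqrt{2r\log n}$. Consequently, both the HC upper bound and the impossibility result for $\beta > \beta^*_{\rm IDJ}(r)$ are inherited directly from the classical Ingster--Donoho--Jin analysis.

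For Part 2, write $\mu_0 = \sqrt{2(r+s)\log n}$ and $\mu_1 = \sqrt{2s\log n}$, so that the marginal of $V_i$ is $P_0 = \tfrac{1}{2}[N(-\mu_0,1) + N(\mu_0,1)]$ under $H_0$ and $P_1 = (1-\epsilon)P_0 + \epsilon Q$ under $H_1$ with $Q = \tfrac{1}{2}[N(-\mu_1,1) + N(\mu_1,1)]$. For the upper bound I analyze the HC test based on $\indc{|V_i|^2 \le t}$, using the sharp large-deviation estimate $\mathbb{P}(\chi^2_{1, 2\alpha \log n} \leq 2\gamma \log n) = n^{-(\sqrt{\alpha} - \sqrt{\gamma})_+^2 + o(1)}$ obtained from the representation $\chi^2_{1, \lambda} \stackrel{d}{=} (Z + \sqrt{\lambda})^2$. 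At threshold $t = 2\gamma\log n$, the null expected count is $n^{1 - (\sqrt{r+s}-\sqrt{\gamma})^2 + o(1)}$ and the excess from the alternative is of order $n \epsilon \cdot n^{-(\sqrt{s}-\sqrt{\gamma})^2_+ + o(1)}$. Maximizing the detectable $\beta$ over $\gamma$, subject to $(\sqrt{r+s}-\sqrt{\gamma})^2 \le 1$ so that the null count is nontrivial, splits into four subcases depending on (a)~whether the interior optimum $\gamma^\ast = (2\sqrt{s}-\sqrt{r+s})^2_+$ is strictly positive (iff $3s > r$) and (b)~whether the constraint $\sqrt{\gamma} \ge \sqrt{r+s}-1$ binds (iff $r+s > 1$). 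These four cases correspond exactly to the four pieces of $\bar\beta^*(r,s)$, and the HC statistic adapts to the optimal threshold in each regime.

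For the lower bound, since $\{V_i\}$ is i.i.d.~under both hypotheses, it suffices to control $n \epsilon^2 \chi^2(Q, P_0)$ or a suitably truncated version thereof, and then invoke the standard second-moment/Le Cam argument. Using $P_0(V) = \phi(V) e^{-\mu_0^2/2}\cosh(\mu_0 V)$ and the analogous expression for $Q$, we have
$$
\chi^2(Q, P_0) + 1 = e^{\mu_0^2/2 - \mu_1^2}\int \phi(V)\, \frac{\cosh^2(\mu_1 V)}{\cosh(\mu_0 V)}\, dV,
$$
which Laplace's method evaluates to $n^{(\sqrt{r+s}-\sqrt{s})^2 + o(1)}$; this untruncated bound is already tight in Case 1, yielding $\bar\beta^\ast = 1/2 + (\sqrt{r+s}-\sqrt{s})^2$. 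In Cases 3 and 4 the untruncated $\chi^2$ diverges too fast, and I would replace $L$ by the truncation $\tilde L = L\cdot \indc{|V| \ge m_n}$ for an $m_n$ chosen so that $\mathbb{E}_{P_0}[\tilde L^2] = 1 + o(1/n)$ while $\tilde L$ retains almost all of its mass under $P_1$; the correct choice matches the binding constraint $(\sqrt{r+s}-\sqrt{\gamma^\ast})^2 = 1$ from the upper bound and recovers the boundaries $2\sqrt{r'}-r'$ (with $r' = (\sqrt{r+s}-\sqrt{s})^2$) in Case 3 and $\beta=1$ in Case 4. Case 2 is handled by an analogous direct calculation localized at the dominant mode $\gamma^\ast = 0$, giving the boundary $(1+r-s)/2$. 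The main technical obstacle will be the Laplace-type asymptotic analysis of the integral above and its truncated variants, whose dominant regime depends on the relative sizes of $\mu_0$, $\mu_1$, and the truncation level; a secondary difficulty is verifying in Cases 3 and 4 that the truncation preserves sufficient mass under the alternative so that upper and lower bounds match across all four regimes.
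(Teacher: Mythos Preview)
Your plan for Part~1 and for the upper bound of Part~2 matches the paper's argument: Conclusion~1 is quoted from Donoho--Jin and Ingster, and the HC upper bound for $\{V_i\}$ proceeds by the same mean/variance calculation using the noncentral $\chi^2$ tail $\mathbb{P}(\chi^2_{1,2\alpha\log n}\le 2\gamma\log n)\asymp n^{-(\sqrt\alpha-\sqrt\gamma)^2}$ (the paper's Lemma~\ref{lem:easy-tail}), followed by a four-case optimization over the threshold $\gamma$ that reproduces the four pieces of $\bar\beta^*(r,s)$.

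For the lower bound of Part~2 you take a genuinely different route. The paper does not compute $\chi^2(Q,P_0)$ or choose an explicit truncation set; instead it bounds the Hellinger distance via the pointwise inequality $(\sqrt{1+x}-1)^2\le x\wedge x^2$ (Lemma~\ref{lem:Yihong-Wu}) after approximating the likelihood ratio by $q/p\asymp n^r e^{-(\sqrt{r+s}-\sqrt s)|v|\sqrt{2\log n}}$, and then applies a Laplace principle (Lemma~\ref{lem:Laplace}) to reduce everything to the single optimization $\beta>\tfrac12+\max_v\bigl[\alpha(v)-v^2+\tfrac{v^2\wedge1}{2}\bigr]$, which is solved via Propositions~\ref{prop:opt2} and~\ref{prop:opt4}. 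The $x\wedge x^2$ bound automatically interpolates between the untruncated and truncated second-moment regimes, so all four cases fall out of one maximization. Your route---untruncated $\chi^2$ for Cases~1--2 and an explicit truncation $\tilde L=L\cdot\indc{|V|\ge m_n}$ with $m_n=(\sqrt{r+s}-1)\sqrt{2\log n}$ for Cases~3--4---is the classical Ingster-style argument and is equally valid; it trades the paper's unified reduction for more transparent case-by-case calculations and requires separately verifying that $n\epsilon\,Q(|V|<m_n)\to0$, which indeed holds exactly when $\beta>2\sqrt{r'}-r'$. Your identification of three of the four pieces with $\beta^*_{\rm IDJ}(r')$ for $r'=(\sqrt{r+s}-\sqrt s)^2$ is a nice structural observation the paper does not make explicit, though note Case~2 genuinely deviates from the IDJ form.

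One arithmetic slip: your Laplace evaluation of $\chi^2(Q,P_0)+1$ in Case~1 should be $n^{2(\sqrt{r+s}-\sqrt s)^2+o(1)}$, not $n^{(\sqrt{r+s}-\sqrt s)^2+o(1)}$, since $(\mu_0-\mu_1)^2=2(\sqrt{r+s}-\sqrt s)^2\log n$. With this correction the condition $n\epsilon^2\chi^2\to0$ does give $\beta>\tfrac12+(\sqrt{r+s}-\sqrt s)^2$ as you claim.
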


The first conclusion of Theorem \ref{thm:general-separate} is obvious, since the marginal distributions of $\{U_i\}_{1\leq i\leq n}$ under  {\eqref{eq:general-comb-null} and \eqref{eq:general-comb-alt}} 
are exactly the same as  {those under (\ref{eq:equal-sparse-null}) and (\ref{eq:equal-sparse-alt}), respectively.} 
In contrast, the second conclusion shows an intricate behavior of the two-dimensional threshold function $\bar{\beta}^*(r,s)$. We note that $\bar{\beta}^*(r,s)$ can be viewed as an extension of $\bar{\beta}^*(r)$ defined in (\ref{eq:threshold-V-equal}) in the sense that 
 {setting $s = 0$ in $\bar{\beta}^*(r,0)$ gives \eqref{eq:threshold-V-equal}.} 
The definition of $\bar{\beta}^*(r,s)$ involves four  {disjoint regions in $(0,\infty) \times [0,\infty)$}. 
When $s=0$, the second and the third  {cases} become degenerate. 
Moreover, we also have the relation $\bar{\beta}^*(r,s)\leq \bar{\beta}^*(r)$ for all $r,s>0$, which suggests that the testing problem becomes harder as $\|\theta\|$ and $\|\eta\|$ 
 {become more different}.
 {Last but not least}, as $s\rightarrow\infty$, we have $\bar{\beta}^*(r,s)\rightarrow\frac{1}{2}$.

\subsection{Which Event Shall We Count?}\label{sec:which-count}

Now let us try to solve the testing problem (\ref{eq:general-comb-null})-(\ref{eq:general-comb-alt}) by considering both $\{U_i\}_{1\leq i\leq n}$ and $\{V_i\}_{1\leq i\leq n}$. 
In order to derive the sharp detection boundary of (\ref{eq:general-comb-null})-(\ref{eq:general-comb-alt}) and also of the original problem (\ref{eq:problem}), we need to first find the optimal testing statistic. 
 {By} Theorem \ref{thm:general-separate}, 
 {the detection boundary of either single sequence can be achieved by an appropriate HC-type test.}
 {For $\{U_i\}_{1\leq i\leq n}$ the test counts the number of large $|U_i|$'s by $\sum_{i=1}^n\indc{|U_i|^2>t}$, and for $\{V_i\}_{1\leq i\leq n}$ the corresponding test counts the number of small $|V_i|$'s by $\sum_{i=1}^n\indc{|V_i|^2\leq t}$.} 
 {These} tests suggest that for testing (\ref{eq:general-comb-null})-(\ref{eq:general-comb-alt}) we should count the event that either $|U_i|$ is large or $|V_i|$ is small. 
When the  {SNRs are equal},
we have used $\sum_{i=1}^n\indc{|U_i|-|V_i|>t}$ in Section \ref{sec:combine-equal}  {for this purpose}. 
However,  {such an event may no longer be} appropriate when $\|\theta\|\neq \|\eta\|$. 

In order to find out the  {appropriate} event to count, we present the following heuristic argument from a more general perspective. 
Let consider  {the following} abstract sparse mixture testing problem:
\begin{eqnarray}
\label{eq:abstract-null} H_0: && W_1,\cdots,W_n\stackrel{iid}{\sim} P, \quad \mbox{vs.}\\
\label{eq:abstract-alt} H_1: && W_1,\cdots,W_n\stackrel{iid}{\sim} (1-\epsilon)P+\epsilon Q,
\end{eqnarray}
where $\epsilon=n^{-\beta}$ for some constant $\beta\in(0,1)$.
Then, the general HC-type testing statistic can be written as
\begin{equation}
\sup_{A\in\mathcal{A}}\frac{\left|\sum_{i=1}^n\indc{W_i\in A}-nP(A)\right|}{\sqrt{nP(A)(1-P(A))}}, \label{eq:HC-abstract}
\end{equation}
where $\mathcal{A}$ is some collection of events. 
 {As we shall show,} the reason to take supreme over  {the collection} $\mathcal{A}$ is  {mostly} for the sake of adaptation. 
When  {one has knowledge of $P$ and $Q$,}
 {let us start with} the statistic
$$T_n(A)=\frac{\sum_{i=1}^n\indc{W_i\in A}-nP(A)}{\sqrt{nP(A)(1-P(A))}}.$$
Now the question becomes how to choose $A$. 
Since the mean and the variance of $T_n(A)$ are $0$ and $1$ under $H_0$, the test $\indc{|T_n(A)|>c_n}$ for some slowly diverging sequence $c_n$ will be consistent if $\frac{(\mathbb{E}_{H_1}T_n(A))^2}{\Var_{H_1}(T_n(A))}\rightarrow\infty$ by  {applying} Chebyshev's inequality. 
A direct calculation gives
$$\frac{(\mathbb{E}_{H_1}T_n(A))^2}{\Var_{H_1}(T_n(A))}=\frac{\left(n\epsilon(Q(A)-P(A))\right)^2}{n\left((1-\epsilon)P(A)+\epsilon Q(A)\right)\left(1-\left((1-\epsilon)P(A)+\epsilon Q(A)\right)\right)}.$$
By  {symmetry of the righthand side}, we may consider $(1-\epsilon)P(A)+\epsilon Q(A)\leq \frac{1}{2}$ without loss of generality. This leads to the simplification
\begin{equation}
\frac{(\mathbb{E}_{H_1}T_n(A))^2}{\Var_{H_1}(T_n(A))}\asymp \frac{\left(n\epsilon(Q(A)-P(A))\right)^2}{nP(A)+n\epsilon Q(A)}. \label{eq:ratio-mean-var}
\end{equation}
In order that this ratio tends to infinity, we require either $\frac{(n\epsilon P(A))^2}{n P(A)+n\epsilon Q(A)}\rightarrow\infty$ or $\frac{(n\epsilon Q(A))^2}{n P(A)+n\epsilon Q(A)}\rightarrow\infty$. 
Suppose $\frac{(n\epsilon P(A))^2}{n P(A)+n\epsilon Q(A)}\rightarrow\infty$ holds, and then we have $n\epsilon^2 P(A)\rightarrow\infty$, which requires $\beta<\frac{1}{2}$, 
 {which is too strong a condition to be of our interest.}
Therefore, we require $\frac{(n\epsilon Q(A))^2}{n P(A)+n\epsilon Q(A)}\rightarrow\infty$, which can be equivalently written as two conditions
$$\frac{n\epsilon^2 Q(A)^2}{P(A)}\rightarrow\infty\quad\text{and}\quad n\epsilon Q(A)\rightarrow\infty.$$
 {With the calibration $\epsilon=n^{-\beta}$,} 
these two conditions  {are equivalent to}
\begin{equation}
\beta<\frac{1}{2}+\frac{\log Q(A)}{\log n}+\frac{1}{2}\min\left(1,\frac{\log\frac{1}{P(A)}}{\log n}\right). \label{eq:beta-abstract}
\end{equation}
To maximize the detection region, we shall consider some event $A$ that makes the righthand side of (\ref{eq:beta-abstract}) as large as possible. Since the righthand side of (\ref{eq:beta-abstract}) is increasing in $Q(A)$ and decreasing in $P(A)$, the maximum is achieved by $A=\{\frac{dQ}{dP}(W)>t \}$ for some  {appropriate choice of} $t$
according to the Neyman--Pearson lemma.
This fact naturally motivates the choice
$$\mathcal{A}=\left\{ \Big\{\frac{dQ}{dP}(W)>t \Big\}: t>0\right\}$$
in (\ref{eq:HC-abstract}),  {which} results in the HC-type statistic
\begin{equation}
\sup_{t>0}\frac{\left|\sum_{i=1}^n\indc{(dQ/dP)(W_i)>t}-nP((dQ/dP)(W)>t)\right|}{\sqrt{nP((dQ/dP)(W)>t)P((dQ/dP)(W)\leq t)}}. \label{eq:HC-abstract-optimal}
\end{equation}

\subsection{Likelihood Ratio Approximation}
The heuristic argument in Section \ref{sec:which-count} suggests  {that we use} the statistic $\sum_{i=1}^n\indc{(dQ/dP)(W_i)>t}$. 
We specify $P$ and $Q$ to the setting of (\ref{eq:general-comb-null})-(\ref{eq:general-comb-alt})
 {to obtain that}
$$\frac{dQ}{dP}(W_i)=\frac{q(U_i,V_i)}{p(U_i,V_i)},$$
where
\begin{eqnarray}
\label{eq:general-P-den} p(u,v) &=& \frac{1}{2}\phi(u)\phi(v-\sqrt{2(r+s)\log n}) + \frac{1}{2}\phi(u)\phi(v+\sqrt{2(r+s)\log n}), \\
\label{eq:general-Q-den}  q(u,v) &=& \frac{1}{2}\phi(u-\sqrt{2r\log n})\phi(v-\sqrt{2s\log n})  \\
\nonumber && + \frac{1}{2}\phi(u+\sqrt{2r\log n})\phi(v+\sqrt{2s\log n}).
\end{eqnarray}
 {Here $\phi(\cdot)$ is the probability density function of $N(0,1)$.}
The following  {key} lemma greatly simplifies the calculation of the likelihood ratio statistic.

\begin{lemma}\label{lem:LR-approx}
For $p(u,v)$ and $q(u,v)$ defined above, we have
$$\sup_{r,s>0}\sup_{u,v\in\mathbb{R}}\left|\log\frac{q(u,v)}{p(u,v)}-\sqrt{2\log n}\left(|\sqrt{r}u+\sqrt{s}v|-\sqrt{r+s}|v|\right)\right|\leq \log 2.$$
\end{lemma}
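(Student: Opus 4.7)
The plan is to first rewrite $p(u,v)$ and $q(u,v)$ in a compact form using the hyperbolic cosine, then reduce the claimed bound to a one-variable inequality about $\log\cosh$.

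Introduce the shorthand $a=\sqrt{2r\log n}$, $b=\sqrt{2s\log n}$, $c=\sqrt{2(r+s)\log n}$, and note the algebraic identity $a^2+b^2=c^2$, which is the reason the formulas collapse cleanly. Expanding $\phi(u)\phi(v\pm c)=\frac{1}{2\pi}\exp\bigl(-(u^2+v^2+c^2)/2\pm cv\bigr)$ and averaging the two terms yields
\[
  p(u,v)=\frac{1}{2\pi}\,e^{-(u^2+v^2+c^2)/2}\,\cosh(cv).
\]
For $q(u,v)$ the same expansion gives $\phi(u\mp a)\phi(v\mp b)=\frac{1}{2\pi}\exp\bigl(-(u^2+v^2+a^2+b^2)/2\pm(au+bv)\bigr)$, and invoking $a^2+b^2=c^2$ to match the $p$-exponent produces
\[
  q(u,v)=\frac{1}{2\pi}\,e^{-(u^2+v^2+c^2)/2}\,\cosh(au+bv).
\]
Dividing cancels the Gaussian prefactor completely, leaving the clean identity
\[
  \log\frac{q(u,v)}{p(u,v)}=\log\cosh(au+bv)-\log\cosh(cv).
\]

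Next I would invoke the elementary bound $\bigl|\log\cosh(x)-|x|\bigr|\le \log 2$ for every $x\in\mathbb{R}$. This follows from writing $\cosh(x)=\tfrac12 e^{|x|}(1+e^{-2|x|})$, so that $\log\cosh(x)-|x|=-\log 2+\log(1+e^{-2|x|})$, a quantity that lies in $[-\log 2,\,0]$. Applying this to each of the two $\log\cosh$ terms, with residuals $R_1,R_2\in[-\log 2,0]$, gives
\[
  \log\frac{q(u,v)}{p(u,v)} = |au+bv|-c|v| + (R_1-R_2),
\qquad |R_1-R_2|\le \log 2.
\]

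Finally I factor out $\sqrt{2\log n}$: since $au+bv=\sqrt{2\log n}(\sqrt{r}u+\sqrt{s}v)$ and $c=\sqrt{2\log n}\sqrt{r+s}$, the main term matches exactly $\sqrt{2\log n}\bigl(|\sqrt{r}u+\sqrt{s}v|-\sqrt{r+s}|v|\bigr)$, and the remainder is uniformly bounded by $\log 2$ in $u$, $v$, $r$, and $s$. There is no real obstacle here; the only step requiring any care is spotting that $a^2+b^2=c^2$ so that the Gaussian factors on the two sides are identical and cancel, which is what makes the error genuinely $O(1)$ rather than growing with $\log n$.
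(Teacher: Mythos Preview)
Your proof is correct and follows essentially the same route as the paper: the paper computes the ratio directly as $\dfrac{e^{au+bv}+e^{-(au+bv)}}{e^{cv}+e^{-cv}}$ and then applies $e^{|x|}\le e^x+e^{-x}\le 2e^{|x|}$, which is exactly your $\log\cosh$ bound written in exponential form. Your presentation via $\cosh$ and the explicit identification of $a^2+b^2=c^2$ is a bit tidier, but the argument is the same.
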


By Lemma \ref{lem:LR-approx}, 
$\sqrt{2\log n}\left(|\sqrt{r}u+\sqrt{s}v|-\sqrt{r+s}|v|\right)$ is the leading term of $\log\frac{q(u,v)}{p(u,v)}$ as $n\rightarrow\infty$. 
Therefore,  {from an asymptotic viewpoint, we could simply focus on the sequence} 
$$\{|\sqrt{r}U_i+\sqrt{s}V_i|-\sqrt{r+s}|V_i|\}_{1\leq i\leq n}$$ 
which combines the information of $\{U_i\}_{1\leq i\leq n}$ and $\{V_i\}_{1\leq i\leq n}$. 
When $s=0$,  {it reduces to} $\{\sqrt{r}(|U_i|-|V_i|)\}_{1\leq i\leq n}$, which  {further} justifies the optimality of the test (\ref{eq:HC-comb}) when $\|\theta\|=\|\eta\|$. 
As $s\rightarrow\infty$, we have $\sqrt{r+s}-\sqrt{s}=\frac{r}{\sqrt{r+s}+\sqrt{s}}\rightarrow 0$, and  {it can shown that}
the sequence becomes $\{\sqrt{r}U_i\sgn(V_i)\}_{1\leq i\leq n}$.
 {In other word,} asymptotically only the sign information of the sequence $\{V_i\}_{1\leq i\leq n}$  {matters as $s\to\infty$}. 

\subsection{The Three-Dimensional Phase Diagram} 

 {We now move on to determine detection boundaries for \eqref{eq:general-comb-null}-\eqref{eq:general-comb-alt} and for \eqref{eq:problem} in general.}

\paragraph{Sparse mixture detection}
 {Consider the sparse mixture detection problem \eqref{eq:general-comb-null}-\eqref{eq:general-comb-alt} first.}
Inspired by 
Lemma \ref{lem:LR-approx}, we consider the following HC-type test with  rejection region
\begin{equation}
\sup_{t\in\mathbb{R}}\frac{\left|\sum_{i=1}^n\indc{|\sqrt{r}U_i+\sqrt{s}V_i|-\sqrt{r+s}|V_i|>t}-nS_{(r,s)}(t)\right|}{\sqrt{n S_{(r,s)}(t)(1-S_{(r,s)}(t))}}>\sqrt{2(1+\delta)\log\log n}, \label{eq:HC-comb-general}
\end{equation}
where $\delta>0$ is some arbitrary fixed constant, and 
$S_{(r,s)}(t)$ is the survival function of $|\sqrt{r}U_i+\sqrt{s}V_i|-\sqrt{r+s}|V_i|$ under the null distribution, defined by 
\begin{equation}
S_{(r,s)}(t)=
\mathbb{P}_{ {H_0}}
\left(|\sqrt{r}U+\sqrt{s}V|-\sqrt{r+s}|V|>t\right), \label{eq:survival-r-s}
\end{equation}
 {where $H_0$ is defined in \eqref{eq:general-comb-null}.}
By Lemma \ref{lem:LR-approx} and our heuristic argument in Section \ref{sec:which-count}, the  {test} statistic in (\ref{eq:HC-comb-general}) is asymptotically  {equivalent to} (\ref{eq:HC-abstract-optimal}). 
Indeed, the test  {with rejection region} (\ref{eq:HC-comb-general}) achieves the optimal detection boundary of the testing problem (\ref{eq:general-comb-null})-(\ref{eq:general-comb-alt}), 
 {which is summarized as the following theorem.}

\begin{thm}\label{thm:general-HC}
Consider  {testing} (\ref{eq:general-comb-null})-(\ref{eq:general-comb-alt}) with $\epsilon=n^{-\beta}$. Define
$$\beta^*(r,s)=\begin{cases}
\frac{1}{2}+2(r+s-\sqrt{s}\sqrt{r+s}), &   3s>r \mathrm{~and~} r+s-\sqrt{s}\sqrt{r+s}\leq \frac{1}{8}, \\
\frac{1}{2}({1+3r-s}), &   3s\leq r \mathrm{~and~} 5r+s\leq 1, \\
2\sqrt{r}\sqrt{1-r-s}, &   5r+s>1, ~\frac{1}{8}<r+s-\sqrt{s}\sqrt{r+s}\leq \frac{1}{2}, \\
& ~~\mathrm{~and~}2(1-r-s)(r+s-\sqrt{s}\sqrt{r+s})>r, \\
\big[2\sqrt{2(r+s-\sqrt{s}\sqrt{r+s})} &   5r+s>1, ~\frac{1}{8}<r+s-\sqrt{s}\sqrt{r+s}\leq \frac{1}{2}, \\
~~~ -2(r+s-\sqrt{s}\sqrt{r+s})\big], & ~~\mathrm{~and~} 2(1-r-s)(r+s-\sqrt{s}\sqrt{r+s})\leq r, \\
1, &   r+s-\sqrt{s}\sqrt{r+s} > \frac{1}{2}.
\end{cases}$$
When $\beta<\beta^*(r,s)$, the test  {with rejection region} (\ref{eq:HC-comb-general}) is consistent. When $\beta>\beta^*(r,s)$, no test is consistent.
\end{thm}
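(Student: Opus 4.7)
The plan is to split the proof into an achievability part for the HC-type test \eqref{eq:HC-comb-general} and a matching lower bound, with Lemma \ref{lem:LR-approx} serving as the pivot for both directions. The heuristic in Section \ref{sec:which-count} already tells us that the right quantity to count is the exceedance of the log-likelihood ratio, and Lemma \ref{lem:LR-approx} identifies this (up to $O(1)$) with $W_i := |\sqrt{r}U_i + \sqrt{s}V_i| - \sqrt{r+s}|V_i|$. So the first step is to pass from $\mathrm{log}(dQ/dP)$ to $W_i$ and reduce the analysis to one-dimensional tail probabilities under $P$ and $Q$.

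For the upper bound, I would proceed as follows. Fix a threshold of the form $t = \tau \sqrt{2\log n}$ with $\tau \geq 0$, and compute the asymptotic rates
\[
-\frac{\log P(W>t)}{\log n} \longrightarrow f_P(\tau), \qquad -\frac{\log Q(W>t)}{\log n} \longrightarrow f_Q(\tau),
\]
via Laplace/large-deviation estimates on the Gaussian integrals defining $P$ and $Q$ in \eqref{eq:general-P-den}--\eqref{eq:general-Q-den}. The key is that $W$ is a piecewise linear combination of $(U,V)$, so these rate functions are explicit piecewise quadratic expressions in $\tau$ involving $r,s$, and the piecewise structure of $\beta^*(r,s)$ (the five regions) will emerge from optimizing over $\tau$ in the condition
\[
\beta \;<\; \tfrac{1}{2} - f_Q(\tau) + \tfrac{1}{2}\min\!\bigl(1,\, f_P(\tau)\bigr),
\]
which is the explicit form of \eqref{eq:beta-abstract} in our setting. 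Once a per-$\tau$ consistency is verified by Chebyshev (using $(\mathbb{E}_{H_1}T_n(A))^2/\Var_{H_1}(T_n(A))\to\infty$), the adaptive HC supremum over $t$ is obtained by standard discretization and uniform control of the normalized empirical process, as in \cite{donoho2004higher}; the $\sqrt{2(1+\delta)\log\log n}$ cutoff absorbs the cost of the supremum under $H_0$. Throughout, Lemma \ref{lem:LR-approx} is used to justify that $f_Q(\tau)$ computed from $W_i$ equals the Neyman--Pearson-optimal rate up to $o(1)$.

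For the lower bound, I would use the second moment method on the Bayesian alternative defined by \eqref{eq:general-comb-alt}. Let $L_n = \prod_{i=1}^{n}\,q(U_i,V_i)/p(U_i,V_i)$ be the likelihood ratio; then i.i.d.\ structure gives $\mathbb{E}_{H_0} L_n^2 = (1 + \epsilon^2 \chi^2)^n$ with $\chi^2 = \int (q-p)^2/p\, du\, dv$. The plan is to bound $\chi^2$ by a saddle-point/Laplace computation on the Gaussian integrand, again using Lemma \ref{lem:LR-approx} to turn $(q/p)^2$ into an exponential in $W$, and then show $n\epsilon^2 \chi^2 = o(1)$ precisely when $\beta > \beta^*(r,s)$. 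The main obstacle is that in part of the diagram (roughly the regions where the tail of $W$ under $Q$ is driven by atypical events) the raw second moment will blow up; the standard remedy, as in the Ingster--Donoho--Jin analysis, is to truncate the likelihood ratio on the event $\{W_i \leq t^\star\}$ for a carefully chosen $t^\star$ and show that the truncated ratio (i) has bounded second moment and (ii) differs from the true ratio in $L^1(H_1)$ by $o(1)$. This truncation threshold $t^\star$ matches the optimal $\tau$ found in the upper bound, which is what produces the same critical exponent $\beta^*(r,s)$ in both directions.

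The hardest technical step is the case-by-case Laplace analysis that produces $\beta^*(r,s)$ in closed form, especially at the interfaces $3s = r$, $r+s - \sqrt{s}\sqrt{r+s} = 1/8$, $5r+s = 1$, and $2(1-r-s)(r+s-\sqrt{s}\sqrt{r+s}) = r$ where the optimizer $\tau$ either lies at a kink of $W$'s distribution or is forced to the boundary by the constraint $f_P(\tau) \leq 1$ (equivalently $nP(A) \gtrsim 1$). Everything else---the reduction to $W_i$, the Chebyshev calculation for the HC statistic at a fixed $t$, and the truncation step in the lower bound---is essentially mechanical once Lemma \ref{lem:LR-approx} is in hand. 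As a sanity check, setting $s=0$ should collapse the five-region formula to the three-region formula $\beta^*(r)$ of Theorem \ref{thm:U-V-equal}, which would validate the Laplace calculations before addressing the general $(r,s)$ case.
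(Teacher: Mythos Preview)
Your upper-bound plan matches the paper's exactly: compute the tail rates of $W=|\sqrt{r}U+\sqrt{s}V|-\sqrt{r+s}|V|$ under $P$ and $Q$ (the paper does this explicitly in Lemmas~\ref{lem:comp-tail-0}--\ref{lem:comp-tail-1}), plug into the mean/variance ratio \eqref{eq:ratio-mean-var}, optimize over the threshold, and control the HC supremum under $H_0$ via the normalized-uniform-empirical-process result. The five-case optimization is carried out exactly as you anticipate.

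For the lower bound your route differs from the paper's in the technical vehicle. You propose the second-moment method on the likelihood ratio with Ingster-style truncation on $\{W_i\le t^\star\}$. The paper instead bounds the Hellinger distance: it writes $H^2(P,(1-\epsilon)P+\epsilon Q)=\mathbb{E}_P(\sqrt{1+\epsilon(q/p-1)}-1)^2$, uses Lemma~\ref{lem:LR-approx} to replace $q/p$ by $e^{W\sqrt{2\log n}}$, and then applies the elementary bound $(\sqrt{1+x}-1)^2\le x\wedge x^2$ (Lemma~\ref{lem:Yihong-Wu}). The $\min$ on the right plays the role of your truncation automatically, so no explicit event is needed; a Laplace principle (Lemma~\ref{lem:Laplace}) then reduces everything to the finite-dimensional optimization $\max_{u,v}[\alpha(u,v)-u^2-v^2+\tfrac12((u^2+v^2)\wedge1)]$. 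One further simplification the paper makes, which you may want to borrow, is to bound the coupled exponent $\alpha(u,v)=2|\sqrt{r}u+\sqrt{s}(v+\sqrt{r+s})|-2\sqrt{r+s}|v+\sqrt{r+s}|$ above by the decoupled $\bar\alpha(u,v)=2\sqrt{r}|u|-2(\sqrt{r+s}-\sqrt{s})|v+\sqrt{r+s}|$; this separates the $(u,v)$ optimization and lets a handful of one-variable calculus propositions produce the five-region formula directly. Your truncated second-moment argument would work too, but the Hellinger route avoids having to verify separately that the truncated and untruncated likelihood ratios are $L^1$-close under $H_1$.

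A small slip: the likelihood ratio is $L_n=\prod_i[(1-\epsilon)+\epsilon\,q_i/p_i]$, not $\prod_i q_i/p_i$; your stated formula $\mathbb{E}_{H_0}L_n^2=(1+\epsilon^2\chi^2)^n$ is correct for the former.
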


\paragraph{Testing equivalence of clustering}
{Turn to} the original testing problem (\ref{eq:problem}). 
Note that the two sequences (\ref{eq:seq1}) and (\ref{eq:seq2}) play the same roles as $\{U_i\}_{1\leq i\leq n}$ and $\{V_i\}_{1\leq i\leq n}$  {do in sparse mixture detection}. 
 {In view of the parameterization in} (\ref{eq:general-cali})-(\ref{eq:min-sig}), 
we define
\begin{eqnarray}
C^-(X_i,Y_i,\theta,\eta) &=& |\theta^TX_i-\eta^TY_i|-|\theta^TX_i+\eta^TY_i|,
\label{eq:cminus}
 \\
C^+(X_i,Y_i,\theta,\eta) &=& |\theta^TX_i+\eta^TY_i|-|\theta^TX_i-\eta^TY_i|.
\label{eq:cplus}
\end{eqnarray}
 {For testing \eqref{eq:problem},}
we need both $\{C^-(X_i,Y_i,\theta,\eta)\}_{1\leq i\leq n}$ and $\{C^+(X_i,Y_i,\theta,\eta)\}_{1\leq i\leq n}$ to accommodate the possibility of label switching.
Then, the HC-type statistics for testing (\ref{eq:problem})  {can be} defined as
\begin{eqnarray}
\label{eq:stat-t-dot--} \dot{T}_n^- &=& \sup_{t\in\mathbb{R}}\frac{\left|\sum_{i=1}^n\indc{C^-(X_i,Y_i,\theta,\eta)>t\sqrt{2\log n}}-nS_{(r,s)}(t)\right|}{\sqrt{n S_{(r,s)}(t)(1-S_{(r,s)}(t))}}, \\
\label{eq:stat-t-dot-+} \dot{T}_n^+ &=& \sup_{t\in\mathbb{R}}\frac{\left|\sum_{i=1}^n\indc{C^+(X_i,Y_i,\theta,\eta)>t\sqrt{2\log n}}-nS_{(r,s)}(t)\right|}{\sqrt{n S_{(r,s)}(t)(1-S_{(r,s)}(t))}}.
\end{eqnarray}
They lead to the test
\begin{equation}
\dot{\psi}=\indc{\dot{T}_n^-\wedge \dot{T}_n^+>\sqrt{2(1+\delta)\log\log n}}, \label{eq:general-test-comb}
\end{equation}
for some arbitrary fixed constant $\delta>0$.

\begin{thm}\label{thm:main-general}
 {For testing} (\ref{eq:problem}) with calibration (\ref{eq:general-cali}), the test (\ref{eq:general-test-comb}) satisfies $\lim_{n\rightarrow\infty}R_n(\dot{\psi},\theta,\eta,\epsilon)=0$ as long as $\beta<{\beta}^*(r,s)$. Moreover, when $\beta>{\beta}^*(r,s)$, we have $\liminf_{n\rightarrow\infty}R_n(\theta,\eta,\epsilon)>0$.
\end{thm}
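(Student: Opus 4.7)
The plan is to establish both halves of the theorem — the consistency of $\dot{\psi}$ when $\beta<\beta^*(r,s)$ and the impossibility of consistent testing when $\beta>\beta^*(r,s)$ — by reducing each to the sparse mixture detection problem \eqref{eq:general-comb-null}-\eqref{eq:general-comb-alt} via a single algebraic identity. Concretely, for $U_i,V_i$ as in \eqref{eq:seq1}-\eqref{eq:seq2}, a direct computation using the calibration \eqref{eq:general-cali} (with $\|\theta\|\ge\|\eta\|$ WLOG) yields
$$
|\sqrt{r}\,U_i+\sqrt{s}\,V_i|-\sqrt{r+s}\,|V_i|\;=\;\frac{C^-(X_i,Y_i,\theta,\eta)}{\sqrt{2\log n}},
$$
so $\dot{T}_n^-$ coincides with the HC-type statistic \eqref{eq:HC-comb-general} up to the change of variable $t\mapsto t\sqrt{2\log n}$, and $\dot{T}_n^+$ is the same expression applied to the label-flipped data $(X_i,-Y_i)$. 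Taking the minimum in \eqref{eq:general-test-comb} is precisely the adjustment needed to absorb the label-switching min in $\ell$.

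For the upper bound, I would first use the projection in \eqref{eq:proj-1d} (the orthogonal directions contribute ancillary $N(0,I)$ noise) and then invoke the identity above to reinterpret $\dot{T}_n^{\pm}$ as the optimal HC-type statistic \eqref{eq:HC-comb-general} on $\{(U_i,V_i)\}$. Under $H_0$ exactly one of $z=\sigma$ or $z=-\sigma$ holds; in the first case $(U_i,V_i)$ is iid from the null \eqref{eq:general-comb-null} and the Type-I analysis inside the proof of Theorem~\ref{thm:general-HC} gives $\dot{T}_n^-\le\sqrt{2(1+\delta)\log\log n}$ with probability $1-o(1)$, while in the second case $\dot{T}_n^+$ plays the analogous role. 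So the min in \eqref{eq:general-test-comb} stays below the threshold and $\dot{\psi}=0$ whp. Under $H_1$, $\ell(z,\sigma)>\epsilon$ forces both $n^{-1}\sum_i\indc{z_i\neq\sigma_i}$ and $n^{-1}\sum_i\indc{z_i\neq-\sigma_i}$ to exceed $\epsilon$; each view produces a $(U_i,V_i)$ sequence containing at least $\lceil\epsilon n\rceil$ signal coordinates drawn from the four-component pattern in \eqref{eq:general-comb-alt}, and the Type-II analysis of Theorem~\ref{thm:general-HC} — which only needs a lower bound on the number of signals — yields $\dot{T}_n^-\wedge\dot{T}_n^+>\sqrt{2(1+\delta)\log\log n}$ whp.

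For the lower bound I would use a Bayesian construction that matches \eqref{eq:general-comb-null}-\eqref{eq:general-comb-alt} marginally on the nose. Under $\pi_0$ set $z=\sigma$ with $z$ uniform on $\{-1,1\}^n$; under $\pi_1$ draw $z$ uniform and, independently, set $\sigma_i=-z_i$ with probability $\epsilon_*=(1+\delta')\epsilon$ for an arbitrarily small $\delta'>0$. The components of $X_i$ and $Y_i$ orthogonal to $\theta$ and $\eta$ are $N(0,I)$ independently of $(z,\sigma)$ and so drop out of the likelihood ratio, while the mean calculations in \eqref{eq:seq1}-\eqref{eq:seq2} show that after the rotation to $(U_i,V_i)$ the marginal laws of $\{(U_i,V_i)\}_{i=1}^n$ are iid from \eqref{eq:general-comb-null} under $\pi_0$ and iid from \eqref{eq:general-comb-alt} under $\pi_1$ exactly. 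A Binomial concentration argument gives $\ell(z,\sigma)>\epsilon$ with probability $1-o(1)$ under $\pi_1$, so truncating $\pi_1$ to that event perturbs the total variation by $o(1)$. Applying the impossibility half of Theorem~\ref{thm:general-HC} to the iid mixture law then delivers $\liminf_n R_n(\theta,\eta,\epsilon)>0$ whenever $\beta>\beta^*(r,s)$.

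The main technical work is fully absorbed into Theorem~\ref{thm:general-HC}, whose proof demands a careful truncated-second-moment and concentration analysis of the HC-type statistic uniformly in $t$ and across all five pieces of $\beta^*(r,s)$; given that theorem, the reduction above is essentially deterministic bookkeeping driven by the identity displayed in the first paragraph. The only substantive extra ingredients are the Binomial truncation used to reconcile the random-cardinality alternative of \eqref{eq:general-comb-alt} with the deterministic constraint $\ell(z,\sigma)>\epsilon$, and the pairing $(\dot{T}_n^-,\dot{T}_n^+)$ with a minimum that handles the label-switching ambiguity intrinsic to $\ell$.
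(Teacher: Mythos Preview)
Your proposal is correct and follows essentially the same route as the paper. The paper's proof of Theorem~\ref{thm:main-general} is a two-line reduction: the upper bound is obtained by combining the Type-I/Type-II analysis pattern of Proposition~\ref{prop:equal-diff} with the HC consistency result of Theorem~\ref{thm:general-HC}, and the lower bound repeats the Bayesian construction from the proof of Theorem~\ref{thm:main-equal} (uniform $z$, flip each label independently, project to $(U_i,V_i)$, discard the orthogonal $N(0,I)$ part, invoke Theorem~\ref{thm:general-HC}). Your algebraic identity $\sqrt{2\log n}\bigl(|\sqrt{r}U_i+\sqrt{s}V_i|-\sqrt{r+s}|V_i|\bigr)=C^-(X_i,Y_i,\theta,\eta)$ is exactly the link the paper uses implicitly when passing from \eqref{eq:HC-comb-general} to \eqref{eq:stat-t-dot--}. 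The only cosmetic difference is your choice of alternative flip probability $\epsilon_*=(1+\delta')\epsilon$ versus the paper's $\bar\epsilon=n^{-(\beta-\delta)}$; both guarantee $\ell(z,\sigma)>\epsilon$ with high probability via Binomial concentration, and both land in the regime where the Hellinger bound of Theorem~\ref{thm:general-HC} applies, so this is immaterial.
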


\begin{figure}[!tbh]
\begin{center}
\includegraphics[width=\textwidth]{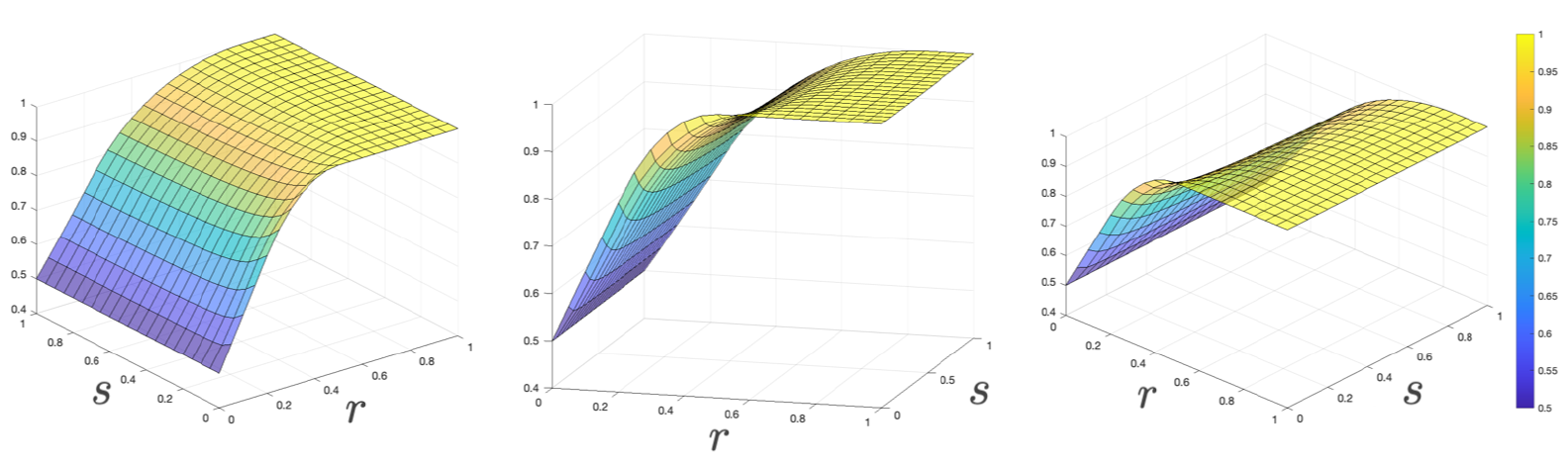}
\caption{ {3D plot of the detection boundary $\beta^*(r,s)$.}}\label{fig:3d}
\end{center}
\end{figure}

With Theorem \ref{thm:main-general}, we completely characterize the detection boundary of the testing problem (\ref{eq:problem}) by the function $\beta^*(r,s)$. 
To  {help understanding} the behavior of $\beta^*(r,s)$, Figure \ref{fig:3d}  {demonstrates its 3D plot} from various angles.
In addition,
we plot the five regions  {that divide the domain of} $\beta^*(r,s)$,  {that is $(0,\infty)\times [0,\infty)$}, on the left panel of Figure \ref{fig:combined}. 
 {Furthermore,
we fix $s$} and study the behavior of the function $\beta_s^*(r)=\beta^*(r,s)$  {as a function of $r$ at some fixed $s$ value}.
\begin{figure}[!tb] 
\begin{center}
\includegraphics[width=\textwidth]{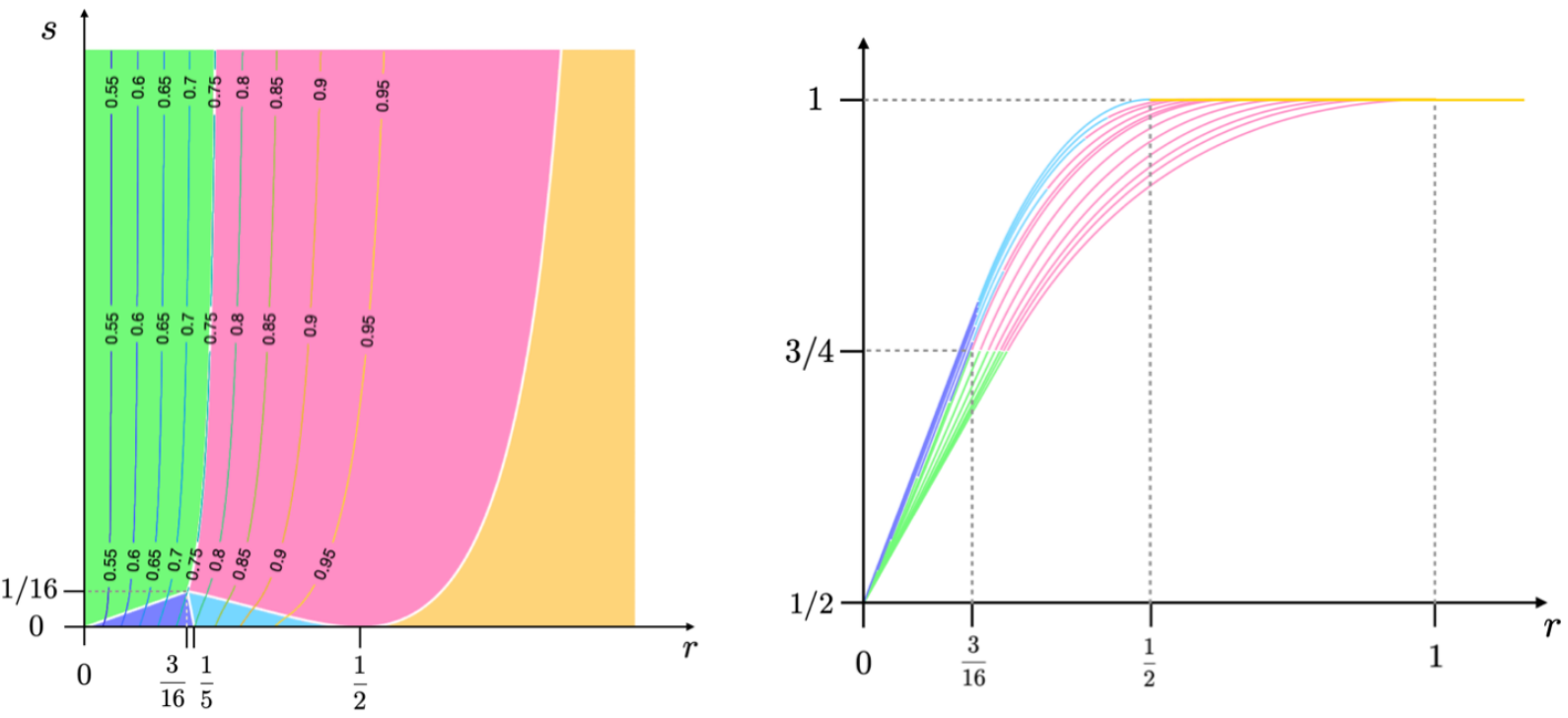}
\caption{The five regions of the $(r,s)$-plane with the contour of $\beta^*(r,s)$ (Left Panel). The detection boundaries $\beta_s^*(r)=\beta^*(r,s)$ with $s$ fixed (Right Panel). The curve moves to the right as the fixed value of $s$ increases. 
The five colors of the two plots correspond to the five regions of $\beta^*(r,s)$ in the order of green, blue, cyan, magenta, and yellow.}\label{fig:combined}
\end{center}
\end{figure}
We start with $s=0$. In this case,  {the problem reduces to}
the  {equal SNR}
situation, and we are able to recover $\beta_s^*(r)=\beta^*(r)$, where $\beta^*(r)$ is defined in (\ref{eq:freestyle}). 
 {For any fixed}
$s\in \left(0,\frac{1}{16}\right)$, the definition of $\beta_s^*(r)$ involves all the five areas in  {the left panel of} Figure \ref{fig:combined}, and we have
$$\beta_s^*(r)=\begin{cases}
\frac{1}{2}+2(r+s-\sqrt{s}\sqrt{r+s}), &   0<r<3s, \\
\frac{1}{2}(1+3r-s), &   3s\leq r< \frac{1-s}{5}, \\
2\sqrt{r}\sqrt{1-r-s}, &   \frac{1-s}{5}\leq r < \rt(s), \\
2\sqrt{2(r+s-\sqrt{s}\sqrt{r+s})}-2(r+s-\sqrt{s}\sqrt{r+s}), &   \rt(s) \leq r <\left(\sqrt{\frac{1}{2}+\frac{s}{4}}+\sqrt{\frac{s}{4}}\right)^2-s, \\
1, &   r>\left(\sqrt{\frac{1}{2}+\frac{s}{4}}+\sqrt{\frac{s}{4}}\right)^2-s.
\end{cases}$$
Here $r=\rt(s)$ is a root of the equation $2(1-r-s)(r+s-\sqrt{s}\sqrt{r+s})=r$. We note that when $s\in \left(0,\frac{1}{16}\right)$, the equation has a unique  {real} root between $\frac{3}{16}$ and $\frac{1}{2}$.
Next, we consider  {any fixed} $s\geq\frac{1}{16}$. 
 {In this case, two regions become degenerate}, and we have
$$\beta^*_s(r)=\begin{cases}
\frac{1}{2}+2(r+s-\sqrt{s}\sqrt{r+s}), &   0<r< \left(\sqrt{\frac{1}{8}+\frac{s}{4}}+\sqrt{\frac{s}{4}}\right)^2-s,\\
\big[2\sqrt{2(r+s-\sqrt{s}\sqrt{r+s})} &   \left(\sqrt{\frac{1}{8}+\frac{s}{4}}+\sqrt{\frac{s}{4}}\right)^2-s \leq r < \left(\sqrt{\frac{1}{2}+\frac{s}{4}}+\sqrt{\frac{s}{4}}\right)^2-s, \\
~~~ -2(r+s-\sqrt{s}\sqrt{r+s})\big], \\
1, &   r\geq \left(\sqrt{\frac{1}{2}+\frac{s}{4}}+\sqrt{\frac{s}{4}}\right)^2-s.
\end{cases}$$
Last but not least, we would like to point out that when $s=\infty$, we obtain the Ingster--Donoho--Jin threshold $\beta_s^*(r)=\beta^*_{\rm IDJ}(r)$. This agrees with the intuition that the sequence $\{V_i\}_{1\leq i\leq n}$ is asymptotically non-informative for the testing problem (\ref{eq:general-comb-null})-(\ref{eq:general-comb-alt}) as $s\rightarrow\infty$. The functions $\{\beta_s^*(r)\}$ with various choices of $s$ are shown on the right panel of Figure \ref{fig:combined}, and all the curves are between $\beta^*(r)$ and $\beta_{\rm IDJ}^*(r)$ (also see Figure \ref{fig:phase-equal}). It is clear that for a fixed $s$, a larger $r$ makes the testing problem easier. On the other hand, increasing $s$ always makes the problem harder in the sense that $\beta^*_{s_1}(r)\geq\beta^*_{s_2}(r)$  {for all $r>0$} when $s_1<s_2$.

\section{Testing for Exact Equality}\label{sec:equality}

 {The most stringent}
version of the testing problem (\ref{eq:problem}) is whether or not the two clustering structures are exactly equal. This can be formulated into the following hypothesis testing problem:
\begin{equation}
H_0:\ell(z,\sigma)=0\quad \mbox{ {vs.}}\quad
H_1:\ell(z,\sigma)> 0. \label{eq:problem-exact}
\end{equation}
Since the loss function $\ell(z,\sigma)$ only takes value in the set $\{0,n^{-1},2n^{-1},\cdots\}$, the alternative hypothesis of (\ref{eq:problem-exact}) is equivalent to $\ell(z,\sigma)\geq n^{-1}$. Therefore, the testing problem (\ref{eq:problem-exact}) is 
a special case of (\ref{eq:problem}) with $\beta=1$. However, Theorem \ref{thm:main-general} only covers $\beta<1$. Since the lower bound proof of Theorem \ref{thm:main-general} is based on the connection between (\ref{eq:problem}) and (\ref{eq:general-comb-null})-(\ref{eq:general-comb-alt}), 
which requires $n\epsilon\rightarrow\infty$, the  {boundary case of} $\beta=1$ is thus excluded.

In this section, we rigorously study the testing problem (\ref{eq:problem-exact}). Given a testing procedure $\psi$, we define its worst-case testing error by
$$R_n^{\rm exact}(\psi,\theta,\eta)=\sup_{\substack{z\in\{-1,1\}^n\\ \sigma\in\{z,-z\}~\,}}P^{(n)}_{(\theta,\eta,z,\sigma)}\psi + \sup_{\substack{z\in\{-1,1\}^n\\\sigma\in\{-1,1\}^n\\\ell(z,\sigma)> 0}}P^{(n)}_{(\theta,\eta,z,\sigma)}(1-\psi).$$
The minimax testing error is  {then} defined by
$$R_n^{\rm exact}(\theta,\eta)=\inf_{\psi}R_n^{\rm exact}(\psi,\theta,\eta).$$
Our first result gives a necessary and sufficient condition for the existence of a consistent test.

\begin{thm}\label{thm:exact}
Consider  {testing} (\ref{eq:problem-exact}) with calibration (\ref{eq:general-cali}). 
When $r+s-\sqrt{s}\sqrt{r+s}<\frac{1}{2}$, we have $\liminf_{n\rightarrow\infty}R_n^{\rm exact}(\theta,\eta)>0$. When $r+s-\sqrt{s}\sqrt{r+s}>\frac{1}{2}$, the HC-type test $\dot{\psi}$ defined in (\ref{eq:general-test-comb}) satisfies $\lim_{n\rightarrow\infty}R_n^{\rm exact}(\dot{\psi},\theta,\eta)=0$.
\end{thm}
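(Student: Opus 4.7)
The statement has two halves, which we handle separately. Throughout set $\mu := r + s - \sqrt{s(r+s)}$, which by \eqref{eq:min-sig} equals $(\|\theta\|^2 \wedge \|\eta\|^2)/\log n$, and assume without loss of generality that $\|\eta\| \le \|\theta\|$.

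\textit{Upper bound ($\mu > 1/2$).} Under any $\ell(z,\sigma) = 0$, either $\sigma = z$ or $\sigma = -z$, and one of these renders the summands in $\dot T_n^-$ (resp.\ $\dot T_n^+$) i.i.d.\ Bernoulli$(S_{(r,s)}(t))$; the resulting null control $\dot T_n^- \wedge \dot T_n^+ \le \sqrt{2(1+\delta/2)\log\log n}$ with probability $1 - o(1)$ is exactly the uniform-in-$t$ HC deviation bound that drives the null analysis of Theorem~\ref{thm:main-general}. For power, under any $\ell(z,\sigma) > 0$, pick an index $i^\star$ with $z_{i^\star} \ne \sigma_{i^\star}$ after choosing the label-switching convention that makes $\dot T_n^-$ the relevant statistic, take $\tau \in (\sqrt{2\mu} - \mu,\, \mu)$ (a non-empty interval precisely when $\mu > 1/2$), and set $t_n := \tau\sqrt{2\log n}$. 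Writing $A = \theta^T X_{i^\star}$ and $B = \eta^T Y_{i^\star}$, a direct Gaussian tail estimate shows $AB < 0$ and $C^-(X_{i^\star}, Y_{i^\star}, \theta, \eta) = 2(|A| \wedge |B|) > 2\tau \log n$ with probability $1 - o(1)$, because $|B| \approx \|\eta\|^2 = \mu \log n > \tau \log n$. Repeating the Gaussian large-deviation computation that underlies Theorem~\ref{thm:general-HC} gives $S_{(r,s)}(t_n) = n^{-(\sqrt\mu + \tau/\sqrt\mu)^2/2 + o(1)}$, and $\tau > \sqrt{2\mu} - \mu$ is precisely what forces $(\sqrt\mu + \tau/\sqrt\mu)^2/2 > 1$, so $n S_{(r,s)}(t_n) \to 0$. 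At this single value of $t$ one then has $\dot T_n^- \ge (1 - nS_{(r,s)}(t_n))/\sqrt{n S_{(r,s)}(t_n)(1 - S_{(r,s)}(t_n))} \to \infty$, which dominates $\sqrt{2(1+\delta)\log\log n}$; a symmetric argument handles $\dot T_n^+$ for the other label-switching convention.

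\textit{Lower bound ($\mu < 1/2$).} Restrict the adversary on both sides to $z = (1,\ldots,1)$; take the null $\sigma = z$ and, on the alternative, the prior $\bar\pi_1$ uniform over $\{\sigma^{(j)} : j \in [n]\}$, where $\sigma^{(j)}$ agrees with $z$ except at the $j$-th coordinate. Because $X$ is ancillary in this sub-problem, sufficiency reduces the minimax error to that of distinguishing the $Y$-marginals. Projecting $\wt Y_i = \eta^T Y_i/\|\eta\|$ and recentering by $\|\eta\|$ yields $H_0$: all coordinates i.i.d.\ $N(0,1)$, versus $H_1$: a single uniformly chosen coordinate carries mean $-2\|\eta\| = -\sqrt{2 \cdot (2\mu) \log n}$ while the remaining $n-1$ coordinates are $N(0,1)$. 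This is the canonical single-signal Gaussian detection problem in IDJ calibration with $\beta = 1$ and $r_{\rm eff} = 2\mu$; since $2\mu < 1 = \beta^*_{\rm IDJ}(1)$, the IDJ lower bound of \cite{ingster1997some}---a truncated second-moment calculation on $\bar L = n^{-1}\sum_j L_j$, where $L_j = \exp(-2\|\eta\| \wt W_j - 2\|\eta\|^2)$ with $\wt W_j$ the centered projection---gives vanishing total variation, and hence $\liminf_n R_n^{\rm exact}(\theta, \eta) > 0$ by Le Cam's inequality.

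\textit{Main obstacle.} The delicate step is the lower bound at the boundary value $\beta = 1$: the untruncated second moment yields only $\mathbb{E}_0[\bar L^2] - 1 \asymp n^{4\mu - 1}$, which decays merely for $\mu < 1/4$ and thus misses the tight threshold $\mu = 1/2$ by a factor of two. Recovering the correct threshold requires the IDJ-style truncation of each $L_j$ at a level $n^{a}$ for an appropriate $a$, which is the same technical obstruction that forced Theorem~\ref{thm:main-general} to be stated only for $\beta < 1$ and that Theorem~\ref{thm:exact} must confront separately.
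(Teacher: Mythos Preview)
Your upper bound is essentially the paper's: both isolate a single mismatched index, pick a threshold $t$ at which $nS_{(r,s)}(t)=o(1)$ while the signal indicator at that index equals $1$ with high probability, and conclude $\dot T_n^-\to\infty$; the only difference is that the paper uses the specific value $t=t^*(r,s)$ together with Proposition~\ref{prop:t-star} to verify $t^*(r,s)<\mu$, whereas you pick any $\tau\in(\sqrt{2\mu}-\mu,\mu)$ directly.

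Your lower bound takes a genuinely different and shorter route. The paper puts a uniform prior on each $z_i$, so that under both $P_0$ and each alternative $P_j$ the pair $(X_i,Y_i)$ is a two-component Gaussian mixture; the likelihood ratio $p_j/p_0$ then depends on both $U_i$ and $V_i$, and the truncated second-moment computation must be carried out in two dimensions, drawing on Lemmas~\ref{lem:comp-tail-0}--\ref{lem:comp-tail-1} and Proposition~\ref{prop:t-star}. By fixing $z=(1,\dots,1)$ you render $X$ ancillary and reduce to a one-dimensional needle-in-haystack in $\wt Y$ with signal $2\|\eta\|=\sqrt{2(2\mu)\log n}$, for which the classical truncated second-moment argument gives indistinguishability precisely when $2\mu<1$. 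This is a legitimate lower bound---fixing $z$ only shrinks the adversary's class---and it recovers the same threshold. What the paper's approach buys is a unified likelihood-ratio picture tied to the $(U_i,V_i)$ statistics used throughout Section~\ref{sec:general}; what yours buys is brevity and independence from the two-dimensional tail lemmas. One caveat: the result you invoke is not literally the sparse-mixture lower bound of \cite{ingster1997some} (there the number of signals is random with mean $n\epsilon$, not exactly one), but the truncation on $\bar L=n^{-1}\sum_j L_j$ that you describe does prove the exact-single-signal version directly.
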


Theorem \ref{thm:exact} shows that whether 
 {$r+s-\sqrt{s}\sqrt{r+s}$ is above or below $\frac{1}{2}$}
determines the existence of a consistent test. This is compatible with the last regime of the threshold function $\beta^*(r,s)$. 
See the yellow area in the left panel of Figure \ref{fig:combined}. 
Given the relation (\ref{eq:min-sig}), it is required that both $\|\theta\|^2$ and $\|\eta\|^2$  {are} 
greater than $\frac{1}{2}\log n$  {for separating} the null and the alternative hypotheses. 
Moreover, the same optimal HC-type test in Theorem \ref{thm:main-general} continues to work for testing exact equality.

In addition to the HC-type test, we introduce a 
Bonferroni-type test that is also optimal for (\ref{eq:problem-exact}). 
 {To this end, define}
$$
t^*(r,s)=\begin{cases}
\sqrt{r(1-r-s)}, & 2(r+s)(r+s+\sqrt{s}\sqrt{r+s})\leq r, \\
\sqrt{2(r+s-\sqrt{s}\sqrt{r+s})}-(r+s-\sqrt{s}\sqrt{r+s}), & 2(r+s)(r+s+\sqrt{s}\sqrt{r+s})> r.
\end{cases}
$$
 {The following lemma shows that}
it characterizes the largest element of the sequence $\{|\sqrt{r}U_i+\sqrt{s}V_i|-\sqrt{r+s}|V_i|\}_{1\leq i\leq n}$ under the null distribution.
\begin{lemma}\label{lem:max-order}
Suppose $\{(U_i,V_i)\}_{1\leq i\leq n}$ are generated according to (\ref{eq:general-comb-null}). Then,
we have
$$\frac{\max_{1\leq i\leq n}\left(|\sqrt{r}U_i+\sqrt{s}V_i|-\sqrt{r+s}|V_i|\right)}{\sqrt{2\log n}}\rightarrow t^*(r,s),$$
in probability.
\end{lemma}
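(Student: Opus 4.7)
My plan is to derive sharp tail asymptotics
\[
S_{(r,s)}(t\sqrt{2\log n}) \;=\; \Prob_{H_0}\bigl(|\sqrt{r}U+\sqrt{s}V| - \sqrt{r+s}|V| > t\sqrt{2\log n}\bigr) \;=\; n^{-\Lambda(t)+o(1)}
\]
for an explicit rate $\Lambda$, and then apply the standard dichotomy for maxima of i.i.d.\ random variables. Under \eqref{eq:general-comb-null} the marginal density of $V$ at $v = y\sqrt{2\log n}$ is $\frac{1}{2}\phi(v-\mu)+\frac{1}{2}\phi(v+\mu)$ with $\mu = \sqrt{2(r+s)\log n}$, which equals $\exp(-(|y|-\sqrt{r+s})^2\log n)$ modulo polynomial prefactors, regardless of the sign of $y$. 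Combined with the independent $N(0,1)$ density of $U = x\sqrt{2\log n}$, the joint density exhibits large-deviation rate $x^2 + (|y|-\sqrt{r+s})^2$.

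By the reflection symmetry $(U,V)\mapsto(-U,-V)$, one may impose $y\geq 0$ at the cost of a factor of $2$. The event then splits into two branches according to the sign of $\sqrt{r}x+\sqrt{s}y$. Writing $a = \sqrt{r+s}-\sqrt{s}$, $a' = \sqrt{r+s}+\sqrt{s}$, $\tau = r+s-\sqrt{s}\sqrt{r+s}$, and $\tau' = r+s+\sqrt{s}\sqrt{r+s}$, one verifies the identities $a^2+r = 2\tau$, $a\sqrt{r+s} = \tau$, $(a')^2+r = 2\tau'$, $a'\sqrt{r+s} = \tau'$. The branch $\sqrt{r}x+\sqrt{s}y\geq 0$ yields the constraint $\sqrt{r}x - ay\geq t$, whose Lagrangian optimum is $y^{\ast} = (r\sqrt{r+s}-at)/(2\tau)$ with value $(t+\tau)^2/(2\tau)$, feasible iff $t\leq \tau'$; otherwise the constrained minimum sits at $y = 0$, $x = t/\sqrt{r}$ with value $(r+s)+t^2/r$. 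The branch $\sqrt{r}x+\sqrt{s}y < 0$ leads via the analogous computation to the value $(t+\tau')^2/(2\tau')$, which is never smaller than the first branch in the relevant range (since $\tau<\tau'$ and $\tau\tau'=r(r+s)$), and can therefore be discarded. Hence
\[
\Lambda(t) \;=\;
\begin{cases} (t+\tau)^2/(2\tau), & t \leq \tau', \\[2pt] (r+s)+t^2/r, & t > \tau'. \end{cases}
\]
Solving $\Lambda(t^{\ast})=1$ gives $t^{\ast}_{\rm int} = \sqrt{2\tau}-\tau$ in the first regime and $t^{\ast}_{\rm bdy} = \sqrt{r(1-r-s)}$ in the second. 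The active regime is determined by comparing $t^{\ast}$ with $\tau'$; the substitution $u=\sqrt{r+s}$ reduces the inequality $\sqrt{r(1-r-s)}>\tau'$ to $\sqrt{s}\leq u(1-2u^2)$, which, after clearing the positive factor $u+\sqrt{s}$, is equivalent to $2(r+s)(r+s+\sqrt{s}\sqrt{r+s})\leq r$, exactly matching the case split defining $t^{\ast}(r,s)$ in the statement.

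With the sharp tail estimate in hand, convergence in probability follows from the standard dichotomy. For $t>t^{\ast}(r,s)$, $\Lambda(t)>1$, and a union bound gives $\Prob(\max_i Z_i > t\sqrt{2\log n}) \leq nS_{(r,s)}(t\sqrt{2\log n}) = n^{1-\Lambda(t)+o(1)} \to 0$. For $t<t^{\ast}(r,s)$, $\Lambda(t)<1$, and independence gives
\[
\Prob\bigl(\max_i Z_i \leq t\sqrt{2\log n}\bigr) \;=\; \bigl(1-S_{(r,s)}(t\sqrt{2\log n})\bigr)^n \;\leq\; \exp\bigl(-nS_{(r,s)}(t\sqrt{2\log n})\bigr) \;\to\; 0,
\]
since $nS_{(r,s)}(t\sqrt{2\log n})\to\infty$. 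Combining the two bounds yields $\max_i Z_i/\sqrt{2\log n}\to t^{\ast}(r,s)$ in probability.

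The main obstacle is to secure the sharp Laplace asymptotics with the correct exponent $\Lambda(t)$ and only polynomial prefactors, with no hidden contributions from neglected regions. This requires an honest second-order expansion around the minimizer---a two-dimensional Gaussian integral in the interior regime and a half-Gaussian boundary integral when $y = 0$ is active---together with a quantitative check that both the discarded branch $\sqrt{r}x+\sqrt{s}y<0$ and the far tail $y<0$ (where the density is dominated by $\phi(v+\mu)$ rather than $\phi(v-\mu)$) contribute only lower-order terms. The remaining algebra linking $\sqrt{s}\leq u(1-2u^2)$ to the paper's case-split inequality becomes routine after the substitution and factoring step.
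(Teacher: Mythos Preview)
Your proposal is correct and follows the same two-step skeleton as the paper: compute the exponent of the tail $S_{(r,s)}(t\sqrt{2\log n})$, then apply the union-bound/independence dichotomy for i.i.d.\ maxima. The only difference is in how the tail exponent is obtained. The paper simply cites Lemma~\ref{lem:comp-tail-0}, whose separate proof introduces independent Gaussians $W_1,W_2\sim N(0,\tau)$ and $W_3\sim N(0,\sqrt{s}\sqrt{r+s})$ with $(\sqrt{r}U+\sqrt{s}V,\sqrt{r+s}V)\stackrel{d}{=}(W_1+W_3+c_1,\,W_2+W_3+c_2)$; independence of $W_1-W_2$ and $W_1+W_2+2W_3$ then factors the tail into a product of one-dimensional Gaussian tails, yielding the exact $1/\sqrt{\log n}$ or $1/\log n$ prefactors via Mills' ratio with no further analysis. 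Your Laplace route via constrained minimization of $x^2+(|y|-\sqrt{r+s})^2$ recovers the same exponent more directly, and since the dichotomy only needs the crude $n^{o(1)}$ prefactor, the ``honest second-order expansion'' you flag is in fact unnecessary: bounding the integrand from above by its value at the constrained minimum over a bounded region, and from below over an $O(1)$-neighborhood (in the original $(u,v)$ scale) of the minimizer, already gives $S_{(r,s)}(t\sqrt{2\log n}) = n^{-\Lambda(t)+o(1)}$. Your algebraic verification of the case split via the substitution $u=\sqrt{r+s}$ is a tidy alternative to the argument the paper embeds in Proposition~\ref{prop:t-star}.
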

Lemma \ref{lem:max-order} shows that the largest element of the sequence $\{|\sqrt{r}U_i+\sqrt{s}V_i|-\sqrt{r+s}|V_i|\}_{1\leq i\leq n}$ is asymptotically $t^*(r,s)\sqrt{2\log n}$ 
under $H_0$. It is therefore natural to reject $H_0$ when the random variable $\max_{1\leq i\leq n}\left(|\sqrt{r}U_i+\sqrt{s}V_i|-\sqrt{r+s}|V_i| \right)$ is larger than $t^*(r,s)\sqrt{2\log n}$. 
 {In view of the connection between sparse mixture detection and testing clustering equivalence,}
applying the result to the sequences $\{C^-(X_i,Y_i,\theta,\eta)\}_{1\leq i\leq n}$ and $\{C^+(X_i,Y_i,\theta,\eta)\}_{1\leq i\leq n}$, we obtain the following 
testing procedure, 
\begin{equation}
\psi_{\rm Bonferroni}=\indc{\left(\max_{1\leq i\leq n}C^-(X_i,Y_i,\theta,\eta)\right)\wedge \left(\max_{1\leq i\leq n}C^+(X_i,Y_i,\theta,\eta)\right)>2t^*(r,s)\log n}. \label{eq:test-exact-Bonf}
\end{equation}

\begin{thm}\label{thm:Bonf}
Consider testing (\ref{eq:problem-exact}) with calibration (\ref{eq:general-cali}). When $r+s-\sqrt{s}\sqrt{r+s}>\frac{1}{2}$, we have $\lim_{n\rightarrow\infty}R_n^{\rm exact}(\psi_{\rm Bonferroni},\theta,\eta)=0$.
\end{thm}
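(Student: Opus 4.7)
The plan is to reduce the clustering test to the sparse mixture setting \eqref{eq:general-comb-null} via a pointwise algebraic identity, and then apply Lemma \ref{lem:max-order} for the type I error while using direct mean computations for the type II error.  Taking $U_i^{(c)}$ and $V_i^{(c)}$ as in \eqref{eq:seq1}-\eqref{eq:seq2}, a direct calculation using the calibration \eqref{eq:general-cali} shows (WLOG $\|\theta\|\geq\|\eta\|$) that
\[\sqrt{r}\,U_i^{(c)}+\sqrt{s}\,V_i^{(c)}=\frac{\theta^TX_i-\eta^TY_i}{\sqrt{2\log n}}\quad\text{and}\quad\sqrt{r+s}\,V_i^{(c)}=\frac{\theta^TX_i+\eta^TY_i}{\sqrt{2\log n}},\]
so that $C^-(X_i,Y_i,\theta,\eta)/\sqrt{2\log n}=|\sqrt{r}U_i^{(c)}+\sqrt{s}V_i^{(c)}|-\sqrt{r+s}|V_i^{(c)}|$ matches exactly the random variable appearing in Lemma \ref{lem:max-order}.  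Since $C^+=-C^-$ pointwise, $\max_i C^+_i$ equals $-\min_i C^-_i$.

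\textbf{Type I error.}  Under $H_0$, either $\sigma=z$ or $\sigma=-z$.  Take $\sigma=z$ (the other case is handled symmetrically via $C^+$).  Then $U_i^{(c)}\sim N(0,1)$ and $V_i^{(c)}\sim N(z_i\sqrt{2(r+s)\log n},1)$; because $|\sqrt{r}U+\sqrt{s}V|-\sqrt{r+s}|V|$ is invariant under $(U,V)\mapsto(-U,-V)$, its joint distribution across $i$ coincides with that under \eqref{eq:general-comb-null}.  Lemma \ref{lem:max-order} then yields $\max_i C^-(X_i,Y_i,\theta,\eta)/(2\log n)\to t^*(r,s)$ in probability.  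Combined with a slight threshold inflation discussed below, this gives $\mathbb{P}_{H_0}(\max_i C^-_i\wedge\max_i C^+_i>2t^*(r,s)\log n)\to 0$.

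\textbf{Type II error.}  When $\ell(z,\sigma)>0$, both $\sum_i\indc{z_i\ne\sigma_i}\geq 1$ and $\sum_i\indc{z_i=\sigma_i}\geq 1$, so fix $i_1,i_2$ with $z_{i_1}\ne\sigma_{i_1}$ and $z_{i_2}=\sigma_{i_2}$.  Using the identity $|A-B|-|A+B|=-2\,\mathrm{sgn}(AB)\min(|A|,|B|)$, the Gaussians $\theta^TX_{i_1}$ and $\eta^TY_{i_1}$ concentrate at opposite-signed means of magnitudes $\|\theta\|^2$ and $\|\eta\|^2$, so
\[C^-(X_{i_1},Y_{i_1},\theta,\eta)=2(\|\theta\|^2\wedge\|\eta\|^2)+O_p(\sqrt{\log n})=2x\log n+O_p(\sqrt{\log n}),\]
where $x:=r+s-\sqrt{s}\sqrt{r+s}$ by \eqref{eq:min-sig}.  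A short case analysis shows that whenever $x>\tfrac12$, the defining inequality of the first case of $t^*(r,s)$ forces $a:=r+s\leq\sqrt{x/2}$; combined with $x=a-\sqrt{s(r+s)}\leq a$, this gives $x\leq\sqrt{x/2}$, i.e., $x\leq\tfrac12$, contradicting the assumption.  Hence only the second case applies, $t^*(r,s)=\sqrt{2x}-x$, and the strict inequality $x>\sqrt{2x}-x$ is equivalent to $x>\tfrac12$.  Therefore $C^-(X_{i_1},Y_{i_1},\theta,\eta)>2t^*(r,s)\log n$ with probability tending to one, and the symmetric computation at $i_2$ (where $\theta^TX_{i_2}$ and $\eta^TY_{i_2}$ share sign) yields $C^+(X_{i_2},Y_{i_2},\theta,\eta)>2t^*(r,s)\log n$ as well, so the $\wedge$ of the two maxima exceeds the threshold.

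\textbf{Main obstacle.}  The delicate point is the boundary bookkeeping for the type I error.  Lemma \ref{lem:max-order} gives only convergence in probability of $\max_i C^-_i/(2\log n)$ to $t^*(r,s)$, and extreme-value heuristics suggest that $\mathbb{P}_{H_0}(\max_i C^-_i>2t^*(r,s)\log n)$ in fact converges to a strictly positive constant.  The natural remedy is to work with a mildly inflated threshold $2t^*(r,s)\log n+\Delta_n$ for some $\Delta_n\to\infty$ with $\Delta_n=o(\log n)$: under this inflated threshold, type I error vanishes since for any fixed $\delta>0$ one eventually has $\Delta_n<2\delta\log n$ and $\mathbb{P}(\max_i/(2\log n)>t^*+\delta)\to 0$ by the lemma, while type II error is unaffected because the $O(\log n)$ gap between the alternative mean $2x\log n$ and $2t^*(r,s)\log n$ dominates $\Delta_n$.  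Apart from this boundary adjustment, the argument is routine: the pointwise identity, single-index Gaussian concentration, and the case analysis of $t^*(r,s)$ are all elementary.
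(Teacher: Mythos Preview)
Your Type~II analysis is essentially the paper's: you locate an index $i_1$ with $z_{i_1}\neq\sigma_{i_1}$, compute that $C^-(X_{i_1},Y_{i_1},\theta,\eta)/(2\log n)\to r+s-\sqrt{s}\sqrt{r+s}$ in probability, and verify (as in Proposition~\ref{prop:t-star}) that $t^*(r,s)<r+s-\sqrt{s}\sqrt{r+s}$ whenever the latter exceeds $\tfrac12$. The symmetric argument for $C^+$ at an index $i_2$ with $z_{i_2}=\sigma_{i_2}$ is also fine.

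The gap is in your Type~I control. You route the argument through Lemma~\ref{lem:max-order}, correctly notice that convergence in probability to $t^*(r,s)$ does not by itself give $\mathbb{P}_{H_0}\bigl(\max_i C^-_i>2t^*(r,s)\log n\bigr)\to 0$, and then propose to inflate the threshold by some $\Delta_n\to\infty$. But the theorem is about $\psi_{\rm Bonferroni}$ as defined in \eqref{eq:test-exact-Bonf}, with the \emph{exact} threshold $2t^*(r,s)\log n$; changing the threshold proves a different statement. More importantly, your extreme-value heuristic that the Type~I probability ``converges to a strictly positive constant'' is incorrect. The tail estimate of Lemma~\ref{lem:comp-tail-0} carries a polylogarithmic prefactor: at $t=t^*(r,s)$ one has
\[
\mathbb{P}_{H_0}\Bigl(|\sqrt{r}U+\sqrt{s}V|-\sqrt{r+s}|V|>t^*(r,s)\sqrt{2\log n}\Bigr)\ \lesssim\ \frac{1}{\sqrt{\log n}}\,n^{-1},
\]
by the very definition of $t^*(r,s)$. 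A plain union bound then gives
\[
\mathbb{P}_{H_0}\Bigl(\max_{1\le i\le n}C^-(X_i,Y_i,\theta,\eta)>2t^*(r,s)\log n\Bigr)\ \lesssim\ \frac{1}{\sqrt{\log n}}\ \to\ 0,
\]
with no inflation needed. This is exactly how the paper proceeds: it bypasses Lemma~\ref{lem:max-order} entirely for the Type~I error and applies Lemma~\ref{lem:comp-tail-0} directly. Lemma~\ref{lem:max-order} is stated only as motivation for the choice of threshold; its proof in fact contains the same union-bound step, so you could also just read off the needed inequality from there rather than invoking only the convergence-in-probability conclusion.
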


\section{Adaptive Tests}\label{sec:adaptive-t}
In this section, we  {investigate} 
how to  {test}
(\ref{eq:problem}) and (\ref{eq:problem-exact}) when the model parameters $\theta\in\mathbb{R}^p$ and $\eta\in\mathbb{R}^q$ are unknown. 
We will show that both the HC-type test and the Bonferroni test can be modified into adaptive procedures, as long as  {some} mild {growth rate} conditions on the dimensions $p$ and $q$ are satisfied.

\subsection{Adaptive Bonferroni Test}

 {We start with testing \eqref{eq:problem-exact}.}
When designing the adaptive procedures, we adopt a random data splitting 
 {scheme}.
We first draw $d_1,\cdots, d_n\stackrel{iid}{\sim}\text{Bernoulli}(\frac{1}{2})$, and then define $\mathcal{D}_0=\{i\in[n]: d_i=0\}$ and $\mathcal{D}_1=\{i\in[n]:d_i=1\}$. 
Then, $\{\mathcal{D}_0,\mathcal{D}_1\}$  {forms} a random partition of $[n]$. 
Given some algorithms $\wh{\theta}(\cdot)$ and $\wh{\eta}(\cdot)$ that compute estimators of $\theta$ and $\eta$, we define 
 {
$\wh{\theta}^{(m)}=\wh{\theta}(\{(X_i,Y_i)\}_{i\in\mathcal{D}_m})$ and 
$\wh{\eta}^{(m)}=\wh{\eta}(\{(X_i,Y_i)\}_{i\in\mathcal{D}_m})$
for $m = 0$ and $1$.}
{For $m = 0$ and $1$,
by plugging $\wh{\theta}^{(m)}$ and $\wh{\eta}^{(m)}$ into the relation (\ref{eq:general-cali}), we obtain $\wh{r}^{(m)}$ and $\wh{s}^{(m)}$.  }

Given these estimators of $\theta$ and $\eta$, we can modify (\ref{eq:test-exact-Bonf}) into an adaptive procedure. We replace $\max_{1\leq i\leq n}C^-(X_i,Y_i,\theta,\eta)$ and $\max_{1\leq i\leq n}C^+(X_i,Y_i,\theta,\eta)$ by 
\begin{eqnarray*}
\wh{C}_m^- &=& \max_{i\in\mathcal{D}_m}C^-(X_i,Y_i,\wh{\theta}^{(1-m)},\wh{\eta}^{(1-m)}),\quad m=0,1, \\
\wh{C}_m^+ &=& \max_{i\in\mathcal{D}_m}C^+(X_i,Y_i,\wh{\theta}^{(1-m)},\wh{\eta}^{(1-m)}),\quad m=0,1.
\end{eqnarray*}
Then, we combine these statistics by
\begin{eqnarray}
\label{eq:levivsbeast} \wh{C}^- &=& \begin{cases}
\wh{C}_0^- \vee \wh{C}_1^-, & \indc{\|\wh{\theta}^{(0)}-\wh{\theta}^{(1)}\|\leq 1, \|\wh{\eta}^{(0)}-\wh{\eta}^{(1)}\|\leq 1} + \indc{\|\wh{\theta}^{(0)}-\wh{\theta}^{(1)}\|> 1, \|\wh{\eta}^{(0)}-\wh{\eta}^{(1)}\|> 1}=1, \\
\wh{C}_0^- \vee \wh{C}_1^+, & \indc{\|\wh{\theta}^{(0)}-\wh{\theta}^{(1)}\|> 1, \|\wh{\eta}^{(0)}-\wh{\eta}^{(1)}\|\leq 1} + \indc{\|\wh{\theta}^{(0)}-\wh{\theta}^{(1)}\|\leq 1, \|\wh{\eta}^{(0)}-\wh{\eta}^{(1)}\|> 1}=1,
\end{cases} \\
\label{eq:elvinisgreat} \wh{C}^+ &=& \begin{cases}
\wh{C}_0^+ \vee \wh{C}_1^+, & \indc{\|\wh{\theta}^{(0)}-\wh{\theta}^{(1)}\|\leq 1, \|\wh{\eta}^{(0)}-\wh{\eta}^{(1)}\|\leq 1} + \indc{\|\wh{\theta}^{(0)}-\wh{\theta}^{(1)}\|> 1, \|\wh{\eta}^{(0)}-\wh{\eta}^{(1)}\|> 1}=1, \\
\wh{C}_0^+ \vee \wh{C}_1^-, & \indc{\|\wh{\theta}^{(0)}-\wh{\theta}^{(1)}\|> 1, \|\wh{\eta}^{(0)}-\wh{\eta}^{(1)}\|\leq 1} + \indc{\|\wh{\theta}^{(0)}-\wh{\theta}^{(1)}\|\leq 1, \|\wh{\eta}^{(0)}-\wh{\eta}^{(1)}\|> 1}=1.
\end{cases}
\end{eqnarray}
The adaptive Bonferroni test is defined by 
$$\psi_{\rm ada-Bonferroni}=\indc{\wh{C}^-\wedge \wh{C}^+>2\left(1+\frac{1}{\sqrt{\log n}}\right)\wh{t}\log n},$$
where
$$\wh{t}=\frac{t^*(\wh{r}^{(0)},\wh{s}^{(0)})+t^*(\wh{r}^{(1)},\wh{s}^{(1)})}{2}.$$
The additional factor $\left(1+\frac{1}{\sqrt{\log n}}\right)$ is to accommodate the error caused by estimators of $\theta$ and $\eta$. Before writing down the theorem that gives the desired theoretical guarantee for $\psi_{\rm ada-Bonferroni}$, let us define the loss functions
$$L(\wh{\theta},\theta)=\|\wh{\theta}-\theta\|\wedge\|\wh{\theta}+\theta\|,\quad L(\wh{\eta},\eta)=\|\wh{\eta}-\eta\|\wedge\|\wh{\eta}+\eta\|.$$
Though $\theta$ and $\eta$ can be of different dimensions, we use the same notation $L(\cdot,\cdot)$ for the two loss functions simplicity.

\begin{thm}\label{thm:ada-Bonf}
We consider the testing problem (\ref{eq:problem-exact}) with the calibration (\ref{eq:general-cali}). 
Assume that there is some constant $\gamma>0$, such that
\begin{equation}
\lim_{n\rightarrow\infty}\sup_{\substack{z\in\{-1,1\}^n\\\sigma\in\{-1,1\}^n}}P^{(n)}_{(\theta,\eta,z,\sigma)}\left(L(\wh{\theta}^{(0)},\theta)\vee L(\wh{\theta}^{(1)},\theta)\vee L(\wh{\eta}^{(0)},\eta)\vee L(\wh{\eta}^{(1)},\eta)>n^{-\gamma}\right) =0. \label{eq:estimation-error-weak}
\end{equation}
When $r+s-\sqrt{s}\sqrt{r+s}>\frac{1}{2}$, we have $\lim_{n\rightarrow\infty}R_n^{\rm exact}(\psi_{\rm ada-Bonferroni},\theta,\eta)=0$.
\end{thm}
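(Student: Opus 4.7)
The plan is to show that $\psi_{\rm ada-Bonferroni}$ behaves asymptotically like the oracle Bonferroni test $\psi_{\rm Bonferroni}$ from Theorem \ref{thm:Bonf}. The three key ingredients are (i) the cross-split independence inherent in the random partition $\{\mathcal{D}_0,\mathcal{D}_1\}$, which makes $\wh\theta^{(1-m)},\wh\eta^{(1-m)}$ independent of $\{(X_i,Y_i)\}_{i\in\mathcal{D}_m}$; (ii) the assumption $r+s-\sqrt{s}\sqrt{r+s}>1/2$, which together with (\ref{eq:min-sig}) forces $\|\theta\|\wedge\|\eta\|\to\infty$; and (iii) the symmetry relations $C^-(X_i,Y_i,-\theta,\eta)=C^-(X_i,Y_i,\theta,-\eta)=C^+(X_i,Y_i,\theta,\eta)$ and $C^-(X_i,Y_i,-\theta,-\eta)=C^-(X_i,Y_i,\theta,\eta)$, with analogous identities for $C^+$. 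I would condition throughout on the high-probability event $\mathcal{E}_n$ on which $L(\wh\theta^{(m)},\theta)\vee L(\wh\eta^{(m)},\eta)\le n^{-\gamma}$ for $m=0,1$, guaranteed by \eqref{eq:estimation-error-weak}.

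On $\mathcal{E}_n$, each $\wh\theta^{(m)}$ lies within $n^{-\gamma}$ of $\theta$ or $-\theta$, and similarly each $\wh\eta^{(m)}$ lies within $n^{-\gamma}$ of $\pm\eta$. Since $\|\theta\|\wedge\|\eta\|\to\infty$, the difference $\|\wh\theta^{(0)}-\wh\theta^{(1)}\|$ is either at most $2n^{-\gamma}<1$ (same sign across splits) or at least $2\|\theta\|-2n^{-\gamma}>1$ (opposite signs) for large $n$; the analogous dichotomy holds for $\eta$. Hence the indicators in \eqref{eq:levivsbeast}--\eqref{eq:elvinisgreat} correctly detect the joint sign configuration. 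Combining this with the symmetry relations, a case-by-case verification shows that, modulo the estimator-induced perturbation, the unordered pair $\{\wh C^-,\wh C^+\}$ equals $\{M^-,M^+\}$ where $M^\pm:=\max_{i\in[n]}C^\pm(X_i,Y_i,\theta,\eta)$. Since $\psi_{\rm ada-Bonferroni}$ only involves $\wh C^-\wedge\wh C^+$, the possible swap is harmless.

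The perturbation is controlled using $\bigl||a|-|b|\bigr|\le|a-b|$, which yields $|C^\pm(X_i,Y_i,\wh\theta,\wh\eta)-C^\pm(X_i,Y_i,\theta',\eta')|\le 2|(\wh\theta-\theta')^TX_i|+2|(\wh\eta-\eta')^TY_i|$ with $(\theta',\eta')$ the sign-aligned version of $(\theta,\eta)$. By cross-split independence, conditionally on the estimators each inner product is Gaussian with mean bounded by $n^{-\gamma}\|\theta\|=O(n^{-\gamma}\sqrt{\log n})$ and standard deviation at most $n^{-\gamma}$; a union bound over $i\in\mathcal{D}_m$ gives a maximum perturbation of $O_P(n^{-\gamma}\sqrt{\log n})=o(\sqrt{\log n})$. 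Plug-in also yields $|\wh r^{(m)}-r|\vee|\wh s^{(m)}-s|=O_P(n^{-\gamma}/\sqrt{\log n})$ and hence, by local Lipschitzness of $t^*(\cdot,\cdot)$, $|\wh t-t^*(r,s)|=O_P(n^{-\gamma}/\sqrt{\log n})$. Consequently the adaptive threshold $2(1+1/\sqrt{\log n})\wh t\log n$ equals $2t^*(r,s)\log n$ plus a slack of order $\sqrt{\log n}$ that dominates the perturbation.

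The conclusion then mirrors the dichotomy of Theorem \ref{thm:Bonf}. Under $H_0$, Lemma \ref{lem:max-order} applied to the true-parameter sequences gives $M^\pm=2t^*(r,s)\log n+o_P(\log n)$, which stays below the threshold after adding the perturbation. Under $H_1$, the condition $\ell(z,\sigma)>0$ forces the coexistence of a mismatch ($z_i\ne\sigma_i$) and a match ($z_j=\sigma_j$); direct computation of means, using $\theta^TX_i=z_i\|\theta\|^2+O_P(\|\theta\|)$ and similarly for $Y$, gives $M^-\ge 2(r+s-\sqrt{s}\sqrt{r+s})\log n\,(1+o_P(1))$ from the mismatch and $M^+\ge 2(r+s-\sqrt{s}\sqrt{r+s})\log n\,(1+o_P(1))$ from the match, so both exceed the threshold by a constant fraction of $\log n$ thanks to the inequality $r+s-\sqrt{s}\sqrt{r+s}>t^*(r,s)$ valid throughout the regime $r+s-\sqrt{s}\sqrt{r+s}>1/2$. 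The main obstacle is the combinatorial bookkeeping in the second paragraph: checking that all four joint sign-alignment scenarios, detected by the two norm indicators, combined with the $C^\pm$ symmetries, yield $\{\wh C^-,\wh C^+\}=\{M^-,M^+\}$ modulo perturbation. Everything else reduces to Gaussian concentration plus Lemma \ref{lem:max-order} and the proof of Theorem \ref{thm:Bonf}.
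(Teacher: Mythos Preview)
Your approach is essentially the paper's: reduce $\psi_{\rm ada-Bonferroni}$ to the oracle Bonferroni test by (a) bounding the perturbation in $C^\pm$ via cross-split independence and (b) showing that the norm indicators in \eqref{eq:levivsbeast}--\eqref{eq:elvinisgreat} correctly resolve the sign ambiguity. The paper organizes step~(b) differently---rather than your explicit four-way case split on sign alignments, it derives the two composite bounds \eqref{eq:very-disgusting1}--\eqref{eq:very-disgusting2} (each a minimum over sign choices) and then argues that, on the high-probability event \eqref{eq:msorrow}, exactly one of those bounds is $\le n^{-\gamma'}$ and the indicator selects the matching combination. Your case-by-case route is more transparent; the paper's is more compact. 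Both yield $|\wh C^-\wedge\wh C^+-M^-\wedge M^+|\le n^{-\gamma'}$ and $|\wh t-t^*(r,s)|\le n^{-\gamma'}$, after which the paper simply sandwiches the adaptive test between two oracle Bonferroni tests and invokes the proof of Theorem~\ref{thm:Bonf}.

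There is one genuine slip in your Type-I argument. You write that Lemma~\ref{lem:max-order} gives $M^\pm=2t^*(r,s)\log n+o_P(\log n)$ under $H_0$. This is false: when $z=\sigma$, only $M^-$ matches the null sequence of Lemma~\ref{lem:max-order}; $M^+=\max_i(-C^-_i)$ is of order $2(r+s-\sqrt{s}\sqrt{r+s})\log n\gg 2t^*(r,s)\log n$. What you need is only that $M^-\wedge M^+$ falls below the threshold, and for that it suffices to control whichever of $M^-,M^+$ has the null law. Moreover, the bare conclusion $o_P(\log n)$ from Lemma~\ref{lem:max-order} is not sharp enough to beat the $\sqrt{\log n}$ slack in the threshold; you need the one-sided bound $\mathbb P(M^-\!>\!2t^*(r,s)\log n)\to 0$ from the lemma's \emph{proof} (equivalently, the union bound used in Theorem~\ref{thm:Bonf}). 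With that correction, your Type-I and Type-II arguments go through and coincide with the paper's.
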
 

 {The condition \eqref{eq:estimation-error-weak} may seem abstract at first sight. Later in Section \ref{sec:para-est}, we shall give concrete estimators so that it is met under a mild growth condition on $p$ and $q$.
For full details, see Corollary \ref{cor:adaptive-dim}.}

\subsection{Adaptive HC-Type Test}

To modify (\ref{eq:general-test-comb}) into an adaptive procedure is more involved. 
This is  {due to the fact that} we  {not only} need to estimate the statistics $\{C^-(X_i,Y_i,\theta,\eta)\}_{1\leq i\leq n}$ and $\{C^+(X_i,Y_i,\theta,\eta)\}_{1\leq i\leq n}$,  {but} also need to estimate the survival function $S_{(r,s)}(t)$ defined in (\ref{eq:survival-r-s}). 
Our proposed strategy starts with a random data splitting step. 
This time we split the data into three parts  {instead of two}. 
Draw $d_1,\cdots, d_n\stackrel{iid}{\sim}\text{Uniform}(\{0,1,2\})$, and then define $\mathcal{D}_m=\{i\in[n]:d_i=m\}$ for $m\in\{0,1,2\}$.

Given some algorithms $\wh{\theta}(\cdot)$ and $\wh{\eta}(\cdot)$ that compute estimators of $\theta$ and $\eta$, we first define $\wh{\theta}=\wh{\theta}(\{(X_i,Y_i)\}_{i\in\mathcal{D}_0})$ and $\wh{\eta}=\wh{\eta}(\{(X_i,Y_i)\}_{i\in\mathcal{D}_0})$. 
We then use $\wh{\theta}$ and $\wh{\eta}$ for 
 {projection}
and compute $\wh{X}_i=\wh{\theta}^TX_i/\|\wh{\theta}\|$ and $\wh{Y}_i=\wh{\eta}^TY_i/\|\wh{\eta}\|$ for all $i\in\mathcal{D}_1\cup\mathcal{D}_2$. Note that conditioning on $\{d_i\}_{1\leq i\leq n}$ and $\{(X_i,Y_i)\}_{i\in\mathcal{D}_0}$, $\wh{X}_i$ and $\wh{Y}_i$ are distributed according to $N(z_ia,1)$ and $N(\sigma_ib,1)$, respectively, where $a=\wh{\theta}^T\theta/\|\wh{\theta}\|$ and $b=\wh{\eta}^T\eta/\|\wh{\eta}\|$. 
 {Given the projected data,}
we will use  {those in} $\mathcal{D}_1$ to estimate the one-dimensional parameters $|a|$ and $|b|$, and  {those in} $\mathcal{D}_2$ to construct the  {test} statistic. Define
$$\wh{a}=\sqrt{\left(\frac{1}{|\mathcal{D}_1|}\sum_{i\in\mathcal{D}_1}X_i^2-1\right)_+}\quad\text{and}\quad\wh{b}=\sqrt{\left(\frac{1}{|\mathcal{D}_1|}\sum_{i\in\mathcal{D}_1}Y_i^2-1\right)_+}.$$
With $\wh{a}$ and $\wh{b}$, we define
$$\wh{r}=\frac{(2|\wh{a}||\wh{b}|)^2}{(2\log n)(\wh{a}^2+\wh{b}^2)}\quad\text{and}\quad \wh{s}=\frac{|\wh{a}^2-\wh{b}^2|^2}{(2\log n)(\wh{a}^2+\wh{b}^2)}.$$
Then, the adaptive HC-type statistics are
\begin{eqnarray*}
\wh{T}_n^- &=& \sup_{|t|\leq \log n}\frac{\left|\sum_{i\in\mathcal{D}_2}\indc{C^-(\wh{X}_i,\wh{Y}_i,\wh{a},\wh{b})>t\sqrt{2\log n}}-|\mathcal{D}_2|S_{(\wh{r},\wh{s})}(t)\right|}{\sqrt{|\mathcal{D}_2|S_{(\wh{r},\wh{s})}(t)}}, \\
\wh{T}_n^+ &=& \sup_{|t|\leq \log n}\frac{\left|\sum_{i\in\mathcal{D}_2}\indc{C^+(\wh{X}_i,\wh{Y}_i,\wh{a},\wh{b})>t\sqrt{2\log n}}-|\mathcal{D}_2|S_{(\wh{r},\wh{s})}(t)\right|}{\sqrt{|\mathcal{D}_2|S_{(\wh{r},\wh{s})}(t)}}.
\end{eqnarray*}
This leads to the adaptive test
$$\wh{\psi}_{\rm ada-HC}=\indc{\wh{T}_n^-\wedge\wh{T}_n^+>(\log n)^3}.$$
Compared with (\ref{eq:stat-t-dot--}) and (\ref{eq:stat-t-dot-+}), the adaptive versions $\wh{T}_n^-$ and $\wh{T}_n^+$ restrict the supremum to the range $|t|\leq \log n$ and does not have an estimator of $1-S_{(r,s)}(t)$ in the denominator. Moreover, the test uses the threshold $(\log n)^3$ instead of the smaller $\sqrt{2(1+\delta)\log\log n}$. 
These changes are  {adopted} to accommodate the  {additional errors} caused by estimating the unknown parameters.

\begin{thm}\label{thm:HC-adaptive}
Consider testing (\ref{eq:problem}) with calibration (\ref{eq:general-cali}). 
Assume that there is some constant $\gamma>0$, such that
\begin{equation}
\lim_{n\rightarrow\infty}\sup_{\substack{z\in\{-1,1\}^n\\\sigma\in\{-1,1\}^n}}P^{(n)}_{(\theta,\eta,z,\sigma)}\left(L(\wh{\theta},\theta)\vee L(\wh{\eta},\eta)>n^{-\gamma}\right) =0. \label{eq:estimation-error-strong}
\end{equation}
When $\beta<\beta^*(r,s)$, we have $\lim_{n\rightarrow\infty}R_n(\psi_{\rm ada-HC},\theta,\eta)=0$.
\end{thm}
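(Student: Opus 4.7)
The plan is to show that on a high-probability ``good event,'' the adaptive statistics $\wh T_n^\pm$ differ negligibly from their oracle counterparts $\dot T_n^\pm$ of Theorem \ref{thm:main-general}, so that consistency of $\dot\psi$ transfers to $\wh\psi_{\rm ada-HC}$ modulo the inflated threshold $(\log n)^3$ that is chosen precisely to absorb the estimation error. First condition on $\{d_i\}_{1\leq i\leq n}$ (which by Chernoff yields $|\mathcal D_m|\asymp n$ for each $m\in\{0,1,2\}$) and on $\{(X_i,Y_i)\}_{i\in\mathcal D_0}$. On the event $\mathcal E_0=\{L(\wh\theta,\theta)\vee L(\wh\eta,\eta)\leq n^{-\gamma}\}$, which has probability $1-o(1)$ by \eqref{eq:estimation-error-strong}, I absorb the sign ambiguity in $\wh\theta,\wh\eta$ (the test $\wh T_n^-\wedge\wh T_n^+$ is invariant under $\wh\theta\to-\wh\theta$ and $\wh\eta\to-\wh\eta$ because each such flip swaps $C^+$ with $C^-$ and hence $\wh T_n^+$ with $\wh T_n^-$) to take $\|\wh\theta-\theta\|\leq n^{-\gamma}$ and $\|\wh\eta-\eta\|\leq n^{-\gamma}$. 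Setting $a=\wh\theta^T\theta/\|\wh\theta\|$ and $b=\wh\eta^T\eta/\|\wh\eta\|$, this gives $\abs{|a|-\|\theta\|}\vee\abs{|b|-\|\eta\|}\leq Cn^{-\gamma}$, and conditionally on $\mathcal D_0$ the projections satisfy $\wh X_i=z_i a+\xi_i$, $\wh Y_i=\sigma_i b+\xi_i'$ with independent $N(0,1)$ noise. Hence $\wh X_i^2$ has mean $a^2+1=O(\log n)$ and variance $O(\log n)$, and Bernstein's inequality on $\mathcal D_1$ gives $|\wh a^2-a^2|\vee|\wh b^2-b^2|\leq n^{-c}$ for some $c>0$ with probability $1-o(1)$, so $|\wh r-r|\vee|\wh s-s|\leq n^{-c'}$ on a further good event $\mathcal E_1$.

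Next I establish a uniform perturbation bound: on $\mathcal E_0\cap\mathcal E_1$ and uniformly over $|t|\leq\log n$,
\begin{equation*}
S_{(\wh r,\wh s)}(t)=S_{(r,s)}(t)\bigl(1+O(n^{-c''})\bigr),
\end{equation*}
for some $c''>0$, by direct differentiation of the closed-form density underlying \eqref{eq:general-comb-null}. Combined with Lemma \ref{lem:LR-approx} and the substitutions $\wh a\to|a|$, $\wh b\to|b|$, the plug-in statistic $C^\pm(\wh X_i,\wh Y_i,\wh a,\wh b)/\sqrt{2\log n}$ differs from the oracle version built from $(X_i,Y_i,\theta,\eta)$ by additive errors of order $n^{-c}\operatorname{polylog}(n)$, so the indicator level sets in $\wh T_n^\pm$ shift by $o(1)$ in $t$, an amount absorbed by the multiplicative error in $S_{(\wh r,\wh s)}$.

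Under $H_0$, WLOG $\sigma=z$ (the case $\sigma=-z$ being symmetric via flipping $z$, which swaps $\wh T_n^\pm$ as above); then $C^-$ is null-distributed under $S_{(r,s)}$ while $C^+$ is highly shifted, so I decompose $\wh T_n^-\leq(\text{stochastic})+(\text{bias})$. The stochastic part is $O(\sqrt{\log n})$ via Bernstein plus a union bound over the $|\mathcal D_2|$ distinct jump points of the indicator (exploiting monotonicity in $t$); the bias is at most $n^{-c''}\sqrt{|\mathcal D_2|S_{(\wh r,\wh s)}(t)}\leq n^{-c''}\sqrt{n}$, which is $o((\log n)^3)$ once $c,c',c''$ are chosen small enough relative to $\gamma$. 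Hence $\wh T_n^-\wedge\wh T_n^+\leq\wh T_n^-\leq(\log n)^3$ w.p.~$1-o(1)$. Under $H_1$, $\ell(z,\sigma)>\epsilon$ forces both $n^{-1}\sum_i\indc{z_i\neq\sigma_i}>\epsilon$ and $n^{-1}\sum_i\indc{z_i\neq-\sigma_i}>\epsilon$, so the HC mechanism in the proof of Theorem \ref{thm:main-general} drives \emph{both} $\dot T_n^-$ and $\dot T_n^+$ to diverge at a polynomial rate: in the strictly consistent regime $\beta<\beta^*(r,s)$ the ratio $(n\epsilon(Q(A)-P(A)))^2/(nP(A)+n\epsilon Q(A))$ grows as $n^\delta$ for some $\delta>0$ at the optimal event $A$, so each $\dot T_n^\pm\gtrsim n^{\delta/2}$. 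Transferring via the uniform approximations of the previous paragraph yields $\wh T_n^-\wedge\wh T_n^+\gtrsim n^{\delta/2}\gg(\log n)^3$ w.p.~$1-o(1)$.

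The main obstacle is the uniform-in-$t$ control of the bias from the plug-in $(\wh r,\wh s)$: since $S_{(r,s)}(t)$ can decay substantially on $|t|\leq\log n$, naive bounds on $|S_{(\wh r,\wh s)}-S_{(r,s)}|$ get amplified by $1/\sqrt{S_{(r,s)}(t)}$. The truncation $|t|\leq\log n$ in the definition of $\wh T_n^\pm$ and the inflated threshold $(\log n)^3$ together tame this amplification, and the constants $c,c',c''$ must be balanced carefully against $\gamma$ in \eqref{eq:estimation-error-strong} to close the estimates.
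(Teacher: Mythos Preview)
Your overall strategy---condition on the split, work on a high-probability good event, decompose into a stochastic term controlled by an HC-type argument and a bias term from plugging in $(\wh a,\wh b,\wh r,\wh s)$---matches the paper's. The decomposition you choose is somewhat different: you try to compare $\wh T_n^\pm$ directly to the oracle $\dot T_n^\pm$ by perturbing both the statistic $C^\pm$ and the survival function $S_{(r,s)}$, whereas the paper never invokes the oracle. Instead it introduces $P(a_1,b_1,a_2,b_2,t)=\mathbb P(C^-(Z_1+a_1,Z_2+b_1,a_2,b_2)>t\sqrt{2\log n})$, conditions on $\mathcal D_0\cup\mathcal D_1$, and observes that the \emph{exact} conditional mean of the indicator is $P(z_i a,\sigma_i b,\wh a,\wh b,t)$; the only bias is then $|P(a,b,\wh a,\wh b,t)-S_{(\wh r,\wh s)}(t)|$, controlled by a multiplicative Lipschitz bound in the first two arguments. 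This sidesteps your extra layer of approximating $C^\pm(\wh X_i,\wh Y_i,\wh a,\wh b)$ by its oracle counterpart.

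There is a genuine arithmetic gap in your bias bound. You write that the bias is at most $n^{-c''}\sqrt{|\mathcal D_2|S_{(\wh r,\wh s)}(t)}\leq n^{-c''}\sqrt n$ and claim this is $o((\log n)^3)$ ``once $c,c',c''$ are chosen small enough relative to $\gamma$.'' But $n^{1/2-c''}=o((\log n)^3)$ requires $c''\geq 1/2$, not small $c''$; and the Bernstein rate on $\mathcal D_1$ gives $|\wh a-a|\vee|\wh b-b|=O(\sqrt{\log n/n})$, which is $n^{-1/2}$ only up to a polylog factor, so you cannot take $c''>1/2$. The constant $\gamma$ from \eqref{eq:estimation-error-strong} is irrelevant here---it governs $|a-\|\theta\||$, not $|\wh a-a|$. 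The fix is to track the polylogarithmic factors precisely rather than work with $n^{-c''}$: the paper's multiplicative perturbation bound reads
\[
\frac{|P(a,b,\wh a,\wh b,t)-P(\wh a,\wh b,\wh a,\wh b,t)|}{P(\wh a,\wh b,\wh a,\wh b,t)}\;\lesssim\;(\log n)^2\bigl(|a-\wh a|+|b-\wh b|\bigr)\;\lesssim\;\frac{(\log n)^{5/2}}{\sqrt n},
\]
yielding a bias of order $(\log n)^{5/2}\sqrt{S_{(\wh r,\wh s)}(t)/S_{(\wh r,\wh s)}(t)}\cdot\sqrt{|\mathcal D_2|S_{(\wh r,\wh s)}(t)}/\sqrt n\leq(\log n)^{5/2}$, which is exactly why the threshold is $(\log n)^3$. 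Your proposal identifies the difficulty in the final paragraph but does not resolve it; the polylog bookkeeping is the entire content of this step and cannot be swept into ``balanced carefully against $\gamma$.''
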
 

 {As before, for estimators such that condition \eqref{eq:estimation-error-strong} can be fulfilled, see Section \ref{sec:para-est}.}

{Randomly splitting} the data into three parts is {needed for technical details in the proofs}.
{In order for our proof to go through}, we need to estimate the statistics $\{C^-(X_i,Y_i,\theta,\eta)\}_{1\leq i\leq n}$ and $\{C^+(X_i,Y_i,\theta,\eta)\}_{1\leq i\leq n}$ and the survival function $S_{(r,s)}(t)$  {at} different  {levels of} accuracy. 
In particular, we require that the estimation error of $S_{(r,s)}(t)$ to be at most $\frac{(\log n)^{O(1)}}{\sqrt{n}}$, independent of the dimensions $p$ and $q$. Therefore, in addition to the two-part data splitting strategy used in building the adaptive Bonferroni test, we need an additional part to  {estimate projection directions of two one-dimensional subspaces}.

\begin{remark}
One may wonder whether the adaptive HC-type test  {can} also achieve the optimal detection boundary for the problem (\ref{eq:problem-exact}). 
The answer would be no for the current definition of $\psi_{\rm ada-HC}$, because  {under $H_1$} with a {non-trivial} probability, $\mathcal{D}_2$ does not contain the coordinate that has the signal. 
However, a modification of $\psi_{\rm ada-HC}$  {can} resolve this issue. 
The modification requires rotating the roles of the three datasets $\mathcal{D}_0$, $\mathcal{D}_1$, and $\mathcal{D}_2$, and then we can define analogous versions of the HC-type statistics $\wh{T}_n^-$ and $\wh{T}_n^+$ on $\mathcal{D}_0$ and $\mathcal{D}_1$. These statistics can be combined in a similar way to (\ref{eq:levivsbeast}) and (\ref{eq:elvinisgreat}). 
We omit the details.	
\end{remark}

\paragraph{Computation}
We now discuss computation of $\wh{\psi}_{\rm ada-HC}$.
Note that both $\wh{T}_n^-$ and $\wh{T}_n^+$ can be computed efficiently using the $p$-value interpretation of the HC statistic in \cite{donoho2004higher}. In the ideal situation where $\theta$ and $\eta$ are known, the two sets of $p$-values are $\left\{S_{(r,s)}\left(\frac{C^-(X_i,Y_i,\theta,\eta)}{\sqrt{2\log n}}\right)\right\}_{1\leq i\leq n}$ and $\left\{S_{(r,s)}\left(\frac{C^+(X_i,Y_i,\theta,\eta)}{\sqrt{2\log n}}\right)\right\}_{1\leq i\leq n}$, which are involved in the computation of the test (\ref{eq:general-test-comb}). When $\theta$ and $\eta$ are unknown, the following proposition suggests a similar  {computation} strategy.
\begin{proposition}\label{prop:comp-p-value}
Define $\wh{p}_i^-=S_{(\wh{r},\wh{s})}\left(\frac{C^-(\wh{X}_i,\wh{Y}_i,\wh{a},\wh{b})}{\sqrt{2\log n}}\right)$ and $\wh{p}_i^+=S_{(\wh{r},\wh{s})}\left(\frac{C^+(\wh{X}_i,\wh{Y}_i,\wh{a},\wh{b})}{\sqrt{2\log n}}\right)$ for $i\in\mathcal{D}_2$. Then, with probability tending to $1$, we have
\begin{eqnarray}
\label{eq:haha} \wh{T}_n^- &=& \max_{1\leq i\leq |\mathcal{D}_2|}\frac{\sqrt{|\mathcal{D}_2|}\left|\frac{i}{|\mathcal{D}_2|}-\wh{p}_{(i,\mathcal{D}_2)}^-\right|}{\sqrt{\wh{p}_{(i,\mathcal{D}_2)}^-}}, \\
\label{eq:hehe} \wh{T}_n^+ &=& \max_{1\leq i\leq |\mathcal{D}_2|}\frac{\sqrt{|\mathcal{D}_2|}\left|\frac{i}{|\mathcal{D}_2|}-\wh{p}_{(i,\mathcal{D}_2)}^+\right|}{\sqrt{\wh{p}_{(i,\mathcal{D}_2)}^+}},
\end{eqnarray}
where the subscript $(i,\mathcal{D}_2)$ indicates the $i$th order statistic within the set $\mathcal{D}_2$.
\end{proposition}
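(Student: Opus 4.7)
The plan is to prove Proposition~\ref{prop:comp-p-value} by converting the supremum over $t$ into the standard HC-via-$p$-values form through a monotone change of variable, then reducing that supremum to a maximum over order statistics by a piecewise-monotonicity argument.

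First I would show that on an event of probability tending to~$1$, the estimated null survival function $S_{(\wh{r},\wh{s})}(\cdot)$ defined in~\eqref{eq:survival-r-s} is continuous and strictly decreasing on~$\mathbb{R}$. This is automatic once $\wh{r}+\wh{s}>0$, because the random variable $|\sqrt{\wh{r}}U+\sqrt{\wh{s}}V|-\sqrt{\wh{r}+\wh{s}}\,|V|$ with $(U,V)$ drawn from~\eqref{eq:general-comb-null} has a continuous density that is strictly positive throughout $\mathbb{R}$. On this event $t\mapsto u:=S_{(\wh{r},\wh{s})}(t)$ is a continuous decreasing bijection from $\mathbb{R}$ onto $(0,1)$, and since $S_{(\wh{r},\wh{s})}$ is strictly decreasing, the indicator $\{C^-(\wh{X}_i,\wh{Y}_i,\wh{a},\wh{b})/\sqrt{2\log n}>t\}$ becomes exactly $\{\wh{p}_i^-<u\}$. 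Writing $n_2=|\mathcal{D}_2|$, the substitution turns $\wh{T}_n^-$ into
\[
\wh{T}_n^-=\sup_{u\in I_n}\frac{\bigl|M(u)-n_2 u\bigr|}{\sqrt{n_2 u}},\quad M(u):=\sum_{i\in\mathcal{D}_2}\indc{\wh{p}_i^-<u},
\]
where $I_n=[S_{(\wh{r},\wh{s})}(\log n),\,S_{(\wh{r},\wh{s})}(-\log n)]$. Standard sub-Gaussian tail bounds on the ingredients together with the parameter estimation guarantee~\eqref{eq:estimation-error-strong} ensure that with probability tending to~$1$ every $\wh{p}_i^\pm$ lies in the interior of $I_n$, so the truncation at $|t|=\log n$ does not remove any relevant mass from the supremum.

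Next I would analyze the objective pointwise. The step function $M$ equals some integer $j$ on each interval $(\wh{p}_{(j,\mathcal{D}_2)}^-,\wh{p}_{(j+1,\mathcal{D}_2)}^-]$ between consecutive order statistics. On any such interval the map $u\mapsto|j-n_2 u|/\sqrt{n_2 u}=\bigl|\sqrt{n_2 u}-j/\sqrt{n_2 u}\bigr|$ is unimodal with minimum at $u=j/n_2$, so its supremum is attained at one of the two endpoints. Collecting the right-limit endpoint contributions at each order statistic yields
\[
\frac{|j-n_2\wh{p}_{(j,\mathcal{D}_2)}^-|}{\sqrt{n_2\wh{p}_{(j,\mathcal{D}_2)}^-}}=\frac{\sqrt{n_2}\,|j/n_2-\wh{p}_{(j,\mathcal{D}_2)}^-|}{\sqrt{\wh{p}_{(j,\mathcal{D}_2)}^-}},
\]
which is precisely the quantity maximized in~\eqref{eq:haha}; the identical argument applied to $C^+$ in place of $C^-$ gives~\eqref{eq:hehe}.

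The main obstacle is to justify retaining only the right-limit endpoint contributions. At each order statistic $\wh{p}_{(j,\mathcal{D}_2)}^-$ there is a second candidate value, the left-attained $|(j-1)-n_2\wh{p}_{(j,\mathcal{D}_2)}^-|/\sqrt{n_2\wh{p}_{(j,\mathcal{D}_2)}^-}$ inherited from the interval on the left where $M=j-1$, and the proposition's formula omits these. I would close the gap by noting that the two numerators at a fixed $j$ differ by at most~$1$, so their ratio tends to~$1$ whenever the common denominator satisfies $\sqrt{n_2\wh{p}_{(j,\mathcal{D}_2)}^-}\to\infty$; and by a short case analysis on the sign of $j-n_2\wh{p}_{(j,\mathcal{D}_2)}^-$ combined with the index shift $j\mapsto j+1$, the left-attained term at index~$j$ is majorized by the right-limit term at either index~$j-1$ or index~$j$. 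Under both hypotheses the extreme order statistics $\wh{p}_{(1,\mathcal{D}_2)}^-$ and $\wh{p}_{(n_2,\mathcal{D}_2)}^-$ stay bounded away from~$0$ and~$1$ by $n_2^{-O(1)}$ with probability tending to~$1$, which controls any residual discrepancy near the boundary of $I_n$ and completes the identification in~\eqref{eq:haha}--\eqref{eq:hehe}.
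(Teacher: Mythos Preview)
Your approach is essentially the paper's: both arguments first show that with high probability the truncation $|t|\le\log n$ is inactive (the paper bounds $\max_{i\in\mathcal{D}_2}|C^\pm(\wh X_i,\wh Y_i,\wh a,\wh b)|/\sqrt{2\log n}=O(\sqrt{\log n})$ directly, while you equivalently show each $\wh p_i^\pm$ lies in the interior of $I_n$), and then pass from the supremum over $t$ to the order-statistic formula. The paper treats that last passage as a one-line Donoho--Jin identity with no further discussion; you spell it out via the piecewise-monotonicity argument, which is more careful.

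Your case-analysis claim, however, is not correct as stated. The assertion that the left-attained term at index $j$ is majorized by the right-limit term at index $j-1$ or $j$ can fail: with $n_2=2$, $\wh p_{(1)}^-=0.95$, $\wh p_{(2)}^-=0.99$, the left-attained value at $j=2$ is $|1-1.98|/\sqrt{1.98}\approx 0.696$, while both right-limit values are smaller ($|1-1.9|/\sqrt{1.9}\approx 0.653$ and $|2-1.98|/\sqrt{1.98}\approx 0.014$). So exact equality in \eqref{eq:haha}--\eqref{eq:hehe} does not follow from this route. What does work---and suffices for the proposition's purpose of a computational surrogate that preserves the test's consistency---is your first observation: the two candidate numerators at each order statistic differ by at most $1$, so the true supremum and the order-statistic formula differ by at most $\max_j 1/\sqrt{|\mathcal D_2|\,\wh p_{(j)}^-}$, which is negligible against the $(\log n)^3$ threshold once you control the extreme $p$-values. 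The paper's terse final equality is best read in the same asymptotic sense.
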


The statistics $\wh{p}_i^-$ and $\wh{p}_i^+$ can be regarded as estimators of $p$-values, which is a useful interpretation of $\wh{\psi}_{\rm ada-HC}$. Since the formulas (\ref{eq:haha}) and (\ref{eq:hehe}) hold with high probability, $\lim_{n\rightarrow\infty}R_n(\psi_{\rm ada-HC},\theta,\eta)=0$ will continue to hold when $\beta<\beta^*(r,s)$ if (\ref{eq:haha}) and (\ref{eq:hehe}) are used in the computation of $\wh{\psi}_{\rm ada-HC}$.

\subsection{Parameter Estimation}
\label{sec:para-est}

We close this section by  {presenting} a simple estimator for $\theta$ and $\eta$. Since we have that $X_i\sim N(z_i\theta,I_p)$, 
the  {empirical second moment}
$\frac{1}{n}\sum_{i=1}^nX_iX_i^T$ is a consistent estimator of the 
 {population counterpart}
$\theta\theta^T+I_p$. 
Apply eigenvalue decomposition and we get $\frac{1}{n}\sum_{i=1}^nX_iX_i^T=\sum_{j=1}^p\wh{\lambda}_j\wh{u}_j\wh{u}_j^T$, and then a natural estimator for $\theta$ is $\wh{\theta}=\sqrt{\wh{\lambda}_1  {-1}}\,\wh{u}_1$. This simple estimator enjoys the following property.
\begin{proposition}\label{prop:parameter-estimation}
Consider independent observations $X_1,\cdots, X_n\sim N(z_i\theta,I_p)$ with some $z_i\in\{-1,1\}$ for all $i\in[n]$. Assume $p\leq n$, and then there exist universal constants $C,C'>0$, such that
$$L(\wh{\theta},\theta)\leq C\sqrt{\frac{p}{n}},$$
with probability at least $1-e^{-C'p}$ uniformly over all $z\in\{-1,1\}^n$ and all $\theta\in\mathbb{R}^p$ that satisfies $\|\theta\|\geq 1$.
\end{proposition}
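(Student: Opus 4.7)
The plan is to treat this as a spiked covariance estimation problem. Write $X_i = z_i\theta + W_i$ with $W_i \stackrel{iid}{\sim} N(0,I_p)$, so that $\mathbb{E}[X_iX_i^T] = \theta\theta^T + I_p$ regardless of $z_i \in \{\pm 1\}$. Let $\Sigma = \theta\theta^T + I_p$ and $\wh{\Sigma} = n^{-1}\sum_{i=1}^n X_iX_i^T$. The leading eigenvalue of $\Sigma$ is $1+\|\theta\|^2$ with eigenvector $u_1 = \theta/\|\theta\|$, all other eigenvalues equal $1$, so the eigengap is exactly $\|\theta\|^2 \geq 1$.

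The first step is to bound $\|\wh{\Sigma}-\Sigma\|_{\rm op}$. Setting $\bar{W} = n^{-1}\sum_{i=1}^n z_i W_i$, an algebraic expansion gives
\[
\wh{\Sigma} - \Sigma = \theta\bar{W}^T + \bar{W}\theta^T + \Bigl(\tfrac{1}{n}\sum_{i=1}^n W_iW_i^T - I_p\Bigr).
\]
Since $z_i^2=1$, $\bar{W} \sim N(0, n^{-1}I_p)$, so standard Gaussian norm concentration gives $\|\bar{W}\| \lesssim \sqrt{p/n}$ with probability at least $1 - e^{-cp}$, hence the cross term has operator norm $\lesssim \|\theta\|\sqrt{p/n}$. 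The Wishart term satisfies $\|n^{-1}\sum W_iW_i^T - I_p\|_{\rm op} \lesssim \sqrt{p/n} + p/n \lesssim \sqrt{p/n}$ with probability at least $1-e^{-c'p}$, using $p\le n$. Combining yields
\[
\|\wh{\Sigma}-\Sigma\|_{\rm op} \lesssim (1+\|\theta\|)\sqrt{p/n}
\]
on an event of probability at least $1-e^{-C'p}$.

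Next I would translate this into guarantees for $\wh{u}_1$ and $\wh{\lambda}_1$. By the Davis–Kahan $\sin\Theta$ theorem with eigengap $\|\theta\|^2$,
\[
\min\bigl(\|\wh{u}_1-u_1\|,\|\wh{u}_1+u_1\|\bigr) \lesssim \frac{\|\wh{\Sigma}-\Sigma\|_{\rm op}}{\|\theta\|^2} \lesssim \frac{(1+\|\theta\|)\sqrt{p/n}}{\|\theta\|^2}.
\]
Weyl's inequality gives $|\wh{\lambda}_1 - (1+\|\theta\|^2)| \le \|\wh{\Sigma}-\Sigma\|_{\rm op} \lesssim (1+\|\theta\|)\sqrt{p/n}$, which for $n$ large enough ensures $\wh{\lambda}_1 - 1 \geq \|\theta\|^2/2 > 0$ and hence
\[
\bigl|\sqrt{\wh{\lambda}_1 - 1} - \|\theta\|\bigr| = \frac{|(\wh{\lambda}_1-1) - \|\theta\|^2|}{\sqrt{\wh{\lambda}_1-1} + \|\theta\|} \lesssim \frac{(1+\|\theta\|)\sqrt{p/n}}{\|\theta\|} \lesssim \sqrt{p/n},
\]
using $\|\theta\|\ge 1$.

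Finally, picking the sign of $\wh{u}_1$ so that $\|\wh{u}_1 - u_1\|$ is the minimum in the Davis–Kahan bound, decompose
\[
\wh{\theta} - \theta = \bigl(\sqrt{\wh{\lambda}_1-1} - \|\theta\|\bigr)\wh{u}_1 + \|\theta\|(\wh{u}_1 - u_1),
\]
and bound the two pieces by $\sqrt{p/n}$ and $\|\theta\|\cdot (1+\|\theta\|)\sqrt{p/n}/\|\theta\|^2 = (1/\|\theta\|+1)\sqrt{p/n} \lesssim \sqrt{p/n}$ respectively, yielding $L(\wh{\theta},\theta) \lesssim \sqrt{p/n}$ on the same high-probability event.

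The main technical point is the operator-norm bound on $\wh{\Sigma}-\Sigma$, where the cross term inevitably carries a factor of $\|\theta\|$ that could be large. The key cancellation that rescues the proof is that this $\|\theta\|$ factor is dominated by the eigengap $\|\theta\|^2$ in Davis–Kahan, so the eigenvector error actually \emph{improves} as $\|\theta\|$ grows; multiplying back by $\|\theta\|$ to recover $\wh{\theta}$ leaves a dimension-free $\sqrt{p/n}$ rate precisely because $\|\theta\| \geq 1$.
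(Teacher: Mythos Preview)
Your proposal is correct and follows essentially the same approach as the paper: decompose $\wh{\Sigma}-\Sigma$ into the Wishart and cross terms, apply Davis--Kahan with eigengap $\|\theta\|^2$ and Weyl's inequality, then combine via the triangle-inequality decomposition of $\wh{\theta}-\theta$. The paper's proof is organized identically, with the same intermediate bounds.
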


\begin{corollary}\label{cor:adaptive-dim}
Consider the calibration (\ref{eq:general-cali}). Suppose $p\vee q<n^{1-\delta}$ for some constant $\delta\in(0,1)$, then there exists some constant $\gamma>0$ depending on $\delta$, such that the conditions (\ref{eq:estimation-error-weak}) and (\ref{eq:estimation-error-strong}) hold.
\end{corollary}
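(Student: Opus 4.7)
The plan is to reduce both \eqref{eq:estimation-error-weak} and \eqref{eq:estimation-error-strong} to Proposition \ref{prop:parameter-estimation} applied to each subsample produced by the random splitting schemes of Section \ref{sec:adaptive-t}, and then convert the stated $\sqrt{p/n}$ rate into an $n^{-\gamma}$ rate using the growth assumption $p\vee q<n^{1-\delta}$.

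First, I would handle the random partition. By a Chernoff bound for the $\mathrm{Bernoulli}(1/2)$ or $\mathrm{Uniform}\{0,1,2\}$ random variables used to form the $\mathcal{D}_m$, every block has cardinality at least $n/6$ with probability at least $1-e^{-cn}$ for some absolute $c>0$. On this event, conditionally on the sizes, the data inside each block is a genuine i.i.d.\ Gaussian sample of the form assumed in Proposition \ref{prop:parameter-estimation}, and $\wh{\theta}^{(m)}$, $\wh{\eta}^{(m)}$ (or the single $\wh{\theta}$, $\wh{\eta}$ in the HC-type construction) coincide with the eigenvalue-based estimators to which the proposition applies. Under the calibration \eqref{eq:general-cali}, $\|\theta\|^2\wedge\|\eta\|^2=(r+s-\sqrt{s(r+s)})\log n\to\infty$, so the assumption $\|\theta\|\ge 1$, $\|\eta\|\ge 1$ is met for all large $n$.

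Next, I would convert the $\sqrt{p/n}$ rate into an $n^{-\gamma}$ rate. A direct application of Proposition \ref{prop:parameter-estimation} gives $L(\wh{\theta}^{(m)},\theta)\leq C\sqrt{p/|\mathcal{D}_m|}$ with probability at least $1-e^{-C'p}$. The main obstacle is that this probability does not tend to $1$ when $p$ stays bounded in $n$, so a direct quotation of the proposition is insufficient; one must strengthen its tail bound. Rereading the proof, its ingredients are the operator-norm concentration of $(1/|\mathcal{D}_m|)\sum_{i\in\mathcal{D}_m}(X_iX_i^T-\mathbb{E}X_iX_i^T)$ and the norm concentration of the Gaussian average $(1/|\mathcal{D}_m|)\sum_{i\in\mathcal{D}_m}z_i\varepsilon_i$ (with $\varepsilon_i\sim N(0,I_p)$), both of which admit a refined two-parameter tail bound
$$
\Prob\!\Bigl( L(\wh{\theta}^{(m)},\theta) > C\sqrt{(p+u)/|\mathcal{D}_m|} \Bigr) \leq e^{-cu} \qquad \text{for every } u>0,
$$
once combined with Davis--Kahan on the leading eigenvector and a one-step Taylor estimate for $\sqrt{\wh{\lambda}_1-1}$. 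Taking $u=\log n$ yields $L(\wh{\theta}^{(m)},\theta)\leq C\sqrt{(p+\log n)/n}$ with probability at least $1-n^{-c}$; an identical bound holds for $\wh{\eta}^{(m)}$ with $q$ in place of $p$.

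Finally, under $p\vee q<n^{1-\delta}$ and for all $n$ sufficiently large, $\sqrt{(p+\log n)/n}\leq\sqrt{2}\,n^{-\delta/2}$, so choosing any fixed $\gamma<\delta/2$, say $\gamma=\delta/3$, guarantees $C\sqrt{(p+\log n)/n}\leq n^{-\gamma}$ eventually. A union bound over the $O(1)$ estimators and over the event $\{\min_m|\mathcal{D}_m|\ge n/6\}$ then yields \eqref{eq:estimation-error-weak} and \eqref{eq:estimation-error-strong}, uniformly in $(z,\sigma,\theta,\eta)$, as claimed.
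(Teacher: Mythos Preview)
The paper does not supply an explicit proof of this corollary; it is presented as an immediate consequence of Proposition~\ref{prop:parameter-estimation} together with the growth condition $p\vee q<n^{1-\delta}$. Your argument is correct and in fact more careful than a direct quotation of that proposition would be: you rightly notice that the stated failure probability $e^{-C'p}$ does not vanish when $p$ (or $q$) stays bounded in $n$, and your remedy---returning to the concentration inequalities underlying the proof (operator-norm concentration for the sample covariance and the Laurent--Massart $\chi^2_p$ tail) to obtain a two-parameter bound of the form $\Prob\bigl(L>C\sqrt{(p+u)/n}\bigr)\le e^{-cu}$, then taking $u=\log n$---is exactly the standard fix. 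Your handling of the random splitting (block sizes $\ge n/6$ by Chernoff) and of the hypothesis $\|\theta\|\wedge\|\eta\|\ge 1$ via (\ref{eq:min-sig}) is also correct.
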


Combine Theorem \ref{thm:ada-Bonf}, Theorem \ref{thm:HC-adaptive}, and Corollary \ref{cor:adaptive-dim}, and we can conclude that the optimal detection boundaries of the testing problems (\ref{eq:problem}) and (\ref{eq:problem-exact}) can be achieved adaptively without the knowledge of $(\theta,\eta)$, as long as the dimensions do not grow too fast in the sense that $p\vee q<n^{1-\delta}$.

In a more general setting, one may have $X_i\sim N(z_i\theta,\sigma^2)$ with both $\theta$ and $\sigma^2$ unknown. The proposed algorithm still works for estimating $\theta$. To estimate $\sigma^2$, one can use the estimator $\wh{\sigma}^2=\frac{1}{p}\Tr\left(\frac{1}{n}\sum_{i=1}^nX_iX_i^T-\wh{\theta}\wh{\theta}^T\right)$. The theoretical analysis can be easily generalized to this case, and we omit the details here.

Last but not least, we remark that the condition $p\vee q<n^{1-\delta}$ can be extended if additional sparsity assumptions on $\theta$ and $\eta$ are imposed. This is related to the sparse clustering setting studied in the literature \citep{azizyan2013minimax,jin2016influential,jin2017phase}, and a sparse PCA algorithm \citep{johnstone2009consistency,ma2013sparse,birnbaum2013minimax,cai2013sparse,vu2013minimax} can be applied to estimate $\theta$ and $\eta$.

\section{Proofs}\label{sec:all-pf}

We give proofs of all the results of the paper in this section. After stating some technical lemmas in Section \ref{sec:pf-lemma-tech}, the proofs are organized in Sections \ref{sec:pf-org1}-\ref{sec:pf-org2}. The technical lemmas stated in Section \ref{sec:pf-lemma-tech} will be proved in Section \ref{sec:pf-last}.

\subsection{Some Technical Lemmas}\label{sec:pf-lemma-tech}

\begin{lemma}\label{prop:standard-normal}
Let $\phi(\cdot)$ and $\Phi(\cdot)$ be the density function and the cumulative distribution function of $N(0,1)$. The following facts hold.
\begin{enumerate}
\item For any $t>0$,
\begin{equation}
(1-t^{-2})\frac{e^{-t^2/2}}{t\sqrt{2\pi}}<1-\Phi(t)<(1-t^{-2}+3t^{-4})\frac{e^{-t^2/2}}{t\sqrt{2\pi}}.\label{eq:Gaussian-tail}
\end{equation}
\item For any $t_1,t_2\in\mathbb{R}$ such that $|t_1-t_2|\leq 1$, we have $\left|\int_{t_1}^{t_2}\phi(x)dx\right|\leq 2|t_1-t_2|\left(\phi(t_1)\vee\phi(t_2)\right)$.
\item We have $\sup_{t\in\mathbb{R}}\frac{\phi(t)/(1\vee t)}{1-\Phi(t)}\leq 20$.
\item For any constant $c>0$, we have $\sup_{\substack{|t_1|,|t_2|\leq (\log n)^2 \\ |t_1-t_2|\leq n^{-c}}}\frac{\phi(t_1)}{\phi(t_2)}\leq 2$ for a sufficiently large $n$.
\item For any constant $c>0$, we have $\sup_{\substack{|t_1|,|t_2|\leq (\log n)^2 \\ |t_1-t_2|\leq n^{-c}}}\frac{1-\Phi(t_1)}{1-\Phi(t_2)}\leq 2$ for a sufficiently large $n$.
\end{enumerate}
\end{lemma}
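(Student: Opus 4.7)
The plan is to establish the five facts one at a time; none of them is deep, but care is needed to keep constants clean and to chain the bounds correctly.

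For (1), I would use repeated integration by parts on $1-\Phi(t)=\int_t^\infty \phi(x)\,dx$ via the identity $\phi(x)=-\phi'(x)/x$. One integration gives $1-\Phi(t)=\phi(t)/t-\int_t^\infty \phi(x)/x^2\,dx$, a second gives $1-\Phi(t)=\phi(t)/t-\phi(t)/t^3+3\int_t^\infty \phi(x)/x^4\,dx$, and a third (bounding the remainder trivially by dropping a positive correction) yields $\int_t^\infty \phi(x)/x^4\,dx\le \phi(t)/t^5$. Combining these produces the two-sided bound.

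For (2), I would split on whether $0\in[t_1,t_2]$. If not, $\phi$ is monotone on the interval so the integral is bounded by $|t_1-t_2|(\phi(t_1)\vee\phi(t_2))$ without the factor $2$. If $0\in[t_1,t_2]$, then since $|t_1-t_2|\le 1$, at least one of $|t_1|,|t_2|$ lies in $[1/2,1]$, at which point $\phi\ge e^{-1/8}\phi(0)\ge \phi(0)/2$; the factor $2$ comes from bounding the integrand by $\phi(0)\le 2(\phi(t_1)\vee\phi(t_2))$.

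For (3), I would split on $t\le 0$, $0<t\le 1$, and $t>1$, and in each regime bound numerator and denominator by explicit constants. The first two regimes give tiny absolute constants. For $t>1$ use the lower bound in (1), namely $1-\Phi(t)\ge (1-t^{-2})\phi(t)/(t\sqrt{2\pi})$, which makes the ratio at most $\sqrt{2\pi}/(1-t^{-2})$. The only mildly delicate range is $t$ slightly larger than $1$, where $1-t^{-2}$ is small; picking any cutoff like $t=\sqrt{2}$ and handling $(1,\sqrt{2}]$ by direct numerical bounds is enough. The constant $20$ is very loose and easily met.

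For (4) and (5), I would argue by direct calculation using the constraints $|t_1|,|t_2|\le(\log n)^2$ and $|t_1-t_2|\le n^{-c}$. For (4), write $\phi(t_1)/\phi(t_2)=\exp((t_2^2-t_1^2)/2)$ and note $|t_2^2-t_1^2|\le |t_1-t_2|\cdot(|t_1|+|t_2|)\le 2n^{-c}(\log n)^2\to 0$, so the ratio tends to $1$, which is eventually at most $2$. For (5), assume WLOG $t_1\ge t_2$, apply (2) to get $|(1-\Phi(t_2))-(1-\Phi(t_1))|\le 2n^{-c}(\phi(t_1)\vee\phi(t_2))$, and divide through by $1-\Phi(t_2)$; the resulting ratio of $\phi$ over $1-\Phi$ at either argument is controlled by (3) together with the polynomial-in-$\log n$ bound on $t$, giving a vanishing correction and hence a ratio eventually bounded by $2$. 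The main bookkeeping challenge will just be organizing these asymptotic estimates without circularity—each part after (1) uses the preceding ones, so I will present them in order.
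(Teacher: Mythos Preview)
Your proposal is correct and follows essentially the same route as the paper: cite or derive the standard Mills-ratio bounds for (1), split on whether $0$ lies between $t_1$ and $t_2$ for (2), split into small-$t$/large-$t$ regimes for (3), and chain (2)--(4) for (5). Two small slips to fix: in (2), when $0\in[t_1,t_2]$ you should say at least one of $|t_1|,|t_2|$ lies in $[0,1/2]$ (since $|t_1|+|t_2|\le 1$), not $[1/2,1]$; and in (5), the nontrivial direction is $t_1\le t_2$ (if $t_1\ge t_2$ the ratio is already $\le 1$), so your ``WLOG'' is backwards, though the rest of the argument is fine once you divide by $1-\Phi(t_2)$ and invoke (3) and (4) as you describe.
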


\begin{lemma}\label{lem:easy-tail}
Consider independent $U^2\sim \chi_{1,2r\log n}^2$ and $V^2\sim \chi_{1,2s\log n}^2$. We have
$$\mathbb{P}\left(U^2\leq 2t\log n\right)\asymp 
\begin{cases}
\frac{1}{\sqrt{\log n}}n^{-(\sqrt{r}-\sqrt{t})^2}, & 0<t<r, \\
1, & t\geq r,
\end{cases}$$
and
$$\mathbb{P}\left(|U|-|V|>t\sqrt{2\log n}\right)\asymp 
\begin{cases}
\frac{1}{\log n}n^{-\left[(t-\sqrt{r})^2+s\right]}, & t>\sqrt{r}+\sqrt{s}, \\
\frac{1}{\sqrt{\log n}}n^{-\frac{1}{2}(t-\sqrt{r}+\sqrt{s})^2}, & \sqrt{r}-\sqrt{s}<t\leq\sqrt{r}+\sqrt{s}, \\
1, & t\leq \sqrt{r}-\sqrt{s}. 
\end{cases}$$
\end{lemma}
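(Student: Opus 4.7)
My plan is to prove the two estimates separately. The first is a single Mills-ratio calculation; the second is a two-dimensional Laplace analysis on the polyhedron $\{(\tilde u,\tilde v):\tilde u,\tilde v\ge 0,\,\tilde u-\tilde v\ge t\}$, with the three regimes corresponding to whether the minimizer of a quadratic sits at an interior point, on a single boundary segment, or at a vertex.

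For the first estimate I would write $|U|=|Z+\sqrt{2r\log n}|$ with $Z\sim N(0,1)$ and observe that
\[
\mathbb{P}(U^2\le 2t\log n)=\Phi(\sqrt{2\log n}(\sqrt t-\sqrt r))-\Phi(-\sqrt{2\log n}(\sqrt t+\sqrt r)).
\]
If $0<t<r$, the first term equals $1-\Phi(\sqrt{2\log n}(\sqrt r-\sqrt t))\asymp\tfrac{1}{\sqrt{\log n}}n^{-(\sqrt r-\sqrt t)^2}$ by parts 1 and 3 of Lemma \ref{prop:standard-normal}, and the second is smaller by a factor $n^{-4\sqrt{rt}}$. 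If $t\ge r$, the first term is bounded between $1/2$ and $1$, so the total is $\asymp 1$.

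For the second estimate, since only the magnitudes of $U$ and $V$ enter, I may take $U\sim N(\sqrt{2r\log n},1)$ and $V\sim N(\sqrt{2s\log n},1)$. The folded-normal density $f_{|U|}(u)=\phi(u-\sqrt{2r\log n})+\phi(u+\sqrt{2r\log n})$ is $\asymp\phi(u-\sqrt{2r\log n})$ on $u\ge 0$ (both summands agree up to a factor of two), and likewise for $|V|$. After rescaling $u=\tilde u\sqrt{2\log n}$ and $v=\tilde v\sqrt{2\log n}$,
\[
\mathbb{P}(|U|-|V|>t\sqrt{2\log n})\asymp\frac{\log n}{\pi}\int\!\!\int_{\Omega_t}n^{-f(\tilde u,\tilde v)}\,d\tilde u\,d\tilde v,
\]
where $f(\tilde u,\tilde v)=(\tilde u-\sqrt r)^2+(\tilde v-\sqrt s)^2$ and $\Omega_t=\{\tilde u,\tilde v\ge 0,\,\tilde u-\tilde v\ge t\}$. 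A Laplace expansion at the minimizer of $f$ on $\Omega_t$ handles the three regimes. When $t\le\sqrt r-\sqrt s$, the unconstrained minimizer $(\sqrt r,\sqrt s)$ lies in $\Omega_t$, $f^*=0$, and the 2D Gaussian integration produces a factor $\pi/\log n$ that cancels the Jacobian and yields $\asymp 1$. When $\sqrt r-\sqrt s<t\le\sqrt r+\sqrt s$, a Lagrange-multiplier computation places the minimizer at $\bigl((\sqrt r+\sqrt s+t)/2,\,(\sqrt r+\sqrt s-t)/2\bigr)$ with $f^*=(t-\sqrt r+\sqrt s)^2/2$; the integral splits into a one-dimensional Gaussian perpendicular to the constraint line (contributing $1/\sqrt{\log n}$) and an exponential tail along its outward normal (contributing $1/\log n$), producing the claimed $\asymp\tfrac{1}{\sqrt{\log n}}n^{-(t-\sqrt r+\sqrt s)^2/2}$.

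The main obstacle is the corner case $t>\sqrt r+\sqrt s$, where the constrained minimizer is forced to the vertex $(t,0)$ with $f^*=(t-\sqrt r)^2+s$ and both constraints $\tilde u-\tilde v\ge t$ and $\tilde v\ge 0$ are simultaneously active. I would set $\alpha=\tilde u-t,\,\gamma=\tilde v$ with $\alpha\ge\gamma\ge 0$, linearize $f$ about $(t,0)$, and reduce the integral to
\[
n^{-f^*}\int_0^\infty e^{(2\sqrt s\,\gamma-\gamma^2)\log n}\int_\gamma^\infty e^{-(2(t-\sqrt r)\alpha+\alpha^2)\log n}\,d\alpha\,d\gamma.
\]
The inner integral evaluates to $\asymp\tfrac{1}{(t-\sqrt r)\log n}\,e^{-2(t-\sqrt r)\gamma\log n}$, and the resulting $\gamma$-integral yields $\asymp\tfrac{1}{(t-\sqrt r-\sqrt s)\log n}$ because $t-\sqrt r-\sqrt s>0$ makes the linear term dominate near $\gamma=0$. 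Multiplied by the $\log n$ Jacobian this gives the desired $\asymp\tfrac{1}{\log n}n^{-[(t-\sqrt r)^2+s]}$. I would additionally have to check routinely that the factor-of-two discrepancy between $f_{|U|}$ and $\phi(\cdot-\sqrt{2r\log n})$, together with contributions from portions of $\Omega_t$ far from the minimizer, change only constants and do not affect the $\asymp$ scaling.
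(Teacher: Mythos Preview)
Your proposal is correct, and for the first estimate it is essentially identical to the paper's argument. For the second estimate, however, the paper takes a notably shorter route. Instead of a two-dimensional Laplace analysis on the polyhedron $\Omega_t$, the paper writes $U=Z_1+\sqrt{2r\log n}$, $V=Z_2+\sqrt{2s\log n}$, splits $\{|U|-|V|>t\sqrt{2\log n}\}$ into $\{U>|V|+t\sqrt{2\log n}\}\cup\{-U>|V|+t\sqrt{2\log n}\}$, and then observes that each piece is an intersection of two half-planes of the form $\{Z_2-Z_1<\cdots\}\cap\{Z_2+Z_1>\cdots\}$. Since $Z_1+Z_2$ and $Z_1-Z_2$ are independent for i.i.d.\ Gaussians, the probability of each intersection factors as a product of two one-dimensional Gaussian tails, each handled directly by the Mills ratio (part~1 of Lemma~\ref{prop:standard-normal}). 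The three regimes in $t$ then fall out automatically from whether each one-dimensional threshold is positive or negative, with no case-by-case geometric optimization needed.

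Your Laplace approach is more general---it would survive non-isotropic quadratics or more complicated constraint regions where no such rotation-to-independence is available, and indeed the paper uses something closer to your style in the later Lemmas~\ref{lem:comp-tail-0} and~\ref{lem:comp-tail-1}. But for this particular lemma the independence trick is cleaner: it replaces your interior/edge/corner trichotomy and the delicate corner integral you flagged as the ``main obstacle'' with a few lines of one-dimensional computation.
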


\begin{lemma}\label{lem:comp-tail-0}
Consider independent  {$U\sim N(0,1)$ and $V\sim N(\sqrt{2(r+s)\log n},1)$}. 
We have
\begin{eqnarray*}
&& \mathbb{P}\left(|\sqrt{r}U+\sqrt{s}V|-\sqrt{r+s}|V|>t\sqrt{2\log n}\right) \\
&\asymp& \mathbb{P}\left(\sqrt{r}|U|-(\sqrt{r+s}-\sqrt{s})|V|>t\sqrt{2\log n}\right) \\
&\asymp& \begin{cases}
\frac{1}{\log n}n^{-\frac{t^2+r(r+s)}{r}}, & t>r+s+\sqrt{s}\sqrt{r+s}, \\
\frac{1}{\sqrt{\log n}}n^{-\frac{(t+r+s-\sqrt{s}\sqrt{r+s})^2}{2(r+s-\sqrt{s}\sqrt{r+s})}},& -(r+s)+\sqrt{s}\sqrt{r+s}<t\leq r+s+\sqrt{s}\sqrt{r+s}, \\
1, & t\leq -(r+s)+\sqrt{s}\sqrt{r+s}.
\end{cases}
\end{eqnarray*}
\end{lemma}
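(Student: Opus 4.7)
The proof naturally splits into two parts corresponding to the two $\asymp$ relations.

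For the first equivalence (between the two probabilities), I would begin with the triangle inequality $|\sqrt{r}U+\sqrt{s}V| \le \sqrt{r}|U|+\sqrt{s}|V|$, which immediately shows that the LHS event is contained in the RHS event, giving one direction of the $\asymp$. For the reverse, observe that on the same-sign cone $\{UV \ge 0\}$ the triangle inequality is saturated, so the two events coincide there. Since the RHS event depends only on $(|U|,|V|)$ and the marginal of $U \sim N(0,1)$ is symmetric, the two half-planes $\{U \ge 0\}$ and $\{U \le 0\}$ carry equal RHS-mass; combined with the fact that $V$ is concentrated near $\sqrt{2(r+s)\log n}$ so that $\mathbb{P}(V \ge 0) = 1-o(1)$, the bulk of the RHS mass lies in $\{U \ge 0, V \ge 0\}$, where the two events coincide. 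The off-diagonal contributions are of the same or smaller exponential order, as confirmed by the saddle-point analysis below.

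For the second $\asymp$ (the explicit asymptotic), I would apply the saddle-point method to the joint Gaussian density of $(U,V)$. Rescaling $u = \xi\sqrt{2\log n}$ and $v = \zeta\sqrt{2\log n}$ turns the log-density into $-[\xi^2 + (\zeta-\sqrt{r+s})^2]\log n$, so the exponential rate is the minimum of this quadratic over the feasible region. After the symmetry reduction to $\xi,\zeta \ge 0$ with constraint $\sqrt{r}\xi - a\zeta \ge t$, where $a := \sqrt{r+s}-\sqrt{s}$, and setting $\mu := r+s-\sqrt{s}\sqrt{r+s}$ (noting the identities $a^2 + r = 2\mu$ and $a\sqrt{r+s} = \mu$), Lagrange multipliers yield the interior optimum $\xi^* = \sqrt{r}(t+\mu)/(2\mu)$, $\zeta^* = (r\sqrt{r+s}-at)/(2\mu)$ with value $(t+\mu)^2/(2\mu)$. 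This interior optimum is valid exactly when $\zeta^* \ge 0$, i.e.\ $t \le r+s+\sqrt{s}\sqrt{r+s}$; above this threshold the minimizer shifts to the boundary $\zeta = 0$, $\xi = t/\sqrt{r}$, with value $t^2/r + (r+s)$. When $t \le -\mu$ the unconstrained minimizer $(0,\sqrt{r+s})$ is itself feasible, so the probability is $\Theta(1)$.

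The sub-polynomial prefactors come from one-dimensional tail estimates. In the middle regime, $W := \sqrt{r}U - aV$ is Gaussian with mean $-\mu\sqrt{2\log n}$ and variance $2\mu$, so applying Mills' ratio via Lemma~\ref{prop:standard-normal} yields $\mathbb{P}(W > t\sqrt{2\log n}) \asymp \tfrac{1}{\sqrt{\log n}}\,n^{-(t+\mu)^2/(2\mu)}$. In the boundary regime, I would write the probability as $\int_0^\infty \phi(v-\sqrt{2(r+s)\log n})\,\mathbb{P}\bigl(U > (t\sqrt{2\log n}+av)/\sqrt{r}\bigr)\,dv$, apply Mills to the inner tail (contributing a factor $1/\sqrt{\log n}$), and observe that the resulting $v$-integrand decays \emph{exponentially} near $v = 0$ at rate $(\log n)\,f'(0)$ because the saddle point lies at the boundary; integrating produces an additional $1/\sqrt{\log n}$, combining to the stated $\tfrac{1}{\log n}\,n^{-(t^2+r(r+s))/r}$.

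The main obstacle is the prefactor bookkeeping in the boundary regime, where one must correctly identify that the $v$-integrand near zero is exponential rather than Gaussian (arising from the linear, not quadratic, behavior of the exponent at $v = 0$). One must also verify that the contribution from the opposite-sign subcase $\{\sqrt{r}U+\sqrt{s}V < 0\}$ never dominates, which reduces to comparing $(t+\mu)^2/(2\mu)$ with $(t+\mu')^2/(2\mu')$ for $\mu' := r+s+\sqrt{s}\sqrt{r+s}$; using $\mu\mu' = r(r+s)$ and the fact that $r < r+s$, a short algebraic manipulation shows that the main case wins strictly throughout the middle regime, while both subcases contribute equally at the boundary $t > \mu'$, which affects only the constant in $\asymp$.
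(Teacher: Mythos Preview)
Your proposal is correct in outline, but it takes a genuinely different route from the paper's proof and trades elegance for generality.

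\textbf{First equivalence.} The paper does not argue via the same-sign cone and concentration of $V$. Instead it records the exact algebraic identity
\[
\sqrt{r}\,|U|-(\sqrt{r+s}-\sqrt{s})\,|V|
=\bigl(|\sqrt{r}U+\sqrt{s}V|-\sqrt{r+s}|V|\bigr)\vee\bigl(|\sqrt{r}U-\sqrt{s}V|-\sqrt{r+s}|V|\bigr),
\]
so the second probability equals $\mathbb{P}(A\cup B)$ with $A,B$ the two level sets; then $\mathbb{P}(A)\vee\mathbb{P}(B)\le\mathbb{P}(A\cup B)\le 2(\mathbb{P}(A)\vee\mathbb{P}(B))$ and the symmetry $U\stackrel{d}{=}-U$ gives $\mathbb{P}(A)=\mathbb{P}(B)$. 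This is shorter and avoids your appeal to ``off-diagonal contributions are of the same or smaller order,'' which in your write-up is deferred to the saddle-point calculation rather than proved directly.

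\textbf{Explicit asymptotic.} Here the contrast is sharper. You run a two-dimensional Laplace/saddle-point analysis: minimize $\xi^2+(\zeta-\sqrt{r+s})^2$ over the half-plane $\sqrt{r}\xi-a\zeta\ge t$, then do separate prefactor bookkeeping in the interior and boundary regimes. The paper instead introduces independent $W_1,W_2\sim N(0,\mu)$ and $W_3\sim N(0,\sqrt{s}\sqrt{r+s})$ (with $\mu=r+s-\sqrt{s}\sqrt{r+s}$) so that
\[
(\sqrt{r}U+\sqrt{s}V,\ \sqrt{r+s}V)\ \stackrel{d}{=}\ (W_1+W_3+\sqrt{s}\sqrt{r+s}\sqrt{2\log n},\ W_2+W_3+(r+s)\sqrt{2\log n}).
\]
After opening the absolute values, each piece becomes an intersection of events in $W_1-W_2$ and $W_1+W_2+2W_3$, which are \emph{independent}; the two-dimensional tail thus factorizes exactly into a product of one-dimensional Gaussian tails, and Mills' ratio gives both the exponent and the $1/\sqrt{\log n}$ or $1/\log n$ prefactor with no further work. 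This decomposition is the paper's main device and eliminates precisely the ``prefactor bookkeeping in the boundary regime'' that you flag as the main obstacle.

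What each buys: your Laplace approach is portable and would extend to perturbed or non-Gaussian settings where no exact factorization exists; the paper's $W_1,W_2,W_3$ trick is specific to this bivariate Gaussian but yields the sharp $\asymp$ in all three regimes in a few lines, with the regime boundaries $t=\pm\mu$ and $t=r+s+\sqrt{s}\sqrt{r+s}$ emerging automatically from which one-dimensional factor is $\asymp 1$.
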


\begin{lemma}\label{lem:comp-tail-1}
Consider independent $U\sim N(\sqrt{2r\log n},1)$ and $V\sim N(\sqrt{2s\log n},1)$. We have
\begin{eqnarray*}
&& \mathbb{P}\left(|\sqrt{r}U+\sqrt{s}V|-\sqrt{r+s}|V|>t\sqrt{2\log n}\right) \\
&\asymp& \begin{cases}
\frac{1}{\log n}n^{-\frac{(t-r)^2+rs}{r}}, & t>r+s+\sqrt{s}\sqrt{r+s}, \\
\frac{1}{\sqrt{\log n}}n^{-\frac{(t-(r+s)+\sqrt{s}\sqrt{r+s})^2}{2(r+s-\sqrt{s}\sqrt{r+s})}}, & r+s-\sqrt{s}\sqrt{r+s}<t \leq r+s+\sqrt{s}\sqrt{r+s}, \\
1, & t\leq r+s-\sqrt{s}\sqrt{r+s}.
\end{cases}
\end{eqnarray*}
Moreover, for $t<r+s-\sqrt{s}\sqrt{r+s}$, we have
$$\mathbb{P}\left(|\sqrt{r}U+\sqrt{s}V|-\sqrt{r+s}|V|>t\sqrt{2\log n}\right)\rightarrow 1.$$
\end{lemma}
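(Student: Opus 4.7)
The plan is to reduce $X := |\sqrt{r}U+\sqrt{s}V|-\sqrt{r+s}|V|$ to a tractable Gaussian on a high-probability event and analyze the three regimes of $t$ separately, handling boundary contributions via a direct Laplace-type calculation. Since both $U$ and $V$ have diverging positive means, the event $\mathcal{G}:=\{V>0,\ \sqrt{r}U+\sqrt{s}V>0\}$ holds with $\mathbb{P}(\mathcal{G})=1-o(1)$, and standard Gaussian tail bounds give $\mathbb{P}(V\le 0)\asymp n^{-s}/\sqrt{\log n}$. On $\mathcal{G}$ the absolute values drop and $X=Y:=\sqrt{r}U+(\sqrt{s}-\sqrt{r+s})V$, which is Gaussian with mean $\mu^*\ell$ and variance $2\mu^*$, where $\mu^*:=r+s-\sqrt{s(r+s)}$ and $\ell:=\sqrt{2\log n}$.

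For $t<\mu^*$, the Gaussian $Y$ has mean $\mu^*\ell>t\ell$, so $\mathbb{P}(Y>t\ell)\to 1$, and combined with $\mathbb{P}(\mathcal{G})\to 1$ this gives $\mathbb{P}(X>t\ell)\to 1$, covering the third case and the additional claim. For the middle regime $\mu^*<t\le r+s+\sqrt{s(r+s)}$, Mill's ratio from Lemma \ref{prop:standard-normal}(1) applied to $Y$ yields $\mathbb{P}(Y>t\ell)\asymp \frac{1}{\sqrt{\log n}}n^{-(t-\mu^*)^2/(2\mu^*)}$; the corrections from $\mathcal{G}^c$ are controlled by showing the boundary rate $(t-r)^2/r+s$ (from the event $\{V\le 0\}$, by the argument described below) strictly exceeds $(t-\mu^*)^2/(2\mu^*)$ throughout the interior of this regime, which follows because the two rate functions agree at $t=r+s+\sqrt{s(r+s)}$ with matching first derivatives while the second derivative of $(t-r)^2/r+s$ is strictly larger.

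The upper regime $t>r+s+\sqrt{s(r+s)}$ is the most delicate, since $\mathbb{P}(Y>t\ell)\asymp n^{-(t-\mu^*)^2/(2\mu^*)}$ is strictly smaller than the target $n^{-((t-r)^2/r+s)}$ and the probability is driven by the boundary-type event where $V$ is close to $0$ while $U$ is unusually large. By independence of $U$ and $V$,
\[
\mathbb{P}(X>t\ell)\asymp \int_{\mathbb{R}}\mathbb{P}\bigl(|\sqrt{r}U+\sqrt{s}v|-\sqrt{r+s}|v|>t\ell\bigr)f_V(v)\,dv,
\]
and for $v\ge 0$ the inner probability is dominated by $\mathbb{P}(\sqrt{r}U>t\ell+(\sqrt{r+s}-\sqrt{s})v)$. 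Applying Mill's ratio after substituting $v=y\ell$ makes the integrand $\asymp \ell^{-1}n^{-F(y)}$ with $F(y):=(t-r+(\sqrt{r+s}-\sqrt{s})y)^2/r+(y-\sqrt{s})^2$. The unconstrained minimizer $y^*=(r\sqrt{s}-(\sqrt{r+s}-\sqrt{s})(t-r))/((\sqrt{r+s}-\sqrt{s})^2+r)$ is negative when $t>r+s+\sqrt{s(r+s)}$, so $\inf_{y\ge 0}F(y)=F(0)=(t-r)^2/r+s$ with strictly positive slope at $y=0$; a boundary Laplace integral $\int_0^\infty e^{-(\log n)c_1 y}\,dy\asymp 1/\log n$, combined with the $1/\sqrt{\log n}$ from Mill's ratio and the Jacobian $\ell$ from $dv=\ell\,dy$, gives the overall prefactor $1/\log n$. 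The remaining regions ($v<0$ and the sign switch $\{\sqrt{r}U+\sqrt{s}V<0\}$) are handled by analogous large-deviation computations that yield strictly larger exponents.

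The principal obstacle is the upper regime: one must recognize that the true large-deviation minimizer lies at the boundary $y=0$ rather than the interior, carry out the boundary Laplace expansion precisely enough to obtain the correct $1/\log n$ prefactor, and verify that each of the sign-switching quadrants contributes only negligibly by redoing the rate comparison there.
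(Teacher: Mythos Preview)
Your overall strategy—conditioning on $V$ and running a Laplace analysis—is a legitimate alternative to the paper's route, and the third case and middle regime are essentially fine. The paper instead introduces independent Gaussians $W_1,W_2\sim N(0,r+s-\sqrt{s(r+s)})$, $W_3\sim N(0,\sqrt{s(r+s)})$ so that $(\sqrt{r}U+\sqrt{s}V,\sqrt{r+s}V)\stackrel{d}{=}(W_1+W_3+\mathrm{const},W_2+W_3+\mathrm{const})$. Then the event $\{|A|-|B|>t\ell\}$ is rewritten via the pairs $A\pm B$, and the crucial observation is that $W_1-W_2$ is \emph{independent} of $W_1+W_2+2W_3$, so each branch factorizes exactly into a product of two one-dimensional Gaussian tails. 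This yields all three regimes and the correct polylog prefactors in a few lines, with no boundary Laplace analysis needed.

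There is, however, a genuine error in your upper-regime reasoning. You assert that $\mathbb{P}(Y>t\ell)\asymp n^{-(t-\mu^*)^2/(2\mu^*)}$ is \emph{smaller} than the target $n^{-((t-r)^2/r+s)}$. The opposite is true: both rate functions and their first derivatives agree at $t_0=r+s+\sqrt{s(r+s)}$, and since $1/\mu^*<2/r$ the quadratic $(t-\mu^*)^2/(2\mu^*)$ has the \emph{smaller} second derivative, hence for $t>t_0$ one has $(t-\mu^*)^2/(2\mu^*)<(t-r)^2/r+s$ and therefore $\mathbb{P}(Y>t\ell)$ is \emph{larger} than the target. The correct picture is that the unconstrained large-deviation minimizer for $\{Y>t\ell\}$ sits at $y^*<0$, i.e.\ has $V<0$, which lies outside $\mathcal{G}$; on $\{V<0\}$ one no longer has $X=Y$, so $\mathbb{P}(Y>t\ell)$ overcounts. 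What you actually need (and what your integral over $v\ge 0$ computes) is $\mathbb{P}(Y>t\ell,\,V\ge 0)$, whose boundary minimum at $y=0$ gives the correct rate $(t-r)^2/r+s$ with the $1/\log n$ prefactor. Your Laplace calculation itself is sound, but as written the motivation is internally inconsistent: you cannot simultaneously have $\mathbb{P}(Y>t\ell)<\text{target}$ and then recover the target by integrating over the subset $\{V\ge 0\}\subset\{Y=X\}$. A minor related point: the region $\{V<0,\ \sqrt{r}U+\sqrt{s}V>0\}$ gives the \emph{same} boundary exponent $(t-r)^2/r+s$ at $v=0^-$, not a strictly larger one, though this does not change the order.
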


\begin{lemma}[Lemma 3 of \cite{cai2014optimal}]\label{lem:Laplace}
Let $f:\mathcal{X}\rightarrow\mathbb{R}$ be a measurable function on the measurable space $(\mathcal{X},\mathcal{F},\nu)$ that satisfies $\int_{\mathcal{X}}e^{n_0f}d\nu<\infty$ for some $n_0>0$. Then, we have
$$\lim_{n\rightarrow\infty}\frac{1}{n}\log\int_{\mathcal{X}}e^{n f}d\nu=\ess\sup_{x\in\mathcal{X}}f(x),$$
where $\ess\sup_{x\in\mathcal{X}}f(x)=\inf\{a\in\mathbb{R}:\nu(\{x:f(x)>a\})=0\}$ is the essential supremum.
\end{lemma}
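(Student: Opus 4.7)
The plan is to prove the two inequalities $\limsup_n \tfrac{1}{n}\log\int_\mathcal{X} e^{nf}\,d\nu \leq M$ and $\liminf_n \tfrac{1}{n}\log\int_\mathcal{X} e^{nf}\,d\nu \geq M$ separately, where $M := \ess\sup_{x\in\mathcal{X}} f(x)$. By the very definition of essential supremum, $f(x)\leq M$ holds for $\nu$-almost every $x$, and for every $M' < M$ the sub-level complement $A_{M'} := \{x\in\mathcal{X} : f(x) > M'\}$ satisfies $\nu(A_{M'}) > 0$. These two facts drive everything.

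For the upper bound, I would use the integrability hypothesis $C_0 := \int_\mathcal{X} e^{n_0 f}\,d\nu < \infty$ to localize the mass. For $n \geq n_0$, split the exponent and apply $f \leq M$ $\nu$-a.e.:
$$\int_\mathcal{X} e^{n f}\,d\nu \;=\; \int_\mathcal{X} e^{(n-n_0)f}\, e^{n_0 f}\,d\nu \;\leq\; e^{(n-n_0)M}\, C_0.$$
Taking $\tfrac{1}{n}\log$ and sending $n\to\infty$ gives $\limsup_n \tfrac{1}{n}\log\int e^{nf}\,d\nu \leq M$, since $\tfrac{(n-n_0)M}{n}\to M$ and $\tfrac{1}{n}\log C_0\to 0$.

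For the lower bound, fix $M' < M$ (assume first $M > -\infty$). Restricting the integration to $A_{M'}$ and using $f > M'$ there yields
$$\int_\mathcal{X} e^{nf}\,d\nu \;\geq\; \int_{A_{M'}} e^{nf}\,d\nu \;\geq\; e^{nM'}\,\nu(A_{M'}),$$
so $\tfrac{1}{n}\log \int e^{nf}\,d\nu \geq M' + \tfrac{1}{n}\log \nu(A_{M'})$. If $\nu(A_{M'}) < \infty$, the second term vanishes as $n\to\infty$; if $\nu(A_{M'}) = \infty$, the lower bound becomes $+\infty$ and is trivially at least $M'$. In either case $\liminf_n \tfrac{1}{n}\log \int e^{nf}\,d\nu \geq M'$, and taking $M' \uparrow M$ closes the gap.

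The only real obstacle is bookkeeping around edge cases. If $M = -\infty$, then $f = -\infty$ a.e., $\int e^{nf}\,d\nu = 0$ for all $n$, and the conclusion holds under the standard convention $\log 0 = -\infty$. If $M = +\infty$, the lower bound argument with $M' \uparrow +\infty$ immediately gives divergence of $\tfrac{1}{n}\log \int e^{nf}\,d\nu$, matching the claim. None of these cases introduces genuine difficulty, so the proof is essentially the two one-line estimates above combined with the definition of essential supremum.
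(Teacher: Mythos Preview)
Your proof is correct and is the standard two-sided estimate for this Laplace-type asymptotic. The paper does not actually prove this lemma: it is quoted verbatim from \cite{cai2014optimal} (as indicated in the lemma heading) and no proof appears anywhere in Section~\ref{sec:all-pf}. So there is no ``paper's own proof'' to compare against; your argument is a clean self-contained justification.

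One minor remark: your concern about the possibility $\nu(A_{M'})=\infty$ is in fact moot under the stated hypothesis, since $\int e^{n_0 f}\,d\nu \geq e^{n_0 M'}\nu(A_{M'})$ forces $\nu(A_{M'})<\infty$ whenever $M'>-\infty$. Similarly, because $f$ is real-valued, the case $M=-\infty$ can only occur when $\nu$ is the zero measure (since $\{f>a\}\uparrow\mathcal{X}$ as $a\downarrow-\infty$), which is trivial. Neither observation affects the validity of your argument.
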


\begin{lemma}[Lemma 4 of \cite{cai2014optimal}]\label{lem:Yihong-Wu}
For any $t\geq 0$, $(\sqrt{2}-1)^2 t\wedge t^2\leq (\sqrt{1+t}-1)^2\leq t\wedge t^2$. For any $t\geq -1$, $\sqrt{1+t}\geq 1+\frac{t}{2}-t^2$.
\end{lemma}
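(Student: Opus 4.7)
The first claim splits naturally into the upper and lower bounds on $(\sqrt{1+t}-1)^2$. For the upper bound $(\sqrt{1+t}-1)^2 \leq t \wedge t^2$, the plan is to verify the two sub-inequalities separately. The inequality $(\sqrt{1+t}-1)^2 \leq t$ is equivalent after expansion to $\sqrt{1+t}\geq 1$, while $(\sqrt{1+t}-1)^2\leq t^2$ is equivalent, since $\sqrt{1+t}-1\geq 0$, to $\sqrt{1+t}\leq 1+t$; both hold trivially for $t\geq 0$.

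For the lower bound, I would split at $t=1$. On $[0,1]$ we have $t \wedge t^2 = t^2$, so the inequality reduces to $(\sqrt{2}-1)\,t \leq \sqrt{1+t}-1$. The function $g(t) = \sqrt{1+t} - 1 - (\sqrt{2}-1)t$ is strictly concave (sum of a strictly concave function and a linear one) and satisfies $g(0)=g(1)=0$, forcing $g \geq 0$ on $[0,1]$. On $[1,\infty)$, $t \wedge t^2 = t$, so the target is $(\sqrt{2}-1)\sqrt{t} \leq \sqrt{1+t}-1$. Squaring both nonnegative sides reduces this to $\bigl(1-(\sqrt{2}-1)^2\bigr)t \geq 2(\sqrt{2}-1)\sqrt{t}$; using $1-(\sqrt{2}-1)^2 = 2(\sqrt{2}-1)$, this simplifies to $t \geq \sqrt{t}$, i.e.\ $t \geq 1$, which is precisely the range under consideration.

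For the second claim, I would set $h(t) = \sqrt{1+t} - 1 - t/2 + t^2$ and show $h \geq 0$ on $[-1,\infty)$. Differentiation gives $h(0)=h'(0)=0$ and $h''(t) = 2 - \frac{1}{4(1+t)^{3/2}}$, which is nonnegative iff $t \geq -3/4$. So on $[-3/4,\infty)$, $h$ is convex with a stationary point at $0$, and thus attains its minimum value $h(0)=0$ there, giving $h \geq 0$. On $[-1,-3/4]$, $h$ is concave; a direct evaluation yields $h(-1)=1/2$ and $h(-3/4)=7/16$, so by concavity $h$ dominates the chord joining these endpoint values, which is strictly positive, giving $h \geq 0$ on the whole interval.

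No single step is deep, so the main point requiring care is really bookkeeping: locating the splitting point $t=1$ in the first claim (dictated both by the kink in $t \wedge t^2$ and by the fact that $(\sqrt{2}-1)^2$ is the sharp constant making the inequality tight at both $t=0$ and $t=1$), and, in the second claim, the inflection point $t=-3/4$ of $h$, which forces the two-region analysis (convex argument on $[-3/4,\infty)$ plus a chord argument on $[-1,-3/4]$).
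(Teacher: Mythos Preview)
Your proof is correct and self-contained. The paper does not actually supply its own proof of this lemma; it merely cites it as Lemma~4 of \cite{cai2014optimal}, so there is no in-paper argument to compare against. Your elementary approach---handling the upper bound by direct algebraic manipulation, the lower bound by a concavity argument on $[0,1]$ and an algebraic reduction on $[1,\infty)$, and the second claim by a convex/concave split at the inflection point $t=-3/4$---is clean and verifies all the stated inequalities without appeal to external results.
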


\subsection{Proofs of Theorem \ref{thm:V-equal} and Theorem \ref{thm:U-V-equal}} \label{sec:pf-org1}

\begin{proof}[Proof of Theorem \ref{thm:V-equal} (upper bound)]
According to the Neyman-Pearson lemma, the optimal test for the testing problem (\ref{eq:equal-sum-null})-(\ref{eq:equal-sum-alt}) is the likelihood ratio test. So it is sufficient to prove the consistency of any test, and then the consistency of the likelihood ratio test is implied. We define the statistic
$$T_n(t)=\frac{\sum_{i=1}^n\indc{V_i^2\leq 2t\log n}-n\mathbb{P}(\chi_{1,2r\log n}^2\leq 2t\log n)}{\sqrt{n\mathbb{P}(\chi_{1,2r\log n}^2\leq 2t\log n)\left(1-\mathbb{P}(\chi_{1,2r\log n}^2\leq 2t\log n)\right)}},$$
with $t\in(0,r)$ to be chosen later. Since $T_n(t)$ has mean $0$ and variance $1$ under $H_0$, the test $\indc{|T_n(t)|>\sqrt{\log n}}$ has a vanishing Type-I error by applying Chebyshev's inequality. To analyze the Type-II error, we need to study the expectation and the variance of $T_n(t)$ under $H_1$. By Lemma \ref{lem:easy-tail}, we have
$$\mathbb{P}(\chi_{1,2r\log n}^2\leq 2t\log n)\asymp \frac{1}{\sqrt{\log n}}n^{-(\sqrt{r}-\sqrt{t})^2},\quad\text{and}\quad \mathbb{P}(\chi_1^2\leq 2t\log n)\asymp 1,$$
which implies
\begin{eqnarray*}
&& \mathbb{E}_{H_1}\left(\sum_{i=1}^n\indc{V_i^2\leq 2t\log n}-n\mathbb{P}(\chi_{1,2r\log n}^2\leq 2t\log n)\right) \\
&=& n\epsilon\left(\mathbb{P}(\chi_1^2\leq 2t\log n)-\mathbb{P}(\chi_{1,2r\log n}^2\leq 2t\log n)\right) \\
&\asymp& n^{1-\beta},
\end{eqnarray*}
and
\begin{eqnarray*}
&& \Var_{H_1}\left(\sum_{i=1}^n\indc{V_i^2\leq 2t\log n}-n\mathbb{P}(\chi_{1,2r\log n}^2\leq 2t\log n)\right) \\
&\asymp& n(1-\epsilon)\mathbb{P}(\chi_{1,2r\log n}^2\leq 2t\log n) + n\epsilon \mathbb{P}(\chi_1^2\leq 2t\log n) \\
&\asymp& \frac{1}{\sqrt{\log n}}n^{1-(\sqrt{r}-\sqrt{t})^2} + n^{1-\beta}.
\end{eqnarray*}
Hence,
\begin{equation}
\frac{(\mathbb{E}_{H_1}T_n(t))^2}{\Var_{H_1}(T_n(t))}\asymp n^{1-\beta} + (\sqrt{\log n})n^{1-2\beta+(\sqrt{r}-\sqrt{t})^2}.\label{eq:E-Var-ratio-V-equal}
\end{equation}
Therefore, as long as $1-\beta>0$ and $1-2\beta+r>0$, we can choose a sufficiently small constant $t\in(0,r)$, such that $\frac{(\mathbb{E}_{H_1}T_n(t))^2}{\Var_{H_1}(T_n(t))}$ diverges to infinity at some polynomial rate. This implies a vanishing Type-II error by Chebyshev's inequality. Finally, note that the conditions $1-\beta>0$ and $1-2\beta+r>0$ can be equivalently written as $\beta<1\wedge\frac{r+1}{2}$, and the proof is complete.
\end{proof}

\begin{proof}[Proof of Theorem \ref{thm:V-equal} (lower bound)]
Let us write the null distribution (\ref{eq:equal-sum-null}) and the alternative distribution (\ref{eq:equal-sum-alt}) as $P$ and $(1-\epsilon)P+\epsilon Q$, where the densities of $P$ and $Q$ are given by
$$p(v)=\frac{1}{2}\phi(v-\sqrt{2r\log n})+\frac{1}{2}\phi(v+\sqrt{2r\log n}),\quad\text{and}\quad q(v)=\phi(v).$$
We use $\phi(\cdot)$ for the density function of $N(0,1)$. It suffices to prove that $H(P,(1-\epsilon)P+\epsilon Q)^2=o(n^{-1})$ so that no consistent test exists (see \cite{cai2014optimal}). We adapt an argument in \cite{tony2011optimal} to upper bound the Hellinger distance. 
We have  {for any measurable set $D$},
\begin{eqnarray}
\nonumber H(P,(1-\epsilon)P+\epsilon Q)^2 &=& 1-\int\sqrt{p((1-\epsilon) p+\epsilon q)} \\
\nonumber &=& 1-\mathbb{E}_P\sqrt{1+\epsilon\left(\frac{q}{p}-1\right)} \\
\nonumber &\leq& 1-\mathbb{E}_P\sqrt{1+\epsilon\left(\frac{q}{p}\mathbf{1}_D-1\right)} \\
\label{eq:used-Jeng} &\leq& -\frac{1}{2}\epsilon\mathbb{E}_P\left(\frac{q}{p}\mathbf{1}_D-1\right) + \epsilon^2\mathbb{E}_P\left(\frac{q}{p}\mathbf{1}_D-1\right)^2 \\
\nonumber &\leq& \frac{1}{2}\epsilon\int_{D^c}q + 2\epsilon^2\int_D\frac{q^2}{p} + 2\epsilon^2,
\end{eqnarray}
where the inequality (\ref{eq:used-Jeng}) is by Lemma \ref{lem:Yihong-Wu} and the fact that $\epsilon\left(\frac{q}{p}\mathbf{1}_D-1\right)\geq -\epsilon\geq -1$. Since $\beta>1\wedge\frac{r+1}{2}$ implies $n\epsilon^2=n^{1-2\beta}=o(n^{-1})$, it suffices to prove
\begin{equation}
\epsilon\int_{D^c}q=o(n^{-1}),\quad\text{and}\quad \epsilon^2\int_D\frac{q^2}{p}=o(n^{-1}). \label{eq:two-conditions-D}
\end{equation}
To this end, we choose $D=\mathbb{R}$, which implies $\epsilon\int_{D^c}q=0$. For the second term, we have
\begin{eqnarray*}
\epsilon^2\int\frac{q^2}{p} &=& 4\epsilon^2\int_0^{\infty}\frac{\phi(v)^2}{\phi(v-\sqrt{2r\log n})+\phi(v+\sqrt{2r\log n})}dv \\
&\leq& 4\epsilon^2\int_0^{\infty}\frac{\phi(v)^2}{\phi(v-\sqrt{2r\log n})}dv \\
&=& 4\epsilon^2n^{2r}\int_0^{\infty}\phi(v+\sqrt{2r\log n})dv \\
&\leq& 4\epsilon^2n^{r},
\end{eqnarray*}
where the last inequality is a standard Gaussian tail bound (\ref{eq:Gaussian-tail}). Therefore, when $\beta>\frac{r+1}{2}$, we have $\epsilon^2\int\frac{q^2}{p}\leq 4\epsilon^2n^{r}=4n^{-2\beta+r}=o(n^{-1})$.

When $\beta>1$, we can choose $D=\varnothing$ in (\ref{eq:two-conditions-D}), which implies $\epsilon^2\int_D\frac{q^2}{p}=0$. Then we have $\epsilon^2\int_D\frac{q^2}{p}=\epsilon=n^{-\beta}=o(n^{-1})$.
The proof is completed by combining the two cases.
\end{proof}

\begin{proof}[Proof of Theorem \ref{thm:U-V-equal} (upper bound)]
Let us write the null distribution (\ref{eq:equal-comb-null}) and the alternative distribution (\ref{eq:equal-comb-alt}) by $P$ and $(1-\epsilon)P+\epsilon Q$, respectively, where the density functions of $P$ and $Q$ are given by
\begin{eqnarray}
\label{eq:P-up} p(u,v) &=& \frac{1}{2}\phi(u)\phi(v-\sqrt{2r\log n}) + \frac{1}{2}\phi(u)\phi(v+\sqrt{2r\log n}), \\
\label{eq:Q-up} q(u,v) &=& \frac{1}{2}\phi(u-\sqrt{2r\log n})\phi(v) + \frac{1}{2}\phi(u+\sqrt{2r\log n})\phi(v).
\end{eqnarray}
We only need to prove consistency of (\ref{eq:HC-comb}), and then consistency of the likelihood ratio test is implied by the Neyman--Pearson lemma. 
We can equivalently write the test (\ref{eq:HC-comb}) as $\indc{\sup_{t\in\mathbb{R}}|T_n(t)|>\sqrt{2(1+\delta)\log\log n}}$, where
$$T_n(t)=\frac{\sum_{i=1}^n\indc{|U_i|-|V_i|>t\sqrt{2\log n}}-nS_{\|\theta\|}(t\sqrt{2\log n})}{\sqrt{nS_{\|\theta\|}(t\sqrt{2\log n})(1-S_{\|\theta\|}(t\sqrt{2\log n}))}}.$$
By \cite{shorack2009empirical} and a standard argument in \cite{donoho2004higher}, $\frac{\sup_{t\in\mathbb{R}}|T_n(t)|}{\sqrt{2\log\log n}}$ converges to $1$ in probability under $H_0$, which then implies a vanishing Type-I error. The Type-II error can be bounded by
$$\mathbb{P}_{H_1}\left(\sup_{t\in\mathbb{R}}|T_n(t)|\leq\sqrt{2(1+\delta)\log\log n}\right)\leq \mathbb{P}_{H_1}\left(|T_n(\bar{t})|\leq\sqrt{2(1+\delta)\log\log n}\right),$$
for some $\bar{t}\in\mathbb{R}$ to be chosen appropriately. So it suffices to choose a $\bar{t}$ so that $\frac{(\mathbb{E}_{H_1}T_n(\bar{t}))^2}{\Var_{H_1}(T_n(\bar{t}))}$ diverges to infinity at  {some} polynomial rate. By Lemma \ref{lem:easy-tail}, we have
\begin{eqnarray*}
P\left(|U|-|V|>\bar{t}\sqrt{2\log n}\right) &\asymp& \frac{1}{\log n}n^{-(\bar{t}^2+r)}, \\
Q\left(|U|-|V|>\bar{t}\sqrt{2\log n}\right) &\asymp& \frac{1}{\log n}n^{-(\bar{t}-\sqrt{r})^2}, \\
\end{eqnarray*}
for any constant $\bar{t}>\sqrt{r}$. Therefore,
\begin{eqnarray*}
&& \mathbb{E}_{H_1}\left(\sum_{i=1}^n\indc{|U_i|-|V_i|>t\sqrt{2\log n}}-nS_{\|\theta\|}(t\sqrt{2\log n})\right) \\
&=& n\epsilon\left(Q\left(|U|-|V|>\bar{t}\sqrt{2\log n}\right)-P\left(|U|-|V|>\bar{t}\sqrt{2\log n}\right)\right) \\
&\asymp& \frac{1}{\log n}n^{1-\beta-(\bar{t}-\sqrt{r})^2},
\end{eqnarray*}
and
\begin{eqnarray*}
&& \Var_{H_1}\left(\sum_{i=1}^n\indc{|U_i|-|V_i|>t\sqrt{2\log n}}-nS_{\|\theta\|}(t\sqrt{2\log n})\right) \\
&\asymp& n(1-\epsilon)P\left(|U|-|V|>\bar{t}\sqrt{2\log n}\right) + n\epsilon Q\left(|U|-|V|>\bar{t}\sqrt{2\log n}\right) \\
&\asymp& \frac{1}{\log n}n^{1-(\bar{t}^2+r)} + \frac{1}{\log n}n^{1-\beta-(\bar{t}-\sqrt{r})^2},
\end{eqnarray*}
which implies
$$\frac{(\mathbb{E}_{H_1}T_n(\bar{t}))^2}{\Var_{H_1}(T_n(\bar{t}))}\asymp \frac{1}{\log n}\left(n^{1-2\beta-2(\bar{t}-\sqrt{r})^2+\bar{t}^2+r}+n^{1-\beta-(\bar{t}-\sqrt{r})^2}\right).$$
We choose $\bar{t}=2\sqrt{r}\wedge(\sqrt{r}+\sqrt{1-\beta^*(r)})$, and then $\frac{(\mathbb{E}_{H_1}T_n(\bar{t}))^2}{\Var_{H_1}(T_n(\bar{t}))}\rightarrow\infty$ at some polynomial rate as long as $\beta<\beta^*(r)$. This implies a vanishing Type-II error by Chebyshev's inequality. The proof is complete.
\end{proof}

\begin{proof}[Proof of Theorem \ref{thm:U-V-equal} (lower bound)]
Recall the notation $p$ and $q$ in (\ref{eq:P-up}) and (\ref{eq:Q-up}).
By the same argument  {as} in the proof of Theorem \ref{thm:V-equal} (lower bound), it suffices to show (\ref{eq:two-conditions-D}) for some set $D$. To this end, we choose $D=\{(u,v):|u|\leq (\sqrt{r}+\sqrt{1-\beta^*(r)})\sqrt{2\log n}\}$. Then,
\begin{eqnarray*}
\epsilon\int_{D^c}q &=& \epsilon \mathbb{P}\left(|N(\sqrt{2r\log n},1)|>(\sqrt{r}+\sqrt{1-\beta})\sqrt{2\log n}\right) \\
&\leq& 2\epsilon \mathbb{P}\left(N(0,1)>\sqrt{1-\beta^*(r)}\sqrt{2\log n}\right) \\
&\leq& 2\epsilon n^{-(1-\beta^*(r))},
\end{eqnarray*}
which implies $\epsilon\int_{D^c}q=o(n^{-1})$ when $\beta>\beta^*(r)$.
We also have
\begin{eqnarray}
\nonumber \epsilon^2\int_D\frac{q^2}{p} &=& 4\epsilon^2 \int_{D\cap\{(u,v):u>0,v>0\}}\frac{q(u,v)^2}{p(u,v)}dudv \\
\label{eq:bi-da-xiao} &\leq& 8\epsilon^2\int_{D\cap\{(u,v):u>0,v>0\}}\frac{\phi(u-\sqrt{2r\log n})^2\phi(v)^2}{\phi(u)\phi(v-\sqrt{2r\log n})}dudv \\
\nonumber &=& 8\epsilon^2\int_0^{(\sqrt{r}+\sqrt{1-\beta^*(r)})\sqrt{2\log n}}\frac{\phi(u-\sqrt{2r\log n})^2}{\phi(u)}du\int_0^{\infty}\frac{\phi(v)^2}{\phi(v-\sqrt{2r\log n})}dv \\
\nonumber &\leq& 8\epsilon^2n^{3r}\int_0^{(\sqrt{r}+\sqrt{1-\beta^*(r)})\sqrt{2\log n}}\phi(u-2\sqrt{2r\log n})du \\
\nonumber &=& 8\epsilon^2n^{3r}\mathbb{P}\left(N(0,1)\leq -(\sqrt{r}-\sqrt{1-\beta^*(r)})\sqrt{2\log n}\right) \\
\nonumber &\leq& \begin{cases}
8n^{-2\beta+3r-(\sqrt{r}-\sqrt{1-\beta^*(r)})^2}, & r>1-\beta^*(r), \\
8n^{-2\beta+3r}, & r\leq 1-\beta^*(r),
\end{cases} \\
\nonumber &=& \begin{cases}
8n^{-2\beta+3r-(\sqrt{r}-\sqrt{1-\beta^*(r)})^2}, & r>\frac{1}{5}, \\
8n^{-2\beta+3r}, & r\leq \frac{1}{5},
\end{cases}
\end{eqnarray}
where we have used the fact that $\phi(u-\sqrt{2r\log n})>\phi(u+\sqrt{2r\log n})$ when $u>0$ in (\ref{eq:bi-da-xiao}). When $r\leq \frac{1}{5}$, we have $-2\beta+3r< -2\beta^*(r)+3r=-1$. When $\frac{1}{5}<r< \frac{1}{2}$, we have
$-2\beta+3r-(\sqrt{r}-\sqrt{1-\beta^*(r)})^2<-2\beta^*(r)+3r-(\sqrt{r}-\sqrt{1-\beta^*(r)})^2\leq -1$ by the definition of $\beta^*(r)$. Therefore, we have $\epsilon^2\int_D\frac{q^2}{p}=o(n^{-1})$ and thus (\ref{eq:two-conditions-D}) holds whenever $r<\frac{1}{2}$. When $r\geq\frac{1}{2}$, we have $\beta^*(r)=1$, and we need to establish (\ref{eq:two-conditions-D}) for $\beta>1$. This can be done by choosing $D=\varnothing$ in (\ref{eq:two-conditions-D}), which implies $\epsilon^2\int_D\frac{q^2}{p}=0$. Then, when $\beta>1$, we have $\epsilon^2\int_D\frac{q^2}{p}=\epsilon=n^{-\beta}=o(n^{-1})$.
The proof is complete.
\end{proof}

\subsection{Proofs of Proposition \ref{prop:equal-diff}, Theorem \ref{thm:equal-sum}, and Theorem \ref{thm:main-equal}}

\begin{proof}[Proof of Proposition \ref{prop:equal-diff}]
We first bound the Type-I error. For any $z,\sigma\in\{-1,1\}^n$ such that $\ell(z,\sigma)=0$, we either have $z=\sigma$ or $z=-\sigma$. By a union bound, the Type-I error can be bounded from above by
\begin{equation}
\sup_{z\in\{-1,1\}^n}P_{(\theta,\eta,z,z)}^{(n)}\psi + \sup_{z\in\{-1,1\}^n}P_{(\theta,\eta,z,-z)}^{(n)}\psi.\label{eq:type-1-3.1}
\end{equation}
By the definition of $\psi$ in (\ref{eq:equal-test-diff}), the first term of (\ref{eq:type-1-3.1}) satisfies
\begin{equation}
\sup_{z\in\{-1,1\}^n}P_{(\theta,\eta,z,z)}^{(n)}\psi\leq \sup_{z\in\{-1,1\}^n}P_{(\theta,\eta,z,z)}^{(n)}\left(T_n^->\sqrt{2(1+\delta)\log \log n}\right)\rightarrow 0, \label{eq:type-1-3.1-part1}
\end{equation}
because $T_n^-/\sqrt{2\log\log n}\rightarrow 0$ in $P_{(\theta,\eta,z,z)}^{(n)}$-probability for any $\theta,\eta,z$ \citep{donoho2004higher,shorack2009empirical}. 
Similarly, for the second term in (\ref{eq:type-1-3.1}), we have
\begin{equation}
\sup_{z\in\{-1,1\}^n}P_{(\theta,\eta,z,-z)}^{(n)}\psi\leq \sup_{z\in\{-1,1\}^n}P_{(\theta,\eta,z,-z)}^{(n)}\left(T_n^+>\sqrt{2(1+\delta)\log \log n}\right)\rightarrow 0, \label{eq:type-1-3.1-part2}
\end{equation}
and thus the Type-I error is vanishing.

To analyze the Type-II error, we notice that by the definition of $\ell(z,z^*)$, we have
\begin{eqnarray}
\nonumber && \sup_{\substack{z\in\{-1,1\}^n\\\sigma\in\{-1,1\}^n\\\ell(z,\sigma)> \epsilon}}P^{(n)}_{(\theta,\eta,z,\sigma)}(1-\psi) \\
\nonumber &\leq& \sup_{\substack{z\in\{-1,1\}^n\\\sigma\in\{-1,1\}^n\\ \frac{1}{n}\sum_{i=1}^n\indc{z_i\neq \sigma_i}> \epsilon}}P^{(n)}_{(\theta,\eta,z,\sigma)}\left(T_n^-\leq\sqrt{2(1+\delta)\log \log n}\right) \\
\label{eq:type-2-3.1} && + \sup_{\substack{z\in\{-1,1\}^n\\\sigma\in\{-1,1\}^n\\ \frac{1}{n}\sum_{i=1}^n\indc{z_i\neq -\sigma_i}> \epsilon}}P^{(n)}_{(\theta,\eta,z,\sigma)}\left(T_n^+\leq\sqrt{2(1+\delta)\log \log n}\right).
\end{eqnarray}
By symmetry, the analyses of the two terms in the above are the same, and thus we only analyze the first term. For any $z,\sigma\in\{-1,1\}^n$ that satisfy $\frac{1}{n}\sum_{i=1}^n\indc{z_i\neq \sigma_i}>\epsilon$, we have
\begin{equation}
P^{(n)}_{(\theta,\eta,z,\sigma)}\left(T_n^-\leq\sqrt{2(1+\delta)\log \log n}\right)\leq P^{(n)}_{(\theta,\eta,z,\sigma)}\left(T_n^-((4r\wedge 1)2\log n)\leq\sqrt{2(1+\delta)\log \log n}\right), \label{eq:type-2-3.1-part}
\end{equation}
where for any $t {>0}$, we use the notation
$$T_n^-(t)=\frac{\left|\sum_{i=1}^n\indc{|\wt{X}_i-\wt{Y}_i|^2/2> 2t\log n}-n\mathbb{P}(\chi_1^2>2t\log n)\right|}{\sqrt{n\mathbb{P}(\chi_1^2>2t\log n)(1-\mathbb{P}(\chi_1^2>2t\log n))}}.$$
We can follow the same analysis in \cite{donoho2004higher,cai2014optimal} and show that
$$\frac{\Var(T_n^-((4r\wedge 1)))}{(\mathbb{E}T_n^-((4r\wedge 1)))^2}\rightarrow 0$$
at some polynomial rate as $n\rightarrow\infty$ whenever $\beta<\beta_{\rm IDJ}^*(r)$, where the variance and expectation above are under $P^{(n)}_{(\theta,\eta,z,\sigma)}$. This implies a vanishing Type-II error by Chebyshev's inequality, and thus the proof is complete.
\end{proof}

\begin{proof}[Proof of Theorem \ref{thm:equal-sum}]
The Type-I error is vanishing by the same arguments  {as} used in (\ref{eq:type-1-3.1})-(\ref{eq:type-1-3.1-part2}). For the Type-II error, we follow (\ref{eq:type-2-3.1}) and (\ref{eq:type-2-3.1-part}), and thus it suffices to prove
\begin{equation}
P^{(n)}_{(\theta,\eta,z,\sigma)}\left(\bar{T}_n^+(t)\leq\sqrt{2(1+\delta)\log \log n}\right)\rightarrow 0, \label{eq:want-to-prove-eq-sum}
\end{equation}
uniformly over any $z,\sigma\in\{-1,1\}^n$ that satisfy $\frac{1}{n}\sum_{i=1}^n\indc{z_i\neq \sigma_i}>\epsilon$. The $\bar{T}_n^+(t)$ in (\ref{eq:want-to-prove-eq-sum}) is defined by
$$\bar{T}_n^+(t)=\frac{\sum_{i=1}^n\indc{(\wt{X}_i+\wt{Y}_i)^2/2\leq 2t\log n}-\mathbb{P}(\chi_{1,2\|\theta\|^2}^2\leq 2t\log n)}{\sqrt{n\mathbb{P}(\chi_{1,2\|\theta\|^2}^2\leq 2t\log n)\left(1-\mathbb{P}(\chi_{1,2\|\theta\|^2}^2\leq 2t\log n)\right)}}.$$
The mean and variance of $\bar{T}_n^+(t)$ can be analyzed by following the same argument in the proof of Theorem \ref{thm:V-equal}, and thus we can obtain (\ref{eq:E-Var-ratio-V-equal}) with $T_n(t)$ replaced by $\bar{T}_n^+(t)$. With an appropriate choice of $t$ and an application of Chebyshev's inequality, we can show the Type-II error is vanishing, and thus the proof is complete.
\end{proof}

\begin{proof}[Proof of Theorem \ref{thm:main-equal}]
Similar to the proof of Theorem \ref{thm:equal-sum}, the upper bound conclusion directly follows the arguments used in the proofs of Proposition \ref{prop:equal-diff} and Theorem \ref{thm:U-V-equal}. Thus, we only prove the lower bound. For the first term of (\ref{eq:def-worst-risk}), we have
\begin{equation}
\sup_{\substack{z\in\{-1,1\}^n\\\sigma\in\{-1,1\}^n\\\ell(z,\sigma)=0}}P^{(n)}_{(\theta,\eta,z,\sigma)}\psi\geq \sup_{z\in  {\{-1,1\}^n} }P^{(n)}_{(\theta,\eta,z,z)}\psi. \label{eq:remove-label-switch}
\end{equation}
To analyze the second term of (\ref{eq:def-worst-risk}), we note that the condition $\beta>\beta^*(r)$ implies that there exists some small constant $\delta>0$ such that $\beta>\beta^*(r)+\delta$. We use the notation $\bar{\epsilon}=n^{-(\beta-\delta)}$ so that $\bar{\epsilon}>\epsilon$. Now we  {use $\wt{P}^{(n)}_{(\theta,\eta,\bar{\epsilon})}$ to denote a joint distribution over $z,\sigma,X,Y$, the sampling process of 
which
is described below}:
\begin{enumerate}
\item Draw $z_i$ uniformly from $\{-1,1\}$ independently over all $i\in[n]$.
\item Conditioning on $z$, draw $\sigma_i$ independently over all $i\in[n]$ so that $\sigma_i=z_i$ with probability $1-\bar{\epsilon}$ and $\sigma_i=-z_i$ with probability $\bar{\epsilon}$.
\item Conditioning on $z$ and $\sigma$, independently sample $X_i|z_i\sim N(z_i\theta,I_p)$ and $Y_i|\sigma_i\sim N(\sigma_i\eta,I_q)$ for all $i\in[n]$.
\end{enumerate}
Consider the event
$$G=\left\{\left|\sum_{i=1}^n\indc{z_i\neq \sigma_i}-n\bar{\epsilon}\right|\leq\sqrt{n\bar{\epsilon}\log n}\right\}.$$
We can check by Chebyshev's inequality that $\wt{P}^{(n)}_{(\theta,\eta,\bar{\epsilon})}(G^c)\rightarrow 0$ as $n\rightarrow\infty$. We also have
$$G\subset\left\{\frac{1}{n}\sum_{i=1}^n\indc{z_i\neq \sigma_i}\in(\epsilon,1-\epsilon)\right\},$$
as long as $n$ is sufficiently large by the definition of $\bar{\epsilon}$.
Now we can lower bound the second term of (\ref{eq:def-worst-risk}) by
\begin{eqnarray*}
\sup_{\substack{z\in\{-1,1\}^n\\\sigma\in\{-1,1\}^n\\\ell(z,\sigma)> \epsilon}}P^{(n)}_{(\theta,\eta,z,\sigma)}(1-\psi) &=& \sup_{\substack{z\in\{-1,1\}^n\\\sigma\in\{-1,1\}^n\\\frac{1}{n}\sum_{i=1}^n\indc{z_i\neq \sigma_i}\in(\epsilon,1-\epsilon)}}P^{(n)}_{(\theta,\eta,z,\sigma)}(1-\psi) \\
&\geq& \wt{P}^{(n)}_{(\theta,\eta,\bar{\epsilon})}(1-\psi)\indc{G} \\
&\geq& \wt{P}^{(n)}_{(\theta,\eta,\bar{\epsilon})}(1-\psi)-\wt{P}^{(n)}_{(\theta,\eta,\bar{\epsilon})}(G^c).
\end{eqnarray*}
Together with (\ref{eq:remove-label-switch}), this implies
\begin{eqnarray*}
&& \inf_{\psi}\left(\sup_{\substack{z\in\{-1,1\}^n\\\sigma\in\{-1,1\}^n\\\ell(z,\sigma)=0}}P^{(n)}_{(\theta,\eta,z,\sigma)}\psi+\sup_{\substack{z\in\{-1,1\}^n\\\sigma\in\{-1,1\}^n\\\ell(z,\sigma)> \epsilon}}P^{(n)}_{(\theta,\eta,z,\sigma)}(1-\psi)\right) \\
&\geq& \inf_{\psi}\left(\wt{P}^{(n)}_{(\theta,\eta,0)}\psi+\wt{P}^{(n)}_{(\theta,\eta,\bar{\epsilon})}(1-\psi)\right)-\wt{P}^{(n)}_{(\theta,\eta,\bar{\epsilon})}(G^c).
\end{eqnarray*}
Since the second term in the above bound is vanishing, it is sufficient to lower bound $\inf_{\psi}\left(\wt{P}^{(n)}_{(\theta,\eta,0)}\psi+\wt{P}^{(n)}_{(\theta,\eta,\bar{\epsilon})}(1-\psi)\right)$ by a constant. Define $U_i=\frac{1}{\sqrt{2}}(\theta^TX_i/\|\theta\|-\eta^TY_i/\|\eta\|)$, $V_i=\frac{1}{\sqrt{2}}(\theta^TX_i/\|\theta\|+\eta^TY_i/\|\eta\|)$, and $W_i=R^T\begin{pmatrix}
X_i \\
Y_i
\end{pmatrix}$ 
for all $i\in[n]$, where $R\in\mathbb{R}^{(p+q)\times (p+q-2)}$ is a matrix the columns of which form an orthonormal basis of $\mathbb{R}^{p+q}$ together with $\frac{1}{\sqrt{2}}\begin{pmatrix}
\theta/\|\theta\| \\
-\eta/\|\eta\|
\end{pmatrix}$ and $\frac{1}{\sqrt{2}}\begin{pmatrix}
\theta/\|\theta\| \\
\eta/\|\eta\|
\end{pmatrix}$.
We note that the distributions of $\{W_i\}_{i\in[n]}$ under $\wt{P}^{(n)}_{(\theta,\eta,0)}$ and $\wt{P}^{(n)}_{(\theta,\eta,\bar{\epsilon})}$ are the same.
Moreover, $\{W_i\}_{i\in[n]}$ is independent from both $\{U_i\}_{i\in [n]}$ and $\{V_i\}_{i\in[n]}$ under both $\wt{P}^{(n)}_{(\theta,\eta,0)}$ and $\wt{P}^{(n)}_{(\theta,\eta,\bar{\epsilon})}$.
Therefore,  {by the connection} between testing error and total variation distance, we have
\begin{eqnarray*}
&& \inf_{\psi}\left(\wt{P}^{(n)}_{(\theta,\eta,0)}\psi+\wt{P}^{(n)}_{(\theta,\eta,\bar{\epsilon})}(1-\psi)\right) \\
&=& 1-\frac{1}{2}\int |p_0(x,y)-p_1(x,y)| \\
&=& 1-\frac{1}{2}\int|p_0(u,v)p_0(w)-p_1(u,v)p_1(w)| \\
&=& 1-\frac{1}{2}|p_0(u,v)-p_1(u,v)|,
\end{eqnarray*}
where we abuse the notation $p_0$ and $p_1$ for the density functions of $X,Y,U,V,W$ under $\wt{P}^{(n)}_{(\theta,\eta,0)}$ and $\wt{P}^{(n)}_{(\theta,\eta,\bar{\epsilon})}$, respectively. The last equality above uses the fact that $p_0(w)=p_1(w)$. Note that $1-\frac{1}{2}|p_0(u,v)-p_1(u,v)|$ is exactly the testing error of (\ref{eq:equal-comb-null})-(\ref{eq:equal-comb-alt}) with $\epsilon$ replaced by $\bar{\epsilon}$. Since $\beta-\delta>\beta^*(r)$, we can apply Theorem \ref{thm:U-V-equal} and get
$$\liminf_{n\rightarrow\infty}\left(1-\frac{1}{2}|p_0(u,v)-p_1(u,v)|\right)>c,$$
for some constant $c>0$, and this completes the proof.
\end{proof}

\subsection{Proofs of Theorem \ref{thm:general-separate}, Theorem \ref{thm:general-HC}, and Theorem \ref{thm:main-general}}

To facilitate the proofs of these theorems, we first state and prove several propositions. Each solves a small optimization problem that will be used in the arguments.

\begin{proposition}\label{prop:opt1}
We have
$$\max_u\left(2\sqrt{r}|u|-\frac{u^2}{2}\right)=2r,$$
and the maximum is achieved at $|u|=2\sqrt{r}$.
\end{proposition}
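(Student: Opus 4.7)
The plan is to reduce this one-variable unconstrained optimization to a completing-the-square identity. Writing $f(u) = 2\sqrt{r}|u| - u^2/2$, I observe that $f$ depends on $u$ only through $|u|$, so it suffices to maximize $g(t) = 2\sqrt{r}\,t - t^2/2$ over $t \ge 0$.

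I would complete the square:
\[
g(t) = -\tfrac{1}{2}\bigl(t^2 - 4\sqrt{r}\,t\bigr) = 2r - \tfrac{1}{2}\bigl(t - 2\sqrt{r}\bigr)^2.
\]
Since $-\tfrac{1}{2}(t - 2\sqrt{r})^2 \le 0$ with equality if and only if $t = 2\sqrt{r}$, the maximum value is $2r$, attained at $t = 2\sqrt{r}$, i.e.\ at $|u| = 2\sqrt{r}$. Note that $2\sqrt{r} \ge 0$, so the maximizer indeed lies in the allowed range.

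There is no real obstacle here; the only thing to double-check is that both choices $u = \pm 2\sqrt{r}$ are valid maximizers (which they are, by the $|u|$ symmetry of the objective) and that the completing-the-square identity is written correctly so that the constant term matches $2r$.
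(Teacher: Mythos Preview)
Your argument is correct; the paper's own proof simply reads ``Obvious,'' so your completing-the-square computation is a fully detailed version of what the authors leave implicit.
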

\begin{proof}
Obvoius.
\end{proof}

\begin{proposition}\label{prop:opt2}
We have
$$\max_v\left(-2(\sqrt{r+s}-\sqrt{s})|v+\sqrt{r+s}|-\frac{v^2}{2}\right)=
\begin{cases}
-\frac{r+s}{2}, & 3s\leq r, \\
-2(\sqrt{r+s}-\sqrt{s})\sqrt{s}, & 3s> r.
\end{cases}$$
The maximum is achieved at $v=-\sqrt{r+s}$ and at $v=-2(\sqrt{r+s}-\sqrt{s})$ in the two cases respectively.
\end{proposition}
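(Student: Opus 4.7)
The plan is to split on the sign of $v+\sqrt{r+s}$, since the objective is piecewise quadratic (and concave) in $v$. Writing $a=\sqrt{r+s}-\sqrt{s}\geq 0$, denote the objective by $f(v)=-2a|v+\sqrt{r+s}|-v^2/2$.

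First I would analyze the right branch $v\geq -\sqrt{r+s}$, where $f(v)=-2a(v+\sqrt{r+s})-v^2/2$ is a downward-opening parabola with unique unconstrained maximizer $v^*=-2a$. I would observe that $v^*$ lies in this branch iff $2a\leq\sqrt{r+s}$, and this inequality simplifies (square and collect) to $r\leq 3s$. In that case, plugging $v=-2a$ into $f$ and simplifying gives
\[
f(-2a)=4a^2-2a\sqrt{r+s}-2a^2=2a(a-\sqrt{r+s})=-2a\sqrt{s}=-2(\sqrt{r+s}-\sqrt{s})\sqrt{s}.
\]
If instead $r>3s$, the vertex $-2a$ lies strictly to the left of $-\sqrt{r+s}$, so on the right branch $f$ is strictly decreasing, and its maximum there is attained at $v=-\sqrt{r+s}$ with value $-(r+s)/2$.

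Next I would handle the left branch $v<-\sqrt{r+s}$, where $f(v)=2a(v+\sqrt{r+s})-v^2/2$ has unconstrained maximizer $v=2a\geq 0$, which lies outside this branch. Hence $f$ is strictly increasing on the branch, and its supremum there equals the boundary value $-(r+s)/2$. So the left branch never beats the boundary point $v=-\sqrt{r+s}$.

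Finally I would combine the two cases: when $r>3s$, both branches are dominated by $v=-\sqrt{r+s}$ with value $-(r+s)/2$; when $r\leq 3s$, the right-branch interior optimum $-2(\sqrt{r+s}-\sqrt{s})\sqrt{s}$ must be compared with $-(r+s)/2$, and a direct computation (or the fact that the two expressions agree at the boundary $r=3s$ and the interior optimum is the unconstrained maximum of a concave function) shows the interior value is larger. This gives exactly the two cases in the statement, with maximizers $v=-\sqrt{r+s}$ and $v=-2(\sqrt{r+s}-\sqrt{s})$ respectively. There is no real obstacle here; the only care required is the algebraic simplification $f(-2a)=-2a\sqrt{s}$ and verifying the threshold $r=3s$ from the feasibility condition $2a\leq\sqrt{r+s}$.
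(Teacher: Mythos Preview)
Your proof is correct and follows essentially the same approach as the paper: both split at the kink $v=-\sqrt{r+s}$, analyze the two concave quadratic pieces, and determine whether the maximizer is the kink itself or the vertex $v=-2(\sqrt{r+s}-\sqrt{s})$ of the right piece via the equivalent condition $2(\sqrt{r+s}-\sqrt{s})\leq\sqrt{r+s}\Leftrightarrow r\leq 3s$. Your concavity argument for comparing the interior optimum with the boundary value $-(r+s)/2$ is clean and correct.
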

\begin{proof}
We write the objective function by $f(v)$, and then
$$f'(v)=\begin{cases}
2(\sqrt{r+s}-\sqrt{s})-v, & v<-\sqrt{r+s}, \\
-2(\sqrt{r+s}-\sqrt{s})-v, & v\geq-\sqrt{r+s}.
\end{cases}$$
It is easy to see that $f'(v)$ is a decreasing function, and it 
 {goes from positive to negative as its argument goes from $-\infty$ to $\infty$}. 
This implies that $f(v)$ is first increasing and then decreasing. When $3s<r$, we have
$$-\sqrt{r+s}>-2(\sqrt{r+s}-\sqrt{s}),$$
so that the point where $f(v)$ changes from increasing to decreasing is $-\sqrt{r+s}$, and thus the maximum is
$$f(-\sqrt{r+s})=-\frac{r+s}{2}.$$
When $3s\geq r$, we have
$$-\sqrt{r+s}\leq-2(\sqrt{r+s}-\sqrt{s}),$$
which implies that the point where $f(v)$ changes from increasing to decreasing is $-2(\sqrt{r+s}-\sqrt{s})$, and thus the maximum is
$$f(-2(\sqrt{r+s}-\sqrt{s}))=-2(\sqrt{r+s}-\sqrt{s})\sqrt{s}.$$
\end{proof}

\begin{proposition}\label{prop:opt3}
We have
$$\max_u\left(2\sqrt{r}|u|-u^2\right)=r,$$
and the maximum is achieved at $|u|=\sqrt{r}$.
\end{proposition}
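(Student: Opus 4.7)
The plan is to reduce to a one-variable unconstrained optimization by noting that the objective $f(u) = 2\sqrt{r}|u| - u^2$ depends on $u$ only through $|u|$, so it suffices to maximize over $t = |u| \geq 0$. The resulting problem $\max_{t\geq 0}(2\sqrt{r}\, t - t^2)$ is a downward-opening quadratic in $t$, and the standard completion-of-the-square identity
\begin{equation*}
2\sqrt{r}\, t - t^2 = r - (t - \sqrt{r})^2
\end{equation*}
immediately gives the upper bound $r$ together with the unique nonnegative maximizer $t = \sqrt{r}$.

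Translating back, the maximum value of the original objective is $r$ and it is attained at every $u$ with $|u| = \sqrt{r}$, which is exactly the claim. There is no real obstacle here; the result plays the same bookkeeping role as Propositions~\ref{prop:opt1} and \ref{prop:opt2}, furnishing an explicit optimum that will later be fed into Laplace-type large-deviation estimates (via Lemma~\ref{lem:Laplace}) in the phase-diagram computations. For consistency with the previous two short propositions, the written proof can simply record the completion-of-the-square calculation above and remark that the maximizer is $|u|=\sqrt{r}$.
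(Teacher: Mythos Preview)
Your proof is correct and essentially the same as the paper's, which simply records ``Obvious.'' Your completion-of-the-square spells out the trivial computation the paper omits.
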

\begin{proof}
Obvious.
\end{proof}

\begin{proposition}\label{prop:opt4}
We have
$$\max_v\left(-2(\sqrt{r+s}-\sqrt{s})|v+\sqrt{r+s}|-v^2\right)=-r,$$
and the maximum is achieved at $v=-(\sqrt{r+s}-\sqrt{s})$.
\end{proposition}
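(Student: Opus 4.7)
Write $f(v)=-2(\sqrt{r+s}-\sqrt{s})|v+\sqrt{r+s}|-v^2$ and let $a=\sqrt{r+s}-\sqrt{s}\geq 0$. The plan is the standard piecewise calculus argument, using the kink of $|\cdot|$ at $v=-\sqrt{r+s}$ to split the real line into two regions.

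First I would treat the region $v>-\sqrt{r+s}$, where $f(v)=-2a(v+\sqrt{r+s})-v^2$. Setting the derivative $-2a-2v$ to zero gives the interior critical point $v^\star=-a=-(\sqrt{r+s}-\sqrt{s})$, which lies in this region since $\sqrt{s}\geq 0$. A quick check of the second derivative ($-2<0$) confirms this is a local maximum. In the other region $v<-\sqrt{r+s}$, the derivative is $2a-2v$, which is strictly positive throughout (because $v<-\sqrt{r+s}\leq 0\leq a$), so $f$ is increasing there and attains no interior maximum. Since $f$ is continuous at the kink and decreases on $(-a,\infty)$, the global maximizer is $v^\star=-a$.

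It remains to evaluate $f(v^\star)$. Substituting $v^\star=-(\sqrt{r+s}-\sqrt{s})$ into the right-region expression gives
\[
f(v^\star)=-2(\sqrt{r+s}-\sqrt{s})\bigl(\sqrt{s}\bigr)-(\sqrt{r+s}-\sqrt{s})^2
=-(\sqrt{r+s}-\sqrt{s})\bigl(2\sqrt{s}+\sqrt{r+s}-\sqrt{s}\bigr)
=-(\sqrt{r+s}-\sqrt{s})(\sqrt{r+s}+\sqrt{s}),
\]
which simplifies via the difference of squares to $-((r+s)-s)=-r$, as claimed. No step is particularly hard; the only thing to be careful about is ensuring that the candidate critical point $-a$ actually falls into the correct piece of the domain and is genuinely a maximum rather than a saddle or minimum, which is handled by the concavity check together with the monotonicity analysis on the complementary region.
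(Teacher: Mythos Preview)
Your proof is correct and follows essentially the same piecewise-derivative argument as the paper's own proof. Note that both you and the paper's proof identify the maximizer as $v=-(\sqrt{r+s}-\sqrt{s})$, whereas the proposition as stated writes $v=-(\sqrt{r+s}-\sqrt{r})$; this is a typo in the statement, and your computation is the correct one.
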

\begin{proof}
We write the objective function as $f(v)$, and then
$$f'(v)=\begin{cases}
2(\sqrt{r+s}-\sqrt{s})-2v, & v<-\sqrt{r+s}, \\
-2(\sqrt{r+s}-\sqrt{s})-2v, & v\geq-\sqrt{r+s}.
\end{cases}$$
Since $f'(v)$  {goes from positive to negative as its argument goes from $-\infty$ to $\infty$}, $f(v)$ is first increasing and then decreasing. The point where it changes from increasing to decreasing is at $v=-(\sqrt{r+s}-\sqrt{s})$, and thus the maximum is
$f(-(\sqrt{r+s}-\sqrt{s}))=-r$.
\end{proof}

\begin{proposition}\label{prop:opt5}
We have
\begin{eqnarray*}
&& \max_{u^2+v^2=1}\left(2\sqrt{r}|u|-2(\sqrt{r+s}-\sqrt{s})|v+\sqrt{r+s}|\right) \\
&=& \begin{cases}
2\sqrt{r}\sqrt{1-r-s}, & 2(1-r-s)(r+s-\sqrt{s}\sqrt{r+s})>r, \\
\Big[2\sqrt{2(r+s-\sqrt{s}\sqrt{r+s})} & \\
~~~ -2(r+s-\sqrt{s}\sqrt{r+s})\Big], & 2(1-r-s)(r+s-\sqrt{s}\sqrt{r+s})\leq r. 
\end{cases}
\end{eqnarray*}
\end{proposition}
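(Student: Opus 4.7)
The plan is to reduce the bivariate problem to a one-dimensional concave program, solve it in closed form, and then verify that the case split in the statement coincides with the geometric condition for the unconstrained optimum to lie in the feasible region.

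Since the objective depends on $u$ only through $|u|$, by symmetry I may take $u \geq 0$ and substitute $u = \sqrt{1-v^2}$. This reduces the problem to maximizing
\begin{equation*}
g(v) = 2\sqrt{r}\,\sqrt{1-v^2} - 2a\,|v + \sqrt{r+s}|, \quad v \in [-1,1],
\end{equation*}
where $a := \sqrt{r+s} - \sqrt{s}$. I will also use the shorthand $b := (r+s) - \sqrt{s(r+s)}$ and $c := r+s$, together with the identities $r + a^2 = 2b$ and $a\sqrt{r+s} = b$, which produce the clean expressions in the statement.

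When $c \leq 1$, $g$ has a kink at $v_0 := -\sqrt{c}$. On the right piece $v \geq v_0$ the absolute value disappears and $g$ becomes strictly concave; the first-order condition yields the unconstrained critical point $v^* = -a/\sqrt{r+a^2}$, at which
\begin{equation*}
g(v^*) = 2\sqrt{r+a^2} - 2a\sqrt{r+s} = 2\sqrt{2b} - 2b.
\end{equation*}
A direct calculation shows $v^* \geq v_0$ iff $a^2 \leq 2b(r+s)$, which upon inserting $a^2 = b^2/(r+s)$ simplifies to $b \leq 2c^2$. On the left piece $v < v_0$ (which is empty when $c \geq 1$), the derivative $-2\sqrt{r}v/\sqrt{1-v^2} + 2a$ is strictly positive, so $g$ is increasing and attains its piecewise maximum at $v_0$ with $g(v_0) = 2\sqrt{r(1-r-s)}$; the endpoints $v = \pm 1$ yield non-positive values and cannot compete. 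Combining the two pieces, the global maximum equals $2\sqrt{2b} - 2b$ when $b \leq 2c^2$ (which is automatic if $c \geq 1$), and equals $2\sqrt{r(1-r-s)}$ when $b > 2c^2$.

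It remains to identify the dichotomy $b \gtrless 2c^2$ with the condition $2(1-c)b \gtrless r$ appearing in the statement. To this end I would establish the two parallel factorizations
\begin{equation*}
b - 2c^2 = -\sqrt{c}\,\bigl(\sqrt{s} - \sqrt{c}(1-2c)\bigr), \quad 2(1-c)b - r = -\bigl(\sqrt{c}-\sqrt{s}\bigr)\bigl(\sqrt{s} - \sqrt{c}(1-2c)\bigr),
\end{equation*}
which share the common factor $\sqrt{s} - \sqrt{c}(1-2c)$. Since $r > 0$ forces $\sqrt{s} < \sqrt{c}$, the remaining prefactors $\sqrt{c}$ and $\sqrt{c}-\sqrt{s}$ are both positive, so $b - 2c^2$ and $2(1-c)b - r$ carry the same sign and the two formulations coincide. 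The second factorization is the main (mild) obstacle: expanding $2(1-c)(c - \sqrt{sc}) - (c-s)$ as a quadratic in $\sqrt{s}$, the discriminant works out to $4c^3$ and the two roots are $\sqrt{c}(1-2c)$ and $\sqrt{c}$, which yields the displayed factorization. Everything else is routine calculus with careful bookkeeping of which piece of the piecewise function contains the optimum.
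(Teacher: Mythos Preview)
Your proposal is correct and follows essentially the same route as the paper: reduce to the one-variable function $g(v)=2\sqrt{r}\sqrt{1-v^2}-2(\sqrt{r+s}-\sqrt{s})|v+\sqrt{r+s}|$, locate the unique critical point $v^*=-a/\sqrt{r+a^2}$ on the right branch, and compare its position with the kink $-\sqrt{r+s}$. The only cosmetic difference is that the paper asserts directly (via the identity $a^2=2b-r$) that $v^*<-\sqrt{r+s}$ is equivalent to $2(1-r-s)(r+s-\sqrt{s}\sqrt{r+s})>r$, whereas you pass through the intermediate condition $b>2c^2$ and then verify the equivalence by the factorizations you wrote down; both arguments are straightforward algebra.
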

\begin{proof}
The constraint $u^2+v^2=1$ implies $|u|=\sqrt{1-v^2}$. Then, we can equivalently write the optimization problem as
$$\max_{|v|\leq 1}\left(2\sqrt{r}\sqrt{1-v^2}-2(\sqrt{r+s}-\sqrt{s})|v+\sqrt{r+s}|\right).$$
Denote the above object function as $f(v)$, and we have
$$f'(v)=\begin{cases}
-2\sqrt{r}\frac{v}{\sqrt{1-v^2}} + 2(\sqrt{r+s}-\sqrt{s}), & v<-\sqrt{r+s}, \\
-2\sqrt{r}\frac{v}{\sqrt{1-v^2}} - 2(\sqrt{r+s}-\sqrt{s}), & v\geq-\sqrt{r+s}. \\
\end{cases}$$
We observe that $f'(v)$ is a decreasing function on $(-1,1)$. Moreover, $f'(v)$  {goes from positive to negative as its argument goes from $-\infty$ to $\infty$}. This implies $f(v)$ is first increasing and then decreasing on $(-1,1)$, and we just need to find the point that the derivative changes its sign. First, the solution to the equation
$$-2\sqrt{r}\frac{v}{\sqrt{1-v^2}} - 2(\sqrt{r+s}-\sqrt{s})=0$$
is
$$v=-\frac{\sqrt{r+s}-\sqrt{s}}{\sqrt{r+(\sqrt{r+s}-\sqrt{s})^2}}.$$
There are two cases. In the first case,
$$-\frac{\sqrt{r+s}-\sqrt{s}}{\sqrt{r+(\sqrt{r+s}-\sqrt{s})^2}}<-\sqrt{r+s},$$
which is equivalently to
$$2(1-r-s)(r+s-\sqrt{s}\sqrt{r+s})>r,$$
and then the $-\sqrt{r+s}$ is the point where $f'(v)$ changes its sign. Thus, the maximum is
$$f(-\sqrt{r+s})=2\sqrt{r}\sqrt{1-r-s}.$$
In the second case,
$$-\frac{\sqrt{r+s}-\sqrt{s}}{\sqrt{r+(\sqrt{r+s}-\sqrt{s})^2}}\geq-\sqrt{r+s},$$
which is equivalently to
$$2(1-r-s)(r+s-\sqrt{s}\sqrt{r+s})\leq r,$$
and then $-\frac{\sqrt{r+s}-\sqrt{s}}{\sqrt{r+(\sqrt{r+s}-\sqrt{s})^2}}$ is the point where $f'(v)$ changes its sign. Thus, the maximum is
$$f\left(-\frac{\sqrt{r+s}-\sqrt{s}}{\sqrt{r+(\sqrt{r+s}-\sqrt{s})^2}}\right)=2\sqrt{2(r+s-\sqrt{s}\sqrt{r+s})}-2(r+s-\sqrt{s}\sqrt{r+s}).$$
\end{proof}

\begin{proof}[Proof of Theorem \ref{thm:general-separate} (upper bound)]
Conclusion 1 is the result in \cite{donoho2004higher}. We only need to prove Conclusion 2. We define the statistic
$$T_n(t)=\frac{\sum_{i=1}^n\indc{|V_i|^2\leq 2t\log n}-n\mathbb{P}\left(\chi_{1,2(r+s)\log n}^2\leq 2t\log n\right)}{\sqrt{n\mathbb{P}\left(\chi_{1,2(r+s)\log n}^2\leq 2t\log n\right)\left(1-\mathbb{P}\left(\chi_{1,2(r+s)\log n}^2\leq 2t\log n\right)\right)}},$$
and then we can write the test as $\indc{\sup_{t>0}|T_n(t)|>\sqrt{2(1+\delta)\log\log n}}$. By \cite{shorack2009empirical}, $\frac{\sup_{t>0}|T_n(t)|}{\sqrt{2\log\log n}}$ converges to $1$ in probability under $H_0$, which then implies a vanishing Type-I error. The Type-II error can be bounded by
$$\mathbb{P}_{H_1}\left(\sup_{t>0}|T_n(t)|\leq\sqrt{2(1+\delta)\log\log n}\right)\leq \inf_{t>0}\mathbb{P}_{H_1}\left(|T_n(t)|\leq\sqrt{2(1+\delta)\log\log n}\right).$$
 {To control the rightside of the last display}, it suffices to show there exists some $t>0$ so that $\frac{(\mathbb{E}_{H_1}T_n(t))^2}{\Var_{H_1}(T_n(t))}$ diverges to infinity at a polynomial rate. By Lemma \ref{lem:easy-tail}, we have
\begin{eqnarray}
\label{eq:g-s-l81} \mathbb{P}\left(\chi_{1,2(r+s)\log n}^2\leq 2t\log n\right) &\asymp& \begin{cases}
\frac{1}{\sqrt{\log n}}n^{-(\sqrt{r+s}-\sqrt{t})^2}, & 0<t<r+s, \\
1, & t \geq r+s,
\end{cases} \\
\label{eq:g-s-l82} 
\mathrm{~~and~~}\mathbb{P}\left(\chi_{1,2s\log n}^2\leq 2t\log n\right) &\asymp& \begin{cases}
\frac{1}{\sqrt{\log n}}n^{-(\sqrt{s}-\sqrt{t})^2}, & 0<t<s, \\
1, & t \geq s.
\end{cases}
\end{eqnarray}
Since
$$\frac{(\mathbb{E}_{H_1}T_n(t))^2}{\Var_{H_1}(T_n(t))}\asymp \frac{n^2\epsilon^2\mathbb{P}\left(\chi_{1,2s\log n}^2\leq 2t\log n\right)^2}{n\mathbb{P}\left(\chi_{1,2(r+s)\log n}^2\leq 2t\log n\right) + n\epsilon \mathbb{P}\left(\chi_{1,2s\log n}^2\leq 2t\log n\right)},$$
we have $\frac{(\mathbb{E}_{H_1}T_n(t))^2}{\Var_{H_1}(T_n(t))}\rightarrow\infty$ is equivalent to $\frac{n\epsilon^2\mathbb{P}\left(\chi_{1,2s\log n}^2\leq 2t\log n\right)^2}{\mathbb{P}\left(\chi_{1,2(r+s)\log n}^2\leq 2t\log n\right)}\rightarrow\infty$ and $n\epsilon \mathbb{P}\left(\chi_{1,2s\log n}^2\leq 2t\log n\right)\rightarrow\infty$. By (\ref{eq:g-s-l81}) and (\ref{eq:g-s-l82}), we have
$$\frac{n\epsilon^2\mathbb{P}\left(\chi_{1,2s\log n}^2\leq 2t\log n\right)^2}{\mathbb{P}\left(\chi_{1,2(r+s)\log n}^2\leq 2t\log n\right)}\asymp \begin{cases}
\frac{1}{\sqrt{\log n}}n^{1-2\beta-2(\sqrt{s}-\sqrt{t})^2+(\sqrt{r+s}-\sqrt{t})^2}, & 0<t<s, \\
(\sqrt{\log n})n^{1-2\beta+(\sqrt{r+s}-\sqrt{t})^2}, & s\leq t<r+s, \\
n^{1-2\beta}, & t>r+s,
\end{cases}$$
and
$$n\epsilon \mathbb{P}\left(\chi_{1,2s\log n}^2\leq 2t\log n\right)\asymp \begin{cases}
\frac{1}{\sqrt{\log n}}n^{1-\beta-(\sqrt{s}-\sqrt{t})^2}, & 0<t<s, \\
n^{1-\beta}, & t \geq s.
\end{cases}
$$
Therefore, a sufficient condition for the existence of $t>0$ such that $\frac{(\mathbb{E}_{H_1}T_n(t))^2}{\Var_{H_1}(T_n(t))}$ diverges to infinity at a polynomial rate is that
\begin{equation}
\beta<\sup_{t\in T_1}(f_1(t)\wedge g_1(t)) \vee \sup_{t\in T_2}(1\wedge g_2(t)) \vee \frac{1}{2}, \label{eq:min-max-1/2-V}
\end{equation}
where
\begin{eqnarray*}
f_1(t) &=& 1-(\sqrt{s}-\sqrt{t})^2, \\
g_1(t) &=& \frac{1}{2}-(\sqrt{s}-\sqrt{t})^2 + \frac{1}{2}(\sqrt{r+s}-\sqrt{t})^2, \\
g_2(t) &=& \frac{1}{2}+(\sqrt{r+s}-\sqrt{t})^2,
\end{eqnarray*}
and $T_1=(0,s)$ and $T_2=[s,r+s)$. We need to show that $\beta<\bar{\beta}^*(r,s)$ is a sufficient condition for (\ref{eq:min-max-1/2-V}) by calculating $\sup_{t\in T_1}(f_1(t)\wedge g_1(t))$. Note that the maximizers of $f_1(t)$ and $g_1(t)$ are $\sqrt{t}=\sqrt{s}$ and $\sqrt{t}=2\sqrt{s}-\sqrt{r+s}$, respectively. Let us consider the following four cases.

\textit{Case 1.} $3s\leq r$ and $r+s\leq 1$. Since $r+s\leq 1$, we have $f_1(t)\wedge g_1(t)=g_1(t)$. Moreover, the condition $3s\leq r$ guarantees that $g_1(t)$ is decreasing on $T_1$. Therefore,
$$\sup_{t\in T_1}(f_1(t)\wedge g_1(t))=g_1(0)=\frac{1+r-s}{2}.$$

\textit{Case 2.} $3s>r$ and $(\sqrt{r+s}-\sqrt{s})^2\leq\frac{1}{4}$. Note that $f_1(t)\wedge g_1(t)=f_1(t)$ for $\sqrt{t}\leq \sqrt{r+s}-1$ and $f_1(t)\wedge g_1(t)=g_1(t)$ for $\sqrt{r+s}-1<\sqrt{t}<\sqrt{s}$. The condition $(\sqrt{r+s}-\sqrt{s})^2\leq\frac{1}{4}$ implies that $\sqrt{r+s}-1\leq 2\sqrt{s}-\sqrt{r+s}<\sqrt{s}$, and the condition $3s>r$ guarantees that $(2\sqrt{s}-\sqrt{r+s})^2\in T_1$. Therefore, $f_1(t)\wedge g_1(t)$ is increasing when $\sqrt{t}\leq 2\sqrt{s}-\sqrt{r+s}$ and decreasing when $2\sqrt{s}-\sqrt{r+s}<\sqrt{t}<\sqrt{s}$. We thus have
$$\sup_{t\in T_1}(f_1(t)\wedge g_1(t))=g_1((2\sqrt{s}-\sqrt{r+s})^2)=\frac{1}{2}+r-2\sqrt{s}(\sqrt{r+s}-\sqrt{s}).$$

\textit{Case 3.} $r+s>1$ and $\frac{1}{4}<(\sqrt{r+s}-\sqrt{s})^2\leq 1$. The condition $\frac{1}{4}<(\sqrt{r+s}-\sqrt{s})^2\leq 1$ implies that $2\sqrt{s}-\sqrt{r+s}<\sqrt{r+s}-1\leq\sqrt{s}$, and the condition $r+s>1$ guarantees that $\sqrt{r+s}-1\in (0,\sqrt{s}]$. Therefore, $f_1(t)\wedge g_1(t)=f_1(t)$ for $\sqrt{t}\leq \sqrt{r+s}-1$ and is increasing. We also have $f_1(t)\wedge g_1(t)=g_1(t)$ for $\sqrt{r+s}-1<\sqrt{t}<\sqrt{s}$ and is decreasing. Hence,
$$\sup_{t\in T_1}(f_1(t)\wedge g_1(t))=g_1((\sqrt{r+s}-1)^2)=r-2(\sqrt{r+s}-\sqrt{s})(\sqrt{r+s}-1).$$

\textit{Case 4.} $(\sqrt{r+s}-\sqrt{s})^2> 1$. This condition implies $\sqrt{s}<\sqrt{r+s}-1$, and thus $f_1(t)\wedge g_1(t)=f_1(t)$ for all $t\in T_1$, which leads to
$$\sup_{t\in T_1}(f_1(t)\wedge g_1(t))=f_1(s)=1.$$

Combine the four cases above, and we conclude that $\beta<\bar{\beta}^*(r,s)$ is a sufficient condition for (\ref{eq:min-max-1/2-V}), which completes the proof.
\end{proof}

\begin{proof}[Proof of Theorem \ref{thm:general-separate} (lower bound)]
Conclusion 1 is a result of \cite{ingster1997some}. We only need to prove Conclusion 2.
If we only use $\{V_i\}_{1\leq i\leq n}$, the testing problem (\ref{eq:general-comb-null})-(\ref{eq:general-comb-alt}) becomes
$$
H_0: V_i \stackrel{iid}{\sim} P, \quad i \in[n],\qquad H_1: V_i \stackrel{iid}{\sim} (1-\epsilon)P+\epsilon Q, \quad i\in[n],
$$
where the densities of $P$ and $Q$ are given by
$$p(v)=\frac{1}{2}\phi(v-\sqrt{2(r+s)\log n})+\frac{1}{2}\phi(v+\sqrt{2(r+s)\log n}),$$
and
$$q(v)=\frac{1}{2}\phi(v-\sqrt{2s\log n})+\frac{1}{2}\phi(v+\sqrt{2s\log n}).$$
Suppose $v\geq 0$, and then we have
$\phi(v-\sqrt{2(r+s)\log n})\geq \phi(v+\sqrt{2(r+s)\log n})$ and $\phi(v-\sqrt{2s\log n})\geq \phi(v+\sqrt{2s\log n})$. These two inequalities imply
$$p(v)\leq \phi(v-\sqrt{2(r+s)\log n})\leq 2p(v),$$
and
$$q(v)\leq \phi(v-\sqrt{2s\log n})\leq 2q(v).$$
Thus, we have
$$\frac{q(v)}{2p(v)}\leq e^{-(\sqrt{r+s}-\sqrt{s})v\sqrt{2\log n}}n^r\leq \frac{2q(v)}{p(v)},$$
 {which is} due to the fact that $\frac{\phi(v-\sqrt{2s\log n})}{\phi(v-\sqrt{2(r+s)\log n})}=e^{-(\sqrt{r+s}-\sqrt{s})v\sqrt{2\log n}}n^r$. 
 {By symmetry, we obtain that}
\begin{equation}
\frac{q(v)}{2p(v)}\leq e^{-(\sqrt{r+s}-\sqrt{s})|v|\sqrt{2\log n}}n^r\leq \frac{2q(v)}{p(v)}, \label{eq:lh-ratio-V}
\end{equation}
for all $v\in\mathbb{R}$.

Now we proceed to bound the Hellinger distance, and it is sufficient to show that $H(P,(1-\epsilon)P+\epsilon Q)^2=o(n^{-1})$. By direct calculation, we have
\begin{eqnarray}
\nonumber H(P,(1-\epsilon)P+\epsilon Q)^2 &=& \mathbb{E}_P\left(\sqrt{1+\epsilon\left(\frac{q}{p}-1\right)}-1\right)^2 \\
\label{eq:Hell-V1} &=& \mathbb{E}_P\left[\left(\sqrt{1+\epsilon\left(\frac{q}{p}-1\right)}-1\right)^2\indc{q\leq p}\right] \\
\label{eq:Hell-V2} && + \mathbb{E}_P\left[\left(\sqrt{1+\epsilon\left(\frac{q}{p}-1\right)}-1\right)^2\indc{ {q>p}}\right].
\end{eqnarray}
By Equation (88) of \cite{cai2014optimal}, the first term (\ref{eq:Hell-V1}) can be bounded by $n^{-2\beta}$, which is $o(n^{-1})$ as long as $\beta>\frac{1}{2}$. For (\ref{eq:Hell-V2}), we have
\begin{eqnarray}
\nonumber && \mathbb{E}_P\left[\left(\sqrt{1+\epsilon\left(\frac{q}{p}-1\right)}-1\right)^2\indc{ {q > p}}\right] \\
\nonumber &\leq& \mathbb{E}_P\left(\sqrt{1+\epsilon\frac{q}{p}}-1\right)^2 \\
\label{eq:He-bound-V-CW} &\leq& \mathbb{E}_{V\sim P}\left(\sqrt{1+2\epsilon e^{-(\sqrt{r+s}-\sqrt{s})| {V}|\sqrt{2\log n}}n^r}-1\right)^2,
\end{eqnarray}
where the last inequality uses (\ref{eq:lh-ratio-V}). Let us define the function
$$\alpha(v)=-2(\sqrt{r+s}-\sqrt{s})|v+\sqrt{r+s}|+r,$$
and then we can  {rewrite} (\ref{eq:He-bound-V-CW}) as $\mathbb{E}\left(\sqrt{1+2n^{-\beta+\alpha(V)}}-1\right)^2$, where $V\sim N(0,(2\log n)^{-1})$. By Lemma \ref{lem:Yihong-Wu}, we have
\begin{eqnarray}
\nonumber \mathbb{E}\left(\sqrt{1+2n^{-\beta+\alpha(V)}}-1\right)^2 &\leq& 4\mathbb{E}n^{(\alpha(V)-\beta)\wedge(2\alpha(V)-2\beta)} \\
\label{eq:intetral-V-lower} &=& 4\sqrt{\frac{\log n}{\pi}}\int n^{(\alpha(v)-\beta)\wedge(2\alpha(v)-2\beta)-v^2}dv.
\end{eqnarray}
Then, by Lemma \ref{lem:Laplace}, a sufficient condition for (\ref{eq:intetral-V-lower}) to be $o(n^{-1})$ is
\begin{equation}
\max_v\left[(\alpha(v)-\beta)\wedge(2\alpha(v)-2\beta)-v^2\right] < -1. \label{eq:half-baked}
\end{equation}

 {In the rest of this proof}, we show that condition (\ref{eq:half-baked})  {is equivalent to} $\beta>\bar{\beta}^*(r,s)$. First, we show (\ref{eq:half-baked}) is equivalent to
\begin{equation}
\beta > \frac{1}{2} + \max_v\left[\alpha(v)-v^2+\frac{v^2\wedge 1}{2}\right]. \label{eq:almost-done}
\end{equation}
Suppose (\ref{eq:almost-done}) is true. Then, for any $v\in\mathbb{R}$, either $\beta>\alpha(v)-v^2+\frac{v^2}{2}$, which is equivalent to
\begin{equation}
2\alpha(v)-2\beta-v^2<-1, \label{eq:eren}
\end{equation}
or $\beta>\alpha(v)-v^2+\frac{1}{2}$, which is equivalent to
\begin{equation}
\alpha(v) - \beta -v^2 < -1. \label{eq:armin}
\end{equation}
Since one of the two inequalities (\ref{eq:eren}) and (\ref{eq:armin}) must hold, we have $(\alpha(v)-\beta)\wedge(2\alpha(v)-2\beta)-v^2<-1$. Taking maximum over $v\in\mathbb{R}$, we obtain (\ref{eq:half-baked}). For the other direction, suppose (\ref{eq:half-baked}) is true. Then, for any $v\in\mathbb{R}$, we have either (\ref{eq:eren}) or (\ref{eq:armin}), which is equivalent to either $\beta>\alpha(v)-v^2+\frac{v^2}{2}$ or $\beta>\alpha(v)-v^2+\frac{1}{2}$. This implies $\beta>\frac{1}{2}+\alpha(v)-v^2+\frac{v^2\wedge 1}{2}$. Taking maximum over $v\in\mathbb{R}$, we obtain (\ref{eq:almost-done}). So we have established the equivalence between (\ref{eq:half-baked}) and (\ref{eq:almost-done}). To solve the righthand side of (\ref{eq:almost-done}), let us write
\begin{eqnarray*}
&& \frac{1}{2} + \max_v\left[\alpha(v)-v^2+\frac{v^2\wedge 1}{2}\right] \\
&=& \left(\frac{1}{2} + \max_{|v|\leq 1}\left[\alpha(v)-\frac{v^2}{2}\right]\right)\vee\left(1 + \max_{|v|\geq 1}\left[\alpha(v)-v^2\right]\right).
\end{eqnarray*}
By Proposition \ref{prop:opt2}, when $3s\leq r$ and $r+s\leq 1$, we have
$$\frac{1}{2} + \max_{|v|\leq 1}\left[\alpha(v)-\frac{v^2}{2}\right]=\frac{1+r-s}{2}.$$
When $3s>r$ and $4(\sqrt{r+s}-\sqrt{s})^2\leq 1$, we have
$$\frac{1}{2} + \max_{|v|\leq 1}\left[\alpha(v)-\frac{v^2}{2}\right]=\frac{1}{2}+r-2\sqrt{s}(\sqrt{r+s}-\sqrt{s}).$$
By Proposition \ref{prop:opt4}, when $(\sqrt{r+s}-\sqrt{s})^2>1$, we have
$$1 + \max_{|v|\geq 1}\left[\alpha(v)-v^2\right]=1.$$
Finally, we also have
\begin{eqnarray*}
 \frac{1}{2} + \max_{|v|= 1}\left[\alpha(v)-\frac{v^2}{2}\right] &=& 1 + \max_{|v|=1}\left[\alpha(v)-v^2\right] \\
&=& r-2(\sqrt{r+s}-\sqrt{s})(\sqrt{r+s}-1).
\end{eqnarray*}
After we properly organize the above cases, we obtain that
$$\frac{1}{2} + \max_v\left[\alpha(v)-v^2+\frac{v^2\wedge 1}{2}\right]=\bar{\beta}^*(r,s),$$
which implies (\ref{eq:almost-done}) is equivalent to $\beta>\bar{\beta}^*(r,s)$, and the proof is complete.
\end{proof}

\begin{proof}[Proof of Theorem \ref{thm:general-HC} (upper bound)]
Define the statistic
$$T_n(t)=\frac{\sum_{i=1}^n\indc{|\sqrt{r}U_i+\sqrt{s}V_i|-\sqrt{r+s}|V_i|>t\sqrt{2\log n}}-nS_{(r,s)}(t\sqrt{2\log n})}{\sqrt{n S_{(r,s)}(t\sqrt{2\log n})(1-S_{(r,s)}(t\sqrt{2\log n}))}},$$
and then we can write the test as $\indc{\sup_{t\in\mathbb{R}}|T_n(t)|>\sqrt{2(1+\delta)\log\log n}}$. By \cite{shorack2009empirical}, $\frac{\sup_{t\in\mathbb{R}}|T_n(t)|}{\sqrt{2\log\log n}}$ converges to $1$ in probability under $H_0$, which then implies a vanishing Type-I error. The Type-II error can be bounded by
$$\mathbb{P}_{H_1}\left(\sup_{t\in\mathbb{R}}|T_n(t)|\leq\sqrt{2(1+\delta)\log\log n}\right)\leq \min_{t\in\mathbb{R}}\mathbb{P}_{H_1}\left(|T_n(t)|\leq\sqrt{2(1+\delta)\log\log n}\right).$$
So it suffices to show there exists some $t$ so that $\frac{(\mathbb{E}_{H_1}T_n(t))^2}{\Var_{H_1}(T_n(t))}$ diverges to infinity at a polynomial rate. Let us write the null distribution (\ref{eq:general-comb-null}) and the alternative distribution (\ref{eq:general-comb-alt}) as $P$ and $(1-\epsilon)P+\epsilon Q$, respectively. Then, $\frac{(\mathbb{E}_{H_1}T_n(t))^2}{\Var_{H_1}(T_n(t))}$ is at the same order of
$$\frac{n^2\epsilon^2Q\left(|\sqrt{r}U+\sqrt{s}V|-\sqrt{r+s}|V|>t\sqrt{2\log n}\right)^2}{nP\left(|\sqrt{r}U+\sqrt{s}V|-\sqrt{r+s}|V|>t\sqrt{2\log n}\right) + n\epsilon Q\left(|\sqrt{r}U+\sqrt{s}V|-\sqrt{r+s}|V|>t\sqrt{2\log n}\right)}.$$
By Lemma \ref{lem:comp-tail-0} and Lemma \ref{lem:comp-tail-1}, we have
\begin{eqnarray*}
&& n\epsilon Q\left(|\sqrt{r}U+\sqrt{s}V|-\sqrt{r+s}|V|>t\sqrt{2\log n}\right) \\
&\asymp& \begin{cases}
\frac{1}{\log n}n^{1-\beta-\frac{(t-r)^2+rs}{r}}, &   t\in T_1, \\
\frac{1}{\sqrt{\log n}}n^{1-\beta-\frac{(t-(r+s)+\sqrt{s}\sqrt{r+s})^2}{2(r+s-\sqrt{s}\sqrt{r+s})}},  & t\in T_2, \\
n^{1-\beta},  & t\in T_3\cup T_4,
\end{cases}
\end{eqnarray*}
and
\begin{eqnarray*}
&& \frac{n\epsilon^2Q\left(|\sqrt{r}U+\sqrt{s}V|-\sqrt{r+s}|V|>t\sqrt{2\log n}\right)^2}{P\left(|\sqrt{r}U+\sqrt{s}V|-\sqrt{r+s}|V|>t\sqrt{2\log n}\right)} \\
&\asymp& \begin{cases}
\frac{1}{\log n}n^{1-2\beta-\frac{2(t-r)^2+2rs}{r}+\frac{t^2+r(r+s)}{r}}, & t\in T_1, \\
\frac{1}{\sqrt{\log n}}n^{1-2\beta-\frac{(t-r-s-\sqrt{s}\sqrt{r+s})^2}{r+s-\sqrt{s}\sqrt{r+s}}+\frac{(t+r+s-\sqrt{s}\sqrt{r+s})^2}{2(r+s-\sqrt{s}\sqrt{r+s})}},  & t\in T_2, \\
(\sqrt{\log n})n^{1-2\beta+\frac{(t+r+s-\sqrt{s}\sqrt{r+s})^2}{2(r+s-\sqrt{s}\sqrt{r+s})}},  & t\in T_3, \\
1, &   t\in T_4,
\end{cases}
\end{eqnarray*}
where
\begin{eqnarray*}
T_1 &=& (r+s+\sqrt{s}\sqrt{r+s},\infty), \\
T_2 &=& (r+s-\sqrt{s}\sqrt{r+s},r+s+\sqrt{s}\sqrt{r+s}], \\
T_3 &=& (-r-s+\sqrt{s}\sqrt{r+s},r+s-\sqrt{s}\sqrt{r+s}], \\
T_4 &=& (-\infty, -r-s+\sqrt{s}\sqrt{r+s}].
\end{eqnarray*}
Therefore, in order that there exists some $t$ such that $\frac{(\mathbb{E}_{H_1}T_n(t))^2}{\Var_{H_1}(T_n(t))}$ diverges to infinity at a polynomial rate, it is sufficient to require
\begin{equation}
\beta < \max_{t\in T_1}(f_1(t)\wedge g_1(t))\vee\max_{t\in T_2}(f_2(t)\wedge g_2(t))\vee\max_{t\in T_3}(1\wedge g_3(t))\vee\frac{1}{2}, \label{eq:general-upper-HC-abstract}
\end{equation}
where
\begin{eqnarray*}
f_1(t) &=& 1 -\frac{(t-r)^2+rs}{r}, \\
g_1(t) &=& \frac{1}{2} -\frac{(t-r)^2+rs}{r}+\frac{t^2+r(r+s)}{2r}, \\
f_2(t) &=& 1-\frac{(t-(r+s)+\sqrt{s}\sqrt{r+s})^2}{2(r+s-\sqrt{s}\sqrt{r+s})}, \\
g_2(t) &=& \frac{1}{2}-\frac{(t-(r+s)+\sqrt{s}\sqrt{r+s})^2}{2(r+s-\sqrt{s}\sqrt{r+s})} + \frac{(t+r+s-\sqrt{s}\sqrt{r+s})^2}{4(r+s-\sqrt{s}\sqrt{r+s})}, \\
g_3(t) &=& \frac{1}{2} + \frac{(t+r+s-\sqrt{s}\sqrt{r+s})^2}{4(r+s-\sqrt{s}\sqrt{r+s})}.
\end{eqnarray*}
Now we need to show that $\beta<\beta^*(r,s)$ is a sufficient condition of (\ref{eq:general-upper-HC-abstract}). According to the definition of $\beta^*(r,s)$, we will discuss the five cases respectively, and in each case, we will show $\beta^*(r,s)\leq \max_{t\in T_1}(f_1(t)\wedge g_1(t))\vee\max_{t\in T_2}(f_2(t)\wedge g_2(t))\vee\max_{t\in T_3}(1\wedge g_3(t))\vee\frac{1}{2}$.

\textit{Case 1.} $3s> r$ and $r+s-\sqrt{s}\sqrt{r+s}\leq \frac{1}{8}$. Note that $3s> r$ is equivalent to
$$3(r+s-\sqrt{s}\sqrt{r+s})< r+s+\sqrt{s}\sqrt{r+s},$$
and $r+s-\sqrt{s}\sqrt{r+s}\leq \frac{1}{8}$ is equivalent to
$$3(r+s-\sqrt{s}\sqrt{r+s})<\sqrt{2(r+s-\sqrt{s}\sqrt{r+s})}-(r+s-\sqrt{s}\sqrt{r+s}).$$
It is easy to see that $f_2(t)$ is a quadratic function maximized at $t=r+s-\sqrt{s}\sqrt{r+s}$, and $g_2(t)$ is a quadratic function maximized at $t=3(r+s-\sqrt{s}\sqrt{r+s})$.
Moreover, when $t\leq \sqrt{2(r+s-\sqrt{s}\sqrt{r+s})}-(r+s-\sqrt{s}\sqrt{r+s})$ and $t\in T_2$, $f_2(t)\wedge g_2(t)=g_2(t)$ achieves  {its} maximum at $t=3(r+s-\sqrt{s}\sqrt{r+s})$. 
When $t> \sqrt{2(r+s-\sqrt{s}\sqrt{r+s})}-(r+s-\sqrt{s}\sqrt{r+s})$ and $t\in T_2$, $f_2(t)\wedge g_2(t)=f_2(t)$ is decreasing. Therefore,
$$\max_{t\in T_2}(f_2(t)\wedge g_2(t))=g_2(3(r+s-\sqrt{s}\sqrt{r+s}))=\frac{1}{2}+2(r+s-\sqrt{s}\sqrt{r+s})=\beta^*(r,s),$$
and we can conclude that $\beta<\beta^*(r,s)$ implies (\ref{eq:general-upper-HC-abstract}).

\textit{Case 2.} $3s\leq r$ and $5r+s\leq 1$. Note that we can equivalently write the condition as $r+s+\sqrt{s}\sqrt{r+s}\leq 2r\leq \sqrt{r(1-r-s)}$. It can be checked that
$$f_1(t)\wedge g_1(t)=\begin{cases}
f_1(t), & t> \sqrt{r(1-r-s)},\\
g_1(t), & r+s+\sqrt{s}\sqrt{r+s}<t\leq \sqrt{r(1-r-s)}.
\end{cases}$$
Moreover, when $r+s+\sqrt{s}\sqrt{r+s}<t\leq \sqrt{r(1-r-s)}$, $g_1(t)$ is maximized at $t=2r$, and when $t> \sqrt{r(1-r-s)}$, $f_1(t)$ is decreasing. Therefore,
$$\max_{t\in T_1}(f_1(t)\wedge g_1(t))=g_1(2r)=\frac{1-s+3r}{2}=\beta^*(r,s),$$
and we can conclude that $\beta<\beta^*(r,s)$ implies (\ref{eq:general-upper-HC-abstract}).

\textit{Case 3.} $5r+s> 1$, $\frac{1}{8}<r+s-\sqrt{s}\sqrt{r+s}\leq \frac{1}{2}$ and $2(1-r-s)(r+s-\sqrt{s}\sqrt{r+s})>r$. Let us first show that the condition $2(1-r-s)(r+s-\sqrt{s}\sqrt{r+s})>r$ implies
$\sqrt{r(1-r-s)}>r+s+\sqrt{s}\sqrt{r+s}$. By $2(1-r-s)(r+s-\sqrt{s}\sqrt{r+s})>r$, we have
$$r+s<1-\frac{r}{2(r+s-\sqrt{s}\sqrt{r+s})}=\frac{r}{2(r+s+\sqrt{s}\sqrt{r+s})}.$$
The inequality $r+s<\frac{r}{2(r+s+\sqrt{s}\sqrt{r+s})}$ can be rearranged into
$$\frac{r}{2(r+s-\sqrt{s}\sqrt{r+s})}>\frac{(r+s+\sqrt{s}\sqrt{r+s})^2}{r}.$$
Then, from $2(1-r-s)(r+s-\sqrt{s}\sqrt{r+s})>r$, we get
$$1-r-s>\frac{r}{2(r+s-\sqrt{s}\sqrt{r+s})}>\frac{(r+s+\sqrt{s}\sqrt{r+s})^2}{r},$$
which leads to the desired inequality $\sqrt{r(1-r-s)}>r+s+\sqrt{s}\sqrt{r+s}$. Moreover, we can easily check that the condition $5r+s> 1$ implies $\sqrt{r(1-r-s)}<2r$, and thus we have
\begin{equation}
r+s+\sqrt{s}\sqrt{r+s}<\sqrt{r(1-r-s)}<2r. \label{eq:greatly-simplified}
\end{equation}
Now we analyze $\max_{t\in T_1}(f_1(t)\wedge g_1(t))$ under (\ref{eq:greatly-simplified}). Since $f_1(t)$ is a quadratic function that achieves maximum at $t=r$ and $g_1(t)$ is a quadratic function that achieves maximum at $t=2r$, we know that when $r+s+\sqrt{s}\sqrt{r+s}<t\leq \sqrt{r(1-r-s)}$, $f_1(t)\wedge g_1(t)=g_1(t)$ is increasing, and when $t>\sqrt{r(1-r-s)}$, $f_1(t)\wedge g_1(t)=f_1(t)$ is decreasing. Thus,
$$\max_{t\in T_1}(f_1(t)\wedge g_1(t))=f_1(\sqrt{r(1-r-s)})=2\sqrt{r}\sqrt{1-r-s}=\beta^*(r,s),$$
and we can conclude that $\beta<\beta^*(r,s)$ implies (\ref{eq:general-upper-HC-abstract}).

\textit{Case 4.} $5r+s> 1$, $\frac{1}{8}<r+s-\sqrt{s}\sqrt{r+s}\leq \frac{1}{2}$ and $2(1-r-s)(r+s-\sqrt{s}\sqrt{r+s})\leq r$. Let us first show that the condition $2(1-r-s)(r+s-\sqrt{s}\sqrt{r+s})\leq r$ implies
\begin{equation}
r+s+\sqrt{s}\sqrt{r+s} \geq \sqrt{2(r+s-\sqrt{s}\sqrt{r+s})}-(r+s-\sqrt{s}\sqrt{r+s}). \label{eq:happy-to-figure-out}
\end{equation}
By $2(1-r-s)(r+s-\sqrt{s}\sqrt{r+s})\leq r$, we have
\begin{eqnarray*}
r+s &\geq& 1-\frac{r}{2(r+s-\sqrt{s}\sqrt{r+s})} \\
&=& \frac{r}{2(r+s+\sqrt{s}\sqrt{r+s})} \\
&=& \frac{1}{2}\left(1-\sqrt{\frac{s}{r+s}}\right),
\end{eqnarray*}
and thus we have $r+s\geq \frac{1}{2}\left(1-\sqrt{\frac{s}{r+s}}\right)$. 
Multiply both sides by $4(r+s)$ to obtain $4(r+s)^2\geq 2(r+s-\sqrt{s}\sqrt{r+s})$. 
We then  {take the} square roots  {of} both sides  {to obtain} $2(r+s)\geq \sqrt{2(r+s-\sqrt{s}\sqrt{r+s})}$, which can then be rearranged into (\ref{eq:happy-to-figure-out}). In addition to (\ref{eq:happy-to-figure-out}), by $\frac{1}{8}<r+s-\sqrt{s}\sqrt{r+s}\leq \frac{1}{2}$, we also have
\begin{equation}
r+s-\sqrt{s}\sqrt{r+s}\leq \sqrt{2(r+s-\sqrt{s}\sqrt{r+s})}-(r+s-\sqrt{s}\sqrt{r+s}) < 3(r+s-\sqrt{s}\sqrt{r+s}). \label{eq:this-one-is-easy}
\end{equation}
Now we will analyze $\max_{t\in T_2}(f_2(t)\wedge g_2(t))$ under (\ref{eq:happy-to-figure-out}) and (\ref{eq:this-one-is-easy}). Since $f_2(t)$ is a quadratic function that achieves maximum at $t=r+s-\sqrt{s}\sqrt{r+s}$ and $g_2(t)$ is a quadratic function that achieves maximum at $t=3(r+s-\sqrt{s}\sqrt{r+s})$, by (\ref{eq:happy-to-figure-out}) and (\ref{eq:this-one-is-easy}), we know that for $t\in T_2$, $f_2(t)\wedge g_2(t)=g_2(t)$ is increasing on the left hand side of $\sqrt{2(r+s-\sqrt{s}\sqrt{r+s})}-(r+s-\sqrt{s}\sqrt{r+s})$, and $f_2(t)\wedge g_2(t)=f_2(t)$ is decreasing on the righthand side of  {it}.
Therefore,
\begin{eqnarray*}
\max_{t\in T_2}(f_2(t)\wedge g_2(t)) &=& g_2\left(\sqrt{2(r+s-\sqrt{s}\sqrt{r+s})}-(r+s-\sqrt{s}\sqrt{r+s})\right) \\
&=& 2\sqrt{2(r+s-\sqrt{s}\sqrt{r+s})}-2(r+s-\sqrt{s}\sqrt{r+s}) \\
&=& \beta^*(r,s),
\end{eqnarray*}
and we can conclude that $\beta<\beta^*(r,s)$ implies (\ref{eq:general-upper-HC-abstract}).

\textit{Case 5.} $r+s-\sqrt{s}\sqrt{r+s}>\frac{1}{2}$. In this case, we have
$$\max_{t\in T_3}(1\wedge g_3(t))=1\wedge \min_{t\in T_3}g_3(t)=1\wedge g_3(r+s-\sqrt{s}\sqrt{r+s})=1,$$
under the condition $r+s-\sqrt{s}\sqrt{r+s}>\frac{1}{2}$. Since we also have $\beta^*(r,s)=1$, we can conclude that $\beta<\beta^*(r,s)$ implies (\ref{eq:general-upper-HC-abstract}).

Combining the results of the five cases above, we conclude that $\beta<\beta^*(r,s)$ is a sufficient condition for (\ref{eq:general-upper-HC-abstract}), and thus the proof is complete.
\end{proof}

\begin{proof}[Proof of Theorem \ref{thm:general-HC} (lower bound)]
Let us write the null distribution (\ref{eq:general-comb-null}) and the alternative distribution (\ref{eq:general-comb-alt}) as $P$ and $(1-\epsilon)P+\epsilon Q$, where the densities of $P$ and $Q$ are given by
$p(u,v)$ and $q(u,v)$ defined in (\ref{eq:general-P-den}) and (\ref{eq:general-Q-den}). It is sufficient to show that $H(P,(1-\epsilon)P+\epsilon Q)^2=o(n^{-1})$. By the same argument used in the lower bound proof of Theorem \ref{thm:general-separate}, $H(P,(1-\epsilon)P+\epsilon Q)^2$ can be written as the sum of (\ref{eq:Hell-V1}) and (\ref{eq:Hell-V2}). 
By Equation (88) of \cite{cai2014optimal}, the first term (\ref{eq:Hell-V1}) can be bounded by $n^{-2\beta}$, which is $o(n^{-1})$ as long as $\beta>\frac{1}{2}$. For (\ref{eq:Hell-V2}), we have
\begin{eqnarray}
\nonumber && \mathbb{E}_P\left[\left(\sqrt{1+\epsilon\left(\frac{q}{p}-1\right)}-1\right)^2\indc{q< p}\right] \\
\nonumber &\leq& \mathbb{E}_P\left(\sqrt{1+\epsilon\frac{q}{p}}-1\right)^2 \\
\label{eq:He-bound-U-V-CW} &\leq& \mathbb{E}_{(U,V)\sim P}\left(\sqrt{1+2\epsilon e^{(|\sqrt{r}U+\sqrt{s}V|-\sqrt{r+s}|V|)\sqrt{2\log n}}}-1\right)^2,
\end{eqnarray}
where the last inequality is by Lemma \ref{lem:LR-approx}. Let us define the function
$$\alpha(u,v)=2|\sqrt{r}u+\sqrt{s}v+\sqrt{s}\sqrt{r+s}|-2\sqrt{r+s}|v+\sqrt{r+s}|,$$
and then we can write (\ref{eq:He-bound-U-V-CW}) as $\mathbb{E}\left(\sqrt{1+2n^{-\beta+\alpha(U,V)}}-1\right)^2$, where $U,V\stackrel{iid}{\sim} N(0,(2\log n)^{-1})$. By Lemma \ref{lem:Yihong-Wu}, we have
\begin{eqnarray*}
\mathbb{E}\left(\sqrt{1+2n^{-\beta+\alpha(U,V)}}-1\right)^2 &\leq& 4\mathbb{E}n^{(\alpha(U,V)-\beta)\wedge(2\alpha(U,V)-2\beta)} \\
\label{eq:intetral-U-V-lower} &=& \frac{4\log n}{\pi}\int\int n^{(\alpha(u,v)-\beta)\wedge(2\alpha(u,v)-2\beta)-u^2-v^2}dudv.
\end{eqnarray*}
Then, by Lemma \ref{lem:Laplace}, a sufficient condition for (\ref{eq:He-bound-U-V-CW}) to be $o(n^{-1})$ is
\begin{equation}
\max_{u,v}\left[(\alpha(u,v)-\beta)\wedge(2\alpha(u,v)-2\beta)-u^2-v^2\right] < -1. \label{eq:half-baked-U-V}
\end{equation}
By the same argument that leads to the equivalence between (\ref{eq:half-baked}) and (\ref{eq:almost-done}), (\ref{eq:half-baked-U-V}) is also equivalent to
\begin{equation}
\beta > \frac{1}{2} + \max_{u,v}\left[\alpha(u,v)-u^2-v^2+\frac{(u^2+v^2)\wedge 1}{2}\right]. \label{eq:almost-done-U-V}
\end{equation}
We also define
$$\bar{\alpha}(u,v)=2\sqrt{r}|u|-2(\sqrt{r+s}-\sqrt{s})|v+\sqrt{r+s}|.$$
Since $\alpha(u,v)\leq \bar{\alpha}(u,v)$, a sufficient condition of (\ref{eq:almost-done-U-V}) is
\begin{equation}
\beta > \frac{1}{2} + \max_{u,v}\left[\bar{\alpha}(u,v)-u^2-v^2+\frac{(u^2+v^2)\wedge 1}{2}\right]. \label{eq:almost-done-U-V-bar}
\end{equation}
Now it suffices to show (\ref{eq:almost-done-U-V-bar}) is equivalent to $\beta>\beta^*(r,s)$. To solve the righthand side of (\ref{eq:almost-done-U-V-bar}), we write
\begin{eqnarray}
\nonumber && \frac{1}{2} + \max_{u,v}\left[\bar{\alpha}(u,v)-u^2-v^2+\frac{(u^2+v^2)\wedge 1}{2}\right] \\
\label{eq:how-to-combine} &=& \left(\frac{1}{2}+\max_{u^2+v^2\leq 1}\left[\bar{\alpha}(u,v)-\frac{u^2+v^2}{2}\right]\right)\vee\left(1+\max_{u^2+v^2\geq 1}\left[\bar{\alpha}(u,v)-u^2-v^2\right]\right).
\end{eqnarray}
By Proposition \ref{prop:opt1} and Proposition \ref{prop:opt2}, when $3s>r$ and $r+s-\sqrt{s}\sqrt{r+s}\leq\frac{1}{8}$, we have
$$\frac{1}{2}+\max_{u^2+v^2\leq 1}\left[\bar{\alpha}(u,v)-\frac{u^2+v^2}{2}\right]=\frac{1}{2}+2(r+s-\sqrt{s}\sqrt{r+s}),$$
and when $3\leq r$ and $5r+s\leq 1$, we have
$$\frac{1}{2}+\max_{u^2+v^2\leq 1}\left[\bar{\alpha}(u,v)-\frac{u^2+v^2}{2}\right]=\frac{1+3r-s}{2}.$$
We then use Proposition \ref{prop:opt3} and Proposition \ref{prop:opt4}. When $r+s-\sqrt{s}\sqrt{r+s}>\frac{1}{2}$, we have
$$1+\max_{u^2+v^2\geq 1}\left[\bar{\alpha}(u,v)-u^2-v^2\right]=1.$$
Finally, by Proposition \ref{prop:opt5}, when $5r+s> 1$, $\frac{1}{8}<r+s-\sqrt{s}\sqrt{r+s}\leq \frac{1}{2}$ and $2(1-r-s)(r+s-\sqrt{s}\sqrt{r+s})>r$, we have
$$\max_{u^2+v^2=1}\bar{\alpha}(u,v)=2\sqrt{r}\sqrt{1-r-s},$$
and when $5r+s> 1$, $\frac{1}{8}<r+s-\sqrt{s}\sqrt{r+s}\leq \frac{1}{2}$ and $2(1-r-s)(r+s-\sqrt{s}\sqrt{r+s})\leq r$, we have
$$\max_{u^2+v^2=1}\bar{\alpha}(u,v)=2\sqrt{2(r+s-\sqrt{s}\sqrt{r+s})}-2(r+s-\sqrt{s}\sqrt{r+s}).$$
Combine the above cases through (\ref{eq:how-to-combine}), and we obtain that
$$\frac{1}{2} + \max_{u,v}\left[\bar{\alpha}(u,v)-u^2-v^2+\frac{(u^2+v^2)\wedge 1}{2}\right]=\beta^*(r,s),$$
and therefore, $\beta>\beta^*(r,s)$ implies (\ref{eq:almost-done-U-V}),
which completes the proof.
\end{proof}

\begin{proof}[Proof of Theorem \ref{thm:main-general}]
Similar to the proof of Theorem \ref{thm:equal-sum}, the upper bound conclusion directly follows the arguments used in the proofs of Proposition \ref{prop:equal-diff} and Theorem \ref{thm:general-HC}. For the lower bound, we can use the same argument in the proof of Theorem \ref{thm:main-equal} to reduce to the testing problem (\ref{eq:general-comb-null})-(\ref{eq:general-comb-alt}) with $\epsilon$ replaced by $n^{-(\beta-\delta)}$ for some sufficiently small constant $\delta>0$ that satisfies $\delta<\beta-\beta^*(r,s)$. Then, apply Theorem \ref{thm:general-HC}, and we obtain the desired conclusion.
\end{proof}

\subsection{Proofs of Theorem \ref{thm:exact} and Theorem \ref{thm:Bonf}}

Let us first state a proposition on the properties of $t^*(r,s)$ defined in Section \ref{sec:equality}.

\begin{proposition}\label{prop:t-star}
The following properties of $t^*(r,s)$ are satisfied.
\begin{enumerate}
\item $t^*(r,s)\leq  \sqrt{2(r+s-\sqrt{s}\sqrt{r+s})}-(r+s-\sqrt{s}\sqrt{r+s})$.
\item If $r+s-\sqrt{s}\sqrt{r+s}>\frac{1}{2}$, then $t^*(r,s)<r+s-\sqrt{s}\sqrt{r+s}$.
\item If $r+s-\sqrt{s}\sqrt{r+s}<\frac{1}{2}$, then $t^*(r,s)>r+s-\sqrt{s}\sqrt{r+s}$.
\item If $t^*(r,s)\geq 3(r+s-\sqrt{s}\sqrt{r+s})$, then $r+s-\sqrt{s}\sqrt{r+s}\leq\frac{1}{8}$.
\end{enumerate}
\end{proposition}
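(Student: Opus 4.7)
The plan is to prove the four parts in order, treating Part 1 as the central algebraic inequality from which Part 4 follows immediately, while Parts 2 and 3 reduce to direct case analysis of the definition of $t^*(r,s)$. Throughout I will write $a = r+s - \sqrt{s(r+s)}$ and $b = r+s + \sqrt{s(r+s)}$, so that $ab = r(r+s)$ and $a+b = 2(r+s)$; the case split in the definition of $t^*$ then becomes: first case when $b^2 \leq r(1-r-s)$ (equivalently $2(r+s)b \leq r$), in which $t^* = \sqrt{r(1-r-s)}$, and second case otherwise, in which $t^* = \sqrt{2a} - a$.

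For Part 1, the second case yields equality trivially. For the first case, I would introduce the substitution $u = \sqrt{r+s}$, $v = \sqrt{s}$, $w = u-v \geq 0$, so that $a = uw$, $r = w(2u-w)$, and $1 - r - s = 1 - u^2$. Since the first case condition $2(r+s)b \leq r$ combined with $r \leq r+s$ gives $2(r+s)^2 \leq r+s$, hence $r+s \leq 1/2$, both sides of the desired inequality $\sqrt{r(1-r-s)} \leq \sqrt{2a} - a$ are non-negative. After squaring and cancelling a factor of $w > 0$, the inequality collapses to $w + 2u^3 \geq 2u\sqrt{2uw}$, which is AM-GM applied to the pair $(w, 2u^3)$. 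The main obstacle is finding the right substitution: direct squaring of the original inequality leads to a quadratic in $\sqrt{r(1-r-s)}$ whose clean factorization is opaque, while the $(u,v,w)$ parameterization reveals the AM-GM structure transparently.

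Parts 2 and 3 both hinge on the observation that the first case forces $a \leq 1/2$. Indeed, the chain $2(r+s)b \leq r \leq r+s$ above gives $b \leq 1/2$, and since $a \leq b$, we get $a \leq 1/2$. For Part 2, the hypothesis $a > 1/2$ therefore rules out the first case, so $t^* = \sqrt{2a} - a$, and the claim $t^* < a$ is equivalent to $\sqrt{2a} < 2a$, i.e., $a > 1/2$, which holds by hypothesis. For Part 3, in the second case $t^* > a$ is equivalent to $a < 1/2$; in the first case $t^* = \sqrt{r(1-r-s)} \geq b$, and $b > a$ strictly whenever $s > 0$, while if $s = 0$ then $a = b = r$ and the first case forces $r \leq 1/2$, so $t^* = \sqrt{r(1-r)} > r = a$ whenever $r < 1/2$.

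Part 4 is an immediate corollary of Part 1: if $t^*(r,s) \geq 3a$, combining with $t^*(r,s) \leq \sqrt{2a} - a$ yields $\sqrt{2a} \geq 4a$, and squaring gives $2a \geq 16a^2$, hence $a \leq 1/8$.
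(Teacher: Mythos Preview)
Your proof is correct, and in several places cleaner than the paper's, though the overall architecture differs.

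For Part 1, the paper interprets $\sqrt{r(1-r-s)}$ and $\sqrt{2a}-a$ as the solutions of $f_1(t)=1$ and $f_2(t)=1$ for the two quadratics $f_1(t)=(t^2+r(r+s))/r$ and $f_2(t)=(t+a)^2/(2a)$; it then checks that $f_1$ and $f_2$ share value and slope at $t=b$ but $f_1''=2/r\geq 1/a=f_2''$ (using $a\geq r/2$), so $f_1$ reaches $1$ no later than $f_2$ does for $t\geq b$. Your direct substitution $u=\sqrt{r+s}$, $w=\sqrt{r+s}-\sqrt{s}$ reducing the squared inequality to the AM--GM estimate $w+2u^3\geq 2u\sqrt{2uw}$ is a genuinely different and more elementary route; it bypasses the function comparison entirely. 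The paper's argument has the advantage of tying $t^*$ back to the tail exponents in Lemma~\ref{lem:comp-tail-0}, whereas yours is self-contained.

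For Part 2, your observation that the first-case condition $2(r+s)b\leq r\leq r+s$ forces $b\leq 1/2$ and hence $a\leq 1/2$ is substantially shorter than the paper's, which rewrites the condition as $2(1-r-s)a\geq r$, pushes through to $b\leq\sqrt{2a}-a$, and derives a contradiction from $\sqrt{2a}-a<a$. For Part 3 in the first case, you use $t^*\geq b$ (equivalent to the case condition) together with $b>a$ when $s>0$, while the paper instead shows $1-r-s\geq r/(2a)>r$ and then $t^*>r\geq a$; the paper's route avoids your $s=0$ sub-case, but both are straightforward. Part 4 is identical in both.
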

\begin{proof}
Define
$$f(t)=\begin{cases}
f_1(t), & t>r+s+\sqrt{s}\sqrt{r+s}, \\
f_2(t), & -(r+s)+\sqrt{s}\sqrt{r+s} < t\leq r+s+\sqrt{s}\sqrt{r+s},
\end{cases}$$
where $f_1(t)=\frac{t^2+r(r+s)}{r}$ and $f_2(t)=\frac{(t+r+s-\sqrt{s}\sqrt{r+s})^2}{2(r+s-\sqrt{s}\sqrt{r+s})}$. Then, $t^*(r,s)$ is a solution to the equation $f(t)=1$. In particular, $\sqrt{r(1-r-s)}$ is a solution to $f_1(t)=1$ and $\sqrt{2(r+s-\sqrt{s}\sqrt{r+s})}-(r+s-\sqrt{s}\sqrt{r+s})$ is a solution to $f_2(t)=1$. It is not hard to check that $f(t)$ is an increasing function, and the condition $2(r+s)(r+s+\sqrt{s}\sqrt{r+s})\leq r$ is equivalent to $f(r+s+\sqrt{s}\sqrt{r+s})\leq 1$. To prove the first conclusion, it suffices to show $\sqrt{r(1-r-s)}\leq \sqrt{2(r+s-\sqrt{s}\sqrt{r+s})}-(r+s-\sqrt{s}\sqrt{r+s})$ when $f(r+s+\sqrt{s}\sqrt{r+s})\leq 1$. It is direct to check that $f_1(r+s+\sqrt{s}\sqrt{r+s})=f_2(r+s+\sqrt{s}\sqrt{r+s})$ and $f_1'(r+s+\sqrt{s}\sqrt{r+s})=f_2'(r+s+\sqrt{s}\sqrt{r+s})$. Moreover, since $r+s-\sqrt{s}\sqrt{r+s}=\frac{\sqrt{r+s}}{\sqrt{r+s}+\sqrt{s}}r\geq\frac{r}{2}$, we have $f_1''(t)=\frac{1}{r}\geq \frac{1}{2(r+s-\sqrt{s}\sqrt{r+s})}=f_2''(t)$. This means $f_1$ grows faster than $f_2$ for $t\geq r+s+\sqrt{s}\sqrt{r+s}$, and thus will $f_1(t)$ reach $1$ no later than $f_2(t)$ does, which implies $\sqrt{r(1-r-s)}\leq \sqrt{2(r+s-\sqrt{s}\sqrt{r+s})}-(r+s-\sqrt{s}\sqrt{r+s})$.

To prove the second conclusion, we notice that $t^*(r,s)=\sqrt{r(1-r-s)}$ when
$r\geq 2(r+s)(r+s+\sqrt{s}\sqrt{r+s})$.
This condition can be equivalently written as
\begin{equation}
2(1-r-s)(r+s-\sqrt{s}\sqrt{r+s})\geq r, \label{eq:oh-again}
\end{equation}
because of the identity $\frac{r}{2(r+s+\sqrt{s}\sqrt{r+s})}=1-\frac{r}{2(r+s-\sqrt{s}\sqrt{r+s})}$. Following the same way that we derive (\ref{eq:happy-to-figure-out}) from $2(1-r-s)(r+s-\sqrt{s}\sqrt{r+s})\leq r$, we can also show that (\ref{eq:oh-again}) implies
\begin{equation}
r+s+\sqrt{s}\sqrt{r+s} \leq \sqrt{2(r+s-\sqrt{s}\sqrt{r+s})}-(r+s-\sqrt{s}\sqrt{r+s}). \label{eq:to-be-contradicted}
\end{equation}
However, under the condition that $r+s-\sqrt{s}\sqrt{r+s}>\frac{1}{2}$, we have
\begin{equation}
\sqrt{2(r+s-\sqrt{s}\sqrt{r+s})}-(r+s-\sqrt{s}\sqrt{r+s})< r+s-\sqrt{s}\sqrt{r+s}, \label{eq:tired}
\end{equation}
which contradicts (\ref{eq:to-be-contradicted}). Therefore, the condition $r+s-\sqrt{s}\sqrt{r+s}>\frac{1}{2}$ implies that we can only have
\begin{equation}
t^*(r,s)=\sqrt{2(r+s-\sqrt{s}\sqrt{r+s})}-(r+s-\sqrt{s}\sqrt{r+s}). \label{eq:t-star-exact-recov}
\end{equation}
Hence, $t^*(r,s)< r+s-\sqrt{s}\sqrt{r+s}$ holds because of (\ref{eq:tired}) and (\ref{eq:t-star-exact-recov}).

Now we prove the third conclusion. When $2(r+s)(r+s+\sqrt{s}\sqrt{r+s})>r$, since $t^*(r,s)=\sqrt{2(r+s-\sqrt{s}\sqrt{r+s})}-(r+s-\sqrt{s}\sqrt{r+s})$, $t^*(r,s)>r+s-\sqrt{s}\sqrt{r+s}$ is equivalent to $r+s-\sqrt{s}\sqrt{r+s}<\frac{1}{2}$. Now consider the other case $2(r+s)(r+s+\sqrt{s}\sqrt{r+s})\leq r$, and then $t^*(r,s)=\sqrt{r(1-r-s)}$. Note that the condition $2(r+s)(r+s+\sqrt{s}\sqrt{r+s})\leq r$ can be equivalently written as (\ref{eq:oh-again}), and thus we have $1-r-s\geq \frac{r}{2(r+s-\sqrt{s}\sqrt{r+s})}$. Then, by $r+s-\sqrt{s}\sqrt{r+s}<\frac{1}{2}$, we get $1-r-s>r$, which then implies $\sqrt{r(1-r-s)}>r$. Since $r+s-\sqrt{s}\sqrt{r+s}=\sqrt{r+s}(\sqrt{r+s}-\sqrt{s})=\frac{\sqrt{r+s}}{\sqrt{r+s}+\sqrt{s}}r\leq r$, we get $\sqrt{r(1-r-s)}>r+s-\sqrt{s}\sqrt{r+s}$ as desired. So we conclude that $t^*(r,s)>r+s-\sqrt{s}\sqrt{r+s}$ holds.

Finally, by the first conclusion, $t^*(r,s)\geq 3(r+s-\sqrt{s}\sqrt{r+s})$ implies
$$\sqrt{2(r+s-\sqrt{s}\sqrt{r+s})}-(r+s-\sqrt{s}\sqrt{r+s})\geq 3(r+s-\sqrt{s}\sqrt{r+s}).$$
This directly leads to $r+s-\sqrt{s}\sqrt{r+s}\leq\frac{1}{8}$.
\end{proof}

\begin{proof}[Proof of Theorem \ref{thm:exact} (upper bound)]
For the test $\dot{\psi}$ defined in (\ref{eq:general-test-comb}), its Type-I error is vanishing by the same arguments used in (\ref{eq:type-1-3.1})-(\ref{eq:type-1-3.1-part2}). For the Type-II error, we follow (\ref{eq:type-2-3.1}) and (\ref{eq:type-2-3.1-part}), and thus it suffices to prove
\begin{equation}
P^{(n)}_{(\theta,\eta,z,\sigma)}\left(\dot{T}_n^-(t^*(r,s))\leq\sqrt{2(1+\delta)\log \log n}\right)\rightarrow 0, \label{eq:want-to-prove-exact-equal}
\end{equation}
uniformly over any $z,\sigma\in\{-1,1\}^n$ that satisfy $\frac{1}{n}\sum_{i=1}^n\indc{z_i\neq \sigma_i}>0$. For any $t\in\mathbb{R}$, the $\dot{T}_n^-(t)$ in (\ref{eq:want-to-prove-exact-equal}) is defined by
$$\dot{T}_n^-(t)=\frac{\sum_{i=1}^n\indc{C^-(X_i,Y_i,\theta,\eta)>\sqrt{2}t\log n}-nS_{(r,s)}(t\sqrt{2\log n})}{\sqrt{n S_{(r,s)}(t\sqrt{2\log n})(1-S_{(r,s)}(t\sqrt{2\log n}))}}.$$
By Lemma \ref{lem:comp-tail-0} and the definition of $t^*(r,s)$, we have
\begin{equation}
\frac{1}{\log n}n^{-1}\lesssim\mathbb{P}\left(|\sqrt{r}U+\sqrt{s}V|-\sqrt{r+s}|V|>t^*(r,s)\sqrt{2\log n}\right)\lesssim \frac{1}{\sqrt{\log n}}n^{-1}, \label{eq:range-of-S}
\end{equation}
and thus
$$\frac{1}{\log n}\lesssim nS_{(r,s)}(t^*(r,s)\sqrt{2\log n})\lesssim \frac{1}{\sqrt{\log n}}.$$
Moreover, since $\frac{1}{n}\sum_{i=1}^n\indc{z_i\neq \sigma_i}>0$, there exists some $i_0\in[n]$ such that $z_{i_0}\neq \sigma_{i_0}$. By Lemma \ref{lem:comp-tail-1}, we have
\begin{eqnarray*}
&& \mathbb{P}\left(C^-(X_{i_0},Y_{i_0},\theta,\eta)>\sqrt{2}t^*(r,s)\log n\right) \\
&=& \mathbb{P}_{(U^2,V^2)\sim \chi_{1,2r\log n}^2\otimes \chi_{1,2s\log n}^2}\left(|\sqrt{r}U+\sqrt{s}V|-\sqrt{r+s}|V|>t^*(r,s)\sqrt{2\log n}\right) \\
&\rightarrow& 1,
\end{eqnarray*}
where the last line uses the fact that $t^*(r,s)< r+s-\sqrt{s}\sqrt{r+s}$, which is implied by $r+s-\sqrt{s}\sqrt{r+s}>\frac{1}{2}$ according to Proposition \ref{prop:t-star}. This implies that
$$\indc{C^-(X_{i_0},Y_{i_0},\theta,\eta)>\sqrt{2}t^*(r,s)\log n}=1,$$
with probability tending to $1$. Finally, by (\ref{eq:range-of-S}), we have
\begin{equation}
\dot{T}_n^-(t^*(r,s))\geq \frac{\indc{C^-(X_{i_0},Y_{i_0},\theta,\eta)>\sqrt{2}t^*(r,s)\log n}-nS_{(r,s)}(t^*(r,s)\sqrt{2\log n})}{\sqrt{nS_{(r,s)}(t^*(r,s)\sqrt{2\log n})(1-S_{(r,s)}(t^*(r,s)\sqrt{2\log n}))}}\gtrsim (\log n)^{1/4}, \label{eq:ipad-os}
\end{equation}
with probability tending to $1$. By the fact that $\sqrt{\log\log n}=o((\log n)^{1/4})$, (\ref{eq:want-to-prove-exact-equal}) holds, and the proof is complete.
\end{proof}

\begin{proof}[Proof of Theorem \ref{thm:exact} (lower bound)]
Define $P_0$ to be the joint distribution of $\{(X_i,Y_i)\}_{1\leq i\leq n}$, under which we have
$$(X_i,Y_i)\stackrel{iid}{\sim} \frac{1}{2}N(\theta,I_p)\otimes N(\eta,I_q)+\frac{1}{2}N(-\theta,I_p)\otimes N(-\eta,I_q),\quad i\in[n].$$
For each $i\in[n]$, we define $P_i$ to be the product measure identical to $P_0$ except for its $i$th coordinate takes
$$\frac{1}{2}N(\theta,I_p)\otimes N(-\eta,I_q)+\frac{1}{2}N(-\theta,I_p)\otimes N(\eta,I_q).$$
Then, for any testing function $\psi$, we have
$$\sup_{\substack{z\in\{-1,1\}^n\\ \sigma\in\{z,-z\}}}P^{(n)}_{(\theta,\eta,z,\sigma)}\psi\geq P_0\psi,$$
and
$$\sup_{\substack{z\in\{-1,1\}^n\\\sigma\in\{-1,1\}^n\\\ell(z,\sigma)> 0}}P^{(n)}_{(\theta,\eta,z,\sigma)}(1-\psi)\geq \frac{1}{n}\sum_{i=1}^n P_i(1-\psi).$$
Let us use $p_i$ for the density function of $P_i$ for all $i\in[n]\cup\{0\}$, and we have
\begin{eqnarray*}
R_n^{\rm exact}(\theta,\eta) &\geq& \inf_{\psi}\left(P_0\psi + \frac{1}{n}\sum_{i=1}^nP_i(1-\psi)\right) \\
&=& \int p_0\wedge\left(\frac{1}{n}\sum_{i=1}^np_i\right) \\
&\geq& \int_{\frac{1}{n}\sum_{i=1}^np_i>p_0/2}\frac{1}{2}p_0 \\
&=& \frac{1}{2}P_0\left(\frac{1}{n}\sum_{i=1}^n\frac{p_i(X,Y)}{p_0(X,Y)}>\frac{1}{2}\right).
\end{eqnarray*}
Define
$$
U_i = \frac{\frac{\|\eta\|}{\|\theta\|}\theta^TX_i-\frac{\|\theta\|}{\|\eta\|}\eta^TY_i}{\sqrt{\|\theta\|^2+\|\eta\|^2}}, \quad
V_i = \frac{\theta^TX_i+\eta^TY_i}{\sqrt{\|\theta\|^2+\|\eta\|^2}}.
$$
Then, under $P_0$, we have
$$(U_i,V_i)\stackrel{iid}{\sim} \frac{1}{2}N(0,1)\otimes N(-\sqrt{2(r+s)\log n},1)+\frac{1}{2}N(0,1)\otimes N(\sqrt{2(r+s)\log n},1).$$
We also have
\begin{eqnarray*}
\theta^TX_i-\eta^TY_i &=& \sqrt{2r\log n}U_i + \sqrt{2s\log n}V_i, \\
\theta^TX_i+\eta^TY_i &=& \sqrt{2(r+s)\log n}V_i.
\end{eqnarray*}
This implies for each $i\in[n]$,
\begin{eqnarray*}
\frac{p_i(X,Y)}{p_0(X,Y)} &=& \frac{e^{-\frac{1}{2}\|X_i-\theta\|^2-\frac{1}{2}\|Y_i+\eta\|^2}+e^{-\frac{1}{2}\|X_i+\theta\|^2-\frac{1}{2}\|Y_i-\eta\|^2}}{e^{-\frac{1}{2}\|X_i-\theta\|^2-\frac{1}{2}\|Y_i-\eta\|^2}+e^{-\frac{1}{2}\|X_i+\theta\|^2-\frac{1}{2}\|Y_i+\eta\|^2}} \\
&=& \frac{e^{\theta^TX_i-\eta^TY_i}+e^{-\theta^TX_i+\eta^TY_i}}{e^{\theta^TX_i+\eta^TY_i}+e^{-\theta^TX_i-\eta^TY_i}} \\
&=& \frac{e^{\sqrt{2r\log n}U_i + \sqrt{2s\log n}V_i}+e^{-\sqrt{2r\log n}U_i - \sqrt{2s\log n}V_i}}{e^{\sqrt{2(r+s)\log n}V_i}+e^{-\sqrt{2(r+s)\log n}V_i}} \\
&=& \frac{\phi(U_i-\sqrt{2r\log n})\phi(V_i-\sqrt{2s\log n})+\phi(U_i+\sqrt{2r\log n})\phi(V_i+\sqrt{2s\log n})}{\phi(U_i)\phi(V_i-\sqrt{2(r+s)\log n})+\phi(U_i)\phi(V_i+\sqrt{2(r+s)\log n})},
\end{eqnarray*}
where $\phi(\cdot)$ is the density function of $N(0,1)$.
Use the notation $r(U_i,V_i)=\frac{p_i(X,Y)}{p_0(X,Y)}$, and it suffices to study the statistic $\frac{1}{n}\sum_{i=1}^nr(U_i,V_i)$  under $P_0$.

To this end, we define the event $G=\cap_{i=1}^nG_i$, where
$$G_i=\left\{\sqrt{r}|U_i|-(\sqrt{r+s}-\sqrt{s})|V_i|\leq t^*(r,s)\sqrt{2\log n}\right\}.$$
Then, by Lemma \ref{lem:comp-tail-0} and the definition of $t^*(r,s)$, we have
\begin{eqnarray*}
P_0(G) &\geq& 1-\sum_{i=1}^nP_0(G_i) \\
&=& 1-n\mathbb{P}_{(U^2,V^2)\sim \chi_1^2\otimes \chi_{1,2(r+s)\log n}^2}\left(\sqrt{r}|U|-(\sqrt{r+s}-\sqrt{s})|V|>t^*(r,s)\sqrt{2\log n}\right) \\
&\geq& 1-O\left(\frac{1}{\sqrt{\log n}}\right),
\end{eqnarray*}
which means the event $G$ holds with high probability under $P_0$. Therefore,
\begin{equation}
\frac{1}{n}\sum_{i=1}^nr(U_i,V_i)=\frac{1}{n}\sum_{i=1}^nr(U_i,V_i)\indc{G_i}, \label{eq:just-trunc}
\end{equation}
with high probability, and we can analyze the truncated version $\frac{1}{n}\sum_{i=1}^nr(U_i,V_i)\indc{G_i}$ instead. We first calculate the mean,
\begin{eqnarray*}
&& \mathbb{E}_{P_0}\left(\frac{1}{n}\sum_{i=1}^nr(U_i,V_i)\indc{G_i}\right) \\
&=& \mathbb{E}_{P_0}\left(\frac{1}{n}\sum_{i=1}^n\frac{p_i(X,Y)}{p_0(X,Y)}\indc{G_i}\right) \\
&=& \frac{1}{n}\sum_{i=1}^nP_i(G_i) \\
&=& \mathbb{P}_{(U,V)\sim N(\sqrt{2r\log n},1)\otimes N(\sqrt{2s\log n},1)}\left(\sqrt{r}|U|-(\sqrt{r+s}-\sqrt{s})|V|\leq t^*(r,s)\sqrt{2\log n}\right).
\end{eqnarray*}
Write $U=Z_1+\sqrt{2r\log n}$ and $V=Z_2+\sqrt{2s\log n}$ with independent $Z_1,Z_2\sim N(0,1)$, and we can write the above probability as
\begin{eqnarray*}
&& \mathbb{P}\left(\sqrt{r}|Z_1+\sqrt{2r\log n}|+(\sqrt{r+s}-\sqrt{s})|Z_2+\sqrt{2s\log n}|\leq t^*(r,s)\sqrt{2\log n}\right) \\
&\geq& \mathbb{P}\left(\frac{\sqrt{r}|Z_1|+(\sqrt{r+s}-\sqrt{s})|Z_2|}{\sqrt{2\log n}}+r+s-\sqrt{s}\sqrt{r+s}\leq t^*(r,s)\right) \\
&\rightarrow& 1.
\end{eqnarray*}
The last line above uses the inequality $t^*(r,s)>r+s-\sqrt{s}\sqrt{r+s}$ and the fact that $\frac{\sqrt{r}|Z_1|+(\sqrt{r+s}-\sqrt{s})|Z_2|}{\sqrt{2\log n}}=o_{\mathbb{P}}(1)$. Note that $t^*(r,s)>r+s-\sqrt{s}\sqrt{r+s}$ is implied by $r+s-\sqrt{s}\sqrt{r+s}<\frac{1}{2}$ according to Proposition \ref{prop:t-star}. We therefore have
\begin{equation}
\mathbb{E}_{P_0}\left(\frac{1}{n}\sum_{i=1}^nr(U_i,V_i)\indc{G_i}\right) \rightarrow 1, \label{eq:mean-trunc}
\end{equation}
as $n\rightarrow \infty$.

Next, we calculate the variance of $\frac{1}{n}\sum_{i=1}^nr(U_i,V_i)\indc{G_i}$. We have
\begin{eqnarray*}
\Var_{P_0}\left(\frac{1}{n}\sum_{i=1}^nr(U_i,V_i)\indc{G_i}\right) &=& \frac{1}{n}\Var_{P_0}\left(r(U_1,V_1)\indc{G_1}\right) \\
&\leq& \frac{1}{n}\mathbb{E}_{P_0}\left(r(U_1,V_1)\indc{G_1}\right)^2.
\end{eqnarray*}
By the definition of $r(U_1,V_1)$, we have
\begin{eqnarray*}
&& \mathbb{E}_{P_0}\left(r(U_1,V_1)\indc{G_1}\right)^2 \\
&=& \int_{\frac{\sqrt{r}|u|-(\sqrt{r+s}-\sqrt{s})|v|}{t^*(r,s)}\leq \sqrt{2\log n}}f(u,v)dudv \\
&\leq& \int_{\frac{\sqrt{r}|u|-(\sqrt{r+s}-\sqrt{s})|v|}{t^*(r,s)}\leq \sqrt{2\log n}}\left(f(u,v)+f(-u,v)\right)dudv,
\end{eqnarray*}
where
$$f(u,v) = \frac{\left[\phi(u-\sqrt{2r\log n})\phi(v-\sqrt{2s\log n})+\phi(u+\sqrt{2r\log n})\phi(v+\sqrt{2s\log n})\right]^2}{2[\phi(u)\phi(v-\sqrt{2(r+s)\log n})+\phi(u)\phi(v+\sqrt{2(r+s)\log n})]}.$$
Note that the $f(u,v)+f(-u,v)$ is a function of $(|u|,|v|)$ because of symmetry. Moreover, when $u\geq 0$ and $v\geq 0$, we have
\begin{eqnarray*}
\phi(u-\sqrt{2r\log n}) &\geq& \phi(u+\sqrt{2r\log n}), \\
\phi(v-\sqrt{2s\log n}) &\geq& \phi(v+\sqrt{2s\log n}).
\end{eqnarray*}
These inequalities imply
$$f(u,v)+f(-u,v)\leq \frac{4\phi(u-\sqrt{2r\log n})^2\phi(v-\sqrt{2s\log n})^2}{\phi(u)\phi(v-\sqrt{2(r+s)\log n})}.$$
Hence,
\begin{eqnarray*}
&& \int_{\frac{\sqrt{r}|u|-(\sqrt{r+s}-\sqrt{s})|v|}{t^*(r,s)}\leq \sqrt{2\log n}}\left(f(u,v)+f(-u,v)\right)dudv \\
&=& 4\int_{\frac{\sqrt{r}|u|-(\sqrt{r+s}-\sqrt{s})|v|}{t^*(r,s)}\leq \sqrt{2\log n},u\geq 0,v\geq 0}\left(f(u,v)+f(-u,v)\right)dudv \\
&\leq& 16\int_{\frac{\sqrt{r}|u|-(\sqrt{r+s}-\sqrt{s})|v|}{t^*(r,s)}\leq \sqrt{2\log n},u\geq 0,v\geq 0} \frac{\phi(u-\sqrt{2r\log n})^2\phi(v-\sqrt{2s\log n})^2}{\phi(u)\phi(v-\sqrt{2(r+s)\log n})}dudv \\
&\leq& \frac{16}{2\pi}n^{4(r+s-\sqrt{s}\sqrt{r+s})}\int_{\frac{\sqrt{r}u-(\sqrt{r+s}-\sqrt{s})v}{t^*(r,s)}\leq \sqrt{2\log n}} e^{-\frac{(u-2\sqrt{2r\log n})^2}{2}-\frac{(v-(2\sqrt{2s\log n}-\sqrt{2(r+s)\log n}))^2}{2}}dudv \\
&=& 16n^{4(r+s-\sqrt{s}\sqrt{r+s})}\mathbb{P}\left(\sqrt{r}Z_1-(\sqrt{r+s}-\sqrt{s})Z_2\leq \left(t^*(r,s)-3(r+s-\sqrt{s}\sqrt{r+s})\right)\sqrt{2\log n}\right),
\end{eqnarray*}
where $Z_1,Z_2\stackrel{iid}{\sim} N(0,1)$.
We consider two cases. In the first case, we have $t^*(r,s)\geq 3(r+s-\sqrt{s}\sqrt{r+s})$, and by Proposition \ref{prop:t-star}, this implies $r+s-\sqrt{s}\sqrt{r+s}\leq \frac{1}{8}$. Thus, we have
$$\Var_{P_0}\left(\frac{1}{n}\sum_{i=1}^nr(U_i,V_i)\indc{G_i}\right)\leq 16n^{4(r+s-\sqrt{s}\sqrt{r+s})-1}\rightarrow 0.$$
In the second case, we have $t^*(r,s)< 3(r+s-\sqrt{s}\sqrt{r+s})$, and then a standard Gaussian tail bound (\ref{eq:Gaussian-tail}) gives
\begin{eqnarray*}
&& \mathbb{P}\left(\sqrt{r}Z_1-(\sqrt{r+s}-\sqrt{s})Z_2\leq \left(t^*(r,s)-3(r+s-\sqrt{s}\sqrt{r+s})\right)\sqrt{2\log n}\right) \\
&\lesssim& n^{-\frac{(3(r+s-\sqrt{s}\sqrt{r+s})-t^*(r,s))^2}{2(r+s-\sqrt{s}\sqrt{r+s})}} \\
&\leq& n^{-\frac{(4(r+s-\sqrt{s}\sqrt{r+s})-\sqrt{2(r+s-\sqrt{s}\sqrt{r+s})})^2}{2(r+s-\sqrt{s}\sqrt{r+s})}},
\end{eqnarray*}
where the last inequality uses the property $t^*(r,s)\leq \sqrt{2(r+s-\sqrt{s}\sqrt{r+s})}-(r+s-\sqrt{s}\sqrt{r+s})$ established by Proposition \ref{prop:t-star}. Let us write $A=r+s-\sqrt{s}\sqrt{r+s}$, and then we have
$$\Var_{P_0}\left(\frac{1}{n}\sum_{i=1}^nr(U_i,V_i)\indc{G_i}\right)\lesssim n^{4A-1-\frac{(4A-\sqrt{2A})^2}{2A}}=n^{-2(\sqrt{2A}-1)^2}\rightarrow 0,$$
as long as $A=r+s-\sqrt{s}\sqrt{r+s}<\frac{1}{2}$. Combine the two cases, and we conclude that
\begin{equation}
\Var_{P_0}\left(\frac{1}{n}\sum_{i=1}^nr(U_i,V_i)\indc{G_i}\right) \rightarrow 0, \label{eq:var-trunc}
\end{equation}
as $n\rightarrow \infty$. By (\ref{eq:mean-trunc}) and (\ref{eq:var-trunc}), $\frac{1}{n}\sum_{i=1}^nr(U_i,V_i)\indc{G_i}\rightarrow 1$ in probability. Finally, by (\ref{eq:just-trunc}), we also have $\frac{1}{n}\sum_{i=1}^nr(U_i,V_i)\rightarrow 1$ in probability, and thus $P_0\left(\frac{1}{n}\sum_{i=1}^n\frac{p_i(X,Y)}{p_0(X,Y)}>\frac{1}{2}\right)\rightarrow 1$, which completes the proof.
\end{proof}

\begin{proof}[Proof of Theorem \ref{thm:Bonf}]
Similar to (\ref{eq:type-1-3.1}), we need to bound $\sup_{z\in\{-1,1\}^n}P_{(\theta,\eta,z,z)}^{(n)}\psi_{\rm Bonferroni}$ and $\sup_{z\in\{-1,1\}^n}P_{(\theta,\eta,z,-z)}^{(n)}\psi_{\rm Bonferroni}$ in order to control the Type-I error. For the first term, we have
\begin{eqnarray*}
&& \sup_{z\in\{-1,1\}^n}P_{(\theta,\eta,z,z)}^{(n)}\psi_{\rm Bonferroni} \\
&\leq& \sup_{z\in\{-1,1\}^n}P_{(\theta,\eta,z,z)}^{(n)}\left(\max_{1\leq i\leq n}C^-(X_i,Y_i,\theta,\eta)>2t^*(r,s)\log n\right) \\
&\leq& \sup_{z\in\{-1,1\}^n}\sum_{i=1}^nP_{(\theta,\eta,z,z)}^{(n)}\left(C^-(X_i,Y_i,\theta,\eta)>2t^*(r,s)\log n\right) \\
&=& n\mathbb{P}_{(U^2,V^2)\sim \chi_{1}^2\otimes \chi_{1,2(r+s)\log n}^2}\left(|\sqrt{r}U+\sqrt{s}V|-\sqrt{r+s}|V|>t^*(r,s)\sqrt{2\log n}\right) \\
&\lesssim& \frac{1}{\sqrt{\log n}}\rightarrow 0,
\end{eqnarray*}
where the last inequality is by Lemma \ref{lem:comp-tail-0} and the definition of $t^*(r,s)$. The same argument also applies to $\sup_{z\in\{-1,1\}^n}P_{(\theta,\eta,z,-z)}^{(n)}\psi_{\rm Bonferroni}$, and therefore the Type-I error is vanishing.

For the Type-II error, we have
\begin{eqnarray}
\nonumber && \sup_{\substack{z\in\{-1,1\}^n \\ \sigma\in\{-1,1\}^n \\ \ell(z,\sigma)>0}}P_{(\theta,\eta,z,\sigma)}^{(n)}(1-\psi_{\rm Bonferroni}) \\
\label{eq:bonf-t2-1} &\leq& \sup_{\substack{z\in\{-1,1\}^n \\ \sigma\in\{-1,1\}^n \\ \frac{1}{n}\sum_{i=1}^n\indc{z_i\neq \sigma_i}>0}}P_{(\theta,\eta,z,\sigma)}^{(n)}\left(\max_{1\leq i\leq n}C^-(X_i,Y_i,\theta,\eta)\leq 2t^*(r,s)\log n\right) \\
\label{eq:bonf-t2-2} && + \sup_{\substack{z\in\{-1,1\}^n \\ \sigma\in\{-1,1\}^n \\ \frac{1}{n}\sum_{i=1}^n\indc{z_i\neq -\sigma_i}>0}}P_{(\theta,\eta,z,\sigma)}^{(n)}\left(\max_{1\leq i\leq n}C^+(X_i,Y_i,\theta,\eta)\leq 2t^*(r,s)\log n\right)
\end{eqnarray}
We give a bound for (\ref{eq:bonf-t2-1}). For any $z,\sigma\in\{-1,1\}^n$ such that $\frac{1}{n}\sum_{i=1}^n\indc{z_i\neq \sigma_i}>0$, there exists some $i_0\in[n]$ such that $z_{i_0}\neq \sigma_{i_0}$. Then,
\begin{eqnarray*}
&& P_{(\theta,\eta,z,\sigma)}^{(n)}\left(\max_{1\leq i\leq n}C^-(X_i,Y_i,\theta,\eta)\leq 2t^*(r,s)\log n\right) \\
&\leq& P_{(\theta,\eta,z,\sigma)}^{(n)}\left(C^-(X_{i_0},Y_{i_0},\theta,\eta)\leq 2t^*(r,s)\log n\right) \\
&=& \mathbb{P}_{(U^2,V^2)\sim \chi_{1,2r\log n}^2\otimes \chi_{1,2s\log n}^2}\left(|\sqrt{r}U+\sqrt{s}V|-\sqrt{r+s}|V|\leq t^*(r,s)\sqrt{2\log n}\right) \\
&\rightarrow& 0,
\end{eqnarray*}
where the last line is by Lemma \ref{lem:comp-tail-1} and the condition $t^*(r,s)< r+s-\sqrt{s}\sqrt{r+s}$. Note that $t^*(r,s)< r+s-\sqrt{s}\sqrt{r+s}$ is implied by $r+s-\sqrt{s}\sqrt{r+s}>\frac{1}{2}$ according to Proposition \ref{prop:t-star}. The same analysis also applies to (\ref{eq:bonf-t2-2}), and we thus conclude that the Type-II error is vanishing. The proof is complete.
\end{proof}

\subsection{Proofs of Theorem \ref{thm:ada-Bonf} and Theorem \ref{thm:HC-adaptive}}


\begin{proof}[Proof of Theorem \ref{thm:ada-Bonf}]
Let us first introduce some notation. Define
\begin{eqnarray*}
{C}_0^- &=& \max_{i\in\mathcal{D}_0}C^-(X_i,Y_i,\theta,\eta), \\
{C}_1^- &=& \max_{i\in\mathcal{D}_1}C^-(X_i,Y_i,\theta,\eta), \\
{C}_0^+ &=& \max_{i\in\mathcal{D}_0}C^+(X_i,Y_i,\theta,\eta), \\
{C}_1^+ &=& \max_{i\in\mathcal{D}_1}C^+(X_i,Y_i,\theta,\eta).
\end{eqnarray*}
Then, the Bonferroni test defined by (\ref{eq:test-exact-Bonf}) can be written as
$$\psi_{\rm Bonferroni}=\indc{(C_0^-\vee C_1^-)\wedge (C_0^+\vee C_1^+)>2t^*(r,s)\log n}.$$ Our primary goal in the proof is to bound the difference between $\wh{C}^-\wedge \wh{C}^+$ and $(C_0^-\vee C_1^-)\wedge (C_0^+\vee C_1^+)$. Define
$$M_n^{(0)}=\max_{i\in\mathcal{D}_0}\frac{|(\wh{\theta}^{(1)}-\theta)^TX_i|}{\|\wh{\theta}^{(1)}-\theta\|}\vee \max_{i\in\mathcal{D}_0}\frac{|(\wh{\theta}^{(1)}+\theta)^TX_i|}{\|\wh{\theta}^{(1)}+\theta\|}\vee\max_{i\in\mathcal{D}_0}\frac{|(\wh{\eta}^{(1)}-\eta)^TY_i|}{\|\wh{\eta}^{(1)}-\eta\|}\vee\max_{i\in\mathcal{D}_0}\frac{|(\wh{\eta}^{(1)}+\eta)^TY_i|}{\|\wh{\eta}^{(1)}+\eta\|},$$
$$M_n^{(1)}=\max_{i\in\mathcal{D}_1}\frac{|(\wh{\theta}^{(0)}-\theta)^TX_i|}{\|\wh{\theta}^{(0)}-\theta\|}\vee \max_{i\in\mathcal{D}_1}\frac{|(\wh{\theta}^{(0)}+\theta)^TX_i|}{\|\wh{\theta}^{(0)}+\theta\|}\vee\max_{i\in\mathcal{D}_1}\frac{|(\wh{\eta}^{(0)}-\eta)^TY_i|}{\|\wh{\eta}^{(0)}-\eta\|}\vee\max_{i\in\mathcal{D}_1}\frac{|(\wh{\eta}^{(0)}+\eta)^TY_i|}{\|\wh{\eta}^{(0)}+\eta\|},$$
and $M_n=M_n^{(0)}\vee M_n^{(1)}$.

Since $|C^-(X_i,Y_i,\wh{\theta},\wh{\eta})-C^-(X_i,Y_i,\theta,\eta)|\leq 2|(\wh{\theta}-\theta)^TX_i|+2|(\wh{\eta}-\eta)^TY_i|$, we have
\begin{eqnarray*}
|\wh{C}_0^--C_0^-| &\leq& 2\|\wh{\theta}^{(1)}-\theta\|\max_{i\in\mathcal{D}_0}\frac{|(\wh{\theta}^{(1)}-\theta)^TX_i|}{\|\wh{\theta}^{(1)}-\theta\|} + 2\|\wh{\eta}^{(1)}-\eta\|\max_{i\in\mathcal{D}_0}\frac{|(\wh{\eta}^{(1)}-\eta)^TY_i|}{\|\wh{\eta}^{(1)}-\eta\|} \\
&\leq& 2M_n\left(\|\wh{\theta}^{(1)}-\theta\|+\|\wh{\eta}^{(1)}-\eta\|\right).
\end{eqnarray*}
Note that we can write $C^-(X_i,Y_i,\theta,\eta)=C^-(X_i,Y_i,-\theta,-\eta)$, and thus we also have
$$|\wh{C}_0^--C_0^-|\leq 2M_n\left(\|\wh{\theta}^{(1)}+\theta\|+\|\wh{\eta}^{(1)}+\eta\|\right).$$
Combine the two bounds above, we have
$$
|\wh{C}_0^--C_0^-|\leq 2M_n\left[\left(\|\wh{\theta}^{(1)}-\theta\|+\|\wh{\eta}^{(1)}-\eta\|\right)\wedge \left(\|\wh{\theta}^{(1)}+\theta\|+\|\wh{\eta}^{(1)}+\eta\|\right)\right].
$$
Using the same argument, we also get
$$|\wh{C}_1^--C_1^-|\leq 2M_n\left[\left(\|\wh{\theta}^{(0)}-\theta\|+\|\wh{\eta}^{(0)}-\eta\|\right)\wedge \left(\|\wh{\theta}^{(0)}+\theta\|+\|\wh{\eta}^{(0)}+\eta\|\right)\right].$$
With the above two bounds, we have
\begin{eqnarray*}
|\wh{C}_0^-\vee \wh{C}_1^--C_0^-\vee C_1^-| &\leq& |\wh{C}_0^--C_0^-|\vee|\wh{C}_1^--C_1^-| \\
&\leq& 2M_n\left[\left(\|\wh{\theta}^{(1)}-\theta\|+\|\wh{\eta}^{(1)}-\eta\|\right)\wedge \left(\|\wh{\theta}^{(1)}+\theta\|+\|\wh{\eta}^{(1)}+\eta\|\right)\right] \\
&& + 2M_n\left[\left(\|\wh{\theta}^{(0)}-\theta\|+\|\wh{\eta}^{(0)}-\eta\|\right)\wedge \left(\|\wh{\theta}^{(0)}+\theta\|+\|\wh{\eta}^{(0)}+\eta\|\right)\right].
\end{eqnarray*}
A similar argument leads to
\begin{eqnarray*}
|\wh{C}_0^+\vee \wh{C}_1^+-C_0^+\vee C_1^+| &\leq& 2M_n\left[\left(\|\wh{\theta}^{(1)}-\theta\|+\|\wh{\eta}^{(1)}-\eta\|\right)\wedge \left(\|\wh{\theta}^{(1)}+\theta\|+\|\wh{\eta}^{(1)}+\eta\|\right)\right] \\
&& + 2M_n\left[\left(\|\wh{\theta}^{(0)}-\theta\|+\|\wh{\eta}^{(0)}-\eta\|\right)\wedge \left(\|\wh{\theta}^{(0)}+\theta\|+\|\wh{\eta}^{(0)}+\eta\|\right)\right].
\end{eqnarray*}
Observe the property that $C^+(X_i,Y_i,\wh{\theta},\wh{\eta})=C^-(X_i,Y_i,-\wh{\theta},\wh{\eta})=C^-(X_i,Y_i,\wh{\theta},-\wh{\eta})$. Use this property repeatedly, we get
\begin{eqnarray*}
|\wh{C}_0^-\vee \wh{C}_1^--C_0^+\vee C_1^+| &\leq& 2M_n\left[\left(\|\wh{\theta}^{(1)}-\theta\|+\|\wh{\eta}^{(1)}+\eta\|\right)\wedge \left(\|\wh{\theta}^{(1)}+\theta\|+\|\wh{\eta}^{(1)}-\eta\|\right)\right] \\
&& + 2M_n\left[\left(\|\wh{\theta}^{(0)}-\theta\|+\|\wh{\eta}^{(0)}+\eta\|\right)\wedge \left(\|\wh{\theta}^{(0)}+\theta\|+\|\wh{\eta}^{(0)}-\eta\|\right)\right],
\end{eqnarray*}
and
\begin{eqnarray*}
|\wh{C}_0^+\vee \wh{C}_1^+-C_0^-\vee C_1^-| &\leq& 2M_n\left[\left(\|\wh{\theta}^{(1)}-\theta\|+\|\wh{\eta}^{(1)}+\eta\|\right)\wedge \left(\|\wh{\theta}^{(1)}+\theta\|+\|\wh{\eta}^{(1)}-\eta\|\right)\right] \\
&& + 2M_n\left[\left(\|\wh{\theta}^{(0)}-\theta\|+\|\wh{\eta}^{(0)}+\eta\|\right)\wedge \left(\|\wh{\theta}^{(0)}+\theta\|+\|\wh{\eta}^{(0)}-\eta\|\right)\right].
\end{eqnarray*}
By combining the four bounds above, we obtain
\begin{eqnarray}
\nonumber && |(\wh{C}_0^-\vee \wh{C}_1^-)\wedge (\wh{C}_0^+\vee \wh{C}_1^+) - (C_0^-\vee C_1^-)\wedge (C_0^+\vee C_1^+)| \\
\nonumber &\leq& \left(|\wh{C}_0^-\vee \wh{C}_1^--C_0^-\vee C_1^-| \vee |\wh{C}_0^+\vee \wh{C}_1^+-C_0^+\vee C_1^+|\right) \\
\nonumber && \wedge \left(|\wh{C}_0^-\vee \wh{C}_1^--C_0^+\vee C_1^+| \vee |\wh{C}_0^+\vee \wh{C}_1^+-C_0^-\vee C_1^-|\right) \\
\label{eq:very-disgusting1} &\leq& \left(2M_n\left[\left(\|\wh{\theta}^{(1)}-\theta\|+\|\wh{\eta}^{(1)}-\eta\|\right)\wedge \left(\|\wh{\theta}^{(1)}+\theta\|+\|\wh{\eta}^{(1)}+\eta\|\right)\right]\right. \\
\nonumber && \left.+ 2M_n\left[\left(\|\wh{\theta}^{(0)}-\theta\|+\|\wh{\eta}^{(0)}-\eta\|\right)\wedge \left(\|\wh{\theta}^{(0)}+\theta\|+\|\wh{\eta}^{(0)}+\eta\|\right)\right]\right) \\
\nonumber && \wedge\left(2M_n\left[\left(\|\wh{\theta}^{(1)}-\theta\|+\|\wh{\eta}^{(1)}+\eta\|\right)\wedge \left(\|\wh{\theta}^{(1)}+\theta\|+\|\wh{\eta}^{(1)}-\eta\|\right)\right]\right. \\
\nonumber && \left.+ 2M_n\left[\left(\|\wh{\theta}^{(0)}-\theta\|+\|\wh{\eta}^{(0)}+\eta\|\right)\wedge \left(\|\wh{\theta}^{(0)}+\theta\|+\|\wh{\eta}^{(0)}-\eta\|\right)\right]\right).
\end{eqnarray}
A symmetric argument leads to
\begin{eqnarray}
\nonumber && \left|(\wh{C}_0^-\vee\wh{C}_1^+)\wedge(\wh{C}_0^+\vee\wh{C}_1^-)-(C_0^-\vee C_1^-)\wedge (C_0^+\vee C_1^+)\right| \\
\nonumber &\leq& \left(\left|\wh{C}_0^-\vee\wh{C}_1^+-C_0^-\vee C_1^-\right|\vee\left|\wh{C}_0^+\vee\wh{C}_1^--C_0^+\vee C_1^+\right|\right) \\
\nonumber && \wedge \left(\left|\wh{C}_0^-\vee\wh{C}_1^+-C_0^+\vee C_1^+\right|\vee\left|\wh{C}_0^+\vee\wh{C}_1^--C_0^-\vee C_1^-\right|\right) \\
\label{eq:very-disgusting2} &\leq& \left(2M_n\left[\left(\|\wh{\theta}^{(1)}-\theta\|+\|\wh{\eta}^{(1)}-\eta\|\right)\wedge \left(\|\wh{\theta}^{(1)}+\theta\|+\|\wh{\eta}^{(1)}+\eta\|\right)\right]\right) \\
\nonumber && \left.+ 2M_n\left[\left(\|\wh{\theta}^{(0)}+\theta\|+\|\wh{\eta}^{(0)}-\eta\|\right)\wedge \left(\|\wh{\theta}^{(0)}-\theta\|+\|\wh{\eta}^{(0)}+\eta\|\right)\right]\right) \\
\nonumber && \wedge\left(2M_n\left[\left(\|\wh{\theta}^{(1)}+\theta\|+\|\wh{\eta}^{(1)}-\eta\|\right)\wedge \left(\|\wh{\theta}^{(1)}-\theta\|+\|\wh{\eta}^{(1)}+\eta\|\right)\right]\right. \\
\nonumber && \left.+ 2M_n\left[\left(\|\wh{\theta}^{(0)}+\theta\|+\|\wh{\eta}^{(0)}+\eta\|\right)\wedge \left(\|\wh{\theta}^{(0)}-\theta\|+\|\wh{\eta}^{(0)}-\eta\|\right)\right]\right).
\end{eqnarray}
The condition (\ref{eq:estimation-error-weak}) indicates that
\begin{equation}
L(\wh{\theta}^{(0)},\theta)\vee L(\wh{\theta}^{(1)},\theta)\vee L(\wh{\eta}^{(0)},\eta)\vee L(\wh{\eta}^{(1)},\eta)\leq n^{-\gamma}, \label{eq:msorrow}
\end{equation}
with high probability. Due to sample splitting, we have $M_n^{(0)}\vee M_n^{(0)}\leq 8\sqrt{2\log n}$ with high probability. Let $\gamma'$ be a constant that satisfies $0<\gamma'<\gamma$. Then, with (\ref{eq:msorrow}), either the righthand side of (\ref{eq:very-disgusting1}) or the righthand side of (\ref{eq:very-disgusting2}) is bounded by $n^{-\gamma'}$. In the first case, we have $\indc{\|\wh{\theta}^{(0)}-\wh{\theta}^{(1)}\|\leq 1, \|\wh{\eta}^{(0)}-\wh{\eta}^{(1)}\|\leq 1} + \indc{\|\wh{\theta}^{(0)}-\wh{\theta}^{(1)}\|> 1, \|\wh{\eta}^{(0)}-\wh{\eta}^{(1)}\|> 1}=1$, and then
$$\wh{C}^-\wedge\wh{C}^+=(\wh{C}_0^-\vee \wh{C}_1^-)\wedge (\wh{C}_0^+\vee \wh{C}_1^+).$$
In the second case, we have $\indc{\|\wh{\theta}^{(0)}-\wh{\theta}^{(1)}\|> 1, \|\wh{\eta}^{(0)}-\wh{\eta}^{(1)}\|\leq 1} + \indc{\|\wh{\theta}^{(0)}-\wh{\theta}^{(1)}\|\leq 1, \|\wh{\eta}^{(0)}-\wh{\eta}^{(1)}\|> 1}=1$, and then
$$\wh{C}^-\wedge\wh{C}^+=(\wh{C}_0^-\vee\wh{C}_1^+)\wedge(\wh{C}_0^+\vee\wh{C}_1^-).$$
Therefore, in both cases, we have
\begin{equation}
\left|\wh{C}^-\wedge\wh{C}^+-(C_0^-\vee C_1^-)\wedge (C_0^+\vee C_1^+)\right|\leq n^{-\gamma'}, \label{eq:cao-ni-ma}
\end{equation}
with high probability. The definition of $\wh{t}$ also implies
\begin{equation}
|\wh{t}-t^*(r,s)|\leq n^{-\gamma'}, \label{eq:MLGB}
\end{equation}
with high probability.

To this end, we define $G$ to be the intersection of the events (\ref{eq:cao-ni-ma}) and (\ref{eq:MLGB}). Then,
\begin{eqnarray*}
&& \sup_{\substack{z\in\{-1,1\}^n\\ \sigma\in\{z,-z\}}}P^{(n)}_{(\theta,\eta,z,\sigma)}\psi_{\rm ada-Bonferroni} + \sup_{\substack{z\in\{-1,1\}^n\\\sigma\in\{-1,1\}^n\\\ell(z,\sigma)> 0}}P^{(n)}_{(\theta,\eta,z,\sigma)}(1-\psi_{\rm ada-Bonferroni}) \\
&\leq& \sup_{\substack{z\in\{-1,1\}^n\\ \sigma\in\{z,-z\}}}P^{(n)}_{(\theta,\eta,z,\sigma)}\psi_{\rm ada-Bonferroni}\indc{G} + \sup_{\substack{z\in\{-1,1\}^n\\\sigma\in\{-1,1\}^n\\\ell(z,\sigma)> 0}}P^{(n)}_{(\theta,\eta,z,\sigma)}(1-\psi_{\rm ada-Bonferroni})\indc{G} \\
&& + \sup_{\substack{z\in\{-1,1\}^n\\\sigma\in\{-1,1\}^n}}P^{(n)}_{(\theta,\eta,z,\sigma)}(G^c).
\end{eqnarray*}
The last term is vanishing because both (\ref{eq:cao-ni-ma}) and (\ref{eq:MLGB}) hold with high probability. Since
\begin{eqnarray*}
&& P^{(n)}_{(\theta,\eta,z,\sigma)}\psi_{\rm ada-Bonferroni}\indc{G} \\
&\leq& P^{(n)}_{(\theta,\eta,z,\sigma)}\left((C_0^-\vee C_1^-)\wedge (C_0^+\vee C_1^+)>2t^*(r,s)\log n\right),
\end{eqnarray*}
and
\begin{eqnarray*}
&& P^{(n)}_{(\theta,\eta,z,\sigma)}(1-\psi_{\rm ada-Bonferroni})\indc{G} \\
&\leq& P^{(n)}_{(\theta,\eta,z,\sigma)}\left((C_0^-\vee C_1^-)\wedge (C_0^+\vee C_1^+)\leq 2(t^*(r,s)+\delta)\log n\right),
\end{eqnarray*}
for some arbitrarily small constant $\delta>0$, the same argument in the proof of Theorem \ref{thm:Bonf} leads to the desired conclusion.
\end{proof}

\begin{proof}[Proof of Theorem \ref{thm:HC-adaptive}]
Let us introduce some notation. For any $a_1,b_1,a_2,b_2,t\in\mathbb{R}$, define
$$P(a_1,b_1,a_2,b_2,t)=\mathbb{P}\left(C^-(Z_1+a_1,Z_2+b_1,a_2,b_2)>t\sqrt{2\log n}\right),$$
where $Z_1,Z_2\stackrel{iid}{\sim}N(0,1)$. We first give a bound for the difference between $P(a_1,b_1,a_2,b_2,t)$ and $P(a_2,b_2,a_2,b_2,t)$. Note that
\begin{eqnarray}
\nonumber && P(a_1,b_1,a_2,b_2,t) \\
\label{eq:only-useful-later} &=& \mathbb{P}\left(\left\{-2b_2(Z_2+b_1)>t\sqrt{2\log n}\text{ and }2a_2(Z_1+a_1)>t\sqrt{2\log n}\right\}\right. \\
\nonumber && \left.\text{ or }\left\{-2a_2(Z_1+a_1)>t\sqrt{2\log n}\text{ and }2b_2(Z_2+b_1)>t\sqrt{2\log n}\right\}\right) \\
\nonumber &=& \mathbb{P}\left(-2b_2(Z_2+b_1)>t\sqrt{2\log n}\right)\mathbb{P}\left(2a_2(Z_1+a_1)>t\sqrt{2\log n}\right) \\
\nonumber && + \mathbb{P}\left(-2a_2(Z_1+a_1)>t\sqrt{2\log n}\right)\mathbb{P}\left(2b_2(Z_2+b_1)>t\sqrt{2\log n}\right) \\
\nonumber && - \mathbb{P}\left(|2b_2(Z_2+b_1)|<-t\sqrt{2\log n}\right)\mathbb{P}\left(|2a_2(Z_1+a_1)|<-t\sqrt{2\log n}\right).
\end{eqnarray}
We therefore have
\begin{eqnarray}
\nonumber && \left|P(a_1,b_1,a_2,b_2,t)-P(a_2,b_2,a_2,b_2,t)\right| \\
\label{eq:est-prob-diff-1} &\leq& \left|\mathbb{P}\left(-2b_2(Z_2+b_1)>t\sqrt{2\log n}\right)\mathbb{P}\left(2a_2(Z_1+a_1)>t\sqrt{2\log n}\right)\right. \\
\nonumber && \left.- \mathbb{P}\left(-2b_2(Z_2+b_2)>t\sqrt{2\log n}\right)\mathbb{P}\left(2a_2(Z_1+a_2)>t\sqrt{2\log n}\right)\right| \\
\label{eq:est-prob-diff-2} && +\left|\mathbb{P}\left(-2a_2(Z_1+a_1)>t\sqrt{2\log n}\right)\mathbb{P}\left(2b_2(Z_2+b_1)>t\sqrt{2\log n}\right)\right. \\
\nonumber && \left.- \mathbb{P}\left(-2a_2(Z_1+a_2)>t\sqrt{2\log n}\right)\mathbb{P}\left(2b_2(Z_2+b_2)>t\sqrt{2\log n}\right)\right| \\
\label{eq:est-prob-diff-3} && +\left|\mathbb{P}\left(|2b_2(Z_2+b_1)|<-t\sqrt{2\log n}\right)\mathbb{P}\left(|2a_2(Z_1+a_1)|<-t\sqrt{2\log n}\right)\right. \\
\nonumber && \left.- \mathbb{P}\left(|2b_2(Z_2+b_2)|<-t\sqrt{2\log n}\right)\mathbb{P}\left(|2a_2(Z_1+a_2)|<-t\sqrt{2\log n}\right)\right|.
\end{eqnarray}
We demonstrate how to bound (\ref{eq:est-prob-diff-1}). The bounds for (\ref{eq:est-prob-diff-2}) and (\ref{eq:est-prob-diff-3}) follow the same argument, and thus we omit the details. By triangle inequality, (\ref{eq:est-prob-diff-1}) can be bounded by
\begin{eqnarray*}
&& \mathbb{P}\left(-2b_2(Z_2+b_2)>t\sqrt{2\log n}\right) \\
&& \times \left|\mathbb{P}\left(2a_2(Z_1+a_1)>t\sqrt{2\log n}\right)-\mathbb{P}\left(2a_2(Z_1+a_2)>t\sqrt{2\log n}\right)\right| \\
&&  +\mathbb{P}\left(2a_2(Z_1+a_2)>t\sqrt{2\log n}\right) \\
&& \times \left|\mathbb{P}\left(-2b_2(Z_2+b_1)>t\sqrt{2\log n}\right)-\mathbb{P}\left(-2b_2(Z_2+b_2)>t\sqrt{2\log n}\right)\right| \\
&& + \left|\mathbb{P}\left(2a_2(Z_1+a_1)>t\sqrt{2\log n}\right)-\mathbb{P}\left(2a_2(Z_1+a_2)>t\sqrt{2\log n}\right)\right| \\
&& \times \left|\mathbb{P}\left(-2b_2(Z_2+b_1)>t\sqrt{2\log n}\right)-\mathbb{P}\left(-2b_2(Z_2+b_2)>t\sqrt{2\log n}\right)\right|.
\end{eqnarray*}
Then, for any $1\leq |a_1|, |a_2|, |b_1|, |b_2|\leq \log n$ and $|t|\leq \log n$, apply Lemma \ref{prop:standard-normal}, and we have
\begin{eqnarray*}
&& \left|\mathbb{P}\left(2a_2(Z_1+a_1)>t\sqrt{2\log n}\right)-\mathbb{P}\left(2a_2(Z_1+a_2)>t\sqrt{2\log n}\right)\right| \\
&=& \left|\int_{\frac{t\sqrt{2\log n}-2a_1a_2}{2|a_2|}}^{\infty}\phi(x)dx-\int_{\frac{t\sqrt{2\log n}-2a_2^2}{2|a_2|}}^{\infty}\phi(x)dx\right| \\
&\leq& 2|a_1-a_2|\left(\phi\left(\frac{t\sqrt{2\log n}-2a_1a_2}{2|a_2|}\right)\vee\phi\left(\frac{t\sqrt{2\log n}-2a_2^2}{2|a_2|}\right)\right) \\
&\leq& 4|a_1-a_2|\phi\left(\frac{t\sqrt{2\log n}-2a_2^2}{2|a_2|}\right) \\
&\leq& (\log n)^2|a_1-a_2|\mathbb{P}\left(2a_2(Z_1+a_2)>t\sqrt{2\log n}\right).
\end{eqnarray*}
With a similar argument, we also have
\begin{eqnarray*}
&& \left|\mathbb{P}\left(-2b_2(Z_2+b_1)>t\sqrt{2\log n}\right)-\mathbb{P}\left(-2b_2(Z_2+b_2)>t\sqrt{2\log n}\right)\right| \\
&\leq& (\log n)^2|b_1-b_2|\mathbb{P}\left(-2b_2(Z_2+b_2)>t\sqrt{2\log n}\right),
\end{eqnarray*}
and we therefore obtain the following bound for (\ref{eq:est-prob-diff-1}),
\begin{eqnarray*}
&& \left((\log n)^2\left(|a_1-a_2|+|b_1-b_2|\right) + (\log n)^4|a_1-a_2||b_1-b_2|\right) \\
&& \times \mathbb{P}\left(2a_2(Z_1+a_2)>t\sqrt{2\log n}\right)\mathbb{P}\left(-2b_2(Z_2+b_2)>t\sqrt{2\log n}\right).
\end{eqnarray*}
With similar bounds obtained for (\ref{eq:est-prob-diff-2}) and (\ref{eq:est-prob-diff-3}), we have
\begin{eqnarray*}
&& \left|P(a_1,b_1,a_2,b_2,t)-P(a_2,b_2,a_2,b_2,t)\right| \\
&\leq&  \left((\log n)^2\left(|a_1-a_2|+|b_1-b_2|\right) + (\log n)^4|a_1-a_2||b_1-b_2|\right)  \\
&& \times \left(\mathbb{P}\left(2a_2(Z_1+a_2)>t\sqrt{2\log n}\right)\mathbb{P}\left(-2b_2(Z_2+b_2)>t\sqrt{2\log n}\right) \right. \\
&& \left. + \mathbb{P}\left(-2a_2(Z_1+a_2)>t\sqrt{2\log n}\right)\mathbb{P}\left(2b_2(Z_2+b_2)>t\sqrt{2\log n}\right)\right) \\
&\leq& 2\left((\log n)^2\left(|a_1-a_2|+|b_1-b_2|\right) + (\log n)^4|a_1-a_2||b_1-b_2|\right)P(a_2,b_2,a_2,b_2,t),
\end{eqnarray*}
where the last inequality uses the fact that $\mathbb{P}(A)+\mathbb{P}(B)\leq 2\mathbb{P}(A\cup B)$. We summarize the bound into the following inequality,
\begin{eqnarray}
\nonumber && \frac{\left|P(a_1,b_1,a_2,b_2,t)-P(a_2,b_2,a_2,b_2,t)\right|}{P(a_2,b_2,a_2,b_2,t)} \\
&\leq& 2\left((\log n)^2\left(|a_1-a_2|+|b_1-b_2|\right) + (\log n)^4|a_1-a_2||b_1-b_2|\right), \label{eq:good-interface-1}
\end{eqnarray}
which holds uniformly over $1\leq |a_1|, |a_2|, |b_1|, |b_2|\leq \log n$ and $|t|\leq \log n$.

Next, we study the ratio between $P(a_1,b_1,a_1,b_1,t)$ and $P(a_2,b_2,a_2,b_2,t)$. By a union bound argument, we have
\begin{eqnarray*}
&& P(a_1,b_1,a_1,b_1,t) \\
&\leq& \mathbb{P}\left(-2b_1(Z_2+b_1)>t\sqrt{2\log n}, 2a_1(Z_1+a_1)>t\sqrt{2\log n}\right) \\
&& + \mathbb{P}\left(-2a_1(Z_1+a_1)>t\sqrt{2\log n},2b_1(Z_2+b_1)>t\sqrt{2\log n}\right) \\
&=& \mathbb{P}\left(-2b_1(Z_2+b_1)>t\sqrt{2\log n}\right)\mathbb{P}\left(2a_1(Z_1+a_1)>t\sqrt{2\log n}\right) \\
&& + \mathbb{P}\left(-2a_1(Z_1+a_1)>t\sqrt{2\log n}\right)\mathbb{P}\left(2b_1(Z_2+b_1)>t\sqrt{2\log n}\right).
\end{eqnarray*}
By $\mathbb{P}(A)+\mathbb{P}(B)\leq 2\mathbb{P}(A\cup B)$ we have
\begin{eqnarray*}
&& P(a_2,b_2,a_2,b_2,t) \\
&\geq& \frac{1}{2}\mathbb{P}\left(-2b_2(Z_2+b_2)>t\sqrt{2\log n}\right)\mathbb{P}\left(2a_2(Z_1+a_2)>t\sqrt{2\log n}\right) \\
&& + \frac{1}{2}\mathbb{P}\left(-2a_2(Z_1+a_2)>t\sqrt{2\log n}\right)\mathbb{P}\left(2b_2(Z_2+b_2)>t\sqrt{2\log n}\right).
\end{eqnarray*}
For any $1\leq |a_1|, |a_2|, |b_1|, |b_2|\leq \log n$ and $|t|\leq \log n$ that satisfy $|a_1-a_2|\leq n^{-c}$ and $|b_1-b_2|\leq n^{-c}$ with some constant $c>0$, we apply Lemma \ref{prop:standard-normal}, and obtain
\begin{eqnarray*}
\frac{\mathbb{P}\left(-2b_1(Z_2+b_1)>t\sqrt{2\log n}\right)}{\mathbb{P}\left(-2b_2(Z_2+b_2)>t\sqrt{2\log n}\right)} &\leq& 2, \\
\frac{\mathbb{P}\left(2a_1(Z_1+a_1)>t\sqrt{2\log n}\right)}{\mathbb{P}\left(2a_2(Z_1+a_2)>t\sqrt{2\log n}\right)} &\leq& 2, \\
\frac{\mathbb{P}\left(-2a_1(Z_1+a_1)>t\sqrt{2\log n}\right)}{\mathbb{P}\left(-2a_2(Z_1+a_2)>t\sqrt{2\log n}\right)} &\leq& 2, \\
\frac{\mathbb{P}\left(2b_1(Z_2+b_1)>t\sqrt{2\log n}\right)}{\mathbb{P}\left(2b_2(Z_2+b_2)>t\sqrt{2\log n}\right)} &\leq& 2.
\end{eqnarray*}
Hence, we have
\begin{equation}
\frac{P(a_1,b_1,a_1,b_1,t)}{P(a_2,b_2,a_2,b_2,t)} \leq 4, \label{eq:good-interface-2}
\end{equation}
uniformly over all $1\leq |a_1|, |a_2|, |b_1|, |b_2|\leq \log n$ and $|t|\leq \log n$ that satisfy $|a_1-a_2|\leq n^{-c}$ and $|b_1-b_2|\leq n^{-c}$.

Use Hoeffding's inequality, and we have
\begin{equation}
|\mathcal{D}_0|\wedge|\mathcal{D}_1|\wedge|\mathcal{D}_2|\geq \frac{n}{4},\label{eq:sample-size-large}
\end{equation}
with high probability. The condition (\ref{eq:estimation-error-strong}), together with (\ref{eq:sample-size-large}), implies $L(\wh{\theta},\theta)\vee L(\wh{\eta},\eta)\leq (n/4)^{-\gamma}$ with high probability. By the definitions of $a$ and $b$, we then have
\begin{equation}
(|a-\|\theta\||\wedge|a+\|\theta\||)\vee(|b-\|\eta\||\wedge|b+\|\eta\||)\leq (n/4)^{-\gamma}, \label{eq:prob-a-b}
\end{equation}
with high probability. With some standard calculation and the sample size bound (\ref{eq:sample-size-large}), we also have
\begin{equation}
(|\wh{a}-a|\wedge|\wh{a}+a|)\vee(|\wh{b}-b|\wedge|\wh{b}+b|)\leq \sqrt{\frac{\log n}{n}}, \label{eq:prob-a-b-hat}
\end{equation}
with high probability. 
Finally, by definitions of $\wh{r},\wh{s},r,s$ and (\ref{eq:sample-size-large})-(\ref{eq:prob-a-b-hat}), we have
\begin{equation}
|\wh{r}-r|\vee|\wh{s}-s|\leq n^{-\wt{\gamma}},  \label{eq:prob-r-s}
\end{equation}
with high probability for some constant $\wt{\gamma}>0$. Let us define $G$ to be the intersection of the events (\ref{eq:sample-size-large}), (\ref{eq:prob-a-b}), (\ref{eq:prob-a-b-hat}), and (\ref{eq:prob-r-s}). Then, we have
\begin{equation}
\sup_{z,\sigma\in\{-1,1\}^n}P_{(\theta,\eta,z,\sigma)}^{(n)}(G)\rightarrow 1. \label{eq:good-event-adaptive}
\end{equation}

With the help of (\ref{eq:good-interface-1}), (\ref{eq:good-interface-2}), and (\ref{eq:good-event-adaptive}), we are ready to analyze $R_n(\psi_{\rm ada-HC},\theta,\eta)$. It is easy to check by the definition that
\begin{eqnarray}
\label{eq:pp1} S_{(r,s)}(t) &=& P(\|\theta\|,\|\eta\|,\|\theta\|,\|\eta\|,t), \\
\label{eq:pp2} S_{(\wh{r},\wh{s})}(t) &=& P(\wh{a},\wh{b},\wh{a},\wh{b},t).
\end{eqnarray}
By the calibration (\ref{eq:general-cali}), we know that under the event $G$, we have $1\leq |a|, |b|, |\wh{a}|, |\wh{b}|, \|\theta\|, \|\eta\|\leq \log n$  {for sufficiently large values of $n$}. Moreover,  the definitions of $\wh{T}_n^-$ and $\wh{T}_n^+$ imply that it is sufficient to consider $|t|\leq \log n$. With $X_i\sim N(z_i\theta,I_p)$ and $Y_i\sim N(\sigma_i\eta,I_q)$, for any $i\in\mathcal{D}_2$, we have
\begin{eqnarray}
\label{eq:pp3} \mathbb{P}\left(C^-(\wh{X}_i,\wh{Y}_i,\wh{a},\wh{b})>t\sqrt{2\log n}\Big|
 {\{d_j\}_{j\in[n]},\{(X_j,Y_j)\}_{j\in\mathcal{D}_0\cup\mathcal{D}_1}}\right) &=& P(z_ia,\sigma_ib,\wh{a},\wh{b},t), \\
\label{eq:pp4} \mathbb{P}\left(C^+(\wh{X}_i,\wh{Y}_i,\wh{a},\wh{b})>t\sqrt{2\log n}\Big|
 {\{d_j\}_{j\in[n]},\{(X_j,Y_j)\}_{j\in\mathcal{D}_0\cup\mathcal{D}_1}}\right) &=& P(z_ia,\sigma_ib,\wh{a},-\wh{b},t).
\end{eqnarray}
In addition, due to the symmetry in the definition, we have
\begin{eqnarray}
\label{eq:pp5} && P(a,b,\wh{a},\wh{b},t) = P(-a,-b,\wh{a},\wh{b},t), \\
\label{eq:pp6} && P(a,b,\wh{a},-\wh{b},t) = P(a,-b,\wh{a},\wh{b},t) = P(-a,b,\wh{a},\wh{b},t).
\end{eqnarray}
Now we analyze the Type-I error. By triangle inequality, we have
\begin{eqnarray}
\label{eq:bound-T-ada1} \wh{T}_n^- &\leq& \sup_{|t|\leq \log n}\frac{\left|\sum_{i\in\mathcal{D}_2}\indc{C^-(\wh{X}_i,\wh{Y}_i,\wh{a},\wh{b})>t\sqrt{2\log n}}-|\mathcal{D}_2|P(a,b,\wh{a},\wh{b},t) \right|}{\sqrt{|\mathcal{D}_2|S_{(\wh{r},\wh{s})}(t)}} \\
\nonumber && + \sup_{|t|\leq\log n}\frac{|\mathcal{D}_2|\left|P(a,b,\wh{a},\wh{b},t)-S_{(\wh{r},\wh{s})}(t)\right|}{\sqrt{|\mathcal{D}_2|S_{(\wh{r},\wh{s})}(t)}}, \\
\label{eq:bound-T-ada2} \wh{T}_n^- &\leq& \sup_{|t|\leq \log n}\frac{\left|\sum_{i\in\mathcal{D}_2}\indc{C^-(\wh{X}_i,\wh{Y}_i,\wh{a},\wh{b})>t\sqrt{2\log n}}-|\mathcal{D}_2|P(-a,b,\wh{a},\wh{b},t) \right|}{\sqrt{|\mathcal{D}_2|S_{(\wh{r},\wh{s})}(t)}} \\
\nonumber && + \sup_{|t|\leq\log n}\frac{|\mathcal{D}_2|\left|P(-a,b,\wh{a},\wh{b},t)-S_{(\wh{r},\wh{s})}(t)\right|}{\sqrt{|\mathcal{D}_2|S_{(\wh{r},\wh{s})}(t)}}, \\
\end{eqnarray}
\begin{eqnarray}
\label{eq:bound-T-ada3} \wh{T}_n^+ &\leq& \sup_{|t|\leq \log n}\frac{\left|\sum_{i\in\mathcal{D}_2}\indc{C^+(\wh{X}_i,\wh{Y}_i,\wh{a},\wh{b})>t\sqrt{2\log n}}-|\mathcal{D}_2|P(a,b,\wh{a},\wh{b},t) \right|}{\sqrt{|\mathcal{D}_2|S_{(\wh{r},\wh{s})}(t)}} \\
\nonumber && + \sup_{|t|\leq\log n}\frac{|\mathcal{D}_2|\left|P(a,b,\wh{a},\wh{b},t)-S_{(\wh{r},\wh{s})}(t)\right|}{\sqrt{|\mathcal{D}_2|S_{(\wh{r},\wh{s})}(t)}}, \\
\label{eq:bound-T-ada4} \wh{T}_n^+ &\leq& \sup_{|t|\leq \log n}\frac{\left|\sum_{i\in\mathcal{D}_2}\indc{C^+(\wh{X}_i,\wh{Y}_i,\wh{a},\wh{b})>t\sqrt{2\log n}}-|\mathcal{D}_2|P(-a,b,\wh{a},\wh{b},t) \right|}{\sqrt{|\mathcal{D}_2|S_{(\wh{r},\wh{s})}(t)}} \\
\nonumber && + \sup_{|t|\leq\log n}\frac{|\mathcal{D}_2|\left|P(-a,b,\wh{a},\wh{b},t)-S_{(\wh{r},\wh{s})}(t)\right|}{\sqrt{|\mathcal{D}_2|S_{(\wh{r},\wh{s})}(t)}}.
\end{eqnarray}
Therefore, we can use any one of the four bounds above to upper bound $\wh{T}_n^-\wedge \wh{T}_n^+$. Under the null hypothesis, we either have $z=\sigma$ or $z=-\sigma$. Assume $z=\sigma$ without loss of generality, and then by (\ref{eq:pp3})-(\ref{eq:pp6}), we should use the smaller bound between (\ref{eq:bound-T-ada1}) and (\ref{eq:bound-T-ada4}). 
By (\ref{eq:prob-a-b-hat}), $\wh{a}$ and $\wh{b}$ estimates $a$ and $b$ up to their signs. Let us suppose $|\wh{a}-a|\vee|\wh{b}-b|\leq\sqrt{\frac{\log n}{n}}$, and then by (\ref{eq:pp2}), we should use (\ref{eq:bound-T-ada1}) instead of  (\ref{eq:bound-T-ada4}) to bound $\wh{T}_n^-\wedge \wh{T}_n^+$. 
By (\ref{eq:good-interface-1}), the first term of (\ref{eq:bound-T-ada1}) can be bounded by
\begin{eqnarray*}
&& (1+o_{\mathbb{P}}(1))\sup_{|t|\leq \log n}\frac{\left|\sum_{i\in\mathcal{D}_2}\indc{C^-(\wh{X}_i,\wh{Y}_i,\wh{a},\wh{b})>t\sqrt{2\log n}}-|\mathcal{D}_2|P(a,b,\wh{a},\wh{b},t) \right|}{\sqrt{|\mathcal{D}_2|P(a,b,\wh{a},\wh{b},t)}} \\
&\leq& (1+o_{\mathbb{P}}(1))\sup_{|t|\leq \log n}\frac{\left|\sum_{i\in\mathcal{D}_2}\indc{C^-(\wh{X}_i,\wh{Y}_i,\wh{a},\wh{b})>t\sqrt{2\log n}}-|\mathcal{D}_2|P(a,b,\wh{a},\wh{b},t) \right|}{\sqrt{|\mathcal{D}_2|P(a,b,\wh{a},\wh{b},t)(1-P(a,b,\wh{a},\wh{b},t))}} \\
&=& (1+o_{\mathbb{P}}(1))\sqrt{2\log\log|\mathcal{D}_2|}=(1+o_{\mathbb{P}}(1))\sqrt{2\log\log n},
\end{eqnarray*}
where the last line is by \cite{shorack2009empirical,donoho2004higher} and (\ref{eq:sample-size-large}). By (\ref{eq:good-interface-1}), the second term of (\ref{eq:bound-T-ada1}) can be bounded by
\begin{eqnarray*}
&& \sup_{|t|\leq\log n}\frac{|\mathcal{D}_2|\left|P(a,b,\wh{a},\wh{b},t)-S_{(\wh{r},\wh{s})}(t)\right|}{\sqrt{|\mathcal{D}_2|S_{(\wh{r},\wh{s})}(t)}} \\
&\lesssim& \sup_{|t|\leq\log n} \frac{|\mathcal{D}_2|\frac{(\log n)^{5/2}}{\sqrt{n}}S_{(\wh{r},\wh{s})}(t)}{\sqrt{|\mathcal{D}_2|S_{(\wh{r},\wh{s})}(t)}} \lesssim (\log n)^{5/2}.
\end{eqnarray*}
Thus, we have $\wh{T}_n^-\wedge \wh{T}_n^+\lesssim (\log n)^{5/2}$. In the case when $z=-\sigma$, $|\wh{a}+a|\leq\sqrt{\frac{\log n}{n}}$, or $|\wh{b}+b|\leq\sqrt{\frac{\log n}{n}}$, we can use one of the four bounds (\ref{eq:bound-T-ada1})-(\ref{eq:bound-T-ada4}) to get the same conclusion. To summarize, whenever $G$ holds, $\wh{T}_n^-\wedge \wh{T}_n^+\lesssim (\log n)^{5/2}$ with high probability under the null, and
the Type-I error can be bounded by
$$\sup_{\substack{z\in\{-1,1\}^n\\ \sigma\in\{z,-z\}}}P^{(n)}_{(\theta,\eta,z,\sigma)}\psi_{\rm ada-HC}\leq \sup_{\substack{z\in\{-1,1\}^n\\ \sigma\in\{z,-z\}}}P^{(n)}_{(\theta,\eta,z,\sigma)}\psi_{\rm ada-HC}\indc{G}+\sup_{\substack{z\in\{-1,1\}^n\\ \sigma\in\{z,-z\}}}P^{(n)}_{(\theta,\eta,z,\sigma)}(G^c)\rightarrow 0.$$

To analyze the Type-II error, we define $G_{++}=G\cap\{a\geq 0, b\geq 0\}$, $G_{+-}=G\cap\{a\geq 0, b< 0\}$, $G_{-+}=G\cap\{a< 0, b\geq 0\}$, and $G_{--}=G\cap\{a< 0, b< 0\}$. Then,
\begin{eqnarray*}
&& \sup_{\substack{z\in\{-1,1\}^n\\\sigma\in\{-1,1\}^n\\\ell(z,\sigma)> \epsilon}}P^{(n)}_{(\theta,\eta,z,\sigma)}(1-\psi_{\rm ada-HC}) \\
&\leq& \sup_{\substack{z\in\{-1,1\}^n\\\sigma\in\{-1,1\}^n\\\ell(z,\sigma)> \epsilon}}P^{(n)}_{(\theta,\eta,z,\sigma)}(1-\psi_{\rm ada-HC})\indc{G_{++}} + \sup_{\substack{z\in\{-1,1\}^n\\\sigma\in\{-1,1\}^n\\\ell(z,\sigma)> \epsilon}}P^{(n)}_{(\theta,\eta,z,\sigma)}(1-\psi_{\rm ada-HC})\indc{G_{+-}} \\
&& + \sup_{\substack{z\in\{-1,1\}^n\\\sigma\in\{-1,1\}^n\\\ell(z,\sigma)> \epsilon}}P^{(n)}_{(\theta,\eta,z,\sigma)}(1-\psi_{\rm ada-HC})\indc{G_{-+}} + \sup_{\substack{z\in\{-1,1\}^n\\\sigma\in\{-1,1\}^n\\\ell(z,\sigma)> \epsilon}}P^{(n)}_{(\theta,\eta,z,\sigma)}(1-\psi_{\rm ada-HC})\indc{G_{--}} \\
&& +\sup_{\substack{z\in\{-1,1\}^n\\\sigma\in\{-1,1\}^n\\\ell(z,\sigma)> \epsilon}}P^{(n)}_{(\theta,\eta,z,\sigma)}(G^c).
\end{eqnarray*}
The first four terms in the bound can be bounded in the same way, and we only show how to bound the first term.
By the definition of $\ell(z,z^*)$, we have
\begin{align}
\nonumber & \sup_{\substack{z\in\{-1,1\}^n\\\sigma\in\{-1,1\}^n\\\ell(z,\sigma)> \epsilon}}P^{(n)}_{(\theta,\eta,z,\sigma)}(1-\psi_{\rm ada-HC})\indc{G_{++}}  \\
\label{eq:xiaren-1} \leq& \sup_{\substack{z\in\{-1,1\}^n\\\sigma\in\{-1,1\}^n\\ \frac{1}{n}\sum_{i=1}^n\indc{z_i\neq \sigma_i}> \epsilon}}P^{(n)}_{(\theta,\eta,z,\sigma)}\left(\wh{T}_n^-\leq (\log n)^3,G_{++}\right) \\
\label{eq:xiaren-2} & + \sup_{\substack{z\in\{-1,1\}^n\\\sigma\in\{-1,1\}^n\\ \frac{1}{n}\sum_{i=1}^n\indc{z_i\neq -\sigma_i}> \epsilon}}P^{(n)}_{(\theta,\eta,z,\sigma)}\left(\wh{T}_n^+\leq (\log n)^3,G_{++}\right).
\end{align}
We then bound (\ref{eq:xiaren-1}). The bound for (\ref{eq:xiaren-2}) follows a similar argument and thus we omit the details. We have
$$P^{(n)}_{(\theta,\eta,z,\sigma)}\left(\wh{T}_n^-\leq (\log n)^3,G_{++}\right)\leq \inf_{|t|\leq \log n}P^{(n)}_{(\theta,\eta,z,\sigma)}\left(|\wh{T}_n^-(t)|\leq (\log n)^3,G_{++}\right),$$
where
$$\wh{T}_n^-(t)=\frac{\sum_{i\in\mathcal{D}_2}\indc{C^-(\wh{X}_i,\wh{Y}_i,\wh{a},\wh{b})>t\sqrt{2\log n}}-|\mathcal{D}_2|S_{(\wh{r},\wh{s})}(t)}{\sqrt{|\mathcal{D}_2|S_{(\wh{r},\wh{s})}(t)}}.$$
Note that under $G_{++}$, we have
\begin{eqnarray}
\label{eq:prob-a-b-hat-sign} |\wh{a}-a|\vee|\wh{b}-b| &\leq& \sqrt{\frac{\log n}{n}}, \\ 
\label{eq:prob-a-b-sign} |a-\|\theta\||\vee|b-\|\eta\|| &\leq& (n/4)^{-\gamma}.
\end{eqnarray}
Define $m_0=\sum_{i\in\mathcal{D}_2}\indc{z_i=\sigma_i}$ and $m_1=\sum_{i\in\mathcal{D}_2}\indc{z_i\neq\sigma_i}$. Then, we can write $\wh{T}_n^-(t)$ as
$$\wh{T}_n^-(t)=R_n(t)+\frac{m_0P(a,b,\wh{a},\wh{b},t)-m_0P(\wh{a},\wh{b},\wh{a},\wh{b},t)}{\sqrt{|\mathcal{D}_2|S_{(\wh{r},\wh{s})}(t)}},$$
where
$$R_n(t)=\frac{\sum_{i\in\mathcal{D}_2}\indc{C^-(\wh{X}_i,\wh{Y}_i,\wh{a},\wh{b})>t\sqrt{2\log n}}-m_0P(a,b,\wh{a},\wh{b},t)-m_1P(\wh{a},\wh{b},\wh{a},\wh{b},t)}{\sqrt{|\mathcal{D}_2|S_{(\wh{r},\wh{s})}(t)}}.$$
The difference between $\wh{T}_n^-(t)$ and $R_n(t)$ can be ignored compared with the threshold $(\log n)^3$ because
$$\sup_{|t|\leq\log n}\frac{\left|m_0P(a,b,\wh{a},\wh{b},t)-m_0P(\wh{a},\wh{b},\wh{a},\wh{b},t)\right|}{\sqrt{|\mathcal{D}_2|S_{(\wh{r},\wh{s})}(t)}} \lesssim (\log n)^{5/2},$$
by (\ref{eq:good-interface-1}), (\ref{eq:sample-size-large}), and (\ref{eq:prob-a-b-hat-sign}). Now we study the mean and the variance  {of $R_n(t)$} conditioning on $\{d_i\}_{i\in[n]}$ and $\{(X_i,Y_i)\}_{i\in\mathcal{D}_0\cup\mathcal{D}_1}$. The conditional mean of the numerator of $R_n(t)$ is given by
$$m_1P(a,-b,\wh{a},\wh{b},t)-m_1P(\wh{a},\wh{b},\wh{a},\wh{b},t),$$
and the conditional variance of the numerator of $R_n(t)$ is bounded by
$$m_0P(a,b,\wh{a},\wh{b},t)+m_1P(a,-b,\wh{a},\wh{b},t).$$
By Chebyshev's inequality, $m_1\geq \frac{1}{6}n^{1-\beta}$ with high probability. Use (\ref{eq:good-interface-1}), (\ref{eq:good-interface-2}), (\ref{eq:prob-a-b-hat-sign}), and (\ref{eq:prob-a-b-sign}), and we have
\begin{eqnarray*}
&& P(a,-b,\wh{a},\wh{b},t) \asymp P(\wh{a},-\wh{b},\wh{a},\wh{b},t) \asymp P(a,-b,a,b,t) \asymp P(\|\theta\|,-\|\eta\|,\|\theta\|,\|\eta\|,t) \\
&& P(a,b,\wh{a},\wh{b},t) \asymp P(\wh{a},\wh{b},\wh{a},\wh{b},t) \asymp  P(a,b,a,b,t) \asymp P(\|\theta\|,\|\eta\|,\|\theta\|,\|\eta\|,t).
\end{eqnarray*}
Therefore, following the same argument in the proof of Theorem \ref{thm:general-HC}, we have
$$\frac{(\mathbb{E}(R_n(t)|\{d_i\}_{i\in[n]},\{(X_i,Y_i)\}_{i\in\mathcal{D}_0\cup\mathcal{D}_1}))^2}{\Var(R_n(t)|\{d_i\}_{i\in[n]},\{(X_i,Y_i)\}_{i\in\mathcal{D}_0\cup\mathcal{D}_1})}\rightarrow\infty,$$
at a polynomial rate with high probability as long as $\beta<\beta^*(r,s)$, which implies (\ref{eq:xiaren-1}) is vanishing, and thus we have $\lim_{n\rightarrow 0}R_n(\psi_{\rm ada-HC},\theta,\eta)=0$.
\end{proof}

\subsection{Proofs of Lemma \ref{lem:LR-approx} and Lemma \ref{lem:max-order}}

\begin{proof}[Proof of Lemma \ref{lem:LR-approx}]
By the definitions of $p(u,v)$, and $q(u,v)$, we have
\begin{eqnarray*}
\frac{q(u,v)}{p(u,v)} &=& \frac{\phi(u-\sqrt{2r\log n})\phi(v-\sqrt{2(r+s)\log n})+\phi(u+\sqrt{2r\log n})\phi(v+\sqrt{2(r+s)\log n})}{\phi(u)\phi(v-\sqrt{2(r+s)\log n})+\phi(u)\phi(v+\sqrt{2(r+s)\log n})} \\
&=& \frac{e^{u\sqrt{2r\log n}+v\sqrt{2s\log n}}+e^{-u\sqrt{2r\log n}-v\sqrt{2s\log n}}}{e^{v\sqrt{2(r+s)\log n}}+e^{-v\sqrt{2(r+s)\log n}}}.
\end{eqnarray*}
Apply the inequality $e^{|x|}\leq e^x+e^{-x}\leq 2e^{|x|}$ to both the numerator and the denominator, and we have
$$\frac{1}{2}\leq\frac{q(u,v)/p(u,v)}{e^{\sqrt{2\log n}\left(|\sqrt{r}u+\sqrt{s}v|-\sqrt{r+s}|v|\right)}}\leq 2.$$
This leads to the desired conclusion.
\end{proof}

\begin{proof}[Proof of Lemma \ref{lem:max-order}]
We note that by Lemma \ref{lem:comp-tail-0} and the definition of $t^*(r,s)$, we have
$$\mathbb{P}\left(|\sqrt{r}U+\sqrt{s}V|-\sqrt{r+s}|V|>t^*(r,s)\sqrt{2\log n}\right)\lesssim \frac{1}{\sqrt{\log n}}n^{-1}.$$
Therefore, using a union bound argument, we have
\begin{eqnarray*}
&& \mathbb{P}\left(\frac{\max_{1\leq i\leq n}\left(|\sqrt{r}U_i+\sqrt{s}V_i|-\sqrt{r+s}|V_i|\right)}{\sqrt{2\log n}}> t^*(r,s)\right) \\
&\leq& \sum_{i=1}^n\mathbb{P}\left(|\sqrt{r}U_i+\sqrt{s}V_i|-\sqrt{r+s}|V_i|>t^*(r,s)\sqrt{2\log n}\right) \\
&\lesssim& \frac{1}{\sqrt{\log n}}\rightarrow 0.
\end{eqnarray*}
Apply Lemma \ref{lem:comp-tail-0} and the definition of $t^*(r,s)$ again, and we have for any constant $\delta>0$, there exists some $\delta'>0$, such that
$$\mathbb{P}\left(|\sqrt{r}U+\sqrt{s}V|-\sqrt{r+s}|V|>(t^*(r,s)-\delta)\sqrt{2\log n}\right)\gtrsim n^{-(1-\delta')}.$$
Then, we have
\begin{eqnarray*}
&& \mathbb{P}\left(\frac{\max_{1\leq i\leq n}\left(|\sqrt{r}U_i+\sqrt{s}V_i|-\sqrt{r+s}|V_i|\right)}{\sqrt{2\log n}}\leq t^*(r,s)-\delta\right) \\
&\leq& \prod_{i=1}^n\mathbb{P}\left(\frac{|\sqrt{r}U_i+\sqrt{s}V_i|-\sqrt{r+s}|V_i|}{\sqrt{2\log n}}\leq t^*(r,s)-\delta\right) \\
&=& \prod_{i=1}^n\left(1-\mathbb{P}\left(\frac{|\sqrt{r}U_i+\sqrt{s}V_i|-\sqrt{r+s}|V_i|}{\sqrt{2\log n}}> t^*(r,s)-\delta\right)\right) \\
&\leq& \left(1-n^{-(1-\delta')}\right)^n \rightarrow 0.
\end{eqnarray*}
The proof is complete.
\end{proof}

\subsection{Proofs of Proposition \ref{prop:comp-p-value} and Proposition \ref{prop:parameter-estimation}}\label{sec:pf-org2}

\begin{proof}[Proof of Proposition \ref{prop:comp-p-value}]
By the definition of $C^-(\wh{X}_i,\wh{Y}_i,\wh{a},\wh{b})$, we have
$$
\max_{i\in\mathcal{D}_2}|C^-(\wh{X}_i,\wh{Y}_i,\wh{a},\wh{b})| \leq 2|\wh{a}|\max_{i\in\mathcal{D}_2}|\wh{X}_i| + 2|\wh{b}|\max_{i\in\mathcal{D}_2}|\wh{Y}_i|.
$$
Note that $|\wh{a}|\leq\|\theta\|=O(\sqrt{\log n})$ and $|\wh{b}|\leq\|\eta\|=O(\sqrt{\log n})$. Due to data splitting, we can write $\wh{X}_i=z_i\wh{\theta}^T\theta/\|\wh{\theta}\|+W_i$ for each $i\in\mathcal{D}_2$ with $W_i\sim N(0,1)$ independent of $\mathcal{D}_2$. Then,
$$\max_{i\in\mathcal{D}_2}|\wh{X}_i|\leq \|\theta\|+\max_{i\in\mathcal{D}_2}|W_i|,$$ 
where we have $\|\theta\|=O(\sqrt{\log n})$ and $\max_{i\in\mathcal{D}_2}|W_i|\leq C\sqrt{\log n}$ with probability tending to $1$ with some sufficiently large constant $C>0$. 
The same argument also applies to $\max_{i\in\mathcal{D}_2}|\wh{Y}_i|$. Therefore, $\max_{i\in\mathcal{D}_2}\frac{|C^-(\wh{X}_i,\wh{Y}_i,\wh{a},\wh{b})|}{\sqrt{2\log n}}=O(\sqrt{\log n})$ with high probability, which implies that
$$\inf_{\substack{z\in\{-1,1\}^n\\\sigma\in\{-1,1\}^n}}P^{(n)}_{(\theta,\eta,z,\sigma)}\left(\max_{i\in\mathcal{D}_2}\frac{|C^-(\wh{X}_i,\wh{Y}_i,\wh{a},\wh{b})|}{\sqrt{2\log n}}\leq\log n\right)\rightarrow 1.$$
Hence, with high probability, we have
\begin{eqnarray*}
\wh{T}_n^- &=& \sup_{|t|\leq \log n}\frac{\left|\sum_{i\in\mathcal{D}_2}\indc{C^-(\wh{X}_i,\wh{Y}_i,\wh{a},\wh{b})>t\sqrt{2\log n}}-|\mathcal{D}_2|S_{(\wh{r},\wh{s})}(t)\right|}{\sqrt{|\mathcal{D}_2|S_{(\wh{r},\wh{s})}(t)}} \\
&=& \sup_{t\in\mathbb{R}}\frac{\left|\sum_{i\in\mathcal{D}_2}\indc{C^-(\wh{X}_i,\wh{Y}_i,\wh{a},\wh{b})>t\sqrt{2\log n}}-|\mathcal{D}_2|S_{(\wh{r},\wh{s})}(t)\right|}{\sqrt{|\mathcal{D}_2|S_{(\wh{r},\wh{s})}(t)}} \\
&=& \max_{1\leq i\leq |\mathcal{D}_2|}\frac{\sqrt{|\mathcal{D}_2|}\left|\frac{i}{|\mathcal{D}_2|}-\wh{p}_{(i,\mathcal{D}_2)}^-\right|}{\sqrt{\wh{p}_{(i,\mathcal{D}_2)}^-}}.
\end{eqnarray*}
The same conclusion also applies to $\wh{T}_n^+$ by the same argument, and the proof is complete.
\end{proof}

\begin{proof}[Proof of Proposition \ref{prop:parameter-estimation}]
For $X_i\sim N(z_i\theta,I_p)$, we can write $X_i=z_i\theta+W_i$ with $W_i\sim N(0,I_p)$ independently for all $i\in[n]$. Then,
\begin{eqnarray*}
\left\|\frac{1}{n}\sum_{i=1}^nX_iX_i^T-\theta\theta^T-I_p\right\|_{\rm op} &\leq& \left\|\frac{1}{n}\sum_{i=1}^nW_iW_i^T-I_p\right\|_{\rm op} + 2\left\|\theta\left(\frac{1}{n}\sum_{i=1}^nz_iW_i^T\right)\right\|_{\rm op} \\
&\leq& \left\|\frac{1}{n}\sum_{i=1}^nW_iW_i^T-I_p\right\|_{\rm op} + 2\|\theta\|\left\|\frac{1}{n}\sum_{i=1}^nz_iW_i\right\|.
\end{eqnarray*}
By a standard covariance matrix concentration bound \citep{koltchinskii2014concentration},
$$\left\|\frac{1}{n}\sum_{i=1}^nW_iW_i^T-I_p\right\|_{\rm op}\leq C\sqrt{\frac{p}{n}},$$
with probability at least $1-e^{-C'p}$. Since $\left\|\frac{1}{\sqrt{n}}\sum_{i=1}^nz_iW_i\right\|^2\sim \chi_p^2$, a chi-square tail bound \citep{laurent2000adaptive} gives
$$\left\|\frac{1}{n}\sum_{i=1}^nz_iW_i\right\|\leq C\sqrt{\frac{p}{n}},$$
with probability at least $1-e^{-C'p}$. Therefore,
\begin{equation}
\left\|\frac{1}{n}\sum_{i=1}^nX_iX_i^T-\theta\theta^T-I_p\right\|_{\rm op} \leq C(1+2\|\theta\|)\sqrt{\frac{p}{n}},\label{eq:trivial-cov-bound}
\end{equation}
with probability at least $1-e^{-C'p}$. By (\ref{eq:trivial-cov-bound}) and Davis-Kahan theorem, we have
$$\|\wh{u}_1-\theta/\|\theta\|\|\wedge\|\wh{u}_1+\theta/\|\theta\|\|\leq C_1\frac{1+\|\theta\|}{\|\theta\|^2}\sqrt{\frac{p}{n}}.$$
Weyl's inequality and (\ref{eq:trivial-cov-bound}) give $|\wh{\lambda}_1  {-1}-\|\theta\|^2|\leq C(1+2\|\theta\|)\sqrt{\frac{p}{n}}$, which leads to
$$\left|\sqrt{\wh{\lambda}_1  {-1}}-\|\theta\| \right|\leq C\frac{1+2\|\theta\|}{\|\theta\|}\sqrt{\frac{p}{n}}.$$
Then, by triangle inequality, we have
$$L(\wh{\theta},\theta)\leq \left|\sqrt{\wh{\lambda}_1 {-1}}-\|\theta\| \right|+\|\theta\|\left(\|\wh{u}_1-\theta/\|\theta\|\|\wedge\|\wh{u}_1+\theta/\|\theta\|\|\right).$$
Combining the bounds, we obtain the desired result.
\end{proof}

\subsection{Proofs of Technical Lemmas}\label{sec:pf-last}

\begin{proof}[Proof of Lemma \ref{prop:standard-normal}]
Conclusion 1 is a standard Gaussian tail estimate from Page 116 of \cite{ross2007second}. For the second conclusion, we consider $t_1\leq t_2$ without loss of generality. If $t_1$ and $t_2$ are on the same side of $0$, we will have $\left|\int_{t_1}^{t_2}\phi(x)dx\right|\leq |t_1-t_2|\sup_{x\in[t_1,t_2]}\phi(x)\leq |t_1-t_2|\left(\phi(t_1)\vee\phi(t_2)\right)$. Otherwise, $\left|\int_{t_1}^{t_2}\phi(x)dx\right|\leq |t_1-t_2|\phi(0)$. The condition $|t_1-t_2|\leq 1$ implies $\phi(0)\leq 2\left(\phi(t_1)\vee\phi(t_2)\right)$, which leads to the desired conclusion. For Conclusion 3, we first consider $t\leq 2$, and then $\frac{\phi(t)/(1\vee t)}{1-\Phi(t)}\leq \frac{\phi(0)}{1-\Phi(2)}\leq 20$. For $t>2$, we use the first conclusion and then we have $\frac{\phi(t)/(1\vee t)}{1-\Phi(t)}\leq 1-t^{-2}\leq 20$. 
For Conclusion 4, we have $\phi(t_1)/\phi(t_2)\leq e^{\frac{1}{2}|t_1-t_2||t_1+t_2|}\leq 2$, since $|t_1-t_2||t_1+t_2|\rightarrow 0$ when $|t_1|,|t_2|\leq (\log n)^2$ and $|t_1-t_2|\leq n^{-c}$.
Finally, we prove Conclusion 5. We have $\frac{1-\Phi(t_1)}{1-\Phi(t_2)}\leq \frac{1-\Phi(t_1)}{1-\Phi(t_1)-2|t_1-t_2|\left(\phi(t_1)\vee\phi(t_2)\right)}$, where the inequality is by Conclusion 2. By Conclusions 3 and 4, $2|t_1-t_2|\left(\phi(t_1)\vee\phi(t_2)\right)/(1-\Phi(t_1))\rightarrow 0$ when $|t_1|,|t_2|\leq (\log n)^2$ and $|t_1-t_2|\leq n^{-c}$. This implies $\frac{1-\Phi(t_1)}{1-\Phi(t_2)}\leq 2$, and the proof is complete.
\end{proof}

\begin{proof}[Proof of Lemma \ref{lem:easy-tail}]
Let $Z\sim N(0,1)$, and then we can write
$$\mathbb{P}\left(U^2\leq 2t\log n\right)=\mathbb{P}\left(-\sqrt{2\log n}(\sqrt{t}+\sqrt{r})\leq Z\leq -\sqrt{2\log n}(\sqrt{r}-\sqrt{t})\right).$$
An application of (\ref{eq:Gaussian-tail}) leads to first result.

For the second result, note that we can write $U=Z_1+\sqrt{2r\log n}$ and $V=Z_2+\sqrt{2s\log n}$ with independent $Z_1,Z_2\sim N(0,1)$. Then, we have
\begin{eqnarray}
\nonumber && \mathbb{P}\left(|U|-|V|>t\sqrt{2\log n}\right) \\
\label{eq:union-sharp} &\asymp& \mathbb{P}\left(U>|V|+t\sqrt{2\log n}\right) + \mathbb{P}\left(U<-|V|-t\sqrt{2\log n}\right) \\
\nonumber &=& \mathbb{P}\left(V<U-t\sqrt{2\log n}, V>-U+t\sqrt{2\log n}\right) \\
\nonumber && + \mathbb{P}\left(V<-U-t\sqrt{2\log n}, V>U+t\sqrt{2\log n}\right) \\
\nonumber &=& \mathbb{P}\left(Z_2-Z_1<\sqrt{2\log n}(\sqrt{r}-\sqrt{s}-t), Z_2+Z_1> \sqrt{2\log n}(t-\sqrt{r}-\sqrt{s})\right) \\
\nonumber && + \mathbb{P}\left(Z_2+Z_1<\sqrt{2\log n}(-\sqrt{r}-\sqrt{s}-t), Z_2-Z_1>\sqrt{2\log n}(\sqrt{r}-\sqrt{s}+t)\right) \\
\label{eq:independent-sharp} &=& \mathbb{P}\left(Z_2-Z_1<\sqrt{2\log n}(\sqrt{r}-\sqrt{s}-t)\right)\mathbb{P}\left(Z_2+Z_1> \sqrt{2\log n}(t-\sqrt{r}-\sqrt{s})\right) \\
\nonumber && + \mathbb{P}\left(Z_2+Z_1<\sqrt{2\log n}(-\sqrt{r}-\sqrt{s}-t)\right)\mathbb{P}\left(Z_2-Z_1>\sqrt{2\log n}(t+\sqrt{r}-\sqrt{s})\right),
\end{eqnarray}
where we have used $(\mathbb{P}(A)+\mathbb{P}(B))/2\leq \mathbb{P}(A\cup B)\leq \mathbb{P}(A)+\mathbb{P}(B)$ in (\ref{eq:union-sharp}) and the fact that $Z_2-Z_1$ and $Z_2+Z_1$ are independent in (\ref{eq:independent-sharp}). Use (\ref{eq:Gaussian-tail}), and we have
$$\mathbb{P}\left(Z_2-Z_1<\sqrt{2\log n}(\sqrt{r}-\sqrt{s}-t)\right)\asymp\begin{cases}
\frac{1}{\sqrt{\log n}}n^{-\frac{1}{2}(t-\sqrt{r}+\sqrt{s})^2}, & t>\sqrt{r}-\sqrt{s}, \\
1, & t\leq \sqrt{r}-\sqrt{s},
\end{cases}$$
and
$$\mathbb{P}\left(Z_2+Z_1> \sqrt{2\log n}(t-\sqrt{r}-\sqrt{s})\right)\asymp \begin{cases}
\frac{1}{\sqrt{\log n}}n^{-\frac{1}{2}(t-\sqrt{r}-\sqrt{s})^2}, & t>\sqrt{r}+\sqrt{s}, \\
1, & t\leq \sqrt{r}+\sqrt{s}.
\end{cases}$$
The product of the above two probabilities is of order
$$\begin{cases}
\frac{1}{\log n}n^{-\left[(t-\sqrt{r})^2+s\right]}, & t>\sqrt{r}+\sqrt{s}, \\
\frac{1}{\sqrt{\log n}}n^{-\frac{1}{2}(t-\sqrt{r}+\sqrt{s})^2}, & \sqrt{r}-\sqrt{s}<t\leq\sqrt{r}+\sqrt{s}, \\
1, & t\leq \sqrt{r}-\sqrt{s}.
\end{cases}$$
The quantity $\mathbb{P}\left(Z_2+Z_1<\sqrt{2\log n}(-\sqrt{r}-\sqrt{s}-t)\right)\mathbb{P}\left(Z_2-Z_1>\sqrt{2\log n}(t+\sqrt{r}-\sqrt{s})\right)$ can be analyzed in the same way, and it is of a smaller order. Thus, the proof is complete.
\end{proof}

\begin{proof}[Proof of Lemma \ref{lem:comp-tail-0}]
We first study $\mathbb{P}\left(|\sqrt{r}U+\sqrt{s}V|-\sqrt{r+s}|V|>t\sqrt{2\log n}\right)$.
Consider independent random variables $W_1\sim N(0,r+s-\sqrt{s}\sqrt{r+s})$, $W_2\sim N(0,r+s-\sqrt{s}\sqrt{r+s})$, and $W_3\sim N(0,\sqrt{s}\sqrt{r+s})$. It is easy to check that
$$(\sqrt{r}U+\sqrt{s}V,\sqrt{r+s}V)\stackrel{d}{=}(W_1+W_3+\sqrt{s}\sqrt{r+s}\sqrt{2\log n}, W_2+W_3+(r+s)\sqrt{2\log n}).$$
Therefore,
\begin{eqnarray*}
&& \mathbb{P}\left(|\sqrt{r}U+\sqrt{s}V|-\sqrt{r+s}|V|>t\sqrt{2\log n}\right) \\
&=& \mathbb{P}\left(|W_1+W_3+\sqrt{s}\sqrt{r+s}\sqrt{2\log n}|-|W_2+W_3+(r+s)\sqrt{2\log n}|>t\sqrt{2\log n}\right) \\
&\asymp& \mathbb{P}\left(W_1+W_3+\sqrt{s}\sqrt{r+s}\sqrt{2\log n}>t\sqrt{2\log n}+|W_2+W_3+(r+s)\sqrt{2\log n}|\right) \\
&& + \mathbb{P}\left(-W_1-W_3-\sqrt{s}\sqrt{r+s}\sqrt{2\log n}>t\sqrt{2\log n}+|W_2+W_3+(r+s)\sqrt{2\log n}|\right) \\
&=& \mathbb{P}\left(W_1+W_3+\sqrt{s}\sqrt{r+s}\sqrt{2\log n}>t\sqrt{2\log n}+W_2+W_3+(r+s)\sqrt{2\log n},\right. \\
&& \left.W_1+W_3+\sqrt{s}\sqrt{r+s}\sqrt{2\log n}>t\sqrt{2\log n}-W_2-W_3-(r+s)\sqrt{2\log n}\right) \\
&& + \mathbb{P}\left(-W_1-W_3-\sqrt{s}\sqrt{r+s}\sqrt{2\log n}>t\sqrt{2\log n}+W_2+W_3+(r+s)\sqrt{2\log n},\right. \\
&& \left.-W_1-W_3-\sqrt{s}\sqrt{r+s}\sqrt{2\log n}>t\sqrt{2\log n}-W_2-W_3-(r+s)\sqrt{2\log n}\right) \\
&=&  \mathbb{P}\left(W_1-W_2>(t+r+s-\sqrt{s}\sqrt{r+s})\sqrt{2\log n}\right) \\
&& \times \mathbb{P}\left(W_1+W_2+2W_3>(t-(r+s)-\sqrt{s}\sqrt{r+s})\sqrt{2\log n}\right) \\
&& + \mathbb{P}\left(-W_1-W_2-2W_3>(t+r+s+\sqrt{s}\sqrt{r+s})\sqrt{2\log n}\right) \\
&& \times \mathbb{P}\left(-W_1+W_2>(t-(r+s)+\sqrt{s}\sqrt{r+s})\sqrt{2\log n}\right),
\end{eqnarray*}
where we have used the fact that $W_1-W_2$, $W_1+W_2$, and $W_3$ are independent. For the four probabilities above, we have
\begin{eqnarray*}
&& \mathbb{P}\left(W_1-W_2>(t+r+s-\sqrt{s}\sqrt{r+s})\sqrt{2\log n}\right) \\
&=& \mathbb{P}\left(N(0,1)>\frac{(t+r+s-\sqrt{s}\sqrt{r+s})\sqrt{2\log n}}{\sqrt{2(r+s-\sqrt{s}\sqrt{r+s})}}\right) \\
&\asymp& \begin{cases}
\frac{1}{\sqrt{\log n}}n^{-\frac{(t+r+s-\sqrt{s}\sqrt{r+s})^2}{2(r+s-\sqrt{s}\sqrt{r+s})}}, & t>-r-s+\sqrt{s}\sqrt{r+s}, \\
1, & t\leq -r-s+\sqrt{s}\sqrt{r+s},
\end{cases}
\end{eqnarray*}
\begin{eqnarray*}
&& \mathbb{P}\left(W_1+W_2+2W_3>(t-(r+s)-\sqrt{s}\sqrt{r+s})\sqrt{2\log n}\right) \\
&=& \mathbb{P}\left(N(0,1)>\frac{(t-(r+s)-\sqrt{s}\sqrt{r+s})\sqrt{2\log n}}{\sqrt{2(r+s+\sqrt{s}\sqrt{r+s})}}\right) \\
&\asymp& \begin{cases}
\frac{1}{\sqrt{\log n}}n^{-\frac{(t-r-s-\sqrt{s}\sqrt{r+s})^2}{2(r+s+\sqrt{s}\sqrt{r+s})}}, & t>r+s+\sqrt{s}\sqrt{r+s}, \\
1, & t\leq r+s+\sqrt{s}\sqrt{r+s},
\end{cases}
\end{eqnarray*}
\begin{eqnarray*}
&& \mathbb{P}\left(-W_1-W_2-2W_3>(t+r+s+\sqrt{s}\sqrt{r+s})\sqrt{2\log n}\right) \\
&=& \mathbb{P}\left(N(0,1)>\frac{(t+r+s+\sqrt{s}\sqrt{r+s})\sqrt{2\log n}}{\sqrt{2(r+s+\sqrt{s}\sqrt{r+s})}}\right) \\
&\asymp& \begin{cases}
\frac{1}{\sqrt{\log n}}n^{-\frac{(t+r+s+\sqrt{s}\sqrt{r+s})^2}{2(r+s+\sqrt{s}\sqrt{r+s})}}, & t>-r-s-\sqrt{s}\sqrt{r+s}, \\
1, & t\leq -r-s-\sqrt{s}\sqrt{r+s},
\end{cases}
\end{eqnarray*}
and
\begin{eqnarray*}
&& \mathbb{P}\left(-W_1+W_2>(t-(r+s)+\sqrt{s}\sqrt{r+s})\sqrt{2\log n}\right) \\
&=& \mathbb{P}\left(N(0,1)>\frac{(t-(r+s)+\sqrt{s}\sqrt{r+s})\sqrt{2\log n}}{\sqrt{2(r+s-\sqrt{s}\sqrt{r+s})}}\right) \\
&\asymp& \begin{cases}
\frac{1}{\sqrt{\log n}}n^{-\frac{(t-r-s+\sqrt{s}\sqrt{r+s})^2}{2(r+s-\sqrt{s}\sqrt{r+s})}}, & t>r+s-\sqrt{s}\sqrt{r+s}, \\
1, & t\leq r+s-\sqrt{s}\sqrt{r+s}.
\end{cases}
\end{eqnarray*}
Putting the pieces together, we get
\begin{eqnarray*}
&& \mathbb{P}\left(|\sqrt{r}U+\sqrt{s}V|-\sqrt{r+s}|V|>t\sqrt{2\log n}\right) \\
&\asymp& \begin{cases}
\frac{1}{\log n}n^{-\frac{t^2+r(r+s)}{r}}, & t>r+s+\sqrt{s}\sqrt{r+s}, \\
\frac{1}{\sqrt{\log n}}n^{-\frac{(t+r+s-\sqrt{s}\sqrt{r+s})^2}{2(r+s-\sqrt{s}\sqrt{r+s})}},& -(r+s)+\sqrt{s}\sqrt{r+s}<t\leq r+s+\sqrt{s}\sqrt{r+s}, \\
1, & t\leq -(r+s)+\sqrt{s}\sqrt{r+s}.
\end{cases}
\end{eqnarray*}

Next, we analyze $\mathbb{P}\left(\sqrt{r}|U|-(\sqrt{r+s}-\sqrt{s})|V|>t\sqrt{2\log n}\right)$. Note that
\begin{equation}
\sqrt{r}|U|-(\sqrt{r+s}-\sqrt{s})|V|=\left(|\sqrt{r}U+\sqrt{s}V|-\sqrt{r+s}|V|\right)\vee\left(|\sqrt{r}U-\sqrt{s}V|-\sqrt{r+s}|V|\right). \label{eq:great-formula}
\end{equation}
Then, by the fact that $\mathbb{P}(A)\vee \mathbb{P}(B)\leq \mathbb{P}(A\cup B)\leq 2\left(\mathbb{P}(A)\vee \mathbb{P}(B)\right)$, we have
\begin{eqnarray*}
&& \mathbb{P}\left(\sqrt{r}|U|-(\sqrt{r+s}-\sqrt{s})|V|>t\sqrt{2\log n}\right) \\
&\asymp& \mathbb{P}\left(|\sqrt{r}U+\sqrt{s}V|-\sqrt{r+s}|V|>t\sqrt{2\log n}\right) \\
&& \vee \mathbb{P}\left(|\sqrt{r}U-\sqrt{s}V|-\sqrt{r+s}|V|>t\sqrt{2\log n}\right).
\end{eqnarray*}
We have derived the asymptotics of $\mathbb{P}\left(|\sqrt{r}U+\sqrt{s}V|-\sqrt{r+s}|V|>t\sqrt{2\log n}\right)$, and thus it is sufficient to analyze $\mathbb{P}\left(|\sqrt{r}U-\sqrt{s}V|-\sqrt{r+s}|V|>t\sqrt{2\log n}\right)$.
 {Since $|\sqrt{r}U-\sqrt{s}V|-\sqrt{r+s}|V|$ has the same distribution as that of $|\sqrt{r}U+\sqrt{s}V|-\sqrt{r+s}|V|$ under our assumptions, the desired bound is exactly the same as before.}
Hence, the proof is complete.
\end{proof}

\begin{proof}[Proof of Lemma \ref{lem:comp-tail-1}]
Consider independent random variables $W_1\sim N(0,r+s-\sqrt{s}\sqrt{r+s})$, $W_2\sim N(0,r+s-\sqrt{s}\sqrt{r+s})$, and $W_3\sim N(0,\sqrt{s}\sqrt{r+s})$. It is easy to check that
$$(\sqrt{r}U+\sqrt{s}V,\sqrt{r+s}V)\stackrel{d}{=}(W_1+W_3+(r+s)\sqrt{2\log n}, W_2+W_3+\sqrt{s}\sqrt{r+s}\sqrt{2\log n}).$$
Therefore,
\begin{eqnarray*}
&& \mathbb{P}\left(|\sqrt{r}U+\sqrt{s}V|-\sqrt{r+s}|V|>t\sqrt{2\log n}\right) \\
&=& \mathbb{P}\left(|W_1+W_3+(r+s)\sqrt{2\log n}|-|W_2+W_3+\sqrt{s}\sqrt{r+s}\sqrt{2\log n}|>t\sqrt{2\log n}\right) \\
&\asymp& \mathbb{P}\left(W_1+W_3+(r+s)\sqrt{2\log n}>t\sqrt{2\log n}+|W_2+W_3+\sqrt{s}\sqrt{r+s}\sqrt{2\log n}|\right) \\
&& + \mathbb{P}\left(-W_1-W_3-(r+s)\sqrt{2\log n}>t\sqrt{2\log n}+|W_2+W_3+\sqrt{s}\sqrt{r+s}\sqrt{2\log n}|\right) \\
&=& \mathbb{P}\left(W_1+W_3+(r+s)\sqrt{2\log n}>t\sqrt{2\log n}+W_2+W_3+\sqrt{s}\sqrt{r+s}\sqrt{2\log n},\right. \\
&& \left. W_1+W_3+(r+s)\sqrt{2\log n}>t\sqrt{2\log n}-W_2-W_3-\sqrt{s}\sqrt{r+s}\sqrt{2\log n}\right) \\
&& + \mathbb{P}\left(-W_1-W_3-(r+s)\sqrt{2\log n}>t\sqrt{2\log n}+W_2+W_3+\sqrt{s}\sqrt{r+s}\sqrt{2\log n},\right. \\
&& \left.-W_1-W_3-(r+s)\sqrt{2\log n}>t\sqrt{2\log n}-W_2-W_3-\sqrt{s}\sqrt{r+s}\sqrt{2\log n}\right) \\
&=&  \mathbb{P}\left(W_1-W_2 > (t-(r+s)+\sqrt{s}\sqrt{r+s})\sqrt{2\log n}\right) \\
&& \times \mathbb{P}\left(W_1+W_2+2W_3> (t-(r+s)-\sqrt{s}\sqrt{r+s})\sqrt{2\log n}\right) \\
&& + \mathbb{P}\left(-W_1-W_2-2W_3> (t+r+s+\sqrt{s}\sqrt{r+s})\sqrt{2\log n}\right) \\
&& \times \mathbb{P}\left(-W_1+W_2> (t+r+s-\sqrt{s}\sqrt{r+s})\sqrt{2\log n}\right),
\end{eqnarray*}
where we have used the fact that $W_1-W_2$, $W_1+W_2$, and $W_3$ are independent. For the four probabilities above, we have
\begin{eqnarray*}
&& \mathbb{P}\left(W_1-W_2 > (t-(r+s)+\sqrt{s}\sqrt{r+s})\sqrt{2\log n}\right) \\
&=& \mathbb{P}\left(N(0,1)>\frac{(t-(r+s)+\sqrt{s}\sqrt{r+s})\sqrt{2\log n}}{\sqrt{2(r+s-\sqrt{s}\sqrt{r+s})}}\right) \\
&\asymp& \begin{cases}
\frac{1}{\sqrt{\log n}}n^{-\frac{(t-(r+s)+\sqrt{s}\sqrt{r+s})^2}{2(r+s-\sqrt{s}\sqrt{r+s})}}, & t>r+s-\sqrt{s}\sqrt{r+s}, \\
1, & t\leq r+s-\sqrt{s}\sqrt{r+s},
\end{cases}
\end{eqnarray*}
\begin{eqnarray*}
&& \mathbb{P}\left(W_1+W_2+2W_3> (t-(r+s)-\sqrt{s}\sqrt{r+s})\sqrt{2\log n}\right) \\
&=& \mathbb{P}\left(N(0,1)>\frac{(t-(r+s)-\sqrt{s}\sqrt{r+s})\sqrt{2\log n}}{\sqrt{2(r+s+\sqrt{s}\sqrt{r+s})}}\right) \\
&\asymp& \begin{cases}
\frac{1}{\sqrt{\log n}}n^{-\frac{(t-(r+s)-\sqrt{s}\sqrt{r+s})^2}{2(r+s+\sqrt{s}\sqrt{r+s})}}, & t>r+s+\sqrt{s}\sqrt{r+s}, \\
1, & t\leq r+s+\sqrt{s}\sqrt{r+s},
\end{cases}
\end{eqnarray*}
\begin{eqnarray*}
&& \mathbb{P}\left(-W_1-W_2-2W_3> (t+r+s+\sqrt{s}\sqrt{r+s})\sqrt{2\log n}\right) \\
&=& \mathbb{P}\left(N(0,1)>\frac{(t+r+s+\sqrt{s}\sqrt{r+s})\sqrt{2\log n}}{\sqrt{2(r+s+\sqrt{s}\sqrt{r+s})}}\right) \\
&\asymp& \begin{cases}
\frac{1}{\sqrt{\log n}}n^{-\frac{(t+r+s+\sqrt{s}\sqrt{r+s})^2}{2(r+s+\sqrt{s}\sqrt{r+s})}}, & t>-r-s-\sqrt{s}\sqrt{r+s}, \\
1, & t\leq -r-s-\sqrt{s}\sqrt{r+s},
\end{cases}
\end{eqnarray*}
and
\begin{eqnarray*}
&& \mathbb{P}\left(-W_1+W_2> (t+r+s-\sqrt{s}\sqrt{r+s})\sqrt{2\log n}\right) \\
&=& \mathbb{P}\left(N(0,1)>\frac{(t+r+s-\sqrt{s}\sqrt{r+s})\sqrt{2\log n}}{\sqrt{2(r+s-\sqrt{s}\sqrt{r+s})}}\right) \\
&\asymp& \begin{cases}
\frac{1}{\sqrt{\log n}}n^{-\frac{(t+r+s-\sqrt{s}\sqrt{r+s})^2}{2(r+s-\sqrt{s}\sqrt{r+s})}}, & t>-r-s+\sqrt{s}\sqrt{r+s}, \\
1, & t\leq -r-s+\sqrt{s}\sqrt{r+s}.
\end{cases}
\end{eqnarray*}
Putting the pieces together, we get 
\begin{eqnarray*}
&& \mathbb{P}\left(|\sqrt{r}U+\sqrt{s}V|-\sqrt{r+s}|V|>t\sqrt{2\log n}\right) \\
&\asymp& \begin{cases}
\frac{1}{\log n}n^{-\frac{(t-r)^2+rs}{r}}, & t>r+s+\sqrt{s}\sqrt{r+s}, \\
\frac{1}{\sqrt{\log n}}n^{-\frac{(t-(r+s)+\sqrt{s}\sqrt{r+s})^2}{2(r+s-\sqrt{s}\sqrt{r+s})}}, & r+s-\sqrt{s}\sqrt{r+s}<t \leq r+s+\sqrt{s}\sqrt{r+s}, \\
1, & t\leq r+s-\sqrt{s}\sqrt{r+s}.
\end{cases}
\end{eqnarray*}
Therefore, the desired conclusion is obtained.

Finally, we consider $t<r+s-\sqrt{s}\sqrt{r+s}$. Write $U=Z_1+\sqrt{2r\log n}$ and $V=Z_2+\sqrt{2s\log n}$ with independent $Z_1,Z_2\sim N(0,1)$. Then, by using (\ref{eq:great-formula}), we have
\begin{eqnarray*}
&&  \mathbb{P}\left(|\sqrt{r}U+\sqrt{s}V|-\sqrt{r+s}|V|>t\sqrt{2\log n}\right) \\
&\geq& \mathbb{P}\left(\sqrt{r}|U|-(\sqrt{r+s}-\sqrt{s})|V|>t\sqrt{2\log n}\right) \\
&\geq& \mathbb{P}\left(\sqrt{r}(Z_1+\sqrt{2r\log n})-(\sqrt{r+s}-\sqrt{s})|Z_2|-(\sqrt{r+s}-\sqrt{s})\sqrt{2s\log n}>t\sqrt{2\log n}\right) \\
&=& \mathbb{P}\left(\frac{\sqrt{t}Z_1-(\sqrt{r+s}-\sqrt{s})|Z_2|}{\sqrt{2\log n}}>t-(r+s-\sqrt{s}\sqrt{r+s})\right) \rightarrow 1,
\end{eqnarray*}
since $\frac{\sqrt{t}Z_1-(\sqrt{r+s}-\sqrt{s})|Z_2|}{\sqrt{2\log n}}=o_{\mathbb{P}}(1)$. The proof is complete.
\end{proof}

\bibliographystyle{dcu}
\bibliography{reference}

\end{document}